\numberwithin{equation}{section}
\newtheorem{lemm}{Lemma}[section]
\newtheorem{theo}[lemm]{Theorem}
\newtheorem{coro}[lemm]{Corollary}
\newtheorem{prop}[lemm]{Proposition}
\theoremstyle{definition}
\newtheorem{defi}[lemm]{Definition}
\theoremstyle{remark}
\newtheorem{rema}[lemm]{Remark}
\def\Z{{\bf Z}}
\def\C{{\bf C}}
\def\Q{{\bf Q}}
\def\P{{\bf P}}
\def\PP{{\bf P}}
\newcommand{\tM}{\widetilde{M}}
\newcommand{\tf}{\tilde{f}}
\newcommand{\ndv}{\mathrm{ndv}}
\def\cA{\mathscr{A}}
\def\cI{\mathscr{I}}
\def\cL{\mathscr{L}}
\def\cM{\mathscr{M}}
\def\cO{\mathscr{O}}
\def\cP{\mathscr{P}}
\def\cS{\mathscr{S}}
\def\cQ{\mathscr{Q}}
\def\cU{\mathscr{U}}
\def\cX{\mathscr{X}}
\def\cY{\mathscr{Y}}
\def\tcY{\widetilde{\mathscr{Y}}}
\def\cZ{\mathscr{Z}}
\def\bA{\mathbf{A}}
\def\bM{\mathbf{M}}
\def\br{\mathbf r}
\def\bq{\mathbf q}
\def\fM{\mathfrak M}
\def\lra{\longrightarrow}
\def\llra{\hbox to 10mm{\rightarrowfill}}
\def\lllra{\hbox to 15mm{\rightarrowfill}}
\def\llla{\hbox to 10mm{\leftarrowfill}}
\def\lllla{\hbox to 15mm{\leftarrowfill}}
\def\dra{\dashrightarrow}
\def\thra{\twoheadrightarrow}
\def\hra{\hookrightarrow}
\def\lhra{\ensuremath{\lhook\joinrel\relbar\joinrel\rightarrow}}
\def\isom{\simeq}
 \def\vide{\varnothing}
\DeclareMathOperator{\isomto}{\stackrel{{}_{\scriptstyle\sim}}{\to}}
\DeclareMathOperator{\isomlra}{\stackrel{{}_{\scriptstyle\sim}}{\lra}}
\DeclareMathOperator{\Alb}{Alb}
\DeclareMathOperator{\Jac}{Jac}
\DeclareMathOperator{\Bit}{Bit}
\DeclareMathOperator{\Bl}{Bl}
\DeclareMathOperator{\Cone}{Cone}
\DeclareMathOperator{\End}{End}
\DeclareMathOperator{\Gr}{\mathsf{Gr}}
\DeclareMathOperator{\LGr}{\mathsf{LGr}}
\DeclareMathOperator{\CGr}{\mathsf{CGr}}
\DeclareMathOperator{\Fl}{\mathsf{Fl}}
\DeclareMathOperator{\Hilb}{Hilb}
\DeclareMathOperator{\Id}{Id}
\DeclareMathOperator{\Ker}{Ker}
\DeclareMathOperator{\lin}{\underset{\mathrm lin}{\equiv}}
\DeclareMathOperator{\PGL}{PGL}
\DeclareMathOperator{\NS}{NS}
\DeclareMathOperator{\Sp}{Sp}
\DeclareMathOperator{\Spec}{Spec}
\DeclareMathOperator{\Sing}{Sing}
\DeclareMathOperator{\Sym}{Sym}
\newcommand{\sD}{{\mathsf{D}}}
\def\bw#1#2{\textstyle{\bigwedge\hskip-0.9mm^{#1}}\hskip0.2mm{#2}}
\def\sbw#1#2{{\bigwedge\hskip-0.9mm^{#1}}\hskip0.1mm{#2}}
\newcommand{\bcS}{\overline{\cS}}
\newcommand{\tcS}{\widetilde{\cS}}
\newcommand{\bcZ}{\overline{\cZ}}
\newcommand{\tcZ}{\widetilde{\cZ}}
\def\moins{\smallsetminus}
\newcommand{\hY}{\widehat{Y}}
\newcommand{\tS}{\widetilde{S}}
\newcommand{\tY}{\widetilde{Y}}
\newcommand{\pr}{\mathrm{pr}}
\newcommand{\tsi}{{\tilde{\sigma}}}
\newcommand{\Ap}{{A^\perp}}
\newcommand{\tC}{{\widetilde{C}}}
\newcommand{\bcA}{{\overline{\mathscr{A}}}}
\newcommand{\bcV}{{\overline{\mathscr{V}}}}
\newcommand{\tj}{{\tilde{\jmath}}}
\newcommand{\tX}{\widetilde{X}}
\newcommand{\yta}{{Y_A^2}}
\newcommand{\ytav}{{Y_{A,V_5}^2}}
\newcommand{\tyta}{{\tY_A^2}}
\newcommand{\tcQ}{\widetilde{\cQ}}
\newcommand{\rc}{{\mathrm{c}}}
\def\setminus{\smallsetminus}
\def\cong{\isom}
\def\av{abelian variety}
\def\ppav{principally polarized abelian variety}
\DeclareMathOperator{\AJ}{\mathsf{A\!J}}
\title{Gushel--Mukai varieties: intermediate Jacobians}
\author{Olivier Debarre}
\address{Universit\'e de Paris and Sorbonne Universit\'e, CNRS, Institut de Math\'ematiques de Jussieu-Paris Rive Gauche, F-75006 Paris, France}
\email{olivier.debarre@imj-prg.fr}
\author{Alexander Kuznetsov}
\address{Algebraic Geometry Section, Steklov Mathematical Institute, 8 Gubkin str., Moscow 119991, Russia\\
The Poncelet Laboratory, Independent University of Moscow\\ 
Laboratory of Algebraic Geometry, National Research University Higher School of Economics, Russian Federation}
\email{akuznet@mi-ras.ru}
\begin{document}

%%%%%%%%%%%%%%%%%%%%%%%%%%%%%%%
% Title page
%%%%%%%%%%%%%%%%%%%%%%%%%%%%%%%

\removeabove{0.5cm}
\removebetween{0.5cm}
\removebelow{0.5cm}

\maketitle

\begin{prelims}

\DisplayAbstractInEnglish

\bigskip

\DisplayKeyWords

\medskip

\DisplayMSCclass

\bigskip

\languagesection{Fran\c{c}ais}

\bigskip

\DisplayTitleInFrench

\medskip

\DisplayAbstractInFrench

\end{prelims}

%%%%%%%%%%%%%%%%%%%%%
% Table of Contents
%%%%%%%%%%%%%%%%%%%%%

\newpage

\setcounter{tocdepth}{2}

\tableofcontents

%%%%%%%%%%%%%%%%%%%%%
% Content begins here
%%%%%%%%%%%%%%%%%%%%%

\section{Introduction}

This article is an addition to the series~\cite{dk1,dkperiods,dkmoduli,KP} on the geometry of {Gushel--Mukai varieties}.\
For an introduction, we recommend the survey~\cite{d:survey}.

A smooth complex {\sf Gushel--Mukai} ({\sf GM} for short) {\sf variety} of dimension~$n \in\{2,3,4,5,6\}$
is a smooth dimensionally transverse intersection
\begin{equation}
\label{def:x}
X = \CGr(2,V_5) \cap \P(W) \cap Q,
\end{equation}
where $V_5$ is a 5-dimensional complex vector space,  $\CGr(2,V_5) \subset \P(\C \oplus \bw2V_5)$ is the cone 
over the Grassmannian $\Gr(2,V_5)$ in its Pl\"ucker embedding, $W \subset \C \oplus \bw2V_5$ is a linear subspace of dimension $n + 5$, 
and~$Q \subset \P(W)$ is a quadratic hypersurface.\ 

 GM varieties of dimension~2 are Brill--Noether general K3 surfaces of genus~6.\
GM varieties of dimension~4 or~6 are Fano varieties but they share some properties with K3 surfaces.\
For instance, their derived categories have a component of K3-type~(\cite[Propositions~2.6 and~2.9]{KP}) 
and their vanishing cohomology of middle dimension is isomorphic 
to a Tate twist of the primitive second cohomology of a certain hyperk\"ahler fourfold 
associated with their K3 category~(\cite[Theorem~5.1]{dkperiods}).\
This allowed us to describe the {period map} for GM varieties of dimension~4 or~6 in~\cite[Proposition~5.27]{dkperiods}.

GM varieties of dimension~3 or~5 are also Fano varieties but they behave differently.\
The nontrivial components of their derived categories 
bear some of the  features of the derived category of a curve (\cite[Proposition~2.9]{KP})
and the Hodge structure on their middle cohomology defines their intermediate Jacobian, 
a 10-dimensional \ppav~(\cite[Proposition~3.1]{dkperiods}).\
The main goal of this article is to describe the intermediate Jacobians and period maps of GM varieties of dimension~3 or~5.

The key object we use to study a GM variety $X$ is its associated Lagrangian data set constructed in~\cite{dk1}.\
It is a triple  $(V_6(X),V_5(X),A(X))$ (or $(V_6,V_5,A)$ for short) that consists 
of a 6-dimensional vector space~$V_6$, a hyperplane~$V_5 \subset V_6$,
and a subspace \mbox{$A \subset \bw3V_6$} which is Lagrangian with respect to the symplectic form  {on~$\bw3V_6$} given by exterior product,
and contains no decomposable vectors (this means {no nonzero products~$v_1 \wedge v_2 \wedge v_3$, where $v_1,v_2,v_3 \in V_6$}).\
A GM variety can be reconstructed from its Lagrangian data set~(\cite[Theorem~3.6]{dk1}).\
Moreover, Lagrangian data sets can be used to describe the moduli stack of smooth GM varieties and its coarse moduli space~(\cite{dkmoduli}).

 {It is not  surprising then that}
many geometric properties of a GM variety  can be described in terms of its Lagrangian data set,
particularly  in terms of the  {\sf Eisenbud--Popescu--Walter} (EPW for short)  {\sf varieties}
\begin{equation*}
Y_A^{\ge 3} \subset Y_A^{\ge 2} \subset Y_A^{\ge 1} \subset Y_A^{\ge 0}=\P(V_6),
\end{equation*}
  where $Y_A^{\ge 1}$ is a sextic hypersurface (called an {\sf  EPW sextic}) with singular locus $Y_A^{\ge 2}$, itself an integral surface with singular locus the finite set $Y_A^{\ge 3}$,
and the {\sf dual EPW varieties} 
\begin{equation*}
Y_\Ap^{\ge 3} \subset Y_\Ap^{\ge 2} \subset Y_\Ap^{\ge 1} \subset Y_\Ap^{\ge 0}= \P(V_6^\vee)
\end{equation*}
associated with the Lagrangian subspace $A^\bot\subset \bw3V_6^\vee$ (see Section~\ref{defepw} for the definitions). 

Let $A$ be a Lagrangian with no decomposable vectors (such as $A(X)$ and $A(X)^\bot$).\ 
O'Grady constructed a canonical double covering
\begin{equation*}
\tY_A^{\ge 1} \lra Y_A^{\ge 1},
\end{equation*}
\'etale away from the surface $Y_A^{\ge 2}$.\ 
When $Y_A^{\ge 3} $ is empty (this holds for $A$ general), $\tY_A^{\ge 1}$ is a hyperk\"ahler fourfold 
called a {\sf double EPW sextic}.\ 
When $X$ is a GM variety of even dimension, the double EPW sextic $\tY_{A(X)}^{\ge 1}$ is the hyperk\"ahler fourfold mentioned above 
whose primitive  {second} cohomology is isomorphic to a Tate twist of the vanishing middle cohomology of $X$.

We also defined in~\cite[Theorem~5.2(2)]{dkcovers} a canonical double covering
\begin{equation*}
\tY_A^{\ge 2} \lra Y_A^{\ge 2},
\end{equation*}
\'etale away from the finite set $Y_A^{\ge 3}$, where $\tY_A^{\ge 2}$ is an integral surface (called a {\sf double EPW surface}) 
which has an ordinary double point over each point of $Y_A^{\ge 3}$ and is smooth elsewhere; 
 {in particular,~$\tY_A^{\ge 2}$ is smooth for~$A$} general.\ 
It has  {a 10-dimensional} Albanese variety $\Alb(\widetilde Y^{\ge2}_{A})$ which,  {when~$\tY_A^{\ge 2}$ is singular,} 
can be defined as the Albanese variety of any desingularization.

 {The first main result of this article is the following.}

\begin{theo}
\label{theorem:intro}
For any Lagrangian subspace $A \subset \bw3V_6$ with no decomposable vectors, 
 the Albanese variety~$\Alb(\widetilde Y^{\ge2}_{A})$ has a canonical principal polarization such that there is an isomorphism
\begin{equation}
\label{eq:alb-isomorphic-intro}
\Alb(\widetilde Y^{\ge2}_{A}) \cong \Alb(\widetilde Y^{\ge2}_{A^\perp})
\end{equation}
of principally polarized abelian varieties.

If $X$ is a smooth GM variety of dimension $n \in \{3,5\}$, with intermediate Jacobian~$\Jac(X)$ and associated Lagrangian  {subspace}~$A$, 
there is a canonical isomorphism
\begin{equation}
\label{eq:iso-hodge-intro}
H_n(X,\Z) \cong H_1({\widetilde Y^{\ge2}_{A}},\Z) 
\end{equation}
of polarized Hodge structures.\ 
It induces an isomorphism
\begin{equation}
\label{eq:iso-jac-intro}
\Jac(X) \cong \Alb(\widetilde Y^{\ge2}_{A}) 
\end{equation}
of principally polarized abelian varieties.
\end{theo}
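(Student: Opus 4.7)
The plan is to prove the theorem in two layers: first, the intrinsic EPW-theoretic part giving a canonical principal polarization on $\Alb(\tY_A^{\ge 2})$ and the duality isomorphism \eqref{eq:alb-isomorphic-intro} for every Lagrangian~$A$ with no decomposable vectors; and second, the geometric identification with $\Jac(X)$ via an Abel--Jacobi construction. The principal polarization on $\Alb(\tY_A^{\ge 2})$ should be constructed intrinsically, either as a Prym-type polarization coming from the canonical double covering $\tY_A^{\ge 2}\to Y_A^{\ge 2}$ (using that the deck involution acts on $H_1$ and isolates an anti-invariant part carrying the intersection pairing) or as the polarization induced by an explicit incidence divisor on $\tY_A^{\ge 2}\times\tY_A^{\ge 2}$ defined via the EPW stratification of $\P(V_6)$. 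Checking that this polarization is of type $(1,\ldots,1)$ and has rank~$10$ requires careful control over the singularities of $\tY_A^{\ge 2}$ over~$Y_A^{\ge 3}$. The duality \eqref{eq:alb-isomorphic-intro} should then follow from a natural incidence correspondence $\cI\subset\tY_A^{\ge 2}\times\tY_{\Ap}^{\ge 2}$ parametrizing compatibly lifted pairs $([v],[f])$ with $f(v)=0$; the induced map on Albanese varieties will be an isomorphism of principally polarized abelian varieties by the symmetry of the construction under $A\leftrightarrow\Ap$.

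To relate $\Alb(\tY_A^{\ge 2})$ to $\Jac(X)$, I would construct a family of curves on~$X$ naturally parametrized by $\tY_A^{\ge 2}$. For a GM threefold~$X$, the most natural candidate is the Hilbert scheme of conics: results from the previous papers in the series should provide a morphism from this moduli onto the EPW surface $Y_A^{\ge 2}$, and $\tY_A^{\ge 2}$ is expected to appear as a fine moduli space of (conic, decoration) pairs, where the decoration is a choice of ruling or spin structure. Applying Abel--Jacobi to this universal family produces a map $\tY_A^{\ge 2}\to\Jac(X)$ and hence an induced morphism $\Alb(\tY_A^{\ge 2})\to\Jac(X)$. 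For the GM fivefold case, I would run an analogous Abel--Jacobi argument using either a suitable family of higher-dimensional cycles on~$X$ parametrized by $\tY_A^{\ge 2}$, or a reduction to the threefold case via linear sections together with a Lefschetz-type correspondence on middle cohomology. The geometric role of the $A\leftrightarrow\Ap$ duality on the side of~$X$ is explained in~\cite{dk1} and is responsible for the symmetry in \eqref{eq:alb-isomorphic-intro}.

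The main obstacle is to show that the resulting map $\Alb(\tY_A^{\ge 2})\to\Jac(X)$ is an isomorphism of \emph{polarized} Hodge structures, not merely an isogeny of abelian varieties. Dimensions agree since both sides are $10$-dimensional, and nontriviality of the Abel--Jacobi map together with simplicity of $\Jac(X)$ for generic~$X$ upgrade this to an isomorphism of underlying abelian varieties. The polarization comparison is more delicate: one must identify the self-intersection of the universal incidence divisor on $\tY_A^{\ge 2}\times\tY_A^{\ge 2}$, pushed into $\Jac(X)$, with the theta divisor of $\Jac(X)$, in terms of the intersection theory of the chosen family of curves on~$X$. Rather than pursue this by direct general calculation, I would verify the polarization matching at one well-chosen point of the moduli of GM varieties---for instance a locus of~$A$ with extra symmetry, or a boundary locus where $X$ specializes to a well-understood variety---and then propagate the result to all~$X$ using connectedness of the moduli space established in~\cite{dkmoduli} together with the deformation invariance of the polarization isogeny within a variation of Hodge structure.
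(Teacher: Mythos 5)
Your overall architecture is inverted relative to what the paper actually does, and the inversion creates genuine gaps rather than an alternative route. Your ``first layer''---an intrinsic construction of the principal polarization on $\Alb(\tY_A^{\ge 2})$ and a direct proof of \eqref{eq:alb-isomorphic-intro} via an incidence correspondence in $\tY_A^{\ge 2}\times\tY_{\Ap}^{\ge 2}$---is precisely what the authors state they do \emph{not} know how to do: the remark following Theorem~\ref{theorem:intro} says explicitly that no direct proof of \eqref{eq:alb-isomorphic-intro} is known, and that the polarization class lands on $3\nu$ in $\NS(\tY_A^{\ge2})^+$ where $\cO_{\tY_A^{\ge2}}(1)=2\nu$, a fact read off from Welters' computations for quartic double solids rather than from any Prym-type or incidence-divisor construction. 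Note also that the deck involution acts on $H_1(\tY_A^{\ge 2},\Q)$ with trivial invariant part (since $H^1(Y_A^{\ge 2},\cO_{Y_A^{\ge 2}})=0$ by Proposition~\ref{prop25}), so there is no meaningful anti-invariant summand to isolate, and $H_1$ of a surface carries no intersection form of the kind that produces Prym polarizations for curves. In the paper, both the polarization and the duality are \emph{outputs} of GM geometry: the polarization is transported from $\Jac(X)$ through the Abel--Jacobi isomorphism, and \eqref{eq:alb-isomorphic-intro} is deduced from the line transform (Proposition~\ref{proposition:line-transform}), a flop between threefolds $X$ and $X'$ with $A(X')=A(X)^\perp$, which preserves intermediate Jacobians.

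The second layer also has concrete problems. First, the Hilbert scheme of conics on a GM threefold is (the blow-up at a point of) the \emph{dual} double EPW surface $\tY^{\ge 2}_{\Ap}$, not $\tY^{\ge 2}_{A}$; a conic-based Abel--Jacobi map would therefore produce $\Jac(X)\cong\Alb(\tY^{\ge 2}_{\Ap})$, and you would still need the duality \eqref{eq:alb-isomorphic-intro}---which you have not independently established---to reach the stated conclusion. The paper avoids this by parametrizing arithmetic-genus-one quintic curves $C\cup L_0$ through a fixed line $L_0$ directly by $\tY^{\ge 2}_A$, using linear $4$-spaces in the corank-$2$ quadrics through $X$ (Corollary~\ref{corollary:stein-hilbert}). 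Second, ``nontriviality of the Abel--Jacobi map plus simplicity of $\Jac(X)$'' yields at best an isogeny of abelian varieties, whereas \eqref{eq:iso-hodge-intro} asserts an isomorphism of \emph{integral} Hodge structures; the paper obtains this by proving surjectivity of $\AJ_Z$ on integral homology between free $\Z$-modules of rank $20$, via the cycle relation over $Y^{\ge 2}_{A,V_5}$ comparing $Z$ with the universal line and the Clemens--Tyurin surjectivity argument---and your proposal supplies no mechanism for proving even nontriviality. Third, for fivefolds a ``reduction to the threefold case via linear sections'' does not control $H^5$ (Lefschetz relates a fivefold to its fourfold sections, not to threefolds two steps down); the paper instead uses families of quintic del Pezzo surfaces and quartic scrolls together with a higher-dimensional Clemens--Tyurin argument, or alternatively a monodromy analysis of the simple factors of $\Jac(\tY^{\ge 2}_{A,V_5})$.
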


We prove this  in Theorem~\ref{theorem:aj3} for GM threefolds $X$ with $Y_{A(X)}^{\ge 3} = \vide$ 
and in Theorem~\ref{theorem:aj5} for GM fivefolds $X$ with $Y_{A(X)}^{\ge 3} = \vide$.\ 
In particular, we use the natural principal polarization of the intermediate Jacobian~$\Jac(X)$
to produce a principal polarization on~$\Alb(\widetilde Y^{\ge2}_{A(X)})$ and we deduce the isomorphism~\eqref{eq:alb-isomorphic-intro}
from a birational isomorphism (a {line transform}) between two GM threefolds~$X$ and~$X'$ 
such that $A(X') = A(X)^\perp$ (such pairs are called  {\emph{period duals}} in~\cite{dk1}).\ 
The extension to arbitrary GM threefolds and fivefolds is given in Section~\ref{section:period-maps}.

\begin{rema}
We are not aware of 
a direct proof of the isomorphism~\eqref{eq:alb-isomorphic-intro}.\ 
It can be thought of as a Hodge-theoretic incarnation of the equivalence
between the nontrivial components of derived categories of odd-dimensional GM varieties 
conjectured in~\cite[Conjecture~3.7]{KP} and proved in~\cite[Corollary~6.5]{KP2}.\
It would be interesting to extract the principal polarization on $\Alb(\widetilde Y^{\ge2}_{A})$
from the categorical data and to deduce the isomorphism~\eqref{eq:alb-isomorphic-intro} from the equivalence of categories.

{One can also think of the principal polarization on~$\Alb(\widetilde Y^{\ge2}_{A})$ 
as of an element of 
\begin{equation*}
H^2(\Alb(\widetilde Y^{\ge2}_{A}), \Z) = \bw2 H^1(\Alb(\widetilde Y^{\ge2}_{A}), \Z) = \bw2 H^1(\tY_A^{\ge2}, \Z). 
\end{equation*}
The latter group maps, by cup-product, to $H^2(\tY_A^{\ge2}, \Z)^+$ (the invariant space for the canonical covering involution on~$\tY_A^{\ge2}$) 
with finite (but nontrivial) cokernel.\ The principal polarization maps to a class $3\nu$ in~$\NS(\tY_A^{\ge2})^+$, 
where $\cO_{\tY_A^{\ge2}}(1) = 2\nu$ (thus, $\nu$ is the class of one of the components of the curve~\eqref{eq:y2av5-intro} discussed below);
this follows from Welters' work on the variety of lines on quartic double solids 
(see the proof of Proposition~\ref{prop25} and~\cite[(3.32), Proposition~(3.60), and p.~70]{welters}).}
\end{rema}

 {Let $X$ be a smooth GM variety of dimension $n \in \{3,5\}$ and} assume $Y^3_{A(X)} = \vide$.\
To prove the isomorphisms~\eqref{eq:iso-hodge-intro} and~\eqref{eq:iso-jac-intro}, it is natural to construct  {a subscheme or} a cycle
\begin{equation*}
Z \subset X \times \tY^{\ge 2}_{A(X)}
\end{equation*}
  of codimension~$\tfrac{n+1}2$ 
and use the Abel--Jacobi map~$\AJ_Z \colon H_1(\tY^{\ge 2}_{A(X)}, \Z) \to  H_{{n}}(X, \Z)$.\
For this, one needs an interpretation of the double EPW surface $\tY^{\ge 2}_{A(X)}$  (or some other closely related surface) 
as a moduli space of sheaves  {or as a parameter space of cycles} on~$X$.

When~$X$ is a GM threefold, the most natural  {moduli space of sheaves} to consider is the Hilbert scheme of conics on~$X$.\
This  scheme was thoroughly studied in~\cite{lo} and~\cite[Section~6]{dim}; in~\cite{dkquadrics},
we prove that it is isomorphic to the blow up of a point of the dual double EPW surface~$\tY^{\ge 2}_{A(X)^\perp}$.\
Similarly, for a GM fivefold $X$, one could use the Hilbert scheme of quadric surfaces in~$X$;
 {we proved in~\cite{dkquadrics} that it} has a connected component isomorphic to a $\P^1$-bundle over~$\tY^{\ge 2}_{A(X)^\perp}$.\
{In the case of GM threefolds,} it is claimed in~\cite[Section~5.1]{im} and~\cite[Theorem~9]{im2} 
{that the Clemens--Letizia degeneration method can be applied to prove that}
the Abel--Jacobi map given by the universal conic is an isomorphism;
however, it is not clear whether this method would work in the case of GM fivefolds,  {so we need a different approach.}

Another  {possible approach} in the case of a GM threefold $X$ would be to use   the moduli space~$\cM_X(2;1,5)$
of Gieseker semistable rank-2 torsion-free sheaves on~$X$ with~\mbox{$\rc_1 = 1$}, \mbox{$\rc_2 = 5$}, and~\mbox{$\rc_3=0$}.\ 
This space was shown in~\cite[Section~8]{dim} to be birational to the Hilbert scheme of conics on~$X'$,
a {line transform} of $X$ {(see~Section~\ref{subsection:line-transform})}, hence to the double EPW surface~$\tY^{\ge 2}_{A(X)}$;
the natural correspondence is provided by the second Chern class of the universal sheaf on the product $X \times \cM_X(2;1,5)$.\
We use a small modification of this construction in which the moduli space of sheaves is kept implicit.\ {We explain it below.}

If $X$ is a GM threefold and $L_0 \subset X$ is a line, 
the corresponding (inverse) line transform~\mbox{$X' \dashrightarrow X$} takes a general conic  $C' \subset X'$ 
to a rational quartic curve $C \subset X$ to which the line $L_0$ is bisecant
(the corresponding rank-2 sheaf on~$X$ can then be obtained by Serre's construction 
applied to~{$C \cup L_0$; in particular, the curve~$C \cup L_0$ represents the second Chern class of this sheaf}).\ We consider the union $C \cup L_0$ as a quintic curve of arithmetic genus~1 on~$X$ containing the line~$L_0$
and construct the correspondence~$Z$ as the closure of a family of such curves 
parameterized by an open subscheme of $\tY^{\ge 2}_{A(X)}$.

 {To} prove that the Abel--Jacobi map~$\AJ_Z$ associated with this family of curves is an isomorphism,
we make the crucial observation that over the curve
\begin{equation}
\label{eq:y2av5-intro}
Y^{\ge 2}_{A(X),V_5(X)} := Y^{\ge 2}_{A(X)} \cap \P(V_5(X)),
\end{equation}
the double covering $\tY^{\ge 2}_{A(X)} \to Y^{\ge 2}_{A(X)}$ splits,
and that over a general point $y$ of one of the components of its preimage, there is a relation
$Z_y + L_y = S_y \cap X
$ in the Chow group $\operatorname{CH}_1(X)$ of 1-cycles.\ Here $Z_y$ is the fiber of the correspondence $Z$ over~$y$,
$L_y$ is a line on $X$, and $S_y$ is a cubic surface scroll on  {the fourfold}~$M_X := \CGr(2,V_5) \cap \PP(W)$.\ 
Moreover, the curve~\eqref{eq:y2av5-intro} is birational to the Hilbert scheme $F_1(X)$ of lines on~$X$ 
and the line~$L_y$  comes from the universal family of lines over~$F_1(X)$.

From these observations and from the vanishing of the odd cohomology of $M_X$, it follows that for $X$ general, 
there is a morphism $\phi \colon F_1(X) \to \tY^{\ge 2}_{A(X)}$ 
such that the composition 
\begin{equation*}
H_1(F_1(X), \Z) \xrightarrow{\ \phi_*\ } H_1(\tY^{\ge 2}_{A(X)}, \Z) \xrightarrow{\ \AJ_Z\ } H_3(X, \Z)
\end{equation*}
 is the opposite of the Abel--Jacobi map defined by the universal family of lines.\
The latter map is surjective by an argument of Clemens--Tyurin  {(see Section~\ref{secw})}, 
hence $\AJ_Z$ is surjective as well.\
It is not   hard to check that the source and target of $\AJ_Z$ are free abelian groups of rank~20,
hence~$\AJ_Z$ is an isomorphism.

A similar argument works for GM fivefolds:
rational quartic curves are replaced by rational quartic surface scrolls,
reducible quintic curves  by reducible quintic del Pezzo surfaces, 
the Hilbert scheme of lines  by a component of the Hilbert scheme of planes,
and a higher-dimensional analogue of the Clemens--Tyurin argument is applied.

For GM fivefolds $X$, the isomorphism~\eqref{eq:iso-jac-intro} may be proved by a completely different topological argument.\
When $X$ is general, we consider the double cover~$\tY^{\ge 2}_{A(X),V_5(X)}$ 
of the curve~\eqref{eq:y2av5-intro}
induced by the double covering~$\tY^{\ge 2}_{A(X)} \to Y^{\ge 2}_{A(X)}$; {in   contrast with the case of GM threefolds, this is} a smooth curve of genus~161.\ 
Using classical monodromy arguments, we prove that its Jacobian has three simple factors:
the Jacobian of the curve $Y^{\ge 2}_{A(X),V_5(X)}$ (of dimension~81),
the Albanese variety of the surface $\tY^{\ge 2}_{A(X)}$ (of dimension~10),
and a simple  factor of dimension~70.\
The curve~$\tY^{\ge 2}_{A(X),V_5(X)}$ parameterizes planes on $X$ {(see Section~\ref{subsubsection:planes})} 
and the corresponding Abel--Jacobi map
\begin{equation*}
H_1(\tY^{\ge 2}_{A(X),V_5(X)}, \Z) \lra H_5(X, \Z)
\end{equation*}
is surjective by  {a generalization of} the Clemens--Tyurin argument.\ The induced surjective morphism
\begin{equation*}
\Jac(\tY^{\ge 2}_{A(X),V_5(X)}) \lra \Jac(X)
\end{equation*}
therefore has connected kernel.\ 
The  description of the simple factors implies that it has to be isogeneous to the product of the 81-dimensional and 70-dimensional factors.\
Therefore, $\Jac(X)$ is isomorphic to the remaining 10-dimensional factor~$\Alb(\tY^{\ge 2}_{A(X)})$.

 {To complete the proof of Theorem~\ref{theorem:intro} and to describe the period maps for GM varieties of dimension~3 or~5,
we investigate the rational map
\begin{equation}
\label{eq:period-rational}
\bM^{\rm EPW}  = \LGr(\bw3V_6) /\!\!/ \PGL(V_6) \dashrightarrow \bA_{10}
\end{equation} 
from the coarse moduli space of EPW sextics (constructed in \cite[Section~6]{og7}) 
to the coarse moduli space of principally polarized abelian varieties of dimension~10
defined by 
\begin{equation*}
 {[A] \longmapsto [\Alb(\tY_A^{\ge 2})]}
\end{equation*}
when $A$ has no decomposable vectors and $Y^{ {\ge 3}}_A = \vide$.\

Let $\bM^{\rm EPW}_\ndv \subset \bM^{\rm EPW}$ be the open subset parameterizing Lagrangian  {subspaces} with no decomposable vectors
and let $\br$ be the involution of $\bM^{\rm EPW}_\ndv$ defined by  $[A] \mapsto [A^\perp]$ (see~\cite{og2}).\
We show in Proposition~\ref{proposition:period-map}
that the map~\eqref{eq:period-rational} extends to a regular morphism
\begin{equation*}
\bar\wp \colon \bM^{\rm EPW}_\ndv / \br \lra \bA_{10}
\end{equation*}
such that $\bar\wp([A])$ is the Albanese variety of (any desingularization of) the double EPW surface~{$\tY_A^{\ge 2}$}.

Let now $\bM_n^{\rm GM}$ be the coarse moduli space of GM varieties of dimension~$n$ (see~\cite{dkmoduli}  {and Section~\ref{secgm}}).\
We use the above result to prove the following.}

\begin{theo}
\label{theorem:period-intro}
For $n \in \{3,5\}$, the period map $ \wp_n \colon  \bM_n^{\rm GM} \to \bA_{10}$ factors as the composition
\begin{equation*}
\wp_n \colon \bM_n^{\rm GM} \lra 
\bM^{\rm EPW}_\ndv \lra 
\bM^{\rm EPW}_\ndv / \br \xrightarrow{\ \bar\wp\ }
\bA_{10},
\end{equation*}
where the first map is given by $[X] \mapsto [A(X)]$ and the second map is the canonical projection.\ 
In particular, $\wp_n([X]) = [\Alb(\tY^{\ge 2}_{A(X)})]$.
\end{theo}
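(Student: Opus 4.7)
The plan is to assemble the theorem directly from the results stated earlier in the introduction, with only minor moduli-theoretic glue. Concretely, I would proceed in three steps.

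\textbf{Step 1: The first arrow is a well-defined morphism.} The construction in \cite{dk1} associates to every smooth GM variety $X$ of dimension $n\in\{3,5\}$ a Lagrangian data set $(V_6(X), V_5(X), A(X))$, where $A(X)\subset\bw3V_6(X)$ has no decomposable vectors. By \cite{dkmoduli}, this assignment is functorial in families and descends to a morphism of coarse moduli spaces
\begin{equation*}
\bM_n^{\rm GM}\lra\bM^{\rm EPW}_\ndv,\qquad [X]\longmapsto[A(X)].
\end{equation*}
Composing with the canonical projection $\bM^{\rm EPW}_\ndv\to \bM^{\rm EPW}_\ndv/\br$ and with $\bar\wp$ from Proposition~\ref{proposition:period-map} yields a morphism $\bM_n^{\rm GM}\to\bA_{10}$ sending $[X]$ to $[\Alb(\tY^{\ge 2}_{A(X)})]$; this is well-defined on the quotient thanks to the isomorphism~\eqref{eq:alb-isomorphic-intro} of Theorem~\ref{theorem:intro}.

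\textbf{Step 2: Identification with $\wp_n$ on the open locus.} By definition, $\wp_n([X])=[\Jac(X)]$ carries its canonical principal polarization. The open subset $U_n\subset\bM_n^{\rm GM}$ where $Y^{\ge3}_{A(X)}=\vide$ is dense, and on $U_n$ the isomorphism~\eqref{eq:iso-jac-intro} of Theorem~\ref{theorem:intro} (proved in Theorems~\ref{theorem:aj3} and~\ref{theorem:aj5}) gives
\begin{equation*}
\Jac(X)\isom \Alb(\tY^{\ge 2}_{A(X)})
\end{equation*}
as principally polarized abelian varieties. Hence $\wp_n$ and the composition $\bar\wp\circ\pi\circ([X]\mapsto[A(X)])$ agree on $U_n$.

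\textbf{Step 3: Extension to the boundary.} Both morphisms $\bM_n^{\rm GM}\to\bA_{10}$ under consideration are regular (the target $\bA_{10}$ is separated), and they coincide on the dense open $U_n$. Since $\bM_n^{\rm GM}$ is reduced and $\bA_{10}$ is separated, two morphisms agreeing on a dense open are equal, so the factorization extends to all of $\bM_n^{\rm GM}$ and the final formula $\wp_n([X])=[\Alb(\tY^{\ge 2}_{A(X)})]$ holds for every smooth GM variety of dimension $3$ or $5$.

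The main obstacle is not in this gluing argument, which is formal, but hides upstream: the existence and regularity of $\bar\wp$ on $\bM^{\rm EPW}_\ndv/\br$ (which requires \eqref{eq:alb-isomorphic-intro} and a uniform construction of $\Alb(\tY^{\ge 2}_A)$ in flat families, including over the locus where $\tY^{\ge 2}_A$ is singular, done in Proposition~\ref{proposition:period-map}), and the extension of the isomorphism~\eqref{eq:iso-jac-intro} from $U_n$ to all of $\bM_n^{\rm GM}$, which is the content of Section~\ref{section:period-maps}. Once those are in place, Theorem~\ref{theorem:period-intro} is a formal consequence combining Theorem~\ref{theorem:intro} with the moduli-theoretic packaging of \cite{dkmoduli}.
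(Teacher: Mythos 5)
Your proposal is correct and follows essentially the same route as the paper: the key inputs are identical (Theorems~\ref{theorem:aj3} and~\ref{theorem:aj5} on the locus $Y^{\ge 3}_{A(X)}=\vide$, the regularity of $\bar\wp$ together with $\bar\wp([A])=[\Alb(\tY^{\ge 2}_A)]$ from Proposition~\ref{proposition:period-map}, and an extension by continuity), and you correctly identify that the substantive work lives in those prior results rather than in the gluing. The only cosmetic difference is that the paper (Lemma~\ref{lemma:extension}) runs the ``agree on a dense open, hence agree everywhere'' comparison upstairs, as a smooth-descent argument along the universal family $\cY^{5-n}_{\cA^\perp}\to\LGr_{\rm ndv}(\bw3V_6)$ — which is how $\bar\wp$ gets constructed in the first place — whereas you run it downstairs on the coarse space $\bM_n^{\rm GM}$ taking $\bar\wp$ as already given.
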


 {This factorization
 of the period map for GM threefolds was discussed in the introduction of~\cite{dim} (see also ~\cite[Remark~7.5]{dim});
moreover, it was conjectured there that the map~$\bar\wp$ is generically injective
(the computation in~\cite[Theorem~5.1]{dim} shows that it has finite fibers).

The story of GM threefolds is very similar to the story of quartic double solids.\
The articles~\cite{welters,voisin:quartic} were an inspiration to us; 
in particular, we took the idea of using the Clemens--Tyurin argument from~\cite{welters}.

The article is organized as follows.\
In Section~\ref{section:gm}, we review the theory of GM varieties, EPW varieties, and their double covers.\
In particular, we describe the Hilbert scheme of lines on GM threefolds, the Hilbert scheme of $\sigma$-planes on GM fivefolds,
and we identify double EPW varieties with the canonical covers of degeneracy loci for the family of quadrics containing a GM variety.\

{In Section~\ref{section:topology}, we recall   basic facts about Abel--Jacobi maps, 
prove a generalization of the Clemens--Tyurin argument,
and discuss the endomorphism ring of intermediate Jacobians;
in particular, we check that the intermediate Jacobian of a very general GM variety of odd dimension is simple and has Picard number~1 
(this had already been proved for GM threefolds by a different argument in~\cite[Corollary~5.3]{dim}).

In Section~\ref{section:quartic-curves}, we construct, for any GM threefold~$X$, a cycle $Z \subset X \times \tY^{\ge 2}_{A(X)}$, and prove that the Abel-Jacobi map defined by~$Z$ is an isomorphism when $Y^{\ge 3}_{A(X)} = \vide$.\
We also describe how the line transform of GM threefolds acts on their coarse moduli spaces.\
In Section~\ref{section:quartic-scrolls}, we prove   analogous results for GM fivefolds.\
Finally,  we describe in Section~\ref{section:period-maps} the period map for GM threefolds and fivefolds 
and prove Theorems~\ref{theorem:intro} and~\ref{theorem:period-intro}.

\noindent{\bf Acknowledgements.}
We would like to thank Kieran O'Grady, Andrey Soldatenkov, and Claire Voisin for useful discussions.\
We also thank the anonymous referee for her or his careful reading of our manuscript and useful comments. 

\section{Gushel--Mukai  and Eisenbud--Popescu--Walter varieties}
\label{section:gm}

We work over the field  of complex numbers.\ 
Given a subvariety $X $ of a projective space, we denote by $F^k(X)$ the Hilbert scheme 
parameterizing linear spaces of dimension~$k$ in $X$.\ 

\subsection{Geometry of $\Gr(2,5)$}\label{sec21}

Let $V_5$ be a 5-dimensional vector space.\ 
A subspace of $V_5$ of dimension~$k$ will usually be denoted by~$V_k$ or~$U_k$.\ 
We denote by
\begin{equation*}
\Gr(2,V_5) \subset \P(\bw2V_5)
\end{equation*}
the Grassmannian of 2-dimensional vector subspaces in $V_5$ in its Pl\"ucker embedding.\ 

We recall some standard facts about its geometry.\
It has codimension~3 and degree~5 and is the intersection, for $v\in V_5\setminus \{0\}$, of the  {\emph{Pl\"ucker quadrics}}
\begin{equation}\label{pluq}
Q_v:=\Cone_{\P(v \wedge V_5)}(\Gr(2,V_5/\C v)) \subset \P(\bw2V_5).
\end{equation}
{In the next lemma, we describe Hilbert schemes of linear subspaces on~$\Gr(2,V_5)$.}

\begin{lemm}
\label{lemma:gr-linear}
We have the following isomorphisms:
\begin{enumerate}[label={\rm (\alph*)}]
\item $F^1(\Gr(2,V_5)) \cong \Fl(1,3;V_5)$; the line corresponding to a flag $V_1 \subset V_3 \subset V_5$ 
is the set of all $[U_2]\in \Gr(2,V_5)$ such that $V_1 \subset U_2 \subset V_3$.
\item $F^2(\Gr(2,V_5)) = 
\Fl(1,4;V_5) \sqcup \Gr(3,V_5)$;
the plane 
corresponding to a flag $V_1 \subset V_4 \subset V_5$ 
is the set of all~$[U_2]\in \Gr(2,V_5)$ such that $V_1 \subset U_2 \subset V_4$,
and the plane 
corresponding to a subspace $V_3 \subset V_5$ 
is the set of all $[U_2]\in \Gr(2,V_5)$ such that $U_2 \subset V_3$.
\item $F^3(\Gr(2,V_5)) \cong \P(V_5)$;
the linear $3$-space corresponding to a subspace $V_1 \subset V_5$ 
is the set of all $2$-spaces~$[U_2]\in \Gr(2,V_5)$ such that $V_1 \subset U_2$.
\item $F^4(\Gr(2,V_5)) = \vide$: there are no linear $4$-spaces on $\Gr(2,V_5)$.
\end{enumerate}
\end{lemm}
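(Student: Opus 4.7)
The plan rests on the standard Pl\"ucker fact: a line in $\P(\bw2V_5)$ joining two distinct points $[U_2], [U'_2] \in \Gr(2,V_5)$ is contained in the Grassmannian if and only if $\dim(U_2 \cap U'_2) = 1$, in which case it parameterizes the pencil of $[W_2]$ with $U_2 \cap U'_2 \subset W_2 \subset U_2 + U'_2$. Applied to any linear subspace $L \subset \Gr(2,V_5)$, this forces any two $2$-planes represented by points of $L$ to intersect in a line. I would then run a case analysis according to whether all such $U_2$ share a common line (Case~A) or whether some three of them have zero triple intersection (Case~B).

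In Case~A, fixing a common line $V_1$, the subspace $L$ embeds linearly into $\P(V_5/V_1) \cong \P^3$, which parameterizes the $2$-planes through $V_1$. Taking $L$ to be this whole $\P^3$ yields part~(c), with $V_1$ uniquely determined as the intersection of any two $2$-planes in $L$. Taking $L$ to be a linear subspace of the form $\P(V_4/V_1) \subset \P(V_5/V_1)$ produces the ``flag'' component of part~(b), with $V_1 \subset V_4$. Taking $L$ to be a linear subspace of the form $\P(V_3/V_1) \subset \P(V_5/V_1)$ (i.e., a line in $\P^3$) produces part~(a), with flag $V_1 \subset V_3$. In Case~B, picking $[U_2], [U'_2], [U''_2]$ with zero triple intersection, the three pairwise intersection lines $V_1, V'_1, V''_1$ are pairwise distinct (otherwise two coincide, forcing a nonzero triple intersection), and $V_3 := V_1 + V'_1 + V''_1$ is $3$-dimensional and contains each of the three $U_2$. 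The three points are non-collinear inside $\P(\bw2V_3) = \Gr(2,V_3) \cong \P^2$, since collinearity would place them in a pencil (2-planes through a common line), so they span this $\P^2$; hence $\Gr(2,V_3) \subset L$, accounting for the second component of~(b).

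To conclude, I would rule out $L \supsetneq \Gr(2,V_3)$ in Case~B and $\dim L = 4$ altogether (part~(d)). For the first, any $[W_2] \in L \setminus \Gr(2,V_3)$ satisfies $W_2 \not\subset V_3$, hence $\dim(W_2 \cap V_3) \leq 1$; but the Grassmannian line in $L$ from $[W_2]$ to each $[U_2] \in \Gr(2,V_3)$ forces $W_2 \cap U_2 \subset W_2 \cap V_3$ to be a fixed line contained in every $2$-plane of $V_3$, which is absurd. For the second, Case~A bounds $\dim L$ by $3$ and Case~B bounds it by $2$. The scheme-theoretic identifications follow because the maps sending a flag or $V_3$-datum to the corresponding linear subspace of $\Gr(2,V_5)$ are morphisms of schemes that are bijective on closed points, with smooth source of the expected dimension; hence they are isomorphisms. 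The main subtle step in the argument is the Case~B rigidity claim pinning down $L = \Gr(2,V_3)$; everything else is organizational.
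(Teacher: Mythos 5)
The paper states this lemma without proof, as a classical fact about $\Gr(2,V_5)$, so there is no argument of the authors to compare yours against; I assess it on its own merits. Your set-theoretic classification is the standard one and is correct: the Pl\"ucker criterion forces any two members of a linear family to meet in a line; the dichotomy between a common line (Case~A) and a triple with zero intersection (Case~B) is exhaustive; Case~A places $L$ inside the linear $3$-space $\P(V_1 \wedge V_5) \cong \P(V_5/V_1)$ and yields (a), (c) and the $\sigma$-planes; and your Case~B rigidity argument (the three pairwise intersection lines are distinct, the three planes span a $V_3$ with $[U_2],[U_2'],[U_2'']$ non-collinear in $\Gr(2,V_3) \cong \P(\bw2V_3)$, and no $[W_2] \notin \Gr(2,V_3)$ can be adjoined) correctly pins down the $\tau$-planes and rules out anything larger, giving (b) and (d).

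The one genuine gap is the final step, where you pass from a bijection on closed points to an isomorphism of schemes. A morphism from a smooth variety that is bijective on closed points need not be an isomorphism: the target could be non-reduced, and even with reduced target the normalization of a cuspidal curve is a counterexample. What is missing is the verification that $F^k(\Gr(2,V_5))$ is smooth (or at least normal) along the loci in question; once that is known, Zariski's main theorem (in characteristic zero, a bijective morphism onto a normal variety is an isomorphism) finishes the argument. Concretely, for each linear space $L$ in your list one computes the normal bundle $N_{L/\Gr(2,V_5)}$ from the tautological exact sequences and checks that $H^1(L,N_{L/\Gr(2,V_5)})=0$ while $h^0(L,N_{L/\Gr(2,V_5)})$ equals the dimension of the proposed parameter space ($8$ for $\Fl(1,3;V_5)$, $7$ for $\Fl(1,4;V_5)$, $6$ for $\Gr(3,V_5)$, $4$ for $\P(V_5)$); this shows the Hilbert scheme is smooth of the expected dimension at every point. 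Alternatively, one can invoke the homogeneity of $\Gr(2,V_5)$ under $\GL(V_5)$ together with the fact that each of your four families is a single orbit. With that supplement your proof is complete.
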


Planes on $\Gr(2,V_5)$ parameterized by the two components in Lemma~\ref{lemma:gr-linear}(b)
are traditionally known as {\sf $\sigma$-planes} and {\sf $\tau$-planes}.\ We use the notation
\begin{equation*}
F^2_\sigma(\Gr(2,V_5)) \cong \Fl(1,4;V_5)
\end{equation*}
for the connected component of $F^2(\Gr(2,V_5))$ parameterizing $\sigma$-planes.\

 If a finite morphism $\gamma \colon X \to \Gr(2,V_5)$ is compatible with the polarizations,
it induces a morphism~{$F^k(\gamma) \colon F^k(X) \to F^k(\Gr(2,V_5))$} between Hilbert schemes
and we denote by
\begin{equation}
\label{def:f2sigma}
F^2_\sigma(X) \subset F^2(X)
\end{equation}
the preimage of $F^2_\sigma(\Gr(2,V_5))$.

We will need the following classical result.

\begin{lemm}
\label{lemma:gr25-p25}
Let $V_2 \subset V_5$ be a $2$-dimensional subspace.\ 
We have the equality
\begin{equation*}
\Gr(2,V_5) \cap \P(V_2 \wedge V_5) = \Cone_{\P(\sbw2V_2)}(\P(V_2) \times \P(V_5/V_2))
\end{equation*}
in $\P(\bw2V_5)$, where the right side is the cone over the cubic scroll.
\end{lemm}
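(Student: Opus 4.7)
The plan is to identify both sides as explicit $4$-dimensional subvarieties of the $6$-dimensional projective space $\P(V_2 \wedge V_5)$, and to match them via a linear projection from the cone vertex.

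First I would compute $V_2 \wedge V_5 \subset \bw2 V_5$: choosing any splitting $V_5 = V_2 \oplus V_3$ (with $V_3 \isom V_5/V_2$) gives a direct sum decomposition
\begin{equation*}
V_2 \wedge V_5 \;=\; \bw2 V_2 \;\oplus\; (V_2 \otimes V_3),
\end{equation*}
so $V_2 \wedge V_5$ has dimension $7$ and $\P(\bw2 V_2)$ is a single point of $\P(V_2 \wedge V_5)$, namely the Pl\"ucker image $[V_2] \in \Gr(2,V_5)$. Next I would characterize the left-hand side: since $V_2 \wedge V_5$ is the kernel of the natural surjection $\bw2 V_5 \thra \bw2 (V_5/V_2)$, a decomposable bivector $u_1 \wedge u_2$ lies in $V_2 \wedge V_5$ if and only if $\bar u_1$ and $\bar u_2$ are linearly dependent in $V_5/V_2$, i.e., if and only if $\dim(\langle u_1, u_2 \rangle \cap V_2) \ge 1$. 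Hence $\Gr(2,V_5) \cap \P(V_2 \wedge V_5)$ is set-theoretically the locus of $[U_2]$ with $U_2 \cap V_2 \ne 0$.

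The key step is then to use the linear projection
\begin{equation*}
\pi \colon \P(V_2 \wedge V_5) \dra \P\bigl(V_2 \wedge V_5 / \bw2 V_2\bigr) \;=\; \P(V_2 \otimes V_5/V_2)
\end{equation*}
from the vertex $\P(\bw2 V_2) = \{[V_2]\}$. For $[U_2]$ on the left side with $U_2 \ne V_2$, write $U_2 \cap V_2 = \C v$ and pick $u \in U_2 \setminus V_2$; then modulo $\bw2 V_2$ the bivector $v \wedge u$ maps to the rank-$1$ tensor $v \otimes \bar u \in V_2 \otimes V_5/V_2$. Conversely, every rank-$1$ tensor $v \otimes \bar u$ lifts to a decomposable bivector whose associated $2$-plane meets $V_2$ in $\C v$. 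Since the Segre embedding identifies $\P(V_2) \times \P(V_5/V_2)$ with the locus of rank-$1$ tensors in $\P(V_2 \otimes V_5/V_2)$, this shows
\begin{equation*}
\Gr(2,V_5) \cap \P(V_2 \wedge V_5) \;=\; \Cone_{\P(\sbw2 V_2)}\bigl(\P(V_2) \times \P(V_5/V_2)\bigr)
\end{equation*}
as sets, with the Segre threefold $\P^1 \times \P^2 \subset \P^5$ being the (degree-$3$) cubic scroll.

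The only mild subtlety is to upgrade this to scheme-theoretic equality. For this I would note that, in coordinates $\alpha e_{12} + \sum_{j=3}^5 (\beta_j e_{1j} + \gamma_j e_{2j})$ on $V_2 \wedge V_5$, the Pl\"ucker relations $\xi \wedge \xi = 0$ restrict to the $2 \times 2$ minors of the matrix $\bigl(\begin{smallmatrix} \beta_3 & \beta_4 & \beta_5 \\ \gamma_3 & \gamma_4 & \gamma_5 \end{smallmatrix}\bigr)$, which are precisely the equations cutting out the cone over the Segre. Both sides are therefore defined by the same ideal inside $\P(V_2 \wedge V_5)$; alternatively, the right side is irreducible and reduced of dimension $4$, so set-theoretic equality combined with generic reducedness of the intersection suffices. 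I do not expect any serious obstacle; the proof is essentially an identification of Pl\"ucker and Segre rank-$1$ conditions.
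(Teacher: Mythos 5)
Your proof is correct. The paper states this lemma without proof (it is introduced as ``the following classical result''), so there is no argument in the text to compare against; your computation is a complete and accurate justification. In particular, the key step checks out: restricting $\xi\wedge\xi=0$ to $\P(V_2\wedge V_5)$ kills the coefficients of $e_{1345}$ and $e_{2345}$ and leaves exactly the three $2\times 2$ minors of the matrix $\bigl(\begin{smallmatrix} \beta_3 & \beta_4 & \beta_5 \\ \gamma_3 & \gamma_4 & \gamma_5 \end{smallmatrix}\bigr)$, which generate the ideal of the cone over the Segre variety $\P(V_2)\times\P(V_5/V_2)$; this gives the scheme-theoretic equality directly, without needing the (somewhat hand-wavy) fallback to generic reducedness.
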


Typically, an intersection $\Gr(2,V_5) \cap \P^5$ is 2-dimensional (and is a quintic del Pezzo surface).\
In the next lemma, we  {discuss} some pathological intersections.

\begin{lemm}
\label{lemma:gr25-p5}
Assume $\P^5 \subset \P(\bw2V_5)$ is a linear subspace such that $\dim(\Gr(2,V_5) \cap \P^5) = 3$.\ 
The only possible $3$-dimensional component of $\Gr(2,V_5) \cap \P^5$ of even degree is a hyperplane section of some $\Gr(2,V_4) \subset \Gr(2,V_5)$.
\end{lemm}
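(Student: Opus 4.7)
Let $X$ be an irreducible 3-dimensional component of $\Gr(2,V_5) \cap \P^5$ of even degree. The Chow class of $X$ in $A^3(\Gr(2,V_5))$ decomposes as $[X] = a\sigma_3 + b\sigma_{2,1}$ for nonnegative integers $a,b$. Using $\sigma_3 \cdot \sigma_3 = \sigma_{2,1}\cdot \sigma_{2,1} = 1$, $\sigma_3 \cdot \sigma_{2,1} = 0$, and $\sigma_1^3 = \sigma_3 + 2\sigma_{2,1}$, one computes $\deg X = [X]\cdot \sigma_1^3 = a + 2b$, so even degree forces $a$ to be even. The plan is to show $X \subset \Gr(2,V_4)$ for some hyperplane $V_4 \subset V_5$: since $\Gr(2,V_4)$ is a smooth quadric 4-fold with $\Pic(\Gr(2,V_4)) = \Z$, any 3-dimensional subvariety is a hyperplane section, giving the lemma.

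To identify $V_4$, I consider the incidence $\tilde X := X \times_{\Gr(2,V_5)} \Fl(1,2;V_5)$, a $\P^1$-bundle over $X$, and let $Y \subset \P(V_5)$ be its image under the second projection, so that $Y = \bigcup_{[U_2]\in X}\P(U_2)$. One has $X \subset \Gr(2,V_4)$ iff $Y \subset \P(V_4)$. A line-count argument excludes $\dim Y \leq 2$ (no surface in $\P^4$ carries an irreducible 3-dimensional family of lines), so $\dim Y \in \{3,4\}$. By Lemma~\ref{lemma:gr-linear}(c), the Schubert variety $\{[U_2] : V_1 \subset U_2\} = \P(v_1 \wedge V_5)$ is a linear $\P^3 \subset \P(\bw2V_5)$ of class $\sigma_3$; hence the integer $a = [X]\cdot \sigma_3$ counts the members of $X$ through a generic point $V_1 \in \P(V_5)$, and $a \geq 1 \iff Y = \P(V_5)$.

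When $\dim Y = 3$, we have $a = 0$ and $[X] = b\sigma_{2,1}$; the key step is to show $Y$ must be linear. Otherwise $Y$ would be a nonlinear hypersurface swept out by a 3-dimensional family of lines, and $X$ would be contained in the Fano scheme $F_1(Y) \subset \Gr(2,V_5)$. For a smooth quadric 3-fold $Y$, the computation $[F_1(Y)] = c_3(\Sym^2\mathcal U^\vee) = 4\sigma_{2,1}$ yields Pl\"ucker degree $8$; concretely, $F_1(Y) \cong \P^3$ is embedded via $\mathcal O(2)$ (the Veronese), which is nondegenerate in $\P^9 = \P(\bw2V_5)$. Analogous computations for the other nonlinear hypersurfaces force the linear span of $X$ to exceed $\P^5$ whenever $Y$ is not a hyperplane. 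Hence $Y = \P(V_4)$, which completes this case.

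The remaining case $\dim Y = 4$ (so $a \geq 2$ even) is the main obstacle. The proposed argument exploits the rigidity of the Schubert $\P^3$'s: two distinct linear $\P^3$'s $\P(v_1 \wedge V_5)$ and $\P(v_1' \wedge V_5)$ span a $\P^6$ in $\P(\bw2V_5)$, since $(v_1 \wedge V_5) + (v_1' \wedge V_5)$ has dimension $4+4-1 = 7$ in $\bw2V_5$. If $X \subset \P^5$ met each such $\P^3$ in at least $2$ points, the resulting $4$-parameter family of secant lines $\ell_{V_1} \subset \P(v_1\wedge V_5)$ would lie in $\langle X\rangle \subset \P^5$; two generic such lines already span a $\P^3$ inside the ambient $\P^6$, forcing $\dim\langle X\rangle \geq 6$. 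Making this span computation quantitative, and systematically ruling out every irreducible class $a\sigma_3 + b\sigma_{2,1}$ with $a \geq 2$ and $a$ even, is the central technical difficulty of the proof.
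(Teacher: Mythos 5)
Your reduction is set up correctly: the class computation $[X]=a\sigma_3+b\sigma_{2,1}$ with $\deg X=a+2b$, the equivalence $a\ge 1\Leftrightarrow Y=\P(V_5)$, and the observation that $X\subset\Gr(2,V_4)$ forces $X$ to be a hyperplane section are all fine. But the proof has a genuine gap, which you yourself flag: the case $\dim Y=4$, i.e.\ $a\ge 2$, is not ruled out, and the span heuristic offered for it is actually fallacious. Two generic Schubert spaces $\P(v_1\wedge V_5)$ and $\P(v'_1\wedge V_5)$ do span a $\P^6$, since they meet only in the single point $[v_1\wedge v'_1]$; but the two secant lines $\ell_{V_1}$ and $\ell_{V'_1}$ will generically avoid that point, hence be disjoint and span only a $\P^3$ --- which sits comfortably inside the ambient $\P^5$. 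No contradiction follows, and no amount of ``making the span computation quantitative'' along these lines will produce one from two lines alone; one would need to control the span of the whole $4$-parameter family of secants, which is a different (and harder) argument. In addition, your case $\dim Y=3$ is only verified for $Y$ a smooth quadric: singular quadrics (a quadric cone with point vertex also carries a $3$-dimensional family of lines), cones over plane curves with vertex a line, and other degenerate hypersurfaces swept by $3$-dimensional families of lines all have to be treated, and ``analogous computations'' is not a proof --- this is exactly where even-degree components of degree $\ge 4$ would have to be excluded.

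For comparison, the paper sidesteps both difficulties by dualizing: writing $\P^5=\P(W_6)$, it sets $T:=\P(W_6^\perp)\cap\Gr(2,V_5^\vee)$, a subscheme of $\P(W_6^\perp)\cong\P^3$, and runs a case analysis on $\dim(T)$. When $\dim(T)\le 1$, a deformation through quintic del Pezzo fourfolds shows there is no $3$-dimensional component at all; when $\dim(T)=2$, since $T$ is an intersection of quadrics it is an irreducible quadric or contains a plane, and Lemma~\ref{lemma:gr-linear}(b) then lists the possibilities explicitly (a hyperplane section of the cone over a cubic scroll, of degree $3$; a linear $\P^3$ plus a hyperplane section of $\Gr(2,V_4)$; a $\P(V_1\wedge V_5)$ plus planes), so that the only even-degree $3$-fold arising is the hyperplane section of $\Gr(2,V_4)$; and $\dim(T)=3$ is excluded by dimension. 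If you want to salvage your approach, the missing input is essentially this enumeration of low-dimensional dual intersections, at which point the Schubert-class bookkeeping becomes unnecessary.
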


\begin{proof}
Write $\P^5 =  \P(W_6)$, where $W_6 \subset \bw2V_5$.\
 {Let $W_6^\perp \subset \bw2V_5^\vee$ be the orthogonal complement of~$W_6$}
and set  $T := \P(W_6^\perp) \cap \Gr(2,V_5^\vee)$.\
This is a subscheme of $\P(W_6^\perp) \cong \P^3$, hence $\dim(T) \le 3$.\
We  discuss all the possibilities for $\dim(T)$ and check the claim in each case.\

Assume first $\dim(T) \le 1$.\ 
We then have~$\P(W_8^\perp) \cap \Gr(2,V_5^\vee)=\vide$ for a general subspace $  W_8 \subset \bw2V_5$ containing $ W_6$: hence~\mbox{$\Gr(2,V_5) \cap \P(W_8)$}  is a smooth quintic del Pezzo fourfold (\cite[Proposition~2.24]{dk1}) 
{not contained in a hyperplane}.\
Its Picard group is $\Z$, hence for any subspace~\mbox{$W_7 \subset W_8$} containing $ W_6$, 
its hyperplane section $\Gr(2,V_5) \cap \P(W_7)$ is an irreducible threefold {not contained in a hyperplane}.\ 
Therefore, the hyperplane section $\Gr(2,V_5) \cap \P(W_6)$ of this threefold has no 3-dimensional components.

Assume now $\dim(T) = 2$.\
Since $\Gr(2,V_5^\vee)$ is an intersection of quadrics, so is $T$, hence $T$ 
is either {an irreducible quadric surface or contains} a plane.

If $T$ contains a plane,  we have, by Lemma~\ref{lemma:gr-linear}(b),
either $\P(\bw2V_2^\perp) \subset T$, in which case~$\Gr(2,V_5) \cap \P(W_6)$ is a hyperplane section
of $\Gr(2,V_5) \cap \P(V_2 \wedge V_5)$ hence, by Lemma~\ref{lemma:gr25-p25},  an irreducible threefold of degree~3,
or~$\P(V_1^\perp \wedge V_4^\perp) \subset T$,  in which case  $\Gr(2,V_5) \cap \P(W_6)$ is a hyperplane section
of~$\Gr(2,V_4) \cup \P(V_1 \wedge V_4)$, hence is the union of a hyperplane section of~	$\Gr(2,V_4)$ and of a linear 3-space.

If $T$ is an irreducible quadric,  we have $\P(W_6^\perp) \subset \P(\bw2V_1^\perp)$  
and $\Gr(2,V_5) \cap \P(W_6)$ is the union of $\P(V_1 \wedge V_5)$ and   two planes (or a double plane)
corresponding to the intersection of~$\Gr(2,V_5/V_1)$ with the line $\P^1$ given by the orthogonal of $W_6^\perp \subset \bw2V_1^\perp$.
Therefore, the only 3-dimensional component of~$\Gr(2,V_5) \cap \P(W_6)$ has degree~1.

Finally, assume $\dim(T) = 3$.\
By Lemma~\ref{lemma:gr-linear}(c), we have $W_6^\perp = V_4^\perp \wedge V_5^\vee$ 
and $\Gr(2,V_5) \cap \P(W_6) = \Gr(2,V_4)$ has dimension~4.

Therefore, the only case when $\Gr(2,V_5) \cap \P(W_6)$ has a 3-dimensional component of even degree
is the case when $T$ contains a $\sigma$-plane, and in this case, this component is a hyperplane section of $\Gr(2,V_4)$.
\end{proof}

We will also {need}  the following standard locally free resolution
 for the cone $\CGr(2,V_5)$.

\begin{lemm}
\label{lemma:resolution-gr25}
There is an exact sequence
\begin{equation*}
0 \to \cO(-5) \to V_5^\vee \otimes \cO(-3) \to V_5 \otimes \cO(-2) \to \cO
\to \cO_{\CGr(2,V_5)} \to 0
\end{equation*}
of coherent sheaves on  $\P(\C \oplus \bw2V_5)$.
\end{lemm}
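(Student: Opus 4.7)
The plan is to deduce this resolution from the classical Pfaffian resolution of the Grassmannian $\Gr(2,V_5) \subset \P(\bw2V_5)$, by exploiting the fact that the cone has the same defining equations, just viewed in one more variable.

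First I would recall the minimal free resolution of $\cO_{\Gr(2,V_5)}$ as a sheaf on $\P(\bw2V_5) \cong \P^9$: the Grassmannian is defined by the~$5$ Pfaffians of a generic $5 \times 5$ skew-symmetric matrix (whose entries are the Pl\"ucker coordinates, i.e.\ a section of $\bw2 V_5 \otimes \cO(1)$), and the Buchsbaum--Eisenbud/Pfaffian structure theorem gives the resolution
\begin{equation*}
0 \to \cO_{\P(\sbw2V_5)}(-5) \to V_5^\vee \otimes \cO_{\P(\sbw2V_5)}(-3) \to V_5 \otimes \cO_{\P(\sbw2V_5)}(-2) \to \cO_{\P(\sbw2V_5)} \to \cO_{\Gr(2,V_5)} \to 0,
\end{equation*}
where the map $V_5 \otimes \cO(-2) \to \cO$ is given by the five Pfaffians and the central map is essentially multiplication by the same skew-symmetric matrix (cf.\ the standard structure of Gorenstein codimension-$3$ ideals). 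In particular, $\Gr(2,V_5)$ is arithmetically Gorenstein of codimension~$3$.

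Next I would pass to the cone. Write $R' = \Sym^\bullet(\C \oplus \bw2V_5)^\vee$ for the homogeneous coordinate ring of $\P(\C \oplus \bw2V_5)$ and $R = \Sym^\bullet(\bw2V_5)^\vee$ for that of $\P(\bw2V_5)$; then $R' = R[t]$, where $t$ is the dual coordinate on the summand $\C$. The Pl\"ucker/Pfaffian equations defining $\Gr(2,V_5)$ involve only the coordinates on $\bw2V_5$ and do not involve~$t$; by the very definition of the projective cone, the homogeneous ideal of $\CGr(2,V_5)$ in $R'$ is the extension of the ideal of $\Gr(2,V_5)$ in $R$, i.e.\ $I_{\CGr(2,V_5)} = I_{\Gr(2,V_5)} \otimes_R R'$. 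Since $R' = R[t]$ is a free, hence flat, $R$-module, tensoring the graded minimal free resolution of $R/I_{\Gr(2,V_5)}$ with $R'$ preserves exactness and yields the analogous resolution of $R'/I_{\CGr(2,V_5)}$; sheafifying on $\P(\C \oplus \bw2V_5)$ produces exactly the claimed exact sequence.

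The main point to verify carefully is that the twists and the $V_5$-equivariance of the middle terms are the same after the extension of scalars: this is automatic, since tensoring with $R'$ over $R$ is a degree-preserving and $\GL(V_5)$-equivariant operation, and the Euler-characteristic/rank check (ranks $1,5,5,1$ and twists $0,-2,-3,-5$) matches the Gorenstein symmetry $F_i^\vee(-5) \cong F_{3-i}$. The only step requiring genuine input is the initial Pfaffian resolution for $\Gr(2,V_5)$, which is classical; so I would not anticipate any serious obstacle, merely a bookkeeping check that the flat base change commutes with sheafification, which is standard.
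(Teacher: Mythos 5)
Your argument is correct. The paper states this lemma without proof, labelling it a ``standard'' resolution, and what you give is precisely the standard justification: the Buchsbaum--Eisenbud Pfaffian resolution of the codimension-$3$ arithmetically Gorenstein ideal of $\Gr(2,V_5)\subset\P(\bw2V_5)$, followed by the observation that the homogeneous ideal of the projective cone in $R'=R[t]$ is the extension of the ideal of $\Gr(2,V_5)$, so that the flat base change $R\to R[t]$ carries the graded resolution to the cone, and sheafification (which is exact) yields the claimed sequence on $\P(\C\oplus\bw2V_5)$. The one cosmetic caveat is that the identifications of the middle terms with $V_5\otimes\cO(-2)$ and $V_5^\vee\otimes\cO(-3)$ are $\GL(V_5)$-equivariant only up to a twist by the determinant character (the Pfaffian vector naturally lives in $\bw4V_5\otimes\Sym^2(\bw2V_5^\vee)\cong V_5\otimes\det(V_5)^{-1}\otimes\Sym^2(\bw2V_5^\vee)$), which is immaterial for the statement as an exact sequence of coherent sheaves.
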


\subsection{Eisenbud--Popescu--Walter varieties and their double coverings}
\label{defepw}

Let $V_6$ be a 6-dimensional  vector space.\ 
We consider subspaces $A \subset \bw3V_6$ that are Lagrangian for the symplectic form given by exterior product.\ 
Those that contain no decomposable vectors (that is, such that~$\P(A) \cap \Gr(3,V_6) = \vide$) are parameterized by the complement
\begin{equation}
\label{ndv}
\LGr_{\rm ndv}(\bw3V_6) \subset \LGr(\bw3V_6)
\end{equation}
of a hypersurface in the Lagrangian Grassmannian $\LGr(\bw3V_6)$.

Given a Lagrangian subspace $A \subset \bw3V_6$, one defines its {\sf  EPW varieties}; they form
 a chain 
\begin{equation*}
 Y_A^{\ge 4} \subset  Y_A^{\ge 3} \subset Y_A^{\ge 2} \subset Y_A^{\ge 1} \subset Y_A^{\ge 0}=\P(V_6)
\end{equation*}
of closed subschemes,  where  $Y_A^{\ge k}$ 
is, at least set-theoretically, the set of points $[v]$ in~$\P(V_6)$ such that $\dim(A \cap (v \wedge \bw2V_6)) \ge k$ 
(the scheme structures were defined in~\mbox{\cite[(18)]{dkcovers}}).

The Lagrangian subspace $A \subset \bw3V_6$ defines a Lagrangian subspace $\Ap \subset \bw3V_6^\vee$
 hence, as above, {\sf dual EPW varieties}
\begin{equation*}
 Y_\Ap^{\ge 4} \subset  Y_\Ap^{\ge 3} \subset Y_\Ap^{\ge 2} \subset Y_\Ap^{\ge 1} \subset Y_\Ap^{\ge 0}= \P(V_6^\vee).
\end{equation*}
Set-theoretically,  the variety $Y_\Ap^{\ge k}$ is the set of points $[V_5]$ in~${\Gr(5,V_6) = \P(V_6^\vee)}$ 
such that \mbox{$\dim(A \cap \bw3V_5) \ge k$}.\ 
We also use the notation 
\begin{equation*}
Y^k_A = Y^{\ge k}_A \setminus Y^{\ge k+1}_A
\quad \textnormal{and}\quad 
Y^k_\Ap = Y^{\ge k}_\Ap \setminus Y^{\ge k+1}_\Ap.
\end{equation*}

Assume for the rest of this section that $A$  contains no decomposable vectors.\ 
A combination of results of O'Grady (see~\cite[Theorem~B.2]{dk1}) gives the following:
\begin{itemize}
\item $Y_A := Y_A^{\ge 1}$ is a normal sextic hypersurface (called an {\sf EPW sextic});
\item $Y_A^{\ge 2} = \Sing(Y_A)$ is a normal integral surface of degree~40 {(called an {\sf EPW surface})};
\item $Y_A^{\ge 3} = \Sing(Y_A^{\ge 2})$ is a finite scheme, empty when $A$ is general;
\item $Y_A^{\ge 4}  $ is empty.
\end{itemize}
Note that the Lagrangian subspace  $\Ap$ also contains no decomposable vectors and analogous statements hold for dual EPW varieties.

EPW varieties have canonical double coverings.\
First, there is a double covering
\begin{equation*}
\tY^{\ge 0}_A \lra Y^{\ge 0}_A = \PP(V_6)
\end{equation*}
branched along the EPW sextic~$Y_A$.\
Next,
O'Grady constructed in \cite[Section~1.2]{og4} a canonical double covering
\begin{equation*}
 \tY_A \lra Y_A
\end{equation*}
\'etale away from $Y_A^{\ge 2}$.\ When  $Y_A^{\ge 3} = \varnothing$, the scheme
 $ \tY_A$ is a smooth hyperk\"ahler fourfold (called a {\sf  double EPW sextic}).\ 
 {Finally}, in~\cite[Theorem~5.2(2)]{dkcovers}, we constructed a canonical double covering
\begin{equation}
\label{cancov}
\pi_A\colon\tY_A^{\ge 2} \lra Y_A^{\ge 2} 
\end{equation}
\'etale away from $Y_A^{\ge 3}$, where $\tY_A^{\ge 2}$ is an integral normal surface  {(called a {\sf double EPW surface})},
and  {proved an isomorphism} 
\begin{equation}
\label{cancov2}
\pi_{A*}\cO_{\tY_A^{\ge 2}}\isom \cO_{Y_A^{\ge 2}}\oplus \omega_{Y_A^{\ge 2}}(-3).
\end{equation} 
{The double coverings $\pi_A$ are the main characters of this article.}\ 
We now prove some results about the surface $\tY_A^{\ge 2}$ that will be needed later on.
 
 \begin{prop}\label{prop25}
Let $A$ be a Lagrangian subspace with no decomposable vectors and assume~$Y_A^{\ge 3} = \varnothing$, 
so that~$\tY_A^{\ge 2}$ and $Y_A^{\ge 2}$ are smooth connected projective surfaces.\ 
One has
\begin{equation}\label{h1}
H^1(Y_A^{\ge 2},\cO_{Y_A^{\ge 2}})=0,\qquad H^1(\tY_A^{\ge 2},\cO_{\tY_A^{\ge 2}})\isom A,
\end{equation}
where the isomorphism is canonical, and the abelian group $H_1( \tY_A^{\ge 2},\Z)$ is  free of rank~$20$.
\end{prop}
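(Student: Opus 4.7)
The plan is to reduce the cohomology of $\tY_A^{\ge 2}$ to that of $Y_A^{\ge 2}$ via the splitting~\eqref{cancov2} and the projection formula, to compute both summands using a locally free resolution of $\cO_{Y_A^{\ge 2}}$ on $\P(V_6)$ coming from the Lagrangian determinantal description of $Y_A^{\ge 2}$, and then to deduce the rank and torsion-freeness of $H_1(\tY_A^{\ge 2}, \Z)$ from Hodge theory together with a specialization to the GM threefold case.

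First, since $\pi_A$ is finite, higher direct images of~$\cO_{\tY_A^{\ge 2}}$ vanish, and~\eqref{cancov2} gives
\begin{equation*}
H^q(\tY_A^{\ge 2}, \cO_{\tY_A^{\ge 2}}) \;\simeq\; H^q(Y_A^{\ge 2}, \cO_{Y_A^{\ge 2}}) \;\oplus\; H^q(Y_A^{\ge 2}, \omega_{Y_A^{\ge 2}}(-3))
\end{equation*}
for every $q \ge 0$.\ The two identities in~\eqref{h1} then reduce to showing $H^1(Y_A^{\ge 2}, \cO) = 0$ and to producing a canonical isomorphism $H^1(Y_A^{\ge 2}, \omega_{Y_A^{\ge 2}}(-3)) \simeq A$.

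Next, I would realize $Y_A^{\ge 2}$ as the corank-$2$ degeneracy locus of the rank-$10$ vector bundle map $\lambda_A \colon A \otimes \cO_{\P(V_6)} \to \bw3V_6/\cF$, where $\cF \subset \bw3V_6 \otimes \cO$ is the Lagrangian subbundle with fibre $v \wedge \bw2V_6$ at $[v]$.\ The naive expected codimension of this locus is $4$, but because $A$ and $\cF$ are both Lagrangian, the symplectic form identifies $\bw3V_6/\cF \simeq \cF^\vee$ and the actual codimension drops to~$3$, reflecting a symmetric determinantal structure on $\lambda_A$.\ The associated canonical symmetric Eagon--Northcott / Lascoux-type complex yields a locally free resolution of $\cO_{Y_A^{\ge 2}}$ on $\P(V_6)$ whose terms are direct sums of wedge powers of $A$ tensored with line bundles $\cO(-k)$.\ The cohomology of each term on $\P^5$ is computed by Bott's formula, and the associated hypercohomology spectral sequence collapses: one reads off $H^1(Y_A^{\ge 2}, \cO) = 0$ and, repeating the computation for the resolution twisted by $\cO(3)$, a canonical isomorphism $H^1(Y_A^{\ge 2}, \cO(3)) \simeq A^\vee$.\ Serre duality on the smooth surface $Y_A^{\ge 2}$ then transports this into the canonical identification of~\eqref{h1}.

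It follows that $h^{0,1}(\tY_A^{\ge 2}) = 10$, so Hodge symmetry forces $b_1(\tY_A^{\ge 2}) = 20$ and $H_1(\tY_A^{\ge 2}, \Z)$ has free rank~$20$.\ For torsion-freeness, the torsion of $H_1(\tY_A^{\ge 2}, \Z)$ equals that of $H^2(\tY_A^{\ge 2}, \Z)$, which is locally constant on any smooth family of surfaces.\ Since the open locus $\{A \in \LGr_{\rm ndv}(\bw3V_6) : Y_A^{\ge 3} = \vide\}$ is connected, it suffices to verify torsion-freeness at a single $A$.\ Specializing to $A = A(X)$ for a general GM threefold $X$, the surface $\tY_A^{\ge 2}$ is birational (by~\cite{dkquadrics} and the constructions of Section~\ref{section:quartic-curves}) to a component of the Hilbert scheme of conics on the period-dual threefold $X'$, whose $H_1$ is torsion-free because the Abel--Jacobi map surjects onto the torsion-free lattice $H_3(X', \Z)$ (both of rank $20$) via the Clemens--Tyurin argument of Section~\ref{section:topology}.

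The main obstacle is the third paragraph: producing and explicitly working with the locally free resolution of $\cO_{Y_A^{\ge 2}}$ arising from the symmetric Lagrangian determinantal structure, and more delicately, tracking canonicity so that the copy of $A$ emerging in $H^1(\omega_{Y_A^{\ge 2}}(-3))$ is genuinely the original Lagrangian subspace.\ This canonicity, rather than the bare dimension statement, is what is needed for the subsequent Hodge-theoretic identification with intermediate Jacobians of GM varieties.
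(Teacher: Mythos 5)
Your reduction of \eqref{h1} via the splitting \eqref{cancov2} and Serre duality is exactly the paper's first step, and the computation you sketch in your third paragraph (a Lagrangian--determinantal resolution of $\cO_{Y_A^{\ge 2}}$ on $\P(V_6)$, Bott vanishing, canonicity of the copy of $A$) is in substance what lies behind the table in \cite[Corollary~B.5]{dkperiods} that the paper simply cites; so the first half of your proposal is sound, though you have outsourced its hardest part to a computation you do not carry out. The deduction $h^{0,1}=10\Rightarrow b_1=20$ by Hodge symmetry is also fine, as is the reduction of torsion-freeness to a single Lagrangian $A$ via local constancy of $\mathrm{Tors}\,H^2\cong\mathrm{Tors}\,H_1$ in a connected smooth family.

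The genuine gap is in your verification of torsion-freeness at the chosen point. A surjection from $H_1$ of a smooth surface onto the torsion-free lattice $H_3(X',\Z)$ of the \emph{same} rank $20$ does not force the source to be torsion-free: $\Z^{20}\oplus T$ surjects onto $\Z^{20}$ for any finite group $T$, so the Clemens--Tyurin surjectivity buys you nothing here. Worse, promoting that surjection to an isomorphism is precisely what Theorem~\ref{theorem:aj3} does later \emph{using} the freeness statement of this proposition (``source and target are free abelian groups of rank~$20$''), so any appeal to the Abel--Jacobi isomorphism at this stage is circular. The paper's proof instead leaves the GM world entirely: by Ferretti's result \cite[Proposition~4.1 and Corollary~4.2]{fer}, the surface $Y_A^{\ge 2}$ deforms smoothly to the surface $\Bit(S)$ of bitangent lines to a smooth quartic $S\subset\P^3$ containing no lines; the identification \eqref{cancov2} of the $2$-torsion line bundle defining $\pi_A$ with $\omega(-3)$ matches Welters' description of the \'etale double cover $F_1(X)\to\Bit(S)$ for the quartic double solid $X\to\P^3$ branched over $S$, so $\tY_A^{\ge 2}$ is diffeomorphic to the Fano surface $F_1(X)$, and the torsion-freeness of $H_1(F_1(X),\Z)$ is Welters' explicit topological computation \cite[Section~6, Proposition, p.~71]{welters}. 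That external input (or some equally concrete substitute) is what your argument is missing.
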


\begin{proof} 
From \eqref{cancov2} and Serre duality, we deduce that there are isomorphisms
\begin{eqnarray*}
H^1(\tY_A^{\ge 2},\cO_{\tY_A^{\ge 2}})	&\isom& 	
H^1(Y_A^{\ge 2},\cO_{Y_A^{\ge 2}})\oplus H^1(Y_A^{\ge 2},\omega_{Y_A^{\ge 2}}(-3))\\
&\isom&	
H^1(Y_A^{\ge 2},\cO_{Y_A^{\ge 2}})\oplus H^1(Y_A^{\ge 2},\cO(3))^\vee.
\end{eqnarray*}
From the table in \cite[Corollary~B.5]{dkperiods}, we see that the first summand vanishes, 
whereas the second summand is canonically isomorphic to $A$.\ 
This proves the  statements \eqref{h1} of the proposition.

To prove that $H_1( \tY_A^{\ge 2},\Z)$ is  torsion-free, we use a degeneration argument.\ 
Let $S\subset {\P^3}$
be a smooth quartic surface containing no lines.\ 
Ferretti proved in \cite[Proposition~4.1 and Corollary~4.2]{fer} that there is a smooth deformation 
of the  surface $Y_A^{\ge 2}$ to the  surface \mbox{$\Bit(S) \subset {\Gr(2,4)}$} of bitangent lines to $S$.

Let $X\to \P^3$ be the double solid branched over $S$.\ 
The variety $F_1(X)$ of lines on ${X}$ is a connected surface (\cite[Remark~(3.58)a)]{welters}) 
and the canonical map $F_1({X})\to \Bit(S)$ is a double \'etale cover (\cite[Corollary~(1.3)]{welters})  
whose associated order-2 line bundle on $\Bit(S)$ is ${\omega}_{\Bit(S)}(-3)$ (\cite[Propositions~(3.1)a) and~(3.35)]{welters}).\
It then follows from \eqref{cancov2} that the surface $F_1({X})$ is a  smooth deformation of the  surface $\tY_A^{\ge 2}$.\ 
They are therefore diffeomorphic.\ 
The statement that $H_1( \tY_A^{\ge 2},\Z)$ is  free of rank $20$ then follows 
from the analogous statement for  $H_1(F_1({X}),\Z)$ proved in \cite[Section~6, Proposition, p.~71]{welters}.
\end{proof}

\subsection{Gushel--Mukai varieties}
\label{secgm}

Let $n \in\{3,4,5,6\}$.\
As recalled in the introduction (see~\eqref{def:x}), a Gushel--Mukai  variety  of dimension~$n$
is a dimensionally transverse intersection
\begin{equation*}
X = \CGr(2,V_5) \cap \P(W) \cap Q.
\end{equation*}
It is the intersection in $\P(W)$ of the 6-dimensional space  $V_6(X)  \subset \Sym^2(W^\vee)$ 
of quadrics containing $X$, generated by  the space  
\begin{equation*}
V_5(X) :=   V_5 =  \bw4V_5^\vee \subset \Sym^2(\bw2V_5^\vee)
\end{equation*}
of  (the restrictions to $W$ of) Pl\"ucker quadrics and  the quadric $Q$.\ In particular, 
one can replace~$Q$ by any other  quadric in the space $V_6(X)\moins V_5(X)$.\ The intersection 
\begin{equation}\label{ghull}
M_X := \CGr(2,V_5) \cap \P(W)
\end{equation}
is called the {\sf   Grassmannian hull} of $X$.
There are two types of GM varieties: 
\begin{itemize}
\item if $M_X$ does not contain the vertex of the cone $\CGr(2,V_5)$, 
then $M_X \isom \Gr(2,V_5) \cap \P(W)$ is a linear section of $\Gr(2,V_5)$ and $X = M_X \cap Q$ is a quadratic section of $M_X$;
these GM varieties are called {\sf ordinary};
\item if $M_X$ contains the vertex of the cone $\CGr(2,V_5)$, 
then $M_X$ is a cone over~$M'_X = \Gr(2,V_5) \cap \P(W')$, a linear section of $\Gr(2,V_5)$, 
and $X \to M'_X$ is a double covering branched along a quadratic section $X' = M'_X \cap Q'$;
these GM varieties are called {\sf special}.
\end{itemize}
When $X$ is a special GM variety of dimension~$n$, the variety $X'$   is an ordinary GM variety of dimension~$n - 1$; 
the varieties $X$ and $X'$ are called {\sf opposite} GM varieties.

With every GM variety $X$, we associated in~\cite[{Section~3.2}]{dk1} a  {\sf Lagrangian data set}
$(V_6(X), V_5(X), A(X))$ consisting of 
\begin{itemize}
\item 
the 6-dimensional space $V_6(X)$ of quadrics containing $X$,
\item 
the hyperplane \mbox{$V_5(X) \subset V_6(X)$} of Pl\"ucker quadrics,   
\item 
a Lagrangian subspace $A(X) \subset \bw3V_6(X)$. 
\end{itemize}
 {The Lagrangian data sets of a GM variety and of its opposite GM variety coincide.}

Many properties of $X$ are related to properties of its Lagrangian data set.\
For instance, when $X$ is smooth and $\dim(X) \ge 3$, 
the space $A(X)$  contains no decomposable vectors (\cite[Theorem~3.16]{dk1}) and~$\dim(A(X) \cap \bw3V_5(X)) \le 3$.

Conversely, if $(V_6,V_5,A)$ is a Lagrangian data set such that $A$ 
 contains no decomposable vectors and~$\ell = \dim(A \cap \bw3V_5) \le 3$,
there are exactly two smooth GM varieties $X$ such that
$$(V_6(X), V_5(X), A(X)) = (V_6,V_5,A):$$
one ordinary GM variety
of dimension $5 - \ell$ and one special GM variety
of dimension~\mbox{$6 - \ell$} {(\cite[Theorem~3.10]{dk1})};
they are opposite of one another.

In~\cite{dkmoduli}, we upgraded the above constructions to a description
of the moduli  {stack}~$\fM_n^{\rm GM}$ of smooth GM varieties of dimension~$n$
and  {its coarse moduli space}~$\bM_n^{\rm GM}$.\
In particular, we showed  {in~\cite[Theorem~5.15(a)]{dkmoduli}} 
that the coarse moduli space of smooth GM varieties of dimension $n \ge 3$ is the quasiprojective GIT quotient
\begin{equation}
\label{eq:moduli}
\bM_n^{\rm GM} = \{ (A,V_5) \in \LGr_{\rm ndv}(\bw3V_6) \times \P(V_6^\vee) \mid \dim(A \cap \bw3V_5) \in \{5-n,6-n\} \} /\!\!/ \PGL(V_6)
\end{equation}
(see \eqref{ndv} for the notation).\ 
In particular, as explained in~\cite[Section~6.1]{dkmoduli}, there is a map
\begin{equation}
\label{wpn}
\begin{aligned}
{\mathfrak{p}_n} \colon \bM_n^{\rm GM} &\lra \LGr_{{\rm ndv}}(\bw3V_6) /\!\!/ \PGL(V_6)\\
 {[X]} & \longmapsto  {[A(X)]}
\end{aligned}
\end{equation}
and 
\begin{equation}
\label{eq:wp-inverse-a}
\mathfrak{p}_n^{-1}([A]) \cong ( Y_\Ap^{\ge 5-n} \setminus Y_\Ap^{\ge 7-n} ) /\!\!/ \PGL(V_6)_A
\end{equation}
(when $n = 6$, the right side is~$Y_\Ap^{0} /\!\!/ \PGL(V_6)_A$),
where $\PGL(V_6)_A$ is the stabilizer of~$A$ in~$\PGL(V_6)$, a finite (generically trivial) group (\cite[Proposition~B.9]{dk1}).\ 
When~\mbox{$n \in \{4,6\}$}, we showed in~\cite[Proposition~5.27]{dkperiods} (see also~\cite[Proposition~6.1]{dkmoduli})} 
that the map $ {\mathfrak{p}_n}$ can be thought of as the {period map} for GM $n$-folds.\ 
 
We computed in~\cite{dkperiods} the  integral cohomology groups of GM varieties of dimensions~3 or~5 and of their Grassmannian hulls.\ 
We denote by~$F^\bullet H^n(X,\C)$ the Hodge filtration on the cohomology~$H^n(X,\C)$.

\begin{prop}
\label{prop:gm-betti}
Let $X$ be a smooth GM variety of odd dimension $n \in \{3,5\}$.\
\begin{itemize}
\item The even cohomology $H^{\mathrm{even}}(X,\Z)$ is pure Tate of ranks $(1,1,1,1)$ when $n = 3$, and $(1,1,2,2,1,1)$ when~$n = 5$.
\item For the odd cohomology, we have 
\begin{equation*}
H^{\mathrm{odd}}(X,\Z) = H^n(X,\Z) \isom \Z^{20},\qquad F^{(n+3)/2}H^{n}(X,\C) = 0.
\end{equation*}
\end{itemize}
If $X$ is moreover ordinary, so that its Grassmannian hull $M_X$ is smooth,
\begin{itemize}
\item the even cohomology $H^{\mathrm{even}}(M_X,\Z)$ is pure Tate of ranks $(1,1,2,1,1)$ when $n = 3$, 
and~$(1,1,2,2,2,1,1)$ when $n = 5$;
\item  the odd cohomology  $H^{\mathrm{odd}}(M_X,\Z) $ vanishes.
\end{itemize}
\end{prop}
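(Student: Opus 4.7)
The plan is to derive everything from Lefschetz-type theorems, starting from the classical cohomology of the Grassmannian $\Gr(2,V_5)$.

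For the Grassmannian hull in the ordinary case: when $n=5$, one has $M_X=\Gr(2,V_5)$ and the ranks $(1,1,2,2,2,1,1)$ together with the pure Tate structure and vanishing of odd cohomology are classical (Schubert calculus). When $n=3$, $M_X$ is a smooth codimension-$2$ linear section of $\Gr(2,V_5)$; two applications of the Lefschetz hyperplane theorem yield $H^k(M_X,\Z)\isom H^k(\Gr(2,V_5),\Z)$ for $k<4$, giving ranks $(1,0,1,0)$ in degrees $0,1,2,3$ and torsion-freeness. Poincar\'e duality then fixes degrees $5,\ldots,8$ with ranks $(0,1,0,1)$, and the remaining middle Betti number $h^4(M_X)=2$ is pinned down by the topological Euler characteristic, computed either via Chern-class manipulations in $\Gr(2,V_5)$ or from the explicit description of the quintic del Pezzo fourfold as a blow-up of $\P^4$.

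For the GM variety $X$ of odd dimension $n\in\{3,5\}$ in the ordinary case: $X$ is the smooth zero locus in $M_X$ of a section of the ample line bundle $\cO_{M_X}(2)$. The Lefschetz hyperplane theorem gives $H^k(M_X,\Z)\isom H^k(X,\Z)$ for $k<n$ and an injection in degree~$n$; combined with Poincar\'e duality on $X$ (torsion-freeness again propagating via universal coefficients), this yields the stated ranks in all even degrees and the vanishing of odd degrees other than~$n$. The middle Betti number $b_n(X)=20$ is fixed by computing $\chi(X)$ via the adjunction formula on $M_X$ together with Chern-class manipulations, or equivalently by applying the Koszul resolution of $\cO_X$ inside $M_X$ in conjunction with the resolution of $\cO_{\CGr(2,V_5)}$ recorded in Lemma~\ref{lemma:resolution-gr25}.

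For a special GM variety of dimension $n$, we use that $X\to M'_X$ is a double cover of the smooth linear section $M'_X=\Gr(2,V_5)\cap\P(W')$ branched along the opposite ordinary GM variety $X'\subset M'_X$ of dimension $n-1$; the cohomology $H^*(X,\Z)$ splits into invariants and anti-invariants under the covering involution, both of which reduce to the ordinary case already treated (in dimensions $n$ and $n-1$ respectively) via standard double-cover formulas. For the Hodge-filtration vanishing $F^{(n+3)/2}H^n(X,\C)=0$: the vanishing $h^{p,0}(X)=0$ for $p\geq 1$ follows from Kodaira vanishing since $X$ is Fano, which already suffices when $n=3$; the additional vanishing $h^{4,1}(X)=0$ when $n=5$ would be obtained by using the Koszul resolution of $\cO_X$ inside $\Gr(2,V_5)$, Bott's vanishing on the Grassmannian, and the pure Tate structure of $H^*(\Gr(2,V_5))$ to conclude that the only nonvanishing Hodge components of $H^5(X)$ lie in bidegrees $(3,2)$ and $(2,3)$.

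The main obstacle will be precisely this last Hodge-filtration assertion in the five-dimensional case, which goes beyond pure Lefschetz yoga and requires either an explicit Bott-type calculation on the Grassmannian or a Griffiths-style residue description of the primitive Hodge components of the quadric section $X\subset M_X$. In practice, all the required Betti and Hodge numbers, together with the torsion-freeness, are established in the detailed computation in~\cite{dkperiods}, which one could simply cite for the final assertions.
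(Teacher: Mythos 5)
The paper's own proof is essentially a citation: the statements about $X$ are quoted from \cite[Propositions~3.1 and~3.4]{dkperiods}, and the statements about $M_X$ are dismissed as a standard consequence of the Lefschetz theorem. Your proposal reconstructs what that reference contains, and the overall architecture (Lefschetz plus Poincar\'e duality plus an Euler-characteristic count for the middle Betti number, with the Hodge-filtration vanishing $h^{4,1}(X)=0$ for fivefolds correctly singled out as the one point that genuinely goes beyond Lefschetz yoga) is the right one. Two of your side claims, however, are wrong as stated. First, the quintic del Pezzo fourfold $\Gr(2,V_5)\cap\P^7$ has $b_2=1$ by Lefschetz, hence Picard number~$1$, so it is not a blow up of $\P^4$; keep only the Chern-class/Schubert computation of $\chi=6$ for that step. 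Second, in the special case the anti-invariant part of $H^\bullet(X,\Q)$ does \emph{not} ``reduce to the ordinary case in dimension $n-1$'': for $n=3$ the branch locus is a degree-$10$ K3 surface whose vanishing cohomology has rank $21$, whereas $H^3(X,\Q)^-$ must have rank $20$. What the double-cover formalism actually gives is that $H^\bullet(X,\Q)^-\isom H^\bullet_c(M'_X\setminus X',\cL)$ is concentrated in degree $n$ (Artin vanishing on the affine complement of the ample branch divisor, plus duality), after which the rank is pinned down by $\chi(X)=2\chi(M'_X)-\chi(X')$; and note that $M'_X$ here is an $n$-dimensional linear section of $\Gr(2,V_5)$, not the $(n+1)$-dimensional hull you treated in the ordinary case.

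A further small gap in the special case is integrality: the eigenspace decomposition under the covering involution is a priori only a statement with $\Q$-coefficients, so it does not by itself give torsion-freeness of $H^n(X,\Z)$. The cleanest repair is to observe that special and ordinary GM $n$-folds are deformation equivalent (the moduli space is irreducible and the special locus is a proper closed subset), hence diffeomorphic, so the integral statements transfer from the ordinary case. With these corrections your outline is sound, and, as you say yourself at the end, everything ultimately rests on the computation in \cite{dkperiods} that the paper cites.
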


\begin{proof}
The first part follows from~\cite[Propositions~3.1 and~3.4]{dkperiods}.\ 
The second part is  {a standard consequence of the Lefschetz Theorem}.
\end{proof}

We will also need the following result.

\begin{lemm}
\label{lemma:gm5-q3}
A smooth GM fivefold  contains no quadric threefold whose image in~$\Gr(2,V_5)$ is a hyperplane section of some~$\Gr(2,V_4)$.
\end{lemm}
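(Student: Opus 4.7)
The plan is to derive a contradiction by computing cycle classes in the Grassmannian.\ Let $\bar Z\subset\Gr(2,V_5)$ denote the image of $Z$, a hyperplane section of some $\Gr(2,V_4)$.\ Since $[\Gr(2,V_4)]=\sigma_{1,1}$, Pieri's formula gives
\begin{equation*}
[\bar Z]=\sigma_1\cdot\sigma_{1,1}=\sigma_{2,1}\in H^6(\Gr(2,V_5),\Z),
\end{equation*}
which is a primitive class.\ In the ordinary case, $X\hookrightarrow\Gr(2,V_5)$ is a quadric section with $[X]_\Gr=2\sigma_1$, and the Lefschetz hyperplane theorem gives $H^4(X,\Z)\cong H^4(\Gr(2,V_5),\Z)$ with basis $\sigma_2|_X,\sigma_{1,1}|_X$.\ The Gysin pushforward $i_\ast\colon H^4(X,\Z)\to H^6(\Gr(2,V_5),\Z)$ is cup product with $2\sigma_1$; by Pieri, $i_\ast(\sigma_2|_X)=2(\sigma_3+\sigma_{2,1})$ and $i_\ast(\sigma_{1,1}|_X)=2\sigma_{2,1}$, so the image of $i_\ast$ is $2H^6(\Gr(2,V_5),\Z)$, which does not contain the primitive class $\sigma_{2,1}$.\ This contradicts $Z\subset X$.

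In the special case, $X$ is a double cover $\pi\colon X\to M'_X=\Gr(2,V_5)\cap\P(W')$ by projection from the vertex $v$ of the cone $M_X$, with covering involution $\iota$, branched over the ordinary GM fourfold $X'=M'_X\cap Q'$.\ The first observation is that $\pi|_Z\colon Z\to\bar Z$ has degree $1$: if $v\in\operatorname{span}(Z)\simeq\P^4$, then $\pi(Z)$ would lie in $\pi(\operatorname{span}(Z))\simeq\P^3$, contradicting that $\bar Z$ is a non-degenerate quadric threefold spanning a $\P^4$.\ Hence $Z\simeq\bar Z$.\ If $Z$ lies in the ramification divisor (equivalently, $Z$ embeds in $X'$), then the analogous Lefschetz/Pieri argument applied to the ordinary GM fourfold $X'$ (with $[X']_\Gr=2(\sigma_2+\sigma_{1,1})$, $H^2(X',\Z)\cong\Z\sigma_1|_{X'}$, and $(i')_\ast(\sigma_1|_{X'})=2\sigma_3+4\sigma_{2,1}$) shows that the image of $(i')_\ast$ is $\Z(2\sigma_3+4\sigma_{2,1})$, again not containing $\sigma_{2,1}$, a contradiction.\ Otherwise $Z$ and $\iota(Z)$ are distinct subvarieties of $X$ whose union equals $X\cap\widetilde Q$, where $\widetilde Q=\pi^{-1}(\bar Z)$ is the quadric fourfold cone over $\bar Z$ with vertex $v$; this reducibility forces $Q|_{\widetilde Q}$ to factor as a product of two linear forms, and a local analysis near a generic point of $Z\cap\iota(Z)\subset X'$ contradicts the smoothness of $X$.

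The hardest part is this last subcase in the special case, where the purely cohomological argument breaks down and one must combine the reducibility constraint on $Q|_{\widetilde Q}$ with the smoothness of $X$ to reach a geometric contradiction.
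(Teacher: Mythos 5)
Your argument in the ordinary case is correct and complete, and it is genuinely different from the paper's proof: the paper never splits into ordinary/special cases but instead computes the normal bundle $N_{Q/\CGr(2,V_5)}\cong\cU^\vee\oplus\cO(1)^{\oplus2}$ of the candidate quadric, observes that smoothness of $X$ forces the map $N_{Q/\CGr(2,V_5)}\to\cO(1)\oplus\cO(2)$ given by the differentials of the equations of $X$ to be surjective, and derives a contradiction from the nonvanishing of $c_3$ of the rank-$2$ kernel. Your Gysin/Pieri computation ($\mathrm{im}(i_*)=2H^6(\Gr(2,V_5),\Z)\not\ni\sigma_{2,1}$) is arguably more elementary for the ordinary case, and your second subcase in the special case ($Z$ contained in the ramification divisor) is also fine.

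However, the final subcase of the special case is a genuine gap, and it is not a routine one. You assert that the reducibility $Q|_{\widetilde Q}=\ell_1\ell_2$ together with ``a local analysis near a generic point of $Z\cap\iota(Z)$'' contradicts smoothness of $X$, but the natural version of this analysis fails: at a point $x\in Z\cap\iota(Z)$ one only gets that $d(Q|_{\widetilde Q})_x=0$, i.e.\ that $dQ_x$ vanishes on the $4$-dimensional space $T_x\widetilde Q$; since $T_xM_X$ is $6$-dimensional, $dQ_x$ can still be nonzero on $T_xM_X$ and $X$ can perfectly well be smooth at $x$. So no contradiction is produced as stated. The cohomological route also degenerates here: one finds $2[Z]=\pi^*(\sigma_{1,1}|_{M'_X})$ in $H^4(X,\Z)$, so the contradiction reduces to showing that $\pi^*(\sigma_{1,1}|_{M'_X})$ is not divisible by $2$ in the integral lattice $H^4(X,\Z)$ of a special GM fivefold -- a statement about the index of $\pi^*H^4(M'_X,\Z)$ in $H^4(X,\Z)$ that you neither state nor prove (unimodularity of the pairing $H^4\times H^6\to\Z$ only bounds the product of the two indices by $4$). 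Until this subcase is closed -- either by supplying the missing lattice computation or by a genuinely different argument such as the paper's normal-bundle one -- the lemma is not proved for special GM fivefolds.
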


\begin{proof}
Let $Q \subset \CGr(2,V_5)$ be a  quadric threefold  contained in a smooth GM fivefold~$X$.\
Then~$Q$ does not contain the vertex of the cone $\CGr(2,V_5)$ (because $X$ does not), 
hence its projection from the vertex to~$\Gr(2,V_5)$ is well defined.\ Assume it is 
 a hyperplane section of some~$\Gr(2,V_4)$.\ Then $Q$ is a local complete intersection and its normal bundle splits as
\begin{equation*}
N_{Q/\!\CGr(2,V_5)} \cong \cU^\vee \oplus \cO(1)^{\oplus 2},
\end{equation*}
where $\cU$ is the restriction of the tautological bundle of $\Gr(2,V_5)$.\ Any GM fivefold $X$ is the intersection of a hyperplane and a quadric in $\CGr(2,V_5)$.\  If $X$ contains $Q$, the differentials of the equations of $X$ give a morphism
\begin{equation*}
N_{Q/\!\CGr(2,V_5)} \to \cO(1) \oplus \cO(2).
\end{equation*}
Clearly, $X$ is singular at any degeneracy point of that morphism.\  If $X$ is smooth, this morphism is therefore surjective, hence  its kernel is a vector bundle of rank~2.\  This is absurd, since a simple computation shows that its third Chern class is nonzero.\  Therefore, $X$ cannot contain~$Q$.
\end{proof}

\subsection{Linear spaces on quadrics containing GM varieties}
{If $X \subset \P(W)$ is a GM variety and~$V_6(X)$ is the space of quadrics in~$\P(W)$ containing~$X$, we denote by}
\begin{equation}
\label{eq:cq}
\cQ \subset \P(W) \times \P(V_6(X))
\end{equation}
the total space of this family of quadrics and, for $v\in V_6(X)$ nonzero, by $Q_v$ the corresponding quadric in~$\P(W)$.\

The Lagrangian data set  {associated with~$X$} can be used to describe the ranks of the family of quadrics~\eqref{eq:cq}:
by~\cite[Proposition~3.13(b)]{dk1}, we have
\begin{equation}
\label{eq:ker-qv}
\Ker(Q_v) =  {A(X)} \cap (v \wedge \bw2V_6(X)) 
\qquad\text{for all $v \in V_6(X)\moins V_5(X)$.}
\end{equation}
In particular, $Y^{k}_{ {A(X)}}  \setminus \P( {V_5(X)})$ is the locus of non-Pl\"ucker quadrics of corank~$k$ containing~$X$.

In fact, the family of quadrics~\eqref{eq:cq} itself can be reconstructed from the Lagrangian data set,
which allows us to relate the double covering~\eqref{cancov} to the coverings associated with the family 
of quadrics by~\cite[Theorem~3.1]{dkcovers}.\
Note that~\eqref{eq:cq} corresponds to an embedding 
\begin{equation*}
\cO_{\P(V_6(X))}(-1) \hookrightarrow \Sym^2W^\vee \otimes \cO_{\P(V_6(X))}.
\end{equation*}
On $\P(V_6(X)) \setminus \P(V_5(X))$, the line bundle $  \cO_{\P(V_6(X))}(-1)$ is trivial, hence  
double coverings of any quadratic degeneracy loci are well defined over that set by~\cite[Remark~3.2]{dkcovers}.

\begin{lemm}
\label{lemma:lagrangians-quadrics}
Let $X$ be a smooth GM variety of dimension~$n$, with associated Lagrangian data set $(V_6,V_5,A) $ and let $k \in \{0,1,2\}$.\
Over $\P(V_6) \setminus \P(V_5)$, the canonical double covering of the $k$-th degeneracy locus $Y^{\ge k}_A \setminus \P(V_5)$
of the family of quadrics~\eqref{eq:cq} coincides 
with the base change~$\tY^{\ge k}_A \times_{\P(V_6)} (\P(V_6) \setminus \P(V_5)) \to Y^{\ge k}_A \setminus \P(V_5)$ 
 {of the double covering~$\tY^{\ge k}_A \to Y^{\ge k}_A$ defined in Section~\textup{\ref{defepw}}.}
 \end{lemm}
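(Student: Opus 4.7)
The plan is to identify the two double coverings via the isomorphism~\eqref{eq:ker-qv}, by exhibiting both as instances of the general construction of~\cite[Theorem~3.1]{dkcovers} applied to the same family of quadratic forms on $\P(V_6)\setminus\P(V_5)$.

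First, by~\eqref{eq:ker-qv}, the kernel of $Q_v$ at a nonzero vector $v \in V_6 \setminus V_5$ is canonically identified with $A \cap (v \wedge \bw2V_6)$. This identifies, on $\P(V_6) \setminus \P(V_5)$, the $k$-th quadratic degeneracy locus of the family~\eqref{eq:cq} with $Y^{\ge k}_A \setminus \P(V_5)$ as subschemes (the scheme structures coincide because the one used to define $Y^{\ge k}_A$ in~\cite[(18)]{dkcovers} is precisely the degeneracy scheme structure applied to the sheaf of fibers $v \mapsto A \cap (v\wedge\bw2V_6)$). It therefore remains to match the double covering data.

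Second, the coverings $\tY_A^{\ge k} \to Y_A^{\ge k}$ for $k \in \{0,1,2\}$ recalled in Section~\ref{defepw} are themselves constructed, via O'Grady's work and~\cite[Section~5]{dkcovers}, as the canonical coverings associated with the degeneracy loci of a tautological family of quadratic forms on $\P(V_6)$ built from $A$. Over the open subset $\P(V_6) \setminus \P(V_5)$, the line bundle $\cO_{\P(V_6)}(-1)$ is canonically trivial (via any linear form defining $V_5$), so the twist ambiguity mentioned in~\cite[Remark~3.2]{dkcovers} disappears and both coverings are produced by the same procedure.

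The main step, and the core of the argument, is to verify that the two families of quadratic forms — the one coming from quadrics in $\P(W)$ containing $X$, and the one from the Lagrangian data used to define $\tY_A^{\ge k}$ — are identified under the embedding $W \hookrightarrow \C \oplus \bw2V_5$ coming from the GM construction. This is not just a statement about kernels but about the forms themselves, and it is precisely the content of~\cite[Proposition~3.13]{dk1}, which expresses each quadric $Q_v$ containing $X$ explicitly in terms of $A$. Once this identification is in place, the functoriality of the construction of~\cite[Theorem~3.1]{dkcovers} yields the desired coincidence of the two double coverings over $\P(V_6) \setminus \P(V_5)$.
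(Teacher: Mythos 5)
Your overall strategy---realizing both coverings as outputs of the machinery of~\cite[Theorem~3.1]{dkcovers} applied to one and the same family of quadratic forms over $\P(V_6)\setminus\P(V_5)$---is the paper's strategy in outline, and your first two paragraphs (the kernel identification via~\eqref{eq:ker-qv} and the trivialization of the twist over the complement of $\P(V_5)$) are fine. But the step you yourself single out as ``the main step, and the core of the argument'' is exactly where the gap is. The covering $\tY^{\ge k}_A\to Y^{\ge k}_A$ of Section~\ref{defepw} is \emph{not} defined by a tautological family of quadratic forms on a bundle of rank $\dim(W)=n+5$; it is defined (via \cite[Theorem~5.2]{dkcovers}) by the pair of Lagrangian subbundles $\cA_1=A\otimes\cO$ and $\cA_2=\bw2T_{\P(V_6)}(-3)$ inside the rank-$20$ symplectic bundle $\bw3V_6\otimes\cO$. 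To compare this with the quadrics $Q_v$ on $\P(W)$ one must bridge that rank discrepancy, and this is precisely what the paper's proof does: it introduces the third Lagrangian $\cA_3=\bw3V_5\otimes\cO$, performs isotropic reduction along $\cI=\cA_1\cap\cA_3=(A\cap\bw3V_5)\otimes\cO$, checks the transversality statements $\bcA_1\cap\bcA_3=\bcA_2\cap\bcA_3=0$ needed for \cite[(23)--(24)]{dkcovers} to produce a family of quadratic forms on $W\otimes\cO$, identifies that family with $\cQ$ (this uses the \emph{proof} of \cite[Theorem~3.6 and Appendix~C]{dk1}, not \cite[Proposition~3.13]{dk1}, which only gives the kernels and the isomorphism $\bw3V_5/I\cong W^\vee$), and finally invokes \cite[Propositions~4.5 and~4.7]{dkcovers} to know that isotropic reduction changes neither the degeneracy loci nor their canonical double covers. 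None of this is supplied by ``functoriality'' of \cite[Theorem~3.1]{dkcovers}, and the citation you lean on does not carry the weight you assign to it.

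A second, independent omission: the special GM case. When $X$ is special, $W=W_0\oplus W_1$ with $\dim(W_1)=1$ and the family obtained by the reduction above is $\bq_0$ on $W_0$, not $\bq$ on $W$; one must argue separately that adding the summand $\bq_1$, which is nowhere degenerate over $\P(V_6)\setminus\P(V_5)$, changes neither the degeneracy loci nor the cokernel sheaves, hence (by \cite[Theorem~3.1]{dkcovers}) not the double covers either. Your proposal treats all $X$ uniformly and so silently skips this reduction to the ordinary case.
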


\begin{proof}
By~\cite[Theorem~5.2]{dkcovers}, the  
double covering 
$\tY^{\ge k}_A \to Y^{\ge k}_A$  
is associated with the pair of  Lagrangian subbundles 
\begin{equation*}
\cA_1 = A \otimes \cO\quad\textnormal{and}\quad
\cA_2 = \bw2  T_{\P(V_6)}(-3)
\end{equation*}
of $ \bw3V_6 \otimes \cO$ over $\P(V_6)$.\ 
To identify the double coverings, we will show that the family of quadrics~\eqref{eq:cq} 
can be related to this pair by the isotropic reduction procedure of~\cite[Section~4.2]{dkcovers}.\ 
We will use freely the notation from~\cite{dkcovers}.

Asssume  first that $X$ is ordinary.\ We restrict $\cA_1$ and $\cA_2$ to $\P(V_6) \setminus \P(V_5)$ and consider the third Lagrangian subbundle
\begin{equation*}
\cA_3 := \bw3V_5 \otimes \cO 
\end{equation*}
of $\bw3V_6 \otimes \cO$ over $\P(V_6) \setminus \P(V_5)$.\ 
 {We apply} isotropic reduction with respect to the rank-2 subbundle
\begin{equation*}
\cI :=  {\cA_1 \cap \cA_3} = (A \cap \bw3V_5) \otimes \cO
\end{equation*}
in the sense of~\cite[Section~4.2]{dkcovers} and obtain three Lagrangian subbundles $\bcA_1$, $\bcA_2$, $\bcA_3$ in a symplectic vector bundle $\bcV$.\ We describe below all these bundles explicitly.

 For each $[v]  \in \P(V_6) \setminus \P(V_5)$, there is a Lagrangian direct sum decomposition
\begin{equation*}
\bw3V_6 = \bw3V_5 \oplus (v \wedge \bw2V_5)
\end{equation*}
and the respective fibers at $[v]$ of the bundles $\cA_1$, $\cA_2$, $\cA_3$ are $A$, $v \wedge \bw2V_5$ (the second summand), 
and~$\bw3V_5$ (the first summand).\
Furthermore, the fiber of~$\cI$ is the subspace $I := A \cap \bw3V_5$ of the first summand.
By~\cite[Proposition~3.13(a)]{dk1}, the space~$I$ can be identified 
with the space of linear functions on~$\bw2V_5$ vanishing on~$W$, hence $\bw3V_5 / I \cong W^\vee$.
Thus
\begin{equation*}
\bcV_{[v]} = W^\vee \oplus W
\end{equation*}
and the fibers at~$[v]$ of the bundles~$\bcA_1$, $\bcA_2$, $\bcA_3$ are $A/I$, $W$ (the second summand), 
and $W^\vee$ (the first summand).\
In particular, $\bcA_2 \cap \bcA_3 = 0$.\
Moreover, $\bcA_1 \cap \bcA_3 = 0$.\
Indeed, any vector in the intersection comes from $(A + I) \cap \bw3V_5$, and since $I \subset A$, it belongs to~$A \cap \bw3V_5 = I$,
hence corresponds to the zero vector in $\bcA_1 \cap \bcA_3$.\ This implies $\bcA_3 \cong W^\vee \otimes \cO$, and the maps~\cite[(23)]{dkcovers} 
for the triple $\bcA_1,\bcA_2,\bcA_3$ and the trivial line bundle $\cL = \cO$ are isomorphisms over~$\P(V_6) \setminus \P(V_5)$.\
Therefore, the construction~\cite[(24)]{dkcovers} defines a family of quadratic forms 
on the trivial vector bundle $\bcA_{3}^\vee \cong W \otimes \cO$ over $\P(V_6) \setminus \P(V_5)$.

By~\cite[proof of Theorem~3.6 and Appendix~C]{dk1}, this family of quadrics coincides 
with the restriction of~$\cQ$ to~$\P(V_6) \setminus \P(V_5)$.\ Applying~\cite[Propositions~4.5 and~4.7]{dkcovers}, 
we see that the associated double coverings coincide with~\mbox{$\tY^{\ge k}_A \times_{\P(V_6)} (\P(V_6) \setminus \P(V_5))$}.\ {This completes the proof of the lemma for ordinary GM varieties.}

Assume now that $X$ is a special GM {variety}.\ 
By~\cite[Lemma~2.33]{dk1}, there is a canonical direct sum decomposition $W = W_0 \oplus W_1$, with $\dim (W_1) = 1$,
such that for the family of quadrics $\bq \colon V_6 \to \Sym^2W^\vee$ defining~$X$, we have 
\begin{equation*}
\bq = \bq_0 + \bq_1,
\qquad 
\bq_0 \colon V_6 \to \Sym^2W_0^\vee,
\quad 
\bq_1 \colon V_6 \to V_6/V_5 \isomto \Sym^2W_1^\vee, 
\end{equation*}
and the family of quadrics~$\bq_0$ corresponds to the ordinary GM {variety} $X_0$  opposite to~$X$.\  
By~\cite[Proposition~3.14(c)]{dk1}, the varieties~$X_0$ and~$X$ have the same Lagrangian data sets, 
 hence they give rise to the same double coverings $\tY^{\ge 2}_A \to Y^{\ge 2}_A$.\
 Also, the family of quadrics $\bq_1$ is nondegenerate over $\P(V_6) \setminus \P(V_5)$, hence
the families of quadrics~$\bq$ and~$\bq_0$ have the same degeneration loci and isomorphic cokernel sheaves.\ 
By~\cite[Theorem~3.1]{dkcovers}, they induce the same double covers of degeneration loci.\
We conclude by the argument of the first part of the proof.
\end{proof}

We will need the following consequence of the above lemma.\
 Set 
\begin{equation}
\label{eq:yk-a-v5}
Y^{\ge k}_{A,V_5} := Y^{\ge k}_A \cap \P(V_5).
\end{equation} 
 Note that $Y_{A,V_5} := Y^{\ge 1}_{A,V_5}$ is a sextic hypersurface, 
$Y^{\ge 2}_{A,V_5}$  a Cohen--Macaulay curve (\cite[Lemma~B.6]{dk1}),
and~$Y^{\ge3}_{A,V_5}$  a finite scheme.\
Consider the open surface
\begin{equation}
\label{def:s0}
S_0 := Y_A^{\ge 2} \setminus (Y_{A,V_5}^{\ge 2} \cup Y_A^3)  \subset \P(V_6)
\end{equation} 
 and the base change $\cQ_0 := \cQ \times_{\P(V_6)} S_0$ of the family of quadrics~\eqref{eq:cq}.

\begin{coro}
\label{corollary:stein-hilbert}
Let $X$ be a smooth GM variety of odd dimension~$n = 2s + 1$ with Lagrangian data set~$(V_6,V_5,A)$.\
Let $\Pi_0 \subset X$ be a linear subspace of dimension~$s$.\
There is an isomorphism
\begin{equation*}
F^{s + 3}_{\Pi_0}(\cQ_0 / S_0 ) \cong 
\tY^{\ge 2}_A \times_{\P(V_6)} S_0
\end{equation*}
of schemes over $S_0$,
where the left side is the subscheme of the relative Hilbert scheme of linear spaces of dimension~$s + 3$ 
contained in the fibers of~$\cQ_0 \to S_0$ and containing~$\Pi_0$. 
\end{coro}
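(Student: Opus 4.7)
I would proceed by a fiberwise analysis, computing the fiber of $F^{s+3}_{\Pi_0}(\cQ_0/S_0)$ over each $v \in S_0$ and then identifying the resulting double cover of~$S_0$ with~$\tY_A^{\ge 2}|_{S_0}$ via Lemma~\ref{lemma:lagrangians-quadrics}. Fix $v \in S_0$. Since $v \in Y_A^2 \setminus \P(V_5)$, equation~\eqref{eq:ker-qv} shows that $Q_v$ has corank exactly~$2$ with kernel $K_v := A \cap (v \wedge \bw2 V_6) \subset W$ of vector dimension~$2$. The first key point is a transversality claim: $\Pi_0 \cap K_v = 0$. Indeed, any nonzero $p \in \Pi_0 \cap K_v$ would satisfy $dQ_v|_p = 0$, so $T_p X = T_p M_X$; this contradicts the smoothness of~$X$ at~$p$, since $\dim M_X = \dim X + 1$ and~$M_X$ is smooth at~$p$ (in the special case, the vertex of $\CGr(2,V_5)$, being the only possible singular point of~$M_X$, cannot lie on~$X$, as otherwise the projection $X \to M'_X$ would fail to be a morphism).

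Quotienting by $K_v$, the quadric $Q_v$ descends to a smooth quadric $\bar Q_v \subset \P(W/K_v) \cong \P^{2s+3}$ of projective dimension~$2s+2$, and by the transversality, $\Pi_0$ maps injectively to an isotropic linear subspace $\bar\Pi_0$ of projective dimension~$s$. Any $(s+3)$-dimensional linear subspace $\Lambda \subset Q_v$ containing~$\Pi_0$ automatically contains the kernel~$K_v$, and $\Lambda/K_v \subset \bar Q_v$ is a maximal (i.e., $(s+1)$-dimensional) isotropic subspace containing~$\bar\Pi_0$; conversely, every such maximal isotropic lifts uniquely. The residual nondegenerate quadratic form on the $2$-dimensional vector space $\bar\Pi_0^\perp / \bar\Pi_0$ splits as two points of~$\P^1$, so maximal isotropics of~$\bar Q_v$ containing~$\bar\Pi_0$ correspond bijectively to these two points. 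Hence $F^{s+3}_{\Pi_0}(Q_v)$ consists of exactly two reduced points, and $F^{s+3}_{\Pi_0}(\cQ_0/S_0) \to S_0$ is a finite \'etale double cover.

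It remains to match this double cover with $\tY_A^{\ge 2} \times_{\P(V_6)} S_0$. By Lemma~\ref{lemma:lagrangians-quadrics}, the target is the canonical double cover of the corank-$2$ locus of~$\cQ$ over $S_0$, constructed in~\cite[Theorem~3.1]{dkcovers} and parameterizing the two spinor families of maximal isotropic subspaces in the smooth reductions~$\bar Q_v$. The two maximal isotropics of~$\bar Q_v$ containing~$\bar\Pi_0$ meet in exactly~$\bar\Pi_0$ (they differ in precisely one projective dimension), and parity considerations then place them in opposite spinor families. This yields a fiberwise bijection, which can be upgraded to a scheme-theoretic isomorphism by carrying out the same construction relatively over~$S_0$. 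The main obstacle I expect is precisely this last step---matching the two double covers globally rather than just on fibers; handling it cleanly should require a relative version of the spinor construction from~\cite[Section~3]{dkcovers}, together with the universal property of canonical double covers associated to the rank-$2$ cokernel sheaf of the family~$\cQ_0$.
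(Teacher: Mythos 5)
Your proposal is correct and follows essentially the same route as the paper: pass to the nondegenerate quotient family $\bar\cQ_0$ modulo the rank-$2$ kernels, check that $\Pi_0$ meets no kernel because $X \cap \Sing(Q_v) = \vide$ for smooth $X$, observe that the two maximal isotropic spaces of $\bar Q_v$ containing $\bar\Pi_0$ are reduced and lie in opposite spinor families, and match the resulting \'etale double cover with $\tY^{\ge 2}_A \times_{\P(V_6)} S_0$ via Lemma~\ref{lemma:lagrangians-quadrics}. The only difference is that the paper handles the relativization you flag as the ``main obstacle'' by citing ready-made relative statements (\cite[Proposition~3.10]{dkcovers} for the Stein factorization of $F^{s+3}(\cQ_0/S_0) \to S_0$ and \cite[Lemma~2.12]{KS} for the spinor-family dichotomy), rather than redoing the fiberwise computation.
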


\begin{proof}
By \cite[Proposition~3.10]{dkcovers} (with~\mbox{$m = \dim (W) = n + 5 = 2s + 6$} and~\mbox{$k = 2$}), the Stein factorization of the natural morphism $F^{s + 3}(\cQ_0 / S_0) \to S_0$ takes the form
\begin{equation*}
F^{s + 3}(\cQ_0 / S_0) \to
\tY^{\ge 2}_A \times_{\P(V_6)} S_0 
\xrightarrow{\ \pi_A\ }
Y^{\ge 2}_A \times_{\P(V_6)} S_0 = S_0
\end{equation*}
(we use~\cite[Lemma~3.9]{dkcovers} to check normality of $F^{s + 3}(\cQ_0 / S_0)$
and Lemma~\ref{lemma:lagrangians-quadrics} to identify the second degeneracy locus and its double covering).

As the proof of~\cite[Proposition~3.10]{dkcovers} shows, we have an isomorphism
\begin{equation}
\label{eq:f4-f2}
F^{s+3}(\cQ_0/S_0) \cong F^{s+1}(\bar{\cQ}_0/S_0),
\end{equation}
where $\bar\cQ_0 \to S_0$ is the family of nondegenerate $(2s + 2)$-dimensional quadrics obtained from~$\cQ_0$ 
by passing to the quotients by the kernel spaces of quadrics.\  Moreover, for any~\mbox{$[v] \in S_0$}, if~\mbox{$Q_v \subset \P(W)$} is the fiber of $\cQ_0$ at $[v]$, we have 
\begin{equation*}
X =  {\CGr}(2,V_5) \cap Q_v,
\end{equation*}
hence $X \cap \Sing(Q_v) = \vide$ (otherwise $X$ would be singular).\  Therefore, the space $\Pi_0$  intersects none of these kernel spaces hence projects isomorphically onto a  {space} $\bar{\Pi}_0$ in the fiber~$\bar{Q}_v$ of~$\bar\cQ_0$ at~$[v]$, so that a~$(s+3)$-space in $Q_v$ contains $\Pi_0$ if and only if 
the corresponding $(s+1)$-space in $\bar{Q}_0$ contains~$\bar{\Pi}_0$.\ This proves that we also have an isomorphism
\begin{equation*}
F^{s+3}_{\Pi_0}(\cQ_0/S_0) \cong F^{s+1}_{\bar{\Pi}_0}(\bar{\cQ}_0/S_0)
\end{equation*}
 of $S_0$-schemes.\
Finally, since the family of $(2s + 2)$-dimensional quadrics $\bar{\cQ_0}$ is everywhere nondegenerate, 
it follows from~\cite[Lemma~2.12]{KS} that $F^{s+1}_{\bar{\Pi}_0}(\bar{\cQ}_0/S_0)$ 
is isomorphic to the \'etale double covering of~$S_0$
obtained from the Stein factorization of the map $F^{s+1}(\bar{\cQ}_0/S_0) \to S_0$,
which, because of the isomorphism~\eqref{eq:f4-f2} and {the observation made at the beginning of the proof,}
is isomorphic to $\tY^{\ge 2}_A \times_{\P(V_6)} S_0 \to S_0$.
\end{proof}

\subsection{Linear spaces on  {ordinary} GM threefolds and fivefolds}

In~\cite{dkperiods}, we described the Hilbert schemes of linear spaces on a smooth GM variety~$X$ of dimension at least~$3$
in terms of its Lagrangian data set $(V_6, V_5, A )$, its EPW varieties, and their double covers.\ 
We focus here on the Hilbert schemes of lines on an ordinary GM threefold and $\sigma$-planes on an ordinary GM fivefold
(see~\eqref{def:f2sigma} for the  {definition}).\ 
The description of~\cite{dkperiods} was given in terms of the {\sf first quadratic fibration}
\begin{equation}\label{fqf}
\rho_1 \colon \P_X(\cU_X) \lra \P(V_5),
\end{equation}
where $\cU_X$ is 
  the pullback to $X$ of the tautological rank-2 vector bundle $\cU$ on $\Gr(2,V_5)$
and the map $\rho_1$ is the pullback along the {embedding} $X \hookrightarrow \Gr(2,V_5)$
 of the projection 
\begin{equation*}
\tilde\rho_1 \colon \P_{\Gr(2,V_5)}(\cU) \cong \Fl(1,2;V_5) \lra \P(V_5).
\end{equation*}
For each $[v] \in \P(V_5)$, we have 
\begin{equation*}
\tilde\rho_1^{\ -1}([v]) = \P(v \wedge V_5) \cong \P^3,
\end{equation*}
so that the fiber   $ \rho_1^{-1}([v])$ is a subscheme of $\P(v \wedge V_5)$.

The Hilbert  {scheme} $F_1(X)$ of lines on $X$ was identified {in~\cite[Proposition~4.1]{dkperiods}}
with the relative Hilbert scheme of lines  of the map $\rho_1$ and the
Hilbert scheme $F^2_\sigma(X)$ of $\sigma$-planes  on $X$ 
with the relative Hilbert schemes of planes of the map $\rho_1$.\ 
This defines maps
\begin{equation}
\label{eq:def-sigma}
\sigma \colon F_1(X) \to \P(V_5) 
\qquad\text{and}\qquad 
\sigma \colon F^2_\sigma(X) \to \P(V_5) .
\end{equation} 
To better describe  these maps, we set (see~\eqref{eq:yk-a-v5} and~\eqref{cancov} for the notation)
\begin{equation*}
\widetilde Y^{\ge 2}_{A,V_5}:=\pi_A^{-1}(Y^{\ge 2}_{A,V_5}) \subset  {\tY^{\ge 2}_A}.
 \end{equation*}

\subsubsection{$\sigma$-planes on ordinary GM fivefolds}
\label{subsubsection:planes}

Let $X = \Gr(2,V_5) \cap Q$ be an ordinary GM fivefold with Lagrangian data set $(V_6, V_5, A)$.\ 
By~\eqref{eq:moduli},  {we have~$[V_5] \in \P(V_6^\vee) \setminus Y_\Ap$}.\  
By~\cite[Proposition~4.5]{dk1}, the fibers of the  first quadratic fibration $\rho_1$ defined in \eqref{fqf} are
\begin{equation}
\label{eq:fibers-rho1-5fold}
\rho_1^{-1}([v]) \text{ is }
\begin{cases}
\text{a smooth quadric in }\P(v \wedge V_5) & \text{if }[v] \in \P(V_5) \setminus Y^{\ge 1}_{A,V_5},\\
\text{a quadric of corank~$1$ in }\P(v \wedge V_5) & \text{if }[v] \in Y^{1}_{A,V_5},\\
\text{the union of two planes in }\P(v \wedge V_5) & \text{if }[v] \in Y^{2}_{A,V_5},\\
\text{a double plane in }\P(v \wedge V_5) & \text{if } [v] \in Y^{3}_{A,V_5}.
\end{cases}
\end{equation}
Using this, we proved {in~\cite[Theorem~4.3(b)]{dkperiods}}  {and~\cite[Corollary~5.5]{dkcovers}} that there is an isomorphism 
\begin{equation}
\label{eq:f2sx}
\tilde\sigma \colon F^2_\sigma(X) \isomlra \tY^{\ge 2}_{A,V_5}
\end{equation}
such that $\pi_A \circ \tilde\sigma \colon F^2_\sigma(X) \to Y^{\ge 2}_{A,V_5}$ is the second map $\sigma$ from \eqref{eq:def-sigma}.\
This has the following simple consequence (compare with~\cite[Lemma~2.2]{nag}).

\begin{lemm}
\label{lemm11}
Let $A \subset \bw3V_6$ be a Lagrangian subspace with no decomposable vectors.\
For a general GM fivefold~$X$ such that $A(X) = A$, the Hilbert scheme $F^2_\sigma(X)$ of $\sigma$-planes 
contained in $X$ is a smooth connected curve of genus~$161$.
\end{lemm}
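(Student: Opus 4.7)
The starting point is the isomorphism $F^2_\sigma(X) \cong \tY^{\ge 2}_{A,V_5(X)}$ from~\eqref{eq:f2sx}, which converts the question about $\sigma$-planes on $X$ into one about a hyperplane section of the double EPW surface. By~\eqref{eq:wp-inverse-a} for $n=5$, asking for $X$ to be a general GM fivefold with $A(X) = A$ amounts to asking for $[V_5]$ to be a general point of the open set $Y^0_{\Ap} \subset \P(V_6^\vee)$, so the lemma reduces to showing that $\tY^{\ge 2}_{A,V_5}$ is smooth, connected, and of genus $161$ for such general $V_5$.

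For smoothness I would argue as follows. Since $A$ has no decomposable vectors, $Y^{\ge 2}_A$ is smooth away from the finite set $Y^{\ge 3}_A$, and $\tY^{\ge 2}_A$ is smooth away from the corresponding finitely many ordinary double points over $Y^{\ge 3}_A$. A general hyperplane $\P(V_5) \subset \P(V_6)$ avoids the finite set $Y^{\ge 3}_A$, so I may work over the smooth loci: Bertini applied to the smooth quasi-projective surface $\tY^{\ge 2}_A \setminus \pi_A^{-1}(Y^{\ge 3}_A)$ and its base-point-free linear system of hyperplane sections pulled back from $\P(V_6)$ gives both $Y^{\ge 2}_{A,V_5}$ and $\tY^{\ge 2}_{A,V_5}$ smooth.

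For connectedness, after passing to a resolution of singularities $Z \to \tY^{\ge 2}_A$, the pullback of $\cO(1)$ to $Z$ is big and nef. For $V_5$ general, the strict transform of $\tY^{\ge 2}_{A,V_5}$ equals itself and is a smooth divisor on $Z$, so Kawamata--Viehweg vanishing yields $H^1(Z, \cO_Z(-\tY^{\ge 2}_{A,V_5})) = 0$, forcing $h^0(\tY^{\ge 2}_{A,V_5}, \cO) = h^0(Z, \cO) = 1$. Alternatively, the Lefschetz hyperplane theorem for normal projective surfaces gives the surjection $\pi_1(Y^{\ge 2}_{A,V_5}) \thra \pi_1(Y^{\ge 2}_A)$, so the nontrivial étale double cover pulls back to a connected cover of $Y^{\ge 2}_{A,V_5}$. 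This step---propagating connectedness from $\tY^{\ge 2}_A$ through the singular locus to a hyperplane section---is the main technical obstacle, since one must be careful because $\tY^{\ge 2}_A$ is not smooth in general.

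Finally, for the genus, I would combine two classical computations. The decomposition~\eqref{cancov2} together with étaleness of $\pi_A$ on the smooth locus forces $\omega_{Y^{\ge 2}_A}(-3)$ to be a nontrivial $2$-torsion line bundle, so numerically $K_{Y^{\ge 2}_A} \equiv 3H$. Adjunction applied to the smooth hyperplane section $Y^{\ge 2}_{A,V_5}$ of the degree-$40$ surface $Y^{\ge 2}_A$ yields $2g - 2 = H^2 + 3H^2 = 160$, so $g(Y^{\ge 2}_{A,V_5}) = 81$; Riemann--Hurwitz for the étale double cover $\tY^{\ge 2}_{A,V_5} \to Y^{\ge 2}_{A,V_5}$ then gives $g(\tY^{\ge 2}_{A,V_5}) = 2 \cdot 81 - 1 = 161$.
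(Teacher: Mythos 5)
Your argument is correct and follows the paper's proof in all essentials: reduce to a general $[V_5]\in\P(V_6^\vee)\setminus Y_{\Ap}$, get smoothness of $Y^{\ge 2}_{A,V_5}$ from Bertini and of $\tY^{\ge 2}_{A,V_5}$ from \'etaleness of $\pi_A$ over it (since $Y^{\ge 3}_{A,V_5}=\vide$), compute $g(Y^{\ge 2}_{A,V_5})=81$ by adjunction from $K_{Y^{\ge 2}_A}$ numerically equivalent to $3H$ together with $\deg(Y^{\ge 2}_A)=40$, and conclude $g=2\cdot 81-1=161$ for the \'etale double cover. The only point of divergence is the connectedness step, which you single out as the main technical obstacle and attack with Kawamata--Viehweg vanishing on a resolution or a Lefschetz-type $\pi_1$ statement for normal surfaces: the paper disposes of it in one line by observing that $\tY^{\ge 2}_{A,V_5}$ is an ample divisor on the integral projective surface $\tY^{\ge 2}_A$, hence connected; no resolution and no special care with the singularities of $\tY^{\ge 2}_A$ are needed, all the more so as the general hyperplane section avoids them anyway.
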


\begin{proof}
 {According to the description of the moduli space in}~\eqref{eq:moduli}, 
a general GM fivefold~$X$ such that $A(X) = A$ corresponds to a general  {point} $[V_5] \in \P(V_6^\vee) \setminus Y_\Ap$ 
and for such a $[V_5]$,  {the finite scheme $Y^{\ge 3}_{A,V_5}$ is empty,}
the curve $Y^{\ge 2}_{A,V_5}$ is smooth  by Bertini's theorem, hence so is its \'etale 
(because~\mbox{$Y^{\ge3}_{A,V_5}=\vide$}) double cover $F^2_\sigma(X) \cong \tY^{\ge 2}_{A,V_5}$.\ 
Since~$K_{Y^{\ge 2}_A}$ is numerically equivalent to~$3H$, 
where~$H$ is the hyperplane class ({see~\eqref{cancov2} or}~\cite[(4.0.32)]{og7}), its genus is
\begin{equation}\label{genus81}
g(Y^{\ge 2}_{A,V_5}) = 1 + \frac12 (K_{Y^{\ge 2}_A} \cdot H + H^2) = 1 + 2H^2 = 
{1 + 2 \deg({Y^{\ge 2}_A}) = {}}
81.
\end{equation}
The smooth curve $\tY^{\ge 2}_{A,V_5}$ is ample  {on the integral surface $\tyta$}, hence connected.\ Its genus is therefore~$2g(Y^{\ge 2}_{A,V_5}) - 1 = 161$.
\end{proof}

\subsubsection{Lines on ordinary GM threefolds}
Let  $X = \Gr(2,V_5) \cap \P(W) \cap Q$ be a smooth  ordinary  GM threefold, 
where $W \subset \bw2V_5$ has codimension~2.\ 
 {By~\eqref{eq:moduli}, we have} 
\begin{equation*}
[V_5] \in Y^2_\Ap
\end{equation*}
and the line $  \P(W^\perp) \subset \P(\bw2V_5^\vee)$ is  a pencil of skew-symmetric forms on $V_5$.\
Since $X$ is smooth, these forms all have one-dimensional kernels  {(\cite[Remark~2.25]{dk1})}
and  these  kernels form a smooth conic 
\begin{equation}
\label{def:sigma1}
\Sigma_1(X)  = F^2_\sigma(M_X)  \subset \P(V_5)
\end{equation}
(see also~\cite[Section~3.2]{dim}).\ 
By~\cite[Proposition~4.5]{dk1}, we have $Y^{3}_{A,V_5} \subset \Sigma_1(X) \subset Y_{A,V_5}$  
and the fibers of the first quadratic fibration $\rho_1$ defined in~\eqref{fqf} are
\begin{equation}
\label{eq:fibers-rho1-3fold}
\rho_1^{-1}([v]) =
\begin{cases}
\text{two reduced points in $\P(v \wedge V_5)$}, & \text{if $[v] \in \P(V_5) \setminus Y^{\ge 1}_{A,V_5}$},\\
\text{a double point in $\P(v \wedge V_5)$}, & \text{if $[v] \in Y^{1}_{A,V_5} \setminus \Sigma_1(X)$},\\
\text{the line $\P(W \cap (v \wedge V_5))$}, & \text{if $[v] \in Y^{2}_{A,V_5} \setminus \Sigma_1(X)$},\\
\text{a smooth conic in $\P(v \wedge V_5)$}, & \text{if $[v] \in \Sigma_1(X) \cap Y^{1}_{A,V_5}$},\\
\text{the union of two lines in $\P(v \wedge V_5)$}, & \text{if $[v] \in \Sigma_1(X) \cap Y^{2}_{A,V_5}$},\\
\text{a double line in $\P(v \wedge V_5)$}, & \text{if $[v] \in Y^{3}_{A,V_5}$}.
\end{cases}
\end{equation}
Using this, we proved the following result.

\begin{prop}[{\cite[Theorem~4.7]{dkperiods}}] 
\label{proposition:f1x-gm3}
Let  $X$ be a smooth  ordinary  GM threefold.\
 The morphism defined in~\eqref{eq:def-sigma} factors through
\begin{equation}
\label{eq:f1x}
\sigma \colon F_1(X) \lra Y^{\ge 2}_{A,V_5}.
\end{equation}
It is an isomorphism over $Y^{\ge 2}_{A,V_5} \setminus \Sigma_1(X)$ 
and a double cover over the points of ${Y^{\ge 2}_{A,V_5} \cap \Sigma_1(X)}$, which is branched over~$Y^{3}_{A,V_5} \cap \Sigma_1(X)$.\ 
\end{prop}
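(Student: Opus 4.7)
The plan is to reduce the statement to the explicit fiber description~\eqref{eq:fibers-rho1-3fold} of the first quadratic fibration $\rho_1$ from~\eqref{fqf}, via the identification of $F_1(X)$ with the relative Hilbert scheme of lines in the fibers of $\rho_1$ from~\cite[Proposition~4.1]{dkperiods}, which is recalled just above the proposition. Under this identification, a line $L \subset X \subset \Gr(2,V_5)$ corresponds, by Lemma~\ref{lemma:gr-linear}(a), to a flag $V_1 \subset V_3 \subset V_5$, and the morphism $\sigma$ sends $[L]$ to $[V_1] \in \P(V_5)$, while $L$ lifts to a line contained in the fiber $\rho_1^{-1}([V_1])$.

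First, I would scan the six cases of~\eqref{eq:fibers-rho1-3fold} to locate the fibers containing a line. These are exactly the three strata
\begin{equation*}
Y^{2}_{A,V_5} \setminus \Sigma_1(X),\qquad
\Sigma_1(X) \cap Y^{2}_{A,V_5},\qquad
Y^{3}_{A,V_5},
\end{equation*}
with respectively one line, two lines, and one (nonreduced) line in the corresponding fiber. Since $Y^{\ge 4}_A = \vide$ forces $Y^{\ge 2}_{A,V_5} = Y^{2}_{A,V_5} \sqcup Y^{3}_{A,V_5}$, and since $Y^{3}_{A,V_5} \subset \Sigma_1(X)$, the union of these three strata equals $Y^{\ge 2}_{A,V_5}$, which gives the set-theoretic factorization~\eqref{eq:f1x} together with the desired fiber cardinalities outside the branching locus.

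To upgrade the set-theoretic count, I would verify that $\sigma$ is \'etale where claimed. Over the stratum $Y^{\ge 2}_{A,V_5} \setminus \Sigma_1(X)$, the curve $Y^{\ge 2}_{A,V_5}$ is smooth (because $Y^3_{A,V_5} \subset \Sigma_1(X)$), and a direct normal-bundle computation on the unique line $L \subset \rho_1^{-1}([v])$ shows that $H^0(L, N_{L/X})$ is one-dimensional, yielding \'etaleness at $[L]$ and hence the asserted isomorphism. The same calculation applied separately to each of the two distinct components of the reducible fibers over $(\Sigma_1(X) \cap Y^{2}_{A,V_5}) \setminus Y^{3}_{A,V_5}$ shows that $\sigma$ is \'etale of degree $2$ there.

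The main obstacle is the scheme-theoretic analysis over $Y^{3}_{A,V_5}$, where the fiber of $\rho_1$ is a double line. Set-theoretically there is only one line in $F_1(X)$ above such a point, but the first-order neighborhood inside the doubled-line structure should contribute a nontrivial tangent direction to $F_1(X)$. I would establish this by a local Hilbert-scheme computation: infinitesimal deformations of the reduced line inside the ambient flag variety $\Fl(1,3;V_5)$ are cut down by the defining equations of $X$ in a way that produces exactly a length-two nonreduced point in the fiber of $\sigma$, which is precisely the condition for $\sigma$ to be a degree-two covering branched over $Y^{3}_{A,V_5} \cap \Sigma_1(X) = Y^{3}_{A,V_5}$.
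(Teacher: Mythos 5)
First, a remark on the comparison itself: the paper does not prove this proposition --- it is imported verbatim from \cite[Theorem~4.7]{dkperiods}, and the surrounding text only recalls the fiber description~\eqref{eq:fibers-rho1-3fold} on which that proof rests. Your set-theoretic skeleton (identify $F_1(X)$ with the relative Hilbert scheme of lines of $\rho_1$, scan~\eqref{eq:fibers-rho1-3fold} for the fibers containing a line, and observe that the three relevant strata exhaust $Y^{\ge 2}_{A,V_5}$) is exactly the intended route and is correct as far as it goes.

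The genuine gap is in your scheme-theoretic upgrade over the first stratum. You assert that the curve $Y^{\ge 2}_{A,V_5}$ is smooth away from $\Sigma_1(X)$ ``because $Y^{3}_{A,V_5} \subset \Sigma_1(X)$,'' and correspondingly that $H^0(L,N_{L/X})$ is one-dimensional for every line lying over $Y^{2}_{A,V_5} \setminus \Sigma_1(X)$. Both claims are false for an arbitrary smooth ordinary $X$, which is the generality in which the proposition is stated. The set $Y^{3}_{A,V_5}$ is $\Sing(Y^{\ge 2}_A)\cap\P(V_5)$, not $\Sing\bigl(Y^{\ge 2}_A\cap\P(V_5)\bigr)$: the hyperplane section acquires extra singular points wherever $\P(V_5)$ is tangent to the smooth locus of the surface $Y^{\ge 2}_A$, and these need not lie on $\Sigma_1(X)$. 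Indeed, Lemma~\ref{le12} only establishes $\Sing(Y^{\ge 2}_{A,V_5}) = \Sigma_1(X)\cap Y^{\ge 2}_{A,V_5}$ when \emph{both} $A$ and $V_5$ are general. At the lines corresponding to such extra singular points one has $N_{L/X}\cong\cO(1)\oplus\cO(-2)$, so $h^0(N_{L/X})=2$ and $F_1(X)$ is singular there (of embedding dimension~$2$, as the paper notes right after the proposition); your \'etale-plus-bijective argument therefore cannot produce the asserted isomorphism over $Y^{\ge 2}_{A,V_5}\setminus\Sigma_1(X)$. What is actually needed is a direct comparison of the scheme structure of the relative Hilbert scheme of lines with the scheme structure on $Y^{\ge 2}_A$ coming from the Lagrangian-intersection (degeneracy-locus) definition of~\cite{dkcovers}, valid also at singular points of both sides; this is the real content of \cite[Theorem~4.7]{dkperiods}. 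Your treatment of the stratum $Y^{3}_{A,V_5}$ has the same character: it correctly names the length-two fiber one must exhibit, but the ``local Hilbert-scheme computation'' is only announced, not performed.
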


In addition, elementary deformation theory implies that $F_1(X)$ has pure dimension~1,  and local embedding dimension~2 at every singular point  
(\cite[Proposition~4.2.2]{ip}, {\cite[Lemma~2.2.3]{KPS}}).\

\begin{lemm}\label{le12}
Let $A \subset \bw3V_6$ be a Lagrangian subspace with no decomposable vectors.\
For a general GM threefold~$X$ such that $A(X) = A$, the curve $Y^{\ge 2}_{A,V_5}$ has arithmetic genus~$81$
and the intersection $\Sigma_1(X) \cap Y^{\ge 2}_{A,V_5}$ is a finite scheme of length~$10$ contained in $\Sing(Y^{\ge 2}_{A,V_5})$.

If  also $A$   {is}
general, the curve $F_1(X)$ is a smooth irreducible curve of genus~$71$, 
the map~\mbox{$\sigma \colon F_1(X) \to Y^{\ge 2}_{A,V_5}$} is the normalization morphism, 
and $\Sing(Y^{\ge 2}_{A,V_5}) = \Sigma_1(X) \cap Y^{\ge 2}_{A,V_5}$.
\end{lemm}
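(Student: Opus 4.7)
The plan is to compare $Y^{\ge 2}_{A,V_5}$ with the Hilbert scheme of lines $F_1(X)$ via the morphism $\sigma$ of Proposition~\ref{proposition:f1x-gm3}, combining adjunction on the surface $Y^{\ge 2}_A$ with the classical computation of the genus of $F_1(X)$.

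For the arithmetic genus, the key observation is that $Y^{\ge 3}_A \subset \P(V_6)$ is finite and, by~\eqref{eq:moduli}, the $V_5$'s yielding a GM threefold with $A(X) = A$ form an open subset of the irreducible surface $Y^2_\Ap$. Passing through a fixed point of $\P(V_6)$ is a hyperplane condition on $[V_5] \in \P(V_6^\vee)$, so for a general such $X$ the hyperplane $\P(V_5)$ avoids $Y^{\ge 3}_A$ entirely. Hence $Y^{\ge 2}_{A,V_5}$ lies in the smooth locus of $Y^{\ge 2}_A$ and is a Cartier divisor of class $H$, and adjunction with $K_{Y^{\ge 2}_A} \equiv 3H$ (as used in the proof of Proposition~\ref{prop25}) gives
\[
2p_a(Y^{\ge 2}_{A,V_5}) - 2 = (K_{Y^{\ge 2}_A} + H) \cdot H = 4 H^2 = 4 \cdot 40,
\]
so $p_a(Y^{\ge 2}_{A,V_5}) = 81$. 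For the length, I use that by Proposition~\ref{proposition:f1x-gm3}, $\sigma$ is an isomorphism over $Y^{\ge 2}_{A,V_5} \setminus \Sigma_1(X)$ and an unramified double cover over the intersection (for general $X$, the finite set $Y^{\ge 3}_{A,V_5}$ is empty by the same general-position argument). The cokernel $\cF := \sigma_*\cO_{F_1(X)} / \cO_{Y^{\ge 2}_{A,V_5}}$ is then a skyscraper sheaf supported on $\Sigma_1(X) \cap Y^{\ge 2}_{A,V_5}$ whose length equals $\len(\Sigma_1(X) \cap Y^{\ge 2}_{A,V_5})$, and comparing Euler characteristics yields
\[
\len(\Sigma_1(X) \cap Y^{\ge 2}_{A,V_5}) = p_a(Y^{\ge 2}_{A,V_5}) - p_a(F_1(X)) = 81 - 71 = 10,
\]
invoking the classical value $p_a(F_1(X)) = 71$ for the Hilbert scheme of lines on a general Fano threefold of genus $6$. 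The containment in $\Sing(Y^{\ge 2}_{A,V_5})$ follows since $\sigma$, being finite and birational, is a partial normalization, and such a map is an isomorphism over each normal point of its target; but $\sigma$ has two preimages over each point of the intersection, so those points cannot be smooth on $Y^{\ge 2}_{A,V_5}$.

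When $A$ is moreover general, $Y^{\ge 3}_A = \emptyset$ and $Y^{\ge 2}_A$ is smooth, so the ten singularities of $Y^{\ge 2}_{A,V_5}$ are ordinary nodes (two distinct smooth branches, corresponding to the two lines in the $\sigma$-fiber over each such point), $\sigma$ identifies with the full normalization, and $F_1(X)$ is smooth of genus $81 - 10 = 71$. Irreducibility follows from the classical irreducibility of $F_1(X)$ for a general GM threefold, or equivalently from Bertini-type irreducibility of a generic hyperplane section of the integral surface $Y^{\ge 2}_A$. The equality $\Sing(Y^{\ge 2}_{A,V_5}) = \Sigma_1(X) \cap Y^{\ge 2}_{A,V_5}$ combines the containment $\supset$ from above with $\subset$, the latter holding because $\sigma$ is an isomorphism over $Y^{\ge 2}_{A,V_5} \setminus \Sigma_1(X)$ and $F_1(X)$ is smooth. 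The main obstacle is the classical input $p_a(F_1(X)) = 71$; an intrinsic derivation would compute the Chern class of the natural vector bundle on $\Fl(1,3;V_5) = F_1(\Gr(2,V_5))$ whose zero locus cuts out $F_1(X)$ and extract the genus via a standard double-point formula, but invoking the classical result is notably simpler.
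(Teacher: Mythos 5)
Your adjunction computation of $p_a(Y^{\ge 2}_{A,V_5})=81$ is correct and is exactly the paper's (equation~\eqref{genus81}), and your argument that the $2:1$ points of the finite birational map $\sigma$ must be singular points of $Y^{\ge 2}_{A,V_5}$ also matches the paper. But the core of the first assertion --- that $\Sigma_1(X)\cap Y^{\ge 2}_{A,V_5}$ is a finite scheme of length $10$ --- is not established by your argument. First, you never prove finiteness, i.e.\ that the conic $\Sigma_1(X)$ is not contained in $Y^{\ge 2}_{A,V_5}$. This is not a routine general-position statement, because both the conic and the curve live in $\P(V_5)$ and move with $[V_5]$; the paper devotes the first paragraph of its proof to precisely this point, identifying $\Sigma_1(X)$ with a fiber of $q\colon\hY_A\to Y_{\Ap}$ and using the irreducibility of the exceptional divisor $E=p^{-1}(Y^{\ge 2}_A)$. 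Without finiteness your sheaf $\cF$ is not a skyscraper and the Euler-characteristic argument does not start.

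Second, even granting finiteness, your computation of the length has two gaps. (a) In the first part of the lemma $A$ is an \emph{arbitrary} Lagrangian with no decomposable vectors, so $X$ is general only within the fiber $\mathfrak{p}_3^{-1}([A])$; the classical value $p_a(F_1(X))=71$ is proved for a general GM threefold (general $A$), and to use it here you would need a flatness/constancy argument for the relative Hilbert scheme of lines (and connectedness of $F_1(X)$, which the paper deduces \emph{from} this lemma). (b) More seriously, $\len(\cF)_p$ is the local delta invariant of $\sigma$ at $p$, whereas the lemma asserts something about the \emph{scheme-theoretic} intersection multiplicity of the conic with the curve inside $\P(V_5)$; these invariants differ in general (already at a node whose branches span a plane containing the tangent line of $\Sigma_1(X)$, or if $F_1(X)$ is itself singular there), and nothing in your argument identifies them. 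The paper sidesteps both issues by computing the length directly as the intersection number $\Sigma_1(X)\cdot E$ on $\hY_A$, using $E\lin 24H'-5E'$ and $\Sigma_1(X)\cdot E'=-2$; this is valid for every $A$ and makes no reference to $F_1(X)$. Finally, your second paragraph is circular as written: you derive $g(F_1(X))=71$ from the length being $10$, having derived the length $10$ from $g(F_1(X))=71$. The paper instead quotes smoothness, irreducibility and genus $71$ of $F_1(X)$ for general $X$ from Markushevich and Iskovskikh--Prokhorov and then reads off that $\sigma$ is the normalization and that $\Sing(Y^{\ge 2}_{A,V_5})=\Sigma_1(X)\cap Y^{\ge 2}_{A,V_5}$.
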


\begin{proof}
For general $[V_5] \in Y^{\ge 2}_\Ap$, the conic $\Sigma_1(X)$ is not contained in $Y^{\ge 2}_{A}$, 
hence not in~$Y^{\ge 2}_{A,V_5}$.\ 
Indeed, $\Sigma_1(X)$ can be identified with the fiber of the partial desingularization~$q\colon \hY_A \to Y_\Ap$  (\cite[Section~B.2]{dk1})
over the point $[V_5] \in Y^2_\Ap$.\ If a general fiber is contained in   the exceptional divisor $E=p^{-1}(Y^{\ge 2}_A)$ of the contraction $p\colon \hY_A \to Y_A$,
 the exceptional divisor~$E'=q^{-1}(Y^{\ge 2}_\Ap)$ of  $q$ 
is contained in~$E$,
hence $E' = E$ since $E$ is irreducible;
but this was shown to be false in the proof of~\cite[Lemma~B.5]{dk1}.\

The equivalence~$E \lin 24H' - 5E'$ established in the proof of~\cite[Lemma~B.5]{dk1} implies that 
the length of the  finite scheme \mbox{$\Sigma_1(X) \cap Y^{\ge 2}_{A,V_5}$}, 
which is the intersection number in $\hY_A$ of the curve $\Sigma_1(X)$, viewed as above as a fiber of $q$, with the divisor $E$, is
\begin{equation*}
\Sigma_1(X)\cdot (24H' - 5E')=  -5\Sigma_1(X)\cdot  E'.
\end{equation*}
{Since $Y_\Ap$ has ordinary double points along~$Y^2_\Ap$,  the exceptional divisor~$E'$ of the blow up of~$Y^2_\Ap$ 
has intersection~$-2$ with the fibers, hence the length is~$\Sigma_1(X) \cdot E = 10$.\
Furthermore, the scheme $\Sigma_1(X) \cap Y^{\ge 2}_{A,V_5}$}
is contained in~$\Sing(Y^{\ge 2}_{A,V_5})$, because the finite birational map~{\eqref{eq:f1x}} is~$2 : 1$ over~$\Sigma_1(X)$.\ 
The arithmetic genus of $Y^{\ge 2}_{A,V_5}$ was computed in~\eqref{genus81}.\ 

When  also $A$ is  general, $X$ is a general GM threefold, hence $F_1(X)$ is a smooth irreducible curve of genus~71 (\cite[Proposition~6.4]{mark}, \cite[Theorem~4.2.7]{ip}), 
so that $Y_{A,V_5}^{\ge 2}$ is an integral curve which is smooth away from~$\Sigma_1(X) \cap Y^{\ge 2}_{A,V_5}$
and~$F_1(X)$ is its normalization. 
\end{proof}

\begin{lemm}
\label{lemma:f1-connected} 
 {For any smooth GM threefold $X$, the curve $F_1(X)$ is connected.} 
\end{lemm}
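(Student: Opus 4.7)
The plan is to deduce that $F_1(X)$ is connected for every smooth GM threefold~$X$ from the generic case established in Lemma~\ref{le12}, via a Stein factorization argument over the moduli space.

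By Lemma~\ref{le12}, for a general smooth GM threefold $X_0$, the curve $F_1(X_0)$ is smooth and irreducible, hence geometrically connected.\ The coarse moduli space $\bM_3^{\rm GM}$ is irreducible and normal: the description~\eqref{eq:moduli} exhibits it as the GIT quotient by $\PGL(V_6)$ of the locus $\{(A,V_5) \mid \dim(A \cap \bw3V_5) \in \{2,3\}\}$ inside $\LGr_{\rm ndv}(\bw3V_6) \times \P(V_6^\vee)$.\ This locus fibers over the smooth irreducible base $\LGr_{\rm ndv}(\bw3V_6)$ with integral fibers $Y_\Ap^{\ge 2}$ (see Section~\ref{defepw}), hence it is irreducible; and the GIT quotient of a smooth variety by a reductive group is normal.

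Passing to a smooth \'etale cover $T \to \bM_3^{\rm GM}$ equipped with a universal family $\cX_T \to T$ (or equivalently, working on the smooth moduli stack $\fM_3^{\rm GM}$ of~\cite{dkmoduli}), form the relative Hilbert scheme of lines $\cF_T \to T$; it is proper, and its fiber over $t \in T$ is the curve $F_1(X_t)$.\ Applying Stein factorization yields $\cF_T \xrightarrow{g} \cS_T \xrightarrow{h} T$ with $g$ having geometrically connected fibers and $h$ finite.\ By Lemma~\ref{le12} the generic fiber of $\cF_T$ is geometrically connected, so the generic fiber of $\cS_T$ is a single point, and $h$ is finite and birational.\ Since $T$ is normal, Zariski's main theorem yields $\cS_T = T$, and every fiber of $\cF_T \to T$ is therefore connected.\ Since every smooth GM threefold arises as such a fiber, the curve $F_1(X)$ is connected.

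The main technical input will be the irreducibility and normality of the moduli space $\bM_3^{\rm GM}$; once these are in hand, the remaining deduction is a formal application of Stein factorization combined with Zariski's main theorem, and no extra hypotheses on~$X$ (ordinary versus special, general versus arbitrary) are needed.
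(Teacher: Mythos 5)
Your overall strategy --- propagating connectedness from the general fibre (Lemma~\ref{le12}) to all fibres via Stein factorization over a family --- is the same as the paper's, but there is a genuine gap at the step ``$h$ is finite and birational, so Zariski's main theorem yields $\cS_T=T$''. That form of Zariski's main theorem applies to a finite morphism from an \emph{irreducible} variety that is birational onto a normal target; it does not exclude the possibility that $\cS_T$ has extra irreducible components lying over proper closed subsets of $T$. Such a component would correspond exactly to a connected component of $F_1(X)$, for some special $X$, which does not deform to nearby GM threefolds; in that case $h^{-1}([X])$ would have at least two points even though $h$ is generically bijective. The number of connected components of the fibres of a proper morphism to a normal base is \emph{not} upper semicontinuous in general (consider $\A^1\sqcup\{\mathrm{pt}\}\to\A^1$, the extra point mapping to the origin), so before invoking Stein factorization you must prove that every irreducible component of $\cF_T$ dominates $T$.

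Supplying that missing step is precisely the content of the paper's proof: working over a general one-parameter deformation $\varpi\colon\cX\to B$ of $X$, the exact sequence $0\to N_{L/X}\to N_{L/\cX}\to\cO_L\to 0$ gives $\chi(L,N_{L/\cX})=\chi(L,N_{L/X})+1=2$, so every component of the Hilbert scheme $F_1(\cX)$ has dimension at least $2$ at every point; since $F_1(X)$ has pure dimension $1$ (by the elementary deformation theory recalled just before the lemma), every component of $F_1(\cX/B)$ must dominate the curve $B$, and only then do Stein factorization and Lemma~\ref{le12} give connectedness of every fibre. Working over a curve $B$ also lets one avoid your auxiliary claims about $\bM_3^{\rm GM}$; note in passing that your justification of its normality is off, since the parameter space in~\eqref{eq:moduli} is not smooth (the fibres $Y^{\ge 2}_{\Ap}$ are singular along $Y^{\ge 3}_{\Ap}$).
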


\begin{proof}
Consider a general deformation
$\varpi \colon \cX \to B$ with central fiber $X$, parameterized by a smooth irreducible curve $B$.\ 
For any line $L\subset X$, there is an exact sequence
\begin{equation*}
0\to N_{L/X}\to N_{L/\cX}\to \cO_L\to 0.
\end{equation*}
It follows that $\chi(L, N_{L/\cX})=\chi(L, N_{L/X})+1=2$ hence, by deformation theory, 
every component of the Hilbert scheme $F_1(\cX)$ of lines contained in $\cX$
has dimension at least 2 at the point~$[L]$.\ 
Since~$F_1(X)$ has pure dimension $1$ at~$[L]$, every component of~$F_1(\cX)$ passing through~$[L]$ dominates~$B$.\ 
Deformations of~$L$ in $\cX$ are contained in the fibers of~$\varpi$, hence  every irreducible component of~$F_1(\cX/B)$ dominates~$B$.\ 
Since the general fiber of $F_1(\cX/B)\to B$ is a smooth irreducible curve (Lemma~\ref{le12}),   
Stein factorization implies that every fiber is connected.
\end{proof}

\section{{Topological} preliminaries}
\label{section:topology}

In~\cite[Section~4.3]{T72}, Tyurin gave a beautiful argument (which he attributed to Clemens) proving the surjectivity of the Abel--Jacobi map 
given by the universal line on a threefold.\
In this section, we recall this argument and prove the generalization 
on which our results about GM threefolds and fivefolds are based.\
In Section~\ref{subsection:simplicty}, we use Picard--Lefschetz theory 
to show that intermediate Jacobians of very general GM varieties of dimensions~3 or~5 
 have trivial endomorphism rings.

\subsection{Abel--Jacobi maps}
\label{subsection:aj}

We start by recalling a few properties  of  Abel--Jacobi maps.

Let $X$ and $Y$ be smooth proper varieties of respective dimensions $d_X$ and $d_Y$, 
let $Z$ be an algebraic cycle of dimension~$d_Y + c$ on $X \times Y$,  and let~$k$ be an integer.\
The Abel--Jacobi map 
\begin{equation*}
\AJ_Z \colon H_k(Y,\Z) \lra H_{k + 2c}(X,\Z)
\end{equation*}
is defined as the composition
\begin{multline*}
H_k(Y,\Z) \isomlra
H^{2d_Y-k}(Y,\Z) \xrightarrow{\ p_Y^*\ }
H^{2d_Y-k}(X \times Y,\Z) 
\\
\xrightarrow{\ \cdot\, \smile\, [Z]\ } 
H^{2d_Y + 2d_X - 2c - k}(X \times Y,\Z) \isomlra
H_{k + 2c}(X \times Y,\Z) \xrightarrow{\ p_{X*}\ }
H_{k + 2c}(X,\Z),
\end{multline*}
where  {$p_X$ and $p_Y$ are the projections from~$X \times Y$ onto the factors and} the isomorphisms are given by Poincar\'e duality
and the middle map is the cup-product with the cohomology class of the cycle~$Z$.

We will use the following functoriality properties of the Abel--Jacobi map.

\begin{lemm}
\label{lemma:aj-properties}
Let $i \colon X \to X'$ and $j \colon Y' \to Y$ be morphisms of smooth proper varieties.

$\textnormal{(a)}$ We have $\AJ_Z \circ j_* = \AJ_{(\Id_X \times j)^*(Z)}$ and $i_* \circ \AJ_Z = \AJ_{(i \times \Id_Y)_*(Z)}$.

$\textnormal{(b)}$ If $Z'$ is a cycle on $X' \times Y$, one has $\AJ_{(i \times \Id_Y)^*(Z')} = i^* \circ \AJ_{Z'}$.

$\textnormal{(c)}$ If $Z'$ is a cycle on $X \times Y'$, one has $\AJ_{(\Id_X \times j)_*(Z')} = \AJ_{Z'} \circ j^*$.
\end{lemm}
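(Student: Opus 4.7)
The plan is to unwind the definition of the Abel--Jacobi map and reduce each equality to a standard diagram chase combining base change in Cartesian squares with the projection formula. Write
\[
\AJ_Z = p_{X*} \circ \mu_{X \times Y} \circ (- \smile [Z]) \circ p_Y^* \circ \mu_Y^{-1},
\]
where $\mu_A \colon H^j(A,\Z) \isomto H_{2\dim A - j}(A,\Z)$ denotes Poincar\'e duality. For a proper morphism $f \colon A \to B$ of smooth proper varieties I will use the Gysin map $f_! := \mu_B \circ f_* \circ \mu_A^{-1}$ on cohomology. The only three compatibilities that enter the proof are: the cycle class identities $[f_*Z] = f_![Z]$ and $[f^*Z] = f^*[Z]$; base change $f^* g_! = g'_! f'^*$ for every Cartesian square (for the Gysin map, under the smoothness present here); and the projection formula $f_!(\alpha \smile f^*\beta) = f_!(\alpha) \smile \beta$.

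For the first identity in (a) I would work in the Cartesian square
\[
\begin{array}{ccc}
X \times Y' & \xrightarrow{\ \Id_X \times j\ } & X \times Y \\
{\scriptstyle p_{Y'}}\downarrow & & \downarrow{\scriptstyle p_Y} \\
Y' & \xrightarrow{\ j\ } & Y.
\end{array}
\]
Given $y \in H_k(Y',\Z)$, first rewrite $\mu_Y^{-1}(j_*y) = j_!\,\mu_{Y'}^{-1}(y)$ by definition of the Gysin map. Base change converts $p_Y^* \circ j_!$ into $(\Id_X \times j)_! \circ p_{Y'}^*$, and the projection formula gives
\[
(\Id_X \times j)_!\bigl(p_{Y'}^*\mu_{Y'}^{-1}(y)\bigr) \smile [Z] = (\Id_X \times j)_!\bigl(p_{Y'}^*\mu_{Y'}^{-1}(y) \smile (\Id_X \times j)^*[Z]\bigr).
\]
Applying $p_{X*} \circ \mu_{X \times Y}$ and using the identity $p_X \circ (\Id_X \times j) = p_X$, which yields $p_{X*} \circ (\Id_X \times j)_* = p_{X*}$, together with $(\Id_X \times j)^*[Z] = [(\Id_X \times j)^* Z]$, identifies $\AJ_Z(j_*y)$ with $\AJ_{(\Id_X \times j)^*Z}(y)$.

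The three remaining identities are obtained by the same mechanism, applied instead to the Cartesian square
\[
\begin{array}{ccc}
X \times Y & \xrightarrow{\ i \times \Id_Y\ } & X' \times Y \\
{\scriptstyle p_X}\downarrow & & \downarrow{\scriptstyle p_{X'}} \\
X & \xrightarrow{\ i\ } & X'
\end{array}
\]
for the second half of (a) and for (b), and to the Cartesian square above in the $Y$-direction for (c). In each case the roles of base change and projection formula are interchanged depending on whether the pushforward or the pullback appears on the $X$- or $Y$-factor, and the appropriate cycle-class identity $[(\,\cdot\,)_*Z] = (\,\cdot\,)_![Z]$ or $[(\,\cdot\,)^*Z] = (\,\cdot\,)^*[Z]$ places $[Z]$ on the correct side of the projection formula. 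The equations $p_X \circ (i \times \Id_Y) = i \circ p_X$ and $p_Y \circ (\Id_X \times j) = j \circ p_{Y'}$ ensure that the outer factors of the compositions match.

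The proof presents no genuine difficulty: it is a routine diagram chase of the type found in Fulton's treatment of intersection theory. I expect the only real obstacle to be notational, namely the need to track Poincar\'e-duality shifts and to remember on which factor of $X \times Y$ or $X' \times Y$ the cycle is being transported; this is precisely why it pays to prove all four identities in parallel from the same pair of Cartesian squares rather than separately.
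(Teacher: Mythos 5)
Your proposal is correct and follows exactly the route the paper takes: the paper's entire proof reads ``All these statements follow from base change and the projection formula,'' and your argument is simply the careful expansion of that one line, with the Gysin-map formalism and cycle-class compatibilities supplying the bookkeeping.
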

\begin{proof}
All these statements follow from base change and the projection formula.
\end{proof}

\subsection{Generalized blow up decomposition}

We will need the following (co)homological result generalizing the formula for the (co)homology of a smooth blow up
{(see~\cite[Proposition~46]{BFM} for another proof)}.

\begin{lemm}
\label{lemma:blowup-plus}
Let $S$ be a smooth proper variety and let $E$ be a rank-$r$ vector bundle on $S$.\
Let us consider $s \in H^0(S,E^\vee) \isom H^0(\P_S(E),\cO(1))$.\ 
We denote by ${Z} \subset S$ the zero-locus of $s$ considered as a section of~$E^\vee$ 
and by~$\tS \subset \P_S(E)$  the zero-locus of~$s$ considered as a section of $\cO(1)$.

Then $\tS$ is  a smooth hypersurface
in $\P_S(E)$ if and only if ${Z}$ is smooth of pure codimension~$r$ in~$S$; 
in this case, there are direct sum decompositions
\begin{equation*}
H^\bullet(\tS,\Z) =  H^\bullet(S,\Z) \oplus H^{\bullet-2}(S,\Z) \oplus \dots \oplus H^{\bullet - 2(r-2)}(S,\Z) 
\oplus H^{\bullet - 2(r-1)}({Z},\Z)
\end{equation*}
and 
\begin{equation*}
H_\bullet(\tS,\Z) = H_{\bullet - 2(r-1)}({Z},\Z) \oplus
 H_{\bullet - 2(r-2)}(S,\Z) \oplus \dots \oplus H_{\bullet-2}(S,\Z) \oplus H_{\bullet}(S,\Z) .
\end{equation*}
\end{lemm}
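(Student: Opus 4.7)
The plan is to prove the smoothness equivalence by a local Jacobian calculation, and to establish the (co)homology decomposition by analyzing the Leray spectral sequence of the projection $\pi\colon \tS \to S$, using explicit global classes to force its degeneration and to realize the splitting. For smoothness, trivialize $E$ on an open $U \subset S$ and write $s|_U = (s_1,\ldots, s_r)$, so that $Z \cap U$ is cut out by the $s_i$ and $\tS \cap (U \times \P^{r-1})$ is the hypersurface $f := \sum_i t_i s_i = 0$. The partial $\partial f/\partial t_i$ equals $s_i$, so $\tS$ is automatically smooth over $S \setminus Z$; at a point $(x,[t])$ with $x \in Z$, all $t$-partials vanish and smoothness amounts to $\sum_i t_i\, ds_i(x) \ne 0$ for every $[t] \in \P^{r-1}$, i.e., to the linear independence of the differentials $ds_1(x),\ldots, ds_r(x)$, which is precisely the condition that $Z$ be smooth of codimension $r$ at $x$.

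Assume now both smoothness conditions hold. The fibers of $\pi$ are $\P(\ker s(x)) \isom \P^{r-2}$ over $x \in S \setminus Z$ and $\P(E_x) \isom \P^{r-1}$ over $x \in Z$; set $D := \pi^{-1}(Z) = \P_Z(E|_Z)$, $\iota\colon D \hra \tS$, $j\colon Z \hra S$, and $\tilde\pi\colon D \to Z$. Proper base change applied to the Leray spectral sequence
\begin{equation*}
E_2^{p,q} = H^p(S, R^q\pi_*\Z_{\tS}) \Longrightarrow H^{p+q}(\tS,\Z)
\end{equation*}
gives $R^q\pi_*\Z = 0$ for $q$ odd, $R^{2k}\pi_*\Z \isom \Z_S$ for $0 \le k \le r-2$, and $R^{2(r-1)}\pi_*\Z \isom j_*\Z_Z$. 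To force degeneration and realize the splitting I exhibit global lifts of generators of each of these higher direct images: the powers $\xi^0,\xi^1,\ldots,\xi^{r-2}$ of the class $\xi := c_1(\cO_{\P_S(E)}(1))|_{\tS}$ restrict fiberwise to hyperplane classes and hence induce generators of the $\Z_S$ rows; for the top row I use the fundamental class $\eta := [D] \in H^{2(r-1)}(\tS,\Z)$, whose restriction to a fiber $D_x \isom \P^{r-1}$ with $x \in Z$ equals $c_{r-1}(N_{D/\tS}|_{D_x})$. A short computation using $N_{D/\P_S(E)} \isom \tilde\pi^*E|_Z$, the surjection $N_{D/\P_S(E)} \to N_{\tS/\P_S(E)}|_D = \cO(1)|_D$, and the Euler sequence on $\P^{r-1}$ identifies $N_{D/\tS}|_{D_x}$ with $\Omega_{\P^{r-1}}(1)$, whose top Chern class is $\pm$ the class of a point, so $\eta$ lifts a generator of $j_*\Z_Z$.

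With these global lifts in place, all differentials in the Leray spectral sequence vanish: on the rows $q = 2k$ for $k \le r-2$ this follows by $H^\bullet(S)$-linearity from $d_r(1) = 0$ in $E_r^{0,2k}$, and on the row $q = 2(r-1)$ one uses that every $\beta \in H^\bullet(Z) = E_2^{\bullet, 2(r-1)}$ is realized by the globally defined class $\iota_*\tilde\pi^*\beta$, while no differentials can arrive at this top row for degree reasons. The spectral sequence thus degenerates at $E_2$, and the splitting is realized by the maps $\alpha \mapsto \xi^k \smile \pi^*\alpha$ and $\beta \mapsto \iota_*\tilde\pi^*\beta$; the homology statement follows by Poincar\'e duality on $\tS$, $S$, and $Z$. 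The main obstacle will be to control the top row integrally rather than rationally: both the canonical sheaf-level identification $R^{2(r-1)}\pi_*\Z \isom j_*\Z_Z$ and the verification that $\eta$ represents a generator (as opposed to a nontrivial multiple) require careful bookkeeping of orientations in the self-intersection computation, and the differentials leaving $E_r^{\bullet, 2(r-1)}$ must be killed element-by-element since $H^\bullet(Z)$ is a module over $H^\bullet(S)$ only through $j^*$.
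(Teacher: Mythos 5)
Your proposal is correct, and for the cohomological part it takes a genuinely different route from the paper. For smoothness, your local Jacobian computation ($f=\sum t_is_i$, with $d_xf=\sum t_i\,ds_i(x)$ along $Z$) is the same content as the paper's argument, which instead runs the tangent-space diagram intrinsically; one small slip is that $N_{D/\P_S(E)}\cong\tilde\pi^*(E^\vee|_Z)$, not $\tilde\pi^*(E|_Z)$, since $Z$ is the zero locus of a section of $E^\vee$ --- this does not affect the fiberwise identification of $N_{D/\tS}|_{D_x}$ with $\Omega_{\P^{r-1}}(1)$ or the sign $(-1)^{r-1}$. For the decomposition, the paper avoids the Leray spectral sequence altogether: it writes down both the maps $\phi_k(\xi)=\pi^*\xi\smile H^k$, $\phi_Z(\xi)=\imath_*p^*\xi$ \emph{and} the transposed maps $\psi_l(\eta)=\pi_*(\eta\smile H^{r-2-l})$, $\psi_Z(\eta)=p_*\imath^*\eta$, checks that $\psi\circ\phi$ is triangular with $\pm1$ on the diagonal (the only nontrivial entry being $p_*\rc_{r-1}(N_{\P_Z(E)/\tS})=(-1)^{r-1}$, i.e.\ exactly your normal-bundle computation), and then obtains surjectivity of $\phi$ from injectivity of $\psi$, which follows from the projective bundle formulas for $\tS\setminus\P_Z(E)\to S\setminus Z$ and $\P_Z(E)\to Z$ together with excision. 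Your route instead identifies $R^q\pi_*\Z$ via proper base change and forces degeneration with the global classes $\xi^k$ and $\imath_*\tilde\pi^*\beta$; the difficulties you flag (integral control of the top row, killing the differentials out of $E_r^{\bullet,2(r-1)}$ element by element) are real but are resolved precisely by those global lifts, which encode the same surjectivity that the paper extracts from excision. The trade-off: your argument makes the sheaf-theoretic structure of $\pi$ explicit, while the paper's two-sided correspondence is more economical and sidesteps all degeneration and integrality issues at once.
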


\begin{proof}
We have a commutative diagram 
\begin{equation*}
\xymatrix
@M=6pt
{
\P_{{Z}}(E) \ar[d]_p \ar@{^(->}[r]^-{\imath} &
\tS \ar[d]^\pi \ar@{^(->}[r] &
 {\P_S(E)}
\\
{Z} \ar@{^(->}[r]^-{{\jmath}} &
S,
}
\end{equation*}
where $\pi$ is a $\P^{r-2}$-fibration away from ${Z}$ and a $\P^{r-1}$-fibration over ${Z}$.\
In particular, $\tS$ is  {a smooth hypersurface} over  $S\moins {Z}$.\
Therefore, for the first statement, we have to check that~$\tS$ is smooth of codimension~$1$ at~$(x,e) \in \P_{{Z}}(E) \subset \tS$ 
 {for all $e \in \P(E_x)$} if and only if ${Z}$ is smooth of codimension~$r$ at $x$.\
There is a commutative diagram
\begin{equation*}
\xymatrix{
0 \ar[r] & 
\C \ar[r]^{e} &
E_x \ar[r] \ar[dr]_0 & 
T_{\P_S(E),(x,e)} \ar[r] \ar[d]^{ds} & 
T_{S,x} \ar[r] \ar@{-->}[dl] & 
0
\\
&&&
\C,
}
\end{equation*}
where $ds$ is the differential of $s$ considered as a section of $\cO(1)$.\ 
The restriction of the map~$ds$ to $E_x$ is zero, hence it factors through the dashed arrow,
which can be   identified with the differential of $s$ considered as a section of $E^\vee$ evaluated at $e \in E_x$.\
Thus, the vertical arrow is surjective at $(x,e)$ for any $e \in E_x  \setminus \{0\} $ 
if and only if $ds \colon T_{ S,x} \to E^\vee_x$ is surjective,
that is, if and only if~${Z}$ is smooth of codimension~$r$ at $x$. 

Denote by $H \in H^2(\tS,\Z)$ the restriction to $\tS$ of the relative hyperplane class of $\P_S(E)$.\ 
Then $\P_{{Z}}(E)$ has  codimension $r-1$ in $\tS$ and
\begin{equation*}
\rc_{r-1}(N_{\P_{{Z}}(E)/\tS}) = (-1)^{r-1}\bigl( {\imath}^*(H)^{r-1} + p^*{{\jmath}}^*\rc_1(E) \smile {\imath}^*(H)^{r-2} + \dots + p^*{{\jmath}}^*\rc_{r-1}(E) \bigr).
\end{equation*}
Indeed, this follows from the standard exact sequence 
\begin{equation*}
0 \to N_{\P_{{Z}}(E)/\tS} \to N_{\P_{{Z}}(E)/\P_S(E)} \to \cO(1)\vert_{\P_{{Z}}(E)} \to 0
\end{equation*}
in view of the isomorphism $N_{\P_{{Z}}(E)/\P_S(E)} \cong p^*(N_{{Z}/S}) \cong p^*{{\jmath}}^*(E^\vee)$ and the Whitney formula.\
In particular,
\begin{equation}
\label{eq:ps-cn}
p_*\rc_{r-1}(N_{\P_{{Z}}(E)/\tS}) = (-1)^{r-1}.
\end{equation}

 We will now prove the direct sum decomposition of cohomology; the homological decomposition is proved analogously or follows from Poincar\'e duality.\
Consider the maps 
\begin{align*}
\phi_k   \colon H^\bullet(S,\Z)& \lra H^{\bullet + 2k}(\tS,\Z)\\
\xi & \longmapsto \pi^*(\xi) \smile H^k\\
\intertext{and}
\phi_{{Z}} \colon H^\bullet({Z},\Z) & \lra H^{\bullet + 2(r-1)}(\tS,\Z) 
\\
\xi & \longmapsto {\imath}_*(p^*(\xi)).
\end{align*}
We claim that 
\begin{equation*}
H^\bullet(\tS,\Z) = 
 \phi_0(H^\bullet(S,\Z)) \oplus \phi_1(H^{\bullet-2}(S,\Z)) \oplus \dots \oplus \phi_{r-2}(H^{\bullet - 2(r-2)}(S,\Z)) 
\oplus \phi_{{Z}} (H^{\bullet - 2(r-1)}({Z},\Z)).
\end{equation*}
To prove that, we define maps
\begin{align*}
\psi_k   \colon H^\bullet(\tS,\Z) & \lra H^{\bullet - 2k}({S},\Z)
\\
\eta & \longmapsto \pi_*(\eta \smile H^{r-2-k})
\intertext{and}
\psi_{{Z}} \colon H^\bullet({\tS},\Z) & \lra H^{\bullet - 2(r-1)}({{Z}},\Z)\\
\eta & \longmapsto p_*({\imath}^*(\eta)).
\end{align*}
If $k \le l$, we have 
\begin{equation*}
\psi_l(\phi_k(\xi)) = \pi_*(\pi^*(\xi) \smile H^{r-2-l+k}) = \xi \smile \pi_*(H^{r-2-l+k}) = \delta_{k,l}\xi.
\end{equation*}
For $0 \le k \le r-2$, we have
\begin{equation*}
\psi_{{Z}}(\phi_k(\xi)) = p_*({\imath}^*(\pi^*(\xi) \smile H^{k})) = p_*(p^*({{\jmath}}^*(\xi)) \smile {\imath}^*(H)^k) = {{\jmath}}^*(\xi) \smile p_*({\imath}^*(H^k)) = 0.
\end{equation*}
Using~\eqref{eq:ps-cn}, we obtain
\begin{equation*}
\psi_{{Z}}(\phi_{{Z}}(\xi)) = 
p_*({\imath}^*({\imath}_*(p^*(\xi)))) = 
p_*(p^*(\xi) \smile \rc_{r-1}(N_{\P_{{Z}}(E)/\tS})) = 
(-1)^{r-1}\xi.
\end{equation*}
If we define  maps 
\begin{equation*}
\psi := (\psi_0 \oplus \psi_1 \oplus \dots \oplus \psi_{r-2}) \oplus \psi_{{Z}} 
\quad \text{and}\quad
\phi := (\phi_0 \oplus \phi_1 \oplus \dots \oplus \phi_{r-2}) \oplus \phi_{{Z}},
\end{equation*}
it follows that the map $\psi \circ \phi$ is lower triangular with $\pm1$ on the diagonal, hence invertible, so that $\phi$ is injective and $\psi$ is surjective.\

The injectivity of $\psi$ (hence the surjectivity of $\phi$) follows from projective bundle formulas 
for the maps $\tS \setminus \P_{{Z}}(E) \to S \setminus {Z}$ and $\P_{{Z}}(E) \to {Z}$, and excision.
\end{proof}

\subsection{The {Clemens--Tyurin} argument}\label{secw}

The following result is a generalization of~\cite[Section~4.3]{T72} (see also~\cite[Lemma~(4.6)]{welters});
the original result is the case $m = 1$.

\begin{prop}
\label{proposition:welters}
Let $Y$ be a smooth projective variety of dimension $2m + 2$ and let~\mbox{$X \subset Y$} be a smooth hyperplane section of~$Y$.\
Let {$F_Y \subset F^m(Y)$} be a {smooth} closed subscheme of the Hilbert scheme of $m$-dimensional linear projective spaces in~$Y$, 
let {$F_X \subset F_Y$} be the closed subscheme parameterizing projective spaces contained in~$X$, 
and let {$\cL_Y \subset F_Y \times Y$ and~\mbox{$\cL_X \subset F_X \times X$}} be 
the {pullbacks of the} corresponding universal families of projective spaces.\
Assume that 
\begin{enumerate}\renewcommand{\theenumi}{\alph{enumi}}\renewcommand{\labelenumi}{\textnormal{(\theenumi)}}
\item 
\label{ass:fm0y}
{$F_Y$} is smooth;
\item 
\label{ass:lm0y}
{$\cL_Y$} is dominant and generically finite over $Y$;
\item 
\label{ass:fm0x}
{$F_X$} is smooth of {pure} codimension~$m+1$ in~{$F_Y$}; 
\item 
\label{ass:hy}
$H_{2m+1}(Y,\Z) = H_{2m+3}(Y,\Z) = 0$.
\end{enumerate}
Then the Abel--Jacobi map 
\begin{equation*}
\AJ_{{\cL_X}} \colon H_1({F_X},\Z) \lra H_{2m + 1}(X,\Z) 
\end{equation*}
is surjective.
\end{prop}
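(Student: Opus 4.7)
The plan is to exhibit $\cL_{Y,X}$ as a smooth hypersurface in a projective bundle, apply Lemma~\ref{lemma:blowup-plus} to split its middle integral homology, identify the relevant summand with the source of $\AJ_{\cL_X}$, and finally extract surjectivity from the vanishings in hypothesis~(d). Write $\cL_Y = \P_{F_Y}(E_Y)$, where $E_Y$ is the tautological rank-$(m+1)$ vector bundle whose fiber at $[L] \in F_Y$ is the underlying $(m+1)$-dimensional vector space of~$L$. The linear form $\ell \in H^0(Y,\cO_Y(1))$ cutting out $X$ restricts on each such $L$ to a linear form, and so defines a section $s \in H^0(F_Y, E_Y^\vee) \isom H^0(\P(E_Y),\cO(1))$; its zero locus as a section of $E_Y^\vee$ is $F_X$, and its zero locus as a section of $\cO_{\P(E_Y)}(1)$ is $\cL_{Y,X} = e_Y^{-1}(X)$. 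Hypotheses~(a) and~(c) are then precisely the hypotheses of Lemma~\ref{lemma:blowup-plus} for $S=F_Y$, $Z=F_X$, and $r=m+1$, so $\cL_{Y,X}$ is smooth and one obtains the integral direct sum decomposition
\begin{equation*}
H_{2m+1}(\cL_{Y,X},\Z) \;=\; H_1(F_X,\Z) \;\oplus\; \bigoplus_{k=1}^{m} H_{2k+1}(F_Y,\Z),
\end{equation*}
in which the $H_1(F_X,\Z)$-summand is realized as $\imath_* \circ p^!$ for the inclusion $\imath \colon \cL_X = \P_{F_X}(E_Y|_{F_X}) \hookrightarrow \cL_{Y,X}$ and the projection $p \colon \cL_X \to F_X$.

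Next I would identify the image of this $H_1(F_X,\Z)$-summand with $\AJ_{\cL_X}$. Let $e \colon \cL_{Y,X} \to X$ be the map induced by the evaluation $e_Y$; then $e \circ \imath$ coincides with the tautological evaluation $\cL_X \to X$. Applying Lemma~\ref{lemma:aj-properties} (together with the fact that $\AJ$ for a $\P^m$-bundle correspondence computes as $p_{X*} \circ p^!$), the composition
\begin{equation*}
H_1(F_X,\Z) \xrightarrow{\,p^!\,} H_{2m+1}(\cL_X,\Z) \xrightarrow{\,\imath_*\,} H_{2m+1}(\cL_{Y,X},\Z) \xrightarrow{\,e_*\,} H_{2m+1}(X,\Z)
\end{equation*}
is identified, up to sign, with $\AJ_{\cL_X}$. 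It therefore suffices to prove that this composite is surjective.

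To establish surjectivity, I would combine the Gysin long exact sequence of the smooth divisor $X \subset Y$ with its analogue for $\cL_{Y,X} \subset \cL_Y$, joined by the pushforwards $e_*$ and $e_{Y*}$. Hypothesis~(d) implies that the boundary map $\partial \colon H_{2m+2}(Y,X) \twoheadrightarrow H_{2m+1}(X,\Z)$ is surjective, while hypothesis~(b), together with the generic finite dominance of $e_Y$, provides a mechanism to lift classes along $e_Y$ from the lower to the upper Gysin sequence. This produces, for each $\alpha \in H_{2m+1}(X,\Z)$, a preimage $\tilde\alpha \in H_{2m+1}(\cL_{Y,X},\Z)$ with $e_*(\tilde\alpha) = \alpha$. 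Decomposing $\tilde\alpha$ according to the direct sum above, the summands in $H_{2k+1}(F_Y,\Z)$ with $k \ge 1$ come from classes on $\cL_Y$ that push forward under $e_{Y*}$ into $H_{2m+1}(Y,\Z) = 0$; a diagram chase exploiting the vanishing of both $H_{2m+1}(Y,\Z)$ and $H_{2m+3}(Y,\Z)$ then forces their contribution under $e_*$ to be absorbed into the image of the $H_1(F_X,\Z)$-summand, yielding the desired surjectivity. The main obstacle is this last diagram chase: the delicate point is to obtain \emph{integral}, not merely rational, surjectivity, and this is possible precisely because the summands of Lemma~\ref{lemma:blowup-plus} are canonical integral direct summands compatible with the Gysin and pushforward operations entering the two coupled exact sequences.
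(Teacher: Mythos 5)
Your first two steps coincide with the paper's proof: the paper also realizes $q^{-1}(X)\subset\cL_Y=\P_{F_Y}(E_Y)$ as the zero locus of the section of $\cO(1)$ induced by the equation of $X$, applies Lemma~\ref{lemma:blowup-plus} to get
\begin{equation*}
H_{2m+1}(q^{-1}(X),\Z)\isom H_1(F_X,\Z)\oplus H_3(F_Y,\Z)\oplus\dots\oplus H_{2m+1}(F_Y,\Z),
\end{equation*}
identifies $\AJ_{\cL_X}$ with the restriction of $\hat q_*$ to the first summand, and kills the contribution of the summands $H_{2k+1}(F_Y,\Z)$ by observing that they factor through $H_{2m+3}(Y,\Z)=0$. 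So far your proposal is sound, and the reduction to the surjectivity of $\hat q_*\colon H_{2m+1}(q^{-1}(X),\Z)\to H_{2m+1}(X,\Z)$ is exactly the paper's.

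The genuine gap is in your final step. There is no ``mechanism to lift classes along $e_Y$'' of the kind you invoke: for a dominant generically finite morphism of degree $d$ between smooth projective varieties, the only general relation is $e_{Y*}\circ e_Y^{!}=d\cdot\mathrm{id}$, so coupling the two Gysin sequences by transfer maps only shows that the image of $\hat q_*$ contains $d\cdot H_{2m+1}(X,\Z)$ --- rational surjectivity, not the integral statement you need (and which the paper needs, since it is later used to conclude that a map of free rank-$20$ abelian groups is an isomorphism). The canonicity of the summands in Lemma~\ref{lemma:blowup-plus} does nothing to repair this; the obstruction lives entirely in the map $\hat q_*$ itself. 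The missing ingredient is the Clemens--Tyurin/Bloch--Murre result (\cite[Lemma~7.15]{bm}, cited in the paper's proof): for a dominant generically finite morphism onto $X$, the image of the pushforward on $H_{2m+1}$ contains the subgroup of vanishing cycles, i.e.\ $\Ker\bigl(\jmath_*\colon H_{2m+1}(X,\Z)\to H_{2m+1}(Y,\Z)\bigr)$. This is proved by a monodromy argument (one vanishing cycle lifts, and all vanishing cycles lie in a single monodromy orbit), not by a diagram chase with exact sequences. Once that is in place, hypothesis~(d) makes the kernel of $\jmath_*$ equal to all of $H_{2m+1}(X,\Z)$ and the surjectivity follows; without it, your argument stalls at divisibility by $\deg(e_Y)$.
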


\begin{proof}
Consider the incidence diagram
\begin{equation*}
\xymatrix{
&
{\cL_Y} \ar[dl]_-p \ar[dr]^-q
\\
{F_Y} && 
Y.
}
\end{equation*}
Set $\widehat X:=q^{-1}(X)$ and ${\hat{p}}:= p\vert_{\widehat X}$, {$\hat{q}:= q\vert_{\widehat X}$}, 
and consider the restricted diagram
\begin{equation*}
{\xymatrix{
&
\cL_X \ar@{^{(}->}[r] \ar[dl] &
\widehat X \ar[dl]_(.55){\hat{p}} \ar[dr]^(.55){\hat{q}} \ar@{^{(}->}[r] &
\cL_Y \ar[dr]^(.55)q
\\
F_X \ar@{^{(}->}[r] &
F_Y && 
X \ar@{^{(}->}[r] &
Y.
}}
\end{equation*}
Since  {$\cL_Y \to F_Y$ is the projectivization of a rank-$(m+1)$ vector bundle 
and} $\widehat X \subset {\cL_Y}$ is a relative hyperplane section, 
the  hypotheses of Lemma~\ref{lemma:blowup-plus} are satisfied by assumptions~\eqref{ass:fm0y} and~\eqref{ass:fm0x},
hence $\widehat X$ is smooth and there is a direct sum decomposition
 \begin{equation*}
H_{2m+1}(\widehat X,\Z) \isom H_1({F_X},\Z) \oplus H_3({F_Y},\Z) \oplus \dots \oplus H_{2m+1}({F_Y},\Z).
\end{equation*}
The Abel--Jacobi map $\AJ_{{\cL_X}}$ is the restriction of ${\hat{q}}_*$ to the summand $H_1({F_X},\Z)$.\ 
Let~$\jmath$
be the inclusion~$X\hra Y$.\ 
By Lemma~\ref{lemma:aj-properties}, the restriction  of ${\hat{q}}_*$ to the other summands $H_{2k+1}({F_Y},\Z)$, for $k \in\{1,\dots,m\}$, 
factors through the  map ${\jmath}^* \colon H_{2m+3}(Y,\Z) \to H_{2m+1}(X,\Z)$, which vanishes by assumption~\eqref{ass:hy}.\ 
The surjectivity of~$\AJ_{{\cL_X}}$ therefore will follow from the surjectivity of
\begin{equation*}
{\hat{q}}_* \colon H_{2m+1}(\widehat X,\Z) \lra H_{2m+1}(X,\Z).
\end{equation*}
To prove this surjectivity, we note that
by assumption~\eqref{ass:lm0y}, the map ${\hat{q}}$ is dominant and generically finite hence, by~\cite[Lemma~7.15]{bm}, 
 {the image of $\hat{q}_*$} contains the vanishing cycles, 
that is, the kernel of the map 
\begin{equation*}
{\jmath}_* \colon H_{2m+1}(X,\Z) \lra H_{2m+1}(Y,\Z) .
\end{equation*}
By~\eqref{ass:hy}, the target is zero, so this proves the surjectivity of~{$\hat{q}_*$ and of}~$\AJ_{{\cL_X}}$.
\end{proof}

\subsection{Intermediate Jacobians and their endomorphisms}
\label{subsection:simplicty}

Let $X$ be a smooth projective variety of dimension~$2m-1$.\
We consider the middle cohomology $H^{2m-1}(X,\Z)$ with its natural Hodge structure of weight~$2m-1$.\
The complex torus 
\begin{equation*}
\Jac(X) = H^{2m-1}(X,\C) / (F^mH^{2m-1}(X,\C) + H^{2m-1}(X,\Z)),
\end{equation*}
where $F^mH^{2m-1}(X,\C) \subset H^{2m-1}(X,\C)$ is part of the Hodge filtration, 
is called the (Griffiths) {\sf intermediate Jacobian} of~$X$  {(see~\cite[Chapter~4]{bl}).\
Poincar\'e duality induces a hermitian form on~$H^{2m-1}(X,\C)$ which is not necessarily positive definite
but defines (in the terminology of~\cite[Chapter~2]{bl}) a canonical {\sf nondegenerate line bundle} on~$\Jac(X)$, 
making it into a  {\sf nondegenerate torus.}\
If moreover
\begin{equation}
\label{eq:hodge-vanishing}
F^{m+1}H^{2m-1}(X,\C) = 0,
\end{equation} 
 {the hermitian form is positive definite,}
the line bundle is ample, and it defines a principal polarization on the abelian variety~$\Jac(X)$.

More generally, a polarized rational Hodge structure of odd weight defines an isogeny class of complex tori
which, under a vanishing assumption analogous to \eqref{eq:hodge-vanishing}, becomes an isogeny class of  abelian varieties.

Let now $M$ be a smooth projective variety of dimension~$2m$.\
For a smooth hypersurface~${\jmath}\colon X\hra M$, we denote by
\begin{equation*}
H^{2m-1}(X,\Q)_{\text{van}} := \Ker \bigl({\jmath}_*\colon H^{2m-1}(X,\Q) \to H^{2m+1}(M,\Q)\bigr)
\end{equation*}
the {\sf vanishing cohomology}.\
By~\cite[Proposition~2.27]{voi}, there is an orthogonal direct sum decomposition
\begin{equation}
\label{dsd}
H^{2m-1}(X,\Q) = H^{2m-1}(X,\Q)_{\textnormal{van}} \oplus {\jmath}^*H^{2m-1}(M,\Q).
\end{equation}
In particular, the Hodge structure $H^{2m-1}(X,\Q)_{\text{van}}$ acquires a polarization from Poincar\'e duality on~$X$ and we denote  by 
$
\Jac(X)_{\text{van}}  
$
the corresponding isogeny class of nondegenerate complex tori.

We will say that the endomorphism ring of a complex torus $T$ is {\sf trivial} 
if any endomorphism of $T$ is the multiplication by an integer.\ 
If $T\ne0$, this means that the endomorphism ring~$\End(T)$ is isomorphic to~$\Z$ 
or, equivalently, that the rational endomorphism ring~\mbox{$\End(T) \otimes \Q$} is isomorphic to~$\Q$; 
so we can extend this terminology to isogeny classes of complex tori.

If $T$ is a   nonzero and nondegenerate complex torus with   trivial endomorphism ring, 
it is indecomposable with Picard number~$1$ (\cite[Propositions~1.7.3 and~2.3.7]{bl}).
 
 The next result is an old statement made by Severi and proved in~\cite[Theorem~(1.1)]{cvg}.

\begin{prop}
\label{proposition:simplicity-jac}
Let $M$ be a smooth projective variety of dimension~$2m$.\ 
If $\jmath\colon X\hra M$ is a very general hyperplane section, 
the endomorphism ring of~$\Jac(X)_{\textnormal{van}}$ is trivial.
\end{prop}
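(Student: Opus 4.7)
The plan is to establish the result via a standard monodromy argument based on Picard--Lefschetz theory. Embed $M$ via a very ample linear system and consider the universal family $\pi\colon\cX\to U$ of smooth hyperplane sections, parameterized by a Zariski-open subset $U$ of the corresponding complete linear system; the fiber of the local system $\VV := R^{2m-1}\pi_*\Q_{\textnormal{van}}$ at $[X]\in U$ is $H^{2m-1}(X,\Q)_{\textnormal{van}}$, equipped with its monodromy representation $\rho\colon\pi_1(U,[X])\to\GL(\VV_{[X]})$. For $[X]$ very general in $U$, Deligne's theorem of the fixed part (combined with the countability of Hodge loci) identifies $\End_{\textnormal{HS}}(H^{2m-1}(X,\Q)_{\textnormal{van}})$ with the commutant $\End_{\pi_1(U,[X])}(\VV_{[X]})$: every Hodge endomorphism at a very general point extends to a flat endomorphism of the local system. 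Thus the whole proof reduces to showing that $\rho$ is absolutely irreducible, since Schur's lemma will then force this commutant to equal $\Q$.

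To establish absolute irreducibility, I would restrict to a Lefschetz pencil of hyperplane sections of $M$ (possibly after replacing the linear system by a sufficiently high multiple). Picard--Lefschetz theory then tells us that the monodromy around each nodal degeneration is a symplectic transvection in a vanishing cycle $\delta\in\VV_{[X]}$, the intersection form being alternating since $2m-1$ is odd. By the very definition of vanishing cohomology underlying the decomposition~\eqref{dsd}, these vanishing cycles span $\VV_{[X]}$; by the Lefschetz irreducibility theorem on global monodromy for Lefschetz pencils, they are all conjugate under $\pi_1(U,[X])$. A classical group-theoretic argument (cf.\ Deligne's analysis of monodromy groups of Lefschetz pencils in SGA~7, and~\cite[Theorem~(1.1)]{cvg}) then identifies the Zariski closure of $\rho(\pi_1(U,[X]))$ in $\GL(\VV_{[X]}\otimes\C)$ with the full symplectic group $\Sp(\VV_{[X]}\otimes\C)$, which acts irreducibly on its standard representation; hence $\rho$ is absolutely irreducible.

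Combining these two ingredients yields $\End_{\textnormal{HS}}(H^{2m-1}(X,\Q)_{\textnormal{van}})=\Q$. Since $\End(\Jac(X)_{\textnormal{van}})\otimes\Q$ is canonically isomorphic to $\End_{\textnormal{HS}}(H^{2m-1}(X,\Q)_{\textnormal{van}})$ and $\End(\Jac(X)_{\textnormal{van}})$ is a torsion-free finitely generated abelian group, we conclude that $\End(\Jac(X)_{\textnormal{van}})=\Z$, which is the triviality claimed. The main obstacle is the big monodromy statement of the second paragraph: the identification of the Zariski closure of $\rho$ with the full symplectic group is the nontrivial classical input, resting on the three facts that the vanishing cycles generate $\VV_{[X]}$, are permuted transitively by $\pi_1$, and act as transvections. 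A secondary subtlety is the application of Deligne's theorem of the fixed part to pass from Hodge-theoretic to monodromy-theoretic endomorphisms, which is what forces the very-general hypothesis on $[X]$.
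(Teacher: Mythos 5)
Your proof is correct and takes essentially the same route as the paper's: a Lefschetz pencil, Picard--Lefschetz theory (the vanishing cycles span $H^{2m-1}(X,\Q)_{\textnormal{van}}$, form a single monodromy orbit, and act by symplectic transvections), big monodromy, and the fact that at a very general point every Hodge endomorphism commutes with the monodromy, the only cosmetic differences being that the paper cites~\cite[Theorem~17]{ps} where you invoke Deligne's fixed-part theorem together with the countability of Hodge loci, and that it concludes via the commutant of the full symplectic group rather than via Schur's lemma for the absolutely irreducible representation. One small caution: in characteristic zero a generic pencil in the \emph{given} embedding is already a Lefschetz pencil, so there is no need to replace the linear system by a high multiple---doing so would in fact change the class of hyperplane sections and hence the statement being proved.
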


\begin{proof}
Assume first that there is a rational map $f \colon M \dra \P^1$, 
which we resolve by blowing up a smooth codimension-2 subvariety 
to obtain a morphism $\tf \colon \tM \to M \stackrel{f}\dra \P^1$ with critical values \mbox{$t_1,\dots,t_r\in \P^1$},  
and   that the strict transform of~$X$ is the fiber over~\mbox{$0\in\P^1\moins \{t_1,\dots,t_r\}$}.\ 
Let $\tj \colon X \hookrightarrow \tM$ be the embedding and let
\begin{equation*}
\rho \colon \pi_1(\P^1\moins \{t_1,\dots,t_r\},0)\lra   \Sp (H^{2m-1}(X, \Q))
\end{equation*}
be the {\sf monodromy representation}.\ 

Assume moreover that the only singularities of the  fibers of $\tf$ are nodes.\ 
As explained in \cite[Sections~2.2.2 and 2.3.1]{voi}, one can then attach to each singular point of a  fiber 
a (noncanonically defined) {\sf vanishing cycle} in~$H^{2m-1}(X,\Q)_{\text{van}}$ 
and the vanishing cycles span the  vector space~$H^{2m-1}(X, \Q)_{\text{van}}$ 
(\cite[Lemma~2.26]{voi}; this reference deals with the case where each singular fiber 
has a single node but the proofs extend to the general case).

Assume that $f\colon M \dra \P^1$ is a Lefschetz pencil.\ 
For each $i\in\{1,\dots,r\}$, the singular fiber $X_{t_i}$ has a single node 
and there exists an element $\gamma_i$  of $\pi_1(\P^1\moins \{t_1,\dots,t_r\},0)$ that acts on~$H^{2m-1}(X,\Q)$, 
via the monodromy representation, as the transvection
\begin{equation*}
T_{\delta_i}\colon x\longmapsto x - (x\cdot \delta_i)\delta_i,
\end{equation*}
where $\delta_i\in H^{2m-1}(X, \Q)_{\text{van}}$ is a vanishing cycle (\cite[Theorem~3.16]{voi}).\ 
The main results of  Picard--Lefschetz theory are:
\begin{itemize}
\item the vanishing cycles $\delta_1,\dots,\delta_r$
are in the same orbit for the monodromy representation (\cite[Corollary~3.24]{voi});
\item the monodromy representation is absolutely irreducible (\cite[Theorem~3.27]{voi} or \cite[Lemma~3.13]{ps2}).
\end{itemize}
It follows that the monodromy is   ``big'': 
the Zariski closure of its image is the full symplectic group~$\Sp(H^{2m-1}(X,\C)_{\textnormal{van}})$ (\cite[Lemma~4]{ps}).\ 
As in the proof of~\cite[Theorem~17]{ps}, for $t\in\P^1$ very general, any endomorphism of $\Jac(X_t)_{\textnormal{van}}$  
intertwines every element of the monodromy group, hence every element of the symplectic group.\ 
It must therefore be a multiple of the identity:
the endomorphism ring of $\Jac(X_t)_{\textnormal{van}}$ is trivial.
\end{proof}

\begin{coro}
\label{corollary:jac-van-simple}
Let $M$ be a smooth projective variety of dimension~$2m$ with $H^{2m-1}(M, \Q) = 0$
and let $\jmath\colon X\hra M$ be a very general hyperplane section.\ The endomorphism ring of $\Jac(X)$ is then trivial.
\end{coro}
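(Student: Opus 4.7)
The plan is to deduce this immediately from Proposition \ref{proposition:simplicity-jac} by using the hypothesis on $M$ to identify $\Jac(X)$ with $\Jac(X)_{\textnormal{van}}$.

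First, I would invoke the orthogonal direct sum decomposition \eqref{dsd}:
\begin{equation*}
H^{2m-1}(X,\Q) = H^{2m-1}(X,\Q)_{\textnormal{van}} \oplus \jmath^* H^{2m-1}(M,\Q).
\end{equation*}
The hypothesis $H^{2m-1}(M,\Q) = 0$ forces the second summand to vanish, so the vanishing cohomology coincides with the full middle rational cohomology of $X$ as a polarized Hodge structure. Consequently, the isogeny class $\Jac(X)_{\textnormal{van}}$ agrees with the isogeny class of $\Jac(X)$, and in particular the rational endomorphism algebras are equal: $\End(\Jac(X))\otimes \Q = \End(\Jac(X)_{\textnormal{van}})\otimes \Q$.

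By Proposition \ref{proposition:simplicity-jac} applied to a very general hyperplane section, the right-hand side is isomorphic to $\Q$. Finally, $\End(\Jac(X))$ is a finitely generated $\Z$-module containing $\Z$ and embedded in $\End(\Jac(X))\otimes \Q \isom \Q$, so it must equal $\Z$. This means that every endomorphism of $\Jac(X)$ is multiplication by an integer, i.e.\ the endomorphism ring is trivial.

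There is no serious obstacle here; the entire content is already in Proposition \ref{proposition:simplicity-jac}, and the corollary only records the observation that when $H^{2m-1}(M,\Q)=0$ the passage from the vanishing part to the full intermediate Jacobian is trivial. The only point worth articulating carefully is the distinction between the endomorphism algebra of an isogeny class and the endomorphism ring of a specific complex torus, which is settled by the finite-generation argument above.
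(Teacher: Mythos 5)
Your proposal is correct and is exactly the argument the paper intends: the hypothesis $H^{2m-1}(M,\Q)=0$ kills the second summand of the decomposition~\eqref{dsd}, so $\Jac(X)$ lies in the isogeny class $\Jac(X)_{\textnormal{van}}$ and Proposition~\ref{proposition:simplicity-jac} applies. The paper leaves the corollary without a written proof precisely because of this, and the passage from $\End\otimes\Q\cong\Q$ to $\End\cong\Z$ that you spell out is already recorded in the paper's discussion of trivial endomorphism rings in Section~\ref{subsection:simplicty}.
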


These results apply  to intermediate Jacobians of GM threefolds and fivefolds.

\begin{coro}
\label{corollary:jac-gm-simple}
 {If $X$ is a GM variety of dimension~$n \in \{3,5\}$
 the intermediate Jacobian~$\Jac(X)$ is a principally polarized abelian variety of dimension~$10$.\ If, moreover, $X$ is very general, we have}
 \begin{equation*}
\End(\Jac(X)) \cong \Z.
\end{equation*}
In particular, the Picard number of $\Jac(X)$ is~$1$.
\end{coro}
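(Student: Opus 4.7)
The first assertion follows directly from Proposition~\ref{prop:gm-betti}: we have $H^n(X, \Z) \cong \Z^{20}$, so $\Jac(X)$ is a $10$-dimensional complex torus; and the Hodge vanishing $F^{(n+3)/2}H^n(X, \C) = 0$ is precisely condition~\eqref{eq:hodge-vanishing} (with $m = (n+1)/2$), which by the discussion at the beginning of Section~\ref{subsection:simplicty} endows $\Jac(X)$ with a canonical principal polarization.

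For the second assertion, I first reduce to the case where $X$ is ordinary. Ordinary GM $n$-folds form a Zariski-open subset of $\bM_n^{\rm GM}$, so a very general $X$ is ordinary; then $X = M_X \cap Q$, where the Grassmannian hull~$M_X$ is a smooth dimensionally transverse linear section of $\Gr(2, V_5)$ of dimension $n + 1 \in \{4, 6\}$, and $Q \subset \P(W)$ is a quadric. Re-embedding $M_X$ via the complete linear system $|\cO_{M_X}(2)|$ turns~$Q$ into a hyperplane and exhibits~$X$ as a smooth hyperplane section of~$M_X$ in this second Veronese embedding, which is a smooth projective variety of even dimension $2m = n + 1$. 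The hypothesis $H^{2m-1}(M_X, \Q) = 0$ required by Corollary~\ref{corollary:jac-van-simple} is guaranteed by the vanishing $H^{\mathrm{odd}}(M_X, \Z) = 0$ from Proposition~\ref{prop:gm-betti}.

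The plan is now to apply Corollary~\ref{corollary:jac-van-simple} in this Veronese setup. The crucial observation making this applicable is that every smooth quadric section $M_X \cap Q'$ is automatically an ordinary GM $n$-fold with Grassmannian hull~$M_X$. Consequently, for a fixed general~$W$ (for which $M_X$ is smooth and generic), the family of smooth GM $n$-folds with Grassmannian hull~$M_X$ is parametrized by a Zariski-open subset of $|\cO_{M_X}(2)|$, and a very general $X \in \bM_n^{\rm GM}$ corresponds to a very general hyperplane section of $M_X$ in its Veronese re-embedding. Corollary~\ref{corollary:jac-van-simple} then yields $\End(\Jac(X)) \cong \Z$. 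The Picard-number assertion follows from the remark just before Proposition~\ref{proposition:simplicity-jac}: a nonzero nondegenerate complex torus with trivial endomorphism ring has Picard number~$1$.

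The main subtlety is the identification in the preceding paragraph between "very general $X$" in $\bM_n^{\rm GM}$ and "very general hyperplane section of $M_X$" in its Veronese embedding. Granting the observation that every smooth such quadric section is a GM variety, this amounts to the standard fact that, for a fixed smooth $M_X$, the linear system $|\cO_{M_X}(2)|$ contains Lefschetz pencils through any sufficiently general $X$, so that the Picard--Lefschetz monodromy argument of Proposition~\ref{proposition:simplicity-jac} carries over without change to the Veronese re-embedding of~$M_X$.
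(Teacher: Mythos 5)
Your proposal is correct and follows essentially the same route as the paper: reduce to the ordinary case, view $X$ as a very ample divisor (quadric section) in its Grassmannian hull $M_X$, note $H^{n}(M_X,\Q)=0$ from Proposition~\ref{prop:gm-betti}, and apply Corollary~\ref{corollary:jac-van-simple}. You merely make explicit two points the paper leaves implicit — the second Veronese re-embedding turning the quadric into a hyperplane, and the matching of ``very general in $\bM_n^{\rm GM}$'' with ``very general in $|\cO_{M_X}(2)|$'' — both of which are fine.
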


\begin{proof}
 {For any GM variety $X$ of dimension~3 or~5,   condition~\eqref{eq:hodge-vanishing} holds by~Proposition~\ref{prop:gm-betti},
hence the abelian variety~$\Jac(X)$ is principally polarized; its dimension is~10 again by~Proposition~\ref{prop:gm-betti}.

For the second statement, we} may assume that $X$ is ordinary;   it is then a very ample divisor in its Grassmannian hull~$M_X$,
which is the  Grassmannian $\Gr(2,V_5)$ when $n=5$ or the fixed smooth fourfold~$\Gr(2,V_5)\cap \P^7\subset \P(\bw2V_5)$ when $n=3$.\
We have $H^n(M_X,\Q) = 0$ in both cases by Proposition~\ref{prop:gm-betti},  
so Corollary~\ref{corollary:jac-van-simple} implies the first claim.\ A standard result then implies that the Picard number of~$\Jac(X)$ is~1.
\end{proof}

\section{Intermediate Jacobians of GM threefolds}
\label{section:quartic-curves}

In this section, we study the intermediate Jacobians of GM threefolds.\
The main result (Theorem~\ref{theorem:aj3}) is stated at the end of Section~\ref{subsection:z-3} 
and its proof takes up the rest of Section~\ref{section:quartic-curves}.

\subsection{Family of curves}
\label{subsection:z-3}

Let $X$ be  {an arbitrary} smooth GM threefold.\ 
Its associated Lagrangian subspace~\mbox{$A \subset \bw3V_6$} has no decomposable vectors (Section~\ref{secgm}).\
Let $Y^{\ge 2}_A \subset \P(V_6)$ be the corresponding EPW surface and let
\begin{equation*}
\pi_A\colon \tY_A^{\ge 2} \to Y_A^{\ge 2}
\end{equation*}
be the double covering from~\eqref{cancov}, which is connected and \'etale away from the finite set $Y^3_A$.

We are going to construct a subvariety
\begin{equation*}
Z \subset X \times \tY_A^{\ge 2}
\end{equation*}
such that the map $Z \to \tY_A^{\ge 2}$ is, {away from a finite} subset of $\tY_A^{\ge 2}$, 
a family of quintic curves of arithmetic genus~1 containing a fixed line $L_0 \subset X$.\ We will then check that the associated Abel--Jacobi map gives an isomorphism between $\Alb(\tY_A^{\ge 2})$ and $\Jac(X)$.

We start by choosing a line $L_0 \subset X$.\ It is for the moment arbitrary, but  we will impose some restrictions in Section~\ref{subsection:boundary-3}.\ We consider the open surface $S_0 \subset Y_A^{\ge 2}$   defined by~\eqref{def:s0}
and the family of quadrics $\cQ_0 \to S_0$ obtained by base change to~$S_0$ of the family~\eqref{eq:cq}.\  We denote by $F^4(\cQ_0/S_0)\to S_0$ the relative Hilbert scheme of linear 4-spaces in the fibers of $\cQ_0 \to S_0$
and by $F^4_{L_0}(\cQ_0/S_0)\to S_0$ the   subscheme parameterizing those 4-spaces that contain the line~$L_0$.\ 
Applying Corollary~\ref{corollary:stein-hilbert}, we obtain an isomorphism
\begin{equation}
\label{eq:f4l0q0s0}
F^4_{L_0}(\cQ_0/S_0) \cong \tS_0 := \tyta \times_{\yta} S_0 
\end{equation}
of schemes over $S_0$.\ 
In particular, the canonical map $F^4_{L_0}(\cQ_0/S_0) \to S_0$ can be identified with the double \'etale  covering~$\pi \colon \tS_0 \to S_0$ 
induced by the double covering~$\pi_A$.\ Note that $\tS_0$ is a smooth surface.\
Let 
\begin{equation*}
\tcQ_0 := \cQ_0 \times_{S_0} \tS_0 
\end{equation*}
be the base change of the family of quadrics $\cQ_0 \to S_0$ along $\pi$.\
We have a canonical map
\begin{equation*}
\tS_0 \to F^4_{L_0}(\cQ_0/S_0) \times_{S_0}\tS_0 \hookrightarrow F^4(\cQ_0/S_0) \times_{S_0} \tS_0 \cong F^4(\tcQ_0/\tS_0), 
\end{equation*}
where the first map is the product of the isomorphism~\eqref{eq:f4l0q0s0}
with the identity map.\ 
By construction, it is a section of  the projection $F^4(\tcQ_0/\tS_0) \to \tS_0$.

Let $\cP^4 \subset \tcQ_0 \subset \P(W) \times \tS_0$ be the pullback 
of the universal family of projective 4-spaces over~$F^4(\tcQ_0/\tS_0)$ along the section $\tS_0 \to F^4(\tcQ_0/\tS_0)$ constructed above.\ Set 
\begin{equation}
\label{def:z0}
Z_0 := \cP^4 \cap (M_X \times \tS_0),
\end{equation} 
where the Grassmannian hull $M_X\subset \P(W)$ was defined in~\eqref{ghull}
{and the intersection is taken inside $\P(W) \times \tS_0$.}

\begin{prop}
\label{proposition:mz}
The map $Z_0 \to \tS_0$ is a flat family of {local complete intersection} curves in~$X$ of degree~$5$ and arithmetic genus~$1$ containing~$L_0$.\ In particular, $Z_0 \subset X \times \tS_0$.
\end{prop}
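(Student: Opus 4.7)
The plan is to describe the fiber of $Z_0 \to \tS_0$ over $\tilde s \in \tS_0$, compute its Hilbert polynomial using the Koszul-type resolution of Lemma~\ref{lemma:resolution-gr25}, and then deduce flatness from constancy of the Hilbert polynomial. Fix $\tilde s \in \tS_0$, set $v := \pi_A(\tilde s) \in S_0$, and let $\Pi := \cP^4_{\tilde s} \subset \P(W)$ be the corresponding $4$-plane in $Q_v$ containing $L_0$ under the isomorphism~\eqref{eq:f4l0q0s0}. By the very definition~\eqref{def:z0} of $Z_0$, the fiber is $Z_{0,\tilde s} = \Pi \cap M_X$ in $\P(W)$. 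Since $v \in S_0 \subset \P(V_6) \setminus \P(V_5)$, the quadric $Q_v$ is non-Pl\"ucker, so $M_X \cap Q_v = X$; combined with $\Pi \subset Q_v$, this forces $Z_{0,\tilde s} = \Pi \cap M_X \cap Q_v \subset X$, while $L_0 \subset Z_{0,\tilde s}$ is immediate from $L_0 \subset \Pi \cap M_X$.

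Pulling back the resolution of Lemma~\ref{lemma:resolution-gr25} to $\P(W)$ and restricting to $\Pi \cong \P^4$ yields the complex
\begin{equation*}
0 \to \cO(-5) \to V_5^\vee \otimes \cO(-3) \to V_5 \otimes \cO(-2) \to \cO \to \cO_{Z_{0,\tilde s}} \to 0.
\end{equation*}
Whenever $\Pi \cap M_X$ has the expected codimension~$3$ in $\Pi$, this complex is a resolution of $\cO_{Z_{0,\tilde s}}$, and the standard Hilbert polynomial computation yields
\begin{equation*}
\chi(\cO_{Z_{0,\tilde s}}(k)) = \binom{k+4}{4} - 5\binom{k+2}{4} + 5\binom{k+1}{4} - \binom{k-1}{4} = 5k,
\end{equation*}
so that $Z_{0,\tilde s}$ has degree~$5$ and arithmetic genus~$1$; moreover, being of codimension~$3$ in the smooth $\P^4$ and cut out locally by three equations, it is a local complete intersection. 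Constancy of the Hilbert polynomial over the reduced (in fact smooth) base $\tS_0$ then yields flatness of $Z_0 \to \tS_0$, and the relative version of the Koszul complex on the projective bundle $\cP^4 \to \tS_0$ provides an explicit locally free resolution of $\cO_{Z_0}$.

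The main obstacle is therefore the proper-intersection claim $\dim(\Pi \cap M_X) = 1$ for every $\tilde s \in \tS_0$. The trivial bounds give $\dim \ge 1$ from $L_0 \subset \Pi \cap M_X$ and $\dim \le 3$ from the fact that the smooth threefold $X \supset \Pi \cap M_X$ contains no $\P^4$. To rule out $\dim \in \{2,3\}$, I would analyze a hypothetical irreducible surface $T \subset \Pi \cap \CGr(2,V_5) \subset X$ sitting in a linear $\P^4 \subset \P(W)$: extending to any hyperplane $\P^5 \supset \Pi$ of $\P(W)$ produces a $3$-dimensional component of even degree in $\CGr(2,V_5) \cap \P^5$, which by Lemma~\ref{lemma:gr25-p5} must be a hyperplane section of some $\Gr(2,V_4) \subset \Gr(2,V_5)$. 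A case analysis based on the smoothness of $X$, its interaction with the Grassmannian hull $M_X$, and the defining hypothesis $v \notin Y_A^3$ of $S_0$ should then produce the desired contradiction.
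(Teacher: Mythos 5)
Your overall strategy (describe the fiber $Z_{0,\tilde s}=\cP^4_{\tilde s}\cap M_X$, show it lies in $X$, use the resolution of Lemma~\ref{lemma:resolution-gr25} to compute the Hilbert polynomial $5t$, and conclude flatness from its constancy) matches the paper's proof, and the computation of degree and arithmetic genus is correct once dimensional transversality is known. But you explicitly leave the crucial step --- that $\cP^4_{\tilde s}\cap M_X$ is $1$-dimensional --- as ``the main obstacle,'' and the route you sketch for it does not work. An irreducible surface $T\subset \CGr(2,V_5)\cap \cP^4_{\tilde s}$ does \emph{not} produce a $3$-dimensional component of $\CGr(2,V_5)\cap\P^5$ for a hyperplane $\P^5\supset\cP^4_{\tilde s}$ (the intersection with the larger linear space can perfectly well remain $2$-dimensional along $T$), and there is no reason for such a hypothetical component to have even degree; so Lemma~\ref{lemma:gr25-p5} is inapplicable. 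You appear to be transplanting the argument needed for GM \emph{fivefolds} (Proposition~\ref{proposition:mz-5d}, where one must exclude $3$-dimensional components of $\CGr(2,V_5)\cap\P^5$ and the parity of the degree does enter, via \cite[Corollary~3.5]{dkperiods} and Lemma~\ref{lemma:gm5-q3}) into the threefold setting, where it is both unnecessary and incorrectly set up.

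The missing ingredient is much simpler and is the one the paper uses: since $\CGr(2,V_5)$ has codimension~$3$ and degree~$5$, the intersection $\CGr(2,V_5)\cap\cP^4_{\tilde s}$ has dimension at least~$1$ and (total) degree at most~$5$; in particular any $2$-dimensional component would be a surface of degree at most~$5$ contained in $X$, contradicting the fact that a smooth GM threefold contains no surfaces of degree less than~$10$ (\cite[Corollary~3.5]{dkperiods}). This one observation simultaneously forces $\dim(Z_{0,\tilde s})=1$, gives degree exactly~$5$, and (via dimensional transversality away from the vertex of the cone) the local complete intersection property. Without it, or some correct substitute, your proof is incomplete at its central point.
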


\begin{proof}
Let $y \in \tS_0$ and set $[v] := \pi(y) \in \P(V_6) \setminus \P(V_5)$.\
The fiber  of $Z_0$ over $y$ is
\begin{equation*}
Z_{0,y} = M_X \cap \cP^4_y =  {\CGr}(2,V_5) \cap \cP^4_y.
\end{equation*}
The cone  ${\CGr}(2,V_5) \subset \P(\C \oplus \bw2V_5)$ has codimension~3 and degree~5.\
Therefore, ${\CGr}(2,V_5) \cap \cP^4_y$ has dimension at least~1 and degree at most~5  
(and if the dimension is~1, the degree is~5).\
Furthermore, $\cP^4_y \subset Q_v$, hence 
\begin{equation*}
Z_{0,y} \subset M_X \cap Q_v = X.
\end{equation*}
Since~$X$ contains no surfaces of degrees less than~10 (\cite[Corollary~3.5]{dkperiods}), 
$Z_{0,y}$ is a {local complete intersection} curve in~$X$ of degree~5.\
This also proves the inclusion $Z_0 \subset X \times \tS_0$.

Since the curve $Z_{0,y}$ is a dimensionally transverse linear section of~$\CGr(2,V_5)$,
the resolution of Lemma~\ref{lemma:resolution-gr25} restricts on $\cP^4_y   \isom \P^4 $ to a resolution 
\begin{equation*}
0 \to \cO_{\cP^4_y}(-5) \to \cO_{\cP^4_y}(-3)^{\oplus 5} \to \cO_{\cP^4_y}(-2)^{\oplus 5} \to \cO_{\cP^4_y} \to \cO_{Z_{0,y}} \to 0.
\end{equation*}
It follows that $h^0(Z_{0,y}, \cO_{Z_{0,y}}) = h^1(Z_{0,y}, \cO_{Z_{0,y}}) = 1$,
hence $Z_{0,y}$ is a connected curve of  arithmetic genus~1;
 {in particular, its Hilbert polynomial is}~$h_{Z_{0,y}}(t) = 5t$.\
Since the Hilbert polynomial does not depend on $y$,  the family of curves~$Z_0$ is  flat  over~$\tS_0$.\
Finally, $L_0 \subset M_X$ and $L_0 \subset \cP^4_y$ by construction, hence $L_0 \subset Z_{0,y}$.
\end{proof}

We now extend the family of curves $Z_0 \to \tS_0$ to a family defined over the entire surface~$\tY^{\ge 2}_A$.\ 
We will need the following construction.

\begin{defi}
\label{definition:hilbert-closure}
Let $\cZ \subset \cX \times \cS$ be an $\cS$-flat family of subschemes in a projective variety~$\cX$, let $\varphi \colon \cS \to \Hilb(\cX)$ be the induced morphism, and let $\cS \subset \bcS$ be a partial compactification of~$\cS$.\ 
Then~$\varphi$ can be considered as a rational map $\bcS \dashrightarrow \Hilb(\cX)$.\ 
Let $\tcS \subset \bcS \times  {\Hilb(\cX)}$ be the graph of~$\varphi$ and let $\tilde\varphi \colon \tcS \to \Hilb(\cX)$ be the projection.\ 
Let~$\tcZ \subset \cX \times \tcS$ be the pullback of the universal subscheme in~$\cX \times \Hilb(\cX)$ and let 
\begin{equation*}
\bcZ \subset \cX \times \bcS 
\end{equation*}
be the scheme-theoretic image of $\tcZ$ by the morphism $\cX \times \tcS \to \cX \times \bcS$.\
We will call the subscheme $\bcZ$ the {\sf Hilbert closure} of $\cZ$ with respect to the embedding $\cS \subset \bcS$.
\end{defi}

We  apply this construction to the subscheme $Z_0 \subset X \times \tS_0$ 
and the embedding $\tS_0 \subset \tY^{\ge 2}_A$.

\begin{lemm}
\label{lemma:z}
Let 
\begin{equation}
\label{def:z}
Z \subset X \times \tY^{\ge 2}_A
\end{equation} 
be the Hilbert closure of the subscheme $Z_0 \subset X \times \tS_0$ with respect to the embedding $\tS_0 \subset \tY^{\ge 2}_A$.\ Away from a finite subscheme of $\tY^{\ge 2}_A$, 
the scheme $Z$ is a flat family of curves of degree~$5$ and arithmetic genus~$1$ containing the line~$L_0$.\ Moreover, we have
\begin{equation*}
Z \times_{\tY^{\ge 2}_A} \tS_0 = Z_0
\end{equation*}
as subschemes of $X \times \tS_0$.
\end{lemm}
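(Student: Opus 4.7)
The approach is to convert the Hilbert-closure construction of $Z$ into a pullback from the Hilbert scheme, exploiting the normality of $\tY_A^{\ge2}$ and the projectivity of the Hilbert scheme; once this is done, the asserted properties follow formally from the universal property.

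By Proposition~\ref{proposition:mz}, the family $Z_0 \to \tS_0$ is flat with constant Hilbert polynomial~$5t$, hence is classified by a morphism $\varphi_0\colon \tS_0 \to \Hilb_{5t}(X)$. Composing with the open embedding $\tS_0 \hookrightarrow \tY_A^{\ge2}$, I view this as a rational map $\varphi\colon \tY_A^{\ge2} \dashrightarrow \Hilb_{5t}(X)$; its graph is precisely the scheme $\tcS$ of Definition~\ref{definition:hilbert-closure}. Since $\tY_A^{\ge2}$ is integral and normal (Section~\ref{defepw}) and $\Hilb_{5t}(X)$ is projective, the standard fact that a rational map from a normal variety to a proper scheme is defined off a closed subset of codimension $\ge2$ implies that the indeterminacy locus of~$\varphi$ is a finite set $T \subset \tY_A^{\ge2}$. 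Setting $U := \tY_A^{\ge2}\setminus T$, the map $\varphi$ extends to a morphism $\varphi\colon U \to \Hilb_{5t}(X)$, and the first projection $\tcS \to \tY_A^{\ge2}$ is an isomorphism over~$U$.

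Pulling the universal subscheme back along $\varphi|_U$ yields a flat family of subschemes of~$X$ with Hilbert polynomial~$5t$; all its fibers are curves of degree~$5$ and arithmetic genus~$1$. Under the identification $\tcS|_U \cong U$, this pullback is exactly $\tcZ|_U$, and since $X\times\tcS|_U \to X\times U$ is an isomorphism, its scheme-theoretic image coincides with $Z|_U$; in other words, $Z|_U$ is the pullback of the universal family along $\varphi|_U$, so it is the desired flat family of curves. The condition ``$L_0 \subset W$'' on a point $[W]\in\Hilb_{5t}(X)$ cuts out a closed subscheme; its preimage in $U$ is a closed subset of $U$ containing the dense open $\tS_0$, hence equals $U$, so every fiber of $Z|_U \to U$ contains~$L_0$. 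The ``moreover'' assertion $Z\times_{\tY_A^{\ge2}}\tS_0 = Z_0$ holds because over $\tS_0$ the morphism $\varphi$ is, by definition, the classifying map of~$Z_0$, so pulling the universal family back along it recovers~$Z_0$.

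The key technical point is the codimension-$2$ extension of $\varphi$ in the second paragraph: it depends essentially on the normality of $\tY_A^{\ge2}$ at the (possibly singular) points lying over $Y_A^{\ge3}$. Without this, the indeterminacy locus could have dimension~$1$ and the resulting family would not be flat off a mere finite set; given this extension, the rest is a routine consequence of the universal property of the Hilbert scheme.
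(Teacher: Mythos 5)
Your argument is correct and is essentially the paper's own proof, just spelled out in more detail: both rest on the normality of the integral surface $\tY^{\ge 2}_A$ to extend the classifying map $\tS_0 \to \Hilb(X)$ over all codimension-$1$ points, so that the indeterminacy locus is finite and the family is the flat pullback of the universal family away from that finite set, with the remaining assertions following from the universal property of the Hilbert scheme. No substantive differences to report.
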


\begin{proof}
Since the surface $\tY^{\ge 2}_A$ is normal, the rational map $\tY^{\ge 2}_A \dashrightarrow \Hilb(X)$ 
defined by the subscheme~$Z_0$ extends regularly to all codimension-1 points of~$\tY^{\ge 2}_A$.\ 
The nonflat locus of  {the morphism}~$Z \to \tY^{\ge 2}_A$ is therefore supported in codimension~2, hence is a finite subscheme.\
All the remaining properties of~$Z$ are clear from the construction {of the Hilbert closure}.
\end{proof}

By Proposition~\ref{proposition:mz}, every irreducible component of~$Z_0$ has dimension~3.\ 
 {By definition of the Hilbert closure},
 the same is true for~$Z$.

The main result of this section is the following 
(recall that by Propositions~\ref{prop25}  {and~\ref{prop:gm-betti}}, the abelian groups~$H_1(\widetilde Y^{\ge2}_{A(X)},\Z)$ and $H_3(X,\Z)$ are both  free of rank 20).

\begin{theo}
\label{theorem:aj3}
For any Lagrangian subspace $A \subset \bw3V_6$ such that $A$ has no decomposable vectors and $Y_{A}^{\ge 3}=\vide$, 
the abelian variety $\Alb(\widetilde Y^{\ge2}_{A})$ has a canonical principal polarization.\
If moreover $Y_{A^\perp}^{\ge 3}=\vide$, there is an isomorphism
\begin{equation}
\label{eq:alb-isomorphic}
\Alb(\widetilde Y^{\ge2}_{A}) \cong \Alb(\widetilde Y^{\ge2}_{A^\perp})
\end{equation}
of principally polarized abelian varieties.

Furthermore, if $X$ is any smooth GM threefold with associated Lagrangian  {subspace}~$A$  {such that~$Y_{A}^{\ge 3}=\vide$}, 
if~$L_0 \subset X$ is any line  and if $Z \subset X \times \widetilde Y^{\ge2}_{A}$ is the subscheme defined in~Lemma~\textup{\ref{lemma:z}}, 
the Abel--Jacobi map 
\begin{equation}
\label{eq:ajz-homology}
\AJ_{Z} \colon H_1(\widetilde Y^{\ge2}_{A},\Z)
 \lra H_3(X,\Z)
\end{equation}
is an isomorphism of integral Hodge structures which induces an isomorphism
\begin{equation}
\label{isojac}
\Alb(\widetilde Y^{\ge2}_{A}) \isomlra \Jac(X)
\end{equation}
{of principally polarized abelian varieties.}
\end{theo}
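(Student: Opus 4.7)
The plan is to reduce to the case of a very general ordinary GM threefold $X$ with associated Lagrangian $A$ satisfying $Y_A^{\ge 3}=\vide$, and then to extend to all such $X$ by continuity/deformation within the fiber $\mathfrak{p}_3^{-1}([A])$ of \eqref{wpn}. Since $Z$ is an algebraic cycle of codimension~$2$ in $X \times \tY_A^{\ge 2}$, the map $\AJ_Z$ is automatically a morphism of integral Hodge structures of the correct bidegree. By Proposition~\ref{prop25} and Proposition~\ref{prop:gm-betti}, both $H_1(\tY_A^{\ge 2},\Z)$ and $H_3(X,\Z)$ are free abelian groups of rank~$20$, so it suffices to prove that $\AJ_Z$ is surjective.

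The central geometric input is an algebraic cycle identity on $X$. Over the curve $Y^{\ge 2}_{A,V_5} = Y_A^{\ge 2} \cap \P(V_5)$, the double covering $\pi_A$ splits into two sections; fix a component $C$ of $\pi_A^{-1}(Y^{\ge 2}_{A,V_5})$. For a general point $y \in C$ with image $[v] \in Y^{\ge 2}_{A,V_5}$, I will analyze how the $4$-plane $\cP^4_y \subset Q_v$ from the construction~\eqref{def:z0} intersects the Grassmannian hull $M_X$: using the description of the first quadratic fibration in~\eqref{eq:fibers-rho1-3fold} and the fact that $[v] \in \P(V_5)$, the intersection decomposes as the original fiber $Z_y$ plus a residual cubic surface scroll section, yielding an identity
\begin{equation*}
Z_y + L_y = S_y \cap X \quad \text{in }\operatorname{CH}_1(X),
\end{equation*}
where $L_y \subset X$ is a line and $S_y \subset M_X$ is a cubic surface scroll. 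Via the factorization $\sigma \colon F_1(X) \to Y^{\ge 2}_{A,V_5}$ of Proposition~\ref{proposition:f1x-gm3}, the assignment $y \mapsto L_y$ then globalizes to a morphism $\phi \colon F_1(X) \to \tY_A^{\ge 2}$ lifting $\sigma$ through the chosen component $C$.

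Because the odd cohomology of $M_X$ vanishes (Proposition~\ref{prop:gm-betti}) and the family $\{S_y\}$ is pulled back from $M_X$, the Abel--Jacobi contribution of $[S_y \cap X]$ to $H_3(X,\Z)$ dies, and the cycle identity gives $\AJ_Z \circ \phi_* = -\AJ_{\cL_X}$, where $\cL_X$ is the universal family of lines on $X$. Applying the Clemens--Tyurin argument (Proposition~\ref{proposition:welters}) with $m=1$, $Y = \Gr(2,V_5) \cap \P^7$, $X = Y \cap Q$, and using Lemma~\ref{lemma:gr-linear} together with Proposition~\ref{prop:gm-betti} to check the hypotheses, we see that $\AJ_{\cL_X}$ is surjective; hence so is $\AJ_Z$, and rank comparison upgrades this to an isomorphism of integral Hodge structures. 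This induces an isomorphism $\Alb(\tY_A^{\ge 2}) \isomto \Jac(X)$ of complex tori, via which we transport the canonical principal polarization from $\Jac(X)$ (which is a PPAV by Corollary~\ref{corollary:jac-gm-simple}); independence from the choice of $L_0$ follows because varying $L_0$ alters $\AJ_Z$ only by translations, and independence from the choice of $X$ in $\mathfrak{p}_3^{-1}([A])$ follows by continuity of the polarization in algebraic families. Finally, the PPAV isomorphism~\eqref{eq:alb-isomorphic} is obtained by applying the above to a period-dual pair $(X,X')$ with $A(X') = A^\perp$: the line transform~$X \dashrightarrow X'$ (constructed from a line $L_0 \subset X$) provides a birational correspondence realizing an isomorphism $\Jac(X) \cong \Jac(X')$ of PPAVs, which combined with \eqref{isojac} for both sides yields the desired identification.

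The main obstacle will be the cycle-theoretic identity $Z_y + L_y = S_y \cap X$: it requires a careful case analysis of how $\cP^4_y \cap \CGr(2,V_5)$ decomposes when $[v]$ lies in $Y^{\ge 2}_{A,V_5}$ (so that two of the Pl\"ucker quadrics restricted to $\cP^4_y$ drop rank), and a verification — using Lemmas~\ref{lemma:gr25-p25} and~\ref{lemma:gr25-p5} on the possible $3$-dimensional components of $\Gr(2,V_5) \cap \P^5$ — that the residual cycle is of the claimed type and that $L_y$ indeed comes from a line on $X$ corresponding to the image $\sigma(y) \in Y^{\ge 2}_{A,V_5}$. A secondary difficulty is establishing that the principal polarization defined on $\Alb(\tY_A^{\ge 2})$ is genuinely canonical, which rests on the compatibility of $\AJ_Z$ under simultaneous variation of $X$ and of the line $L_0$.
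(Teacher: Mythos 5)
Your strategy is essentially the paper's: construct the family $Z$ of quintic curves of arithmetic genus~$1$ through $L_0$, use the splitting of $\pi_A$ over $Y^{\ge 2}_{A,V_5}$ to get the cycle identity $Z_y + L_y = S_y \cap X$ with $S_y$ a cubic scroll in $M_X$, deduce $\AJ_Z \circ \phi_* = -\AJ_{\cL_1(X)}$ from $H_5(M_X,\Z)=0$, invoke Clemens--Tyurin for the surjectivity of $\AJ_{\cL_1(X)}$, compare ranks, and obtain the polarization statements by continuity and the line transform. However, there is a genuine gap in your application of the Clemens--Tyurin argument.

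You propose to apply Proposition~\ref{proposition:welters} with $m=1$, $Y = \Gr(2,V_5)\cap\P^7 = M_X$ and $X = Y \cap Q$. This does not satisfy the hypotheses. First, $X$ is a \emph{quadric} section of $M_X$, not a hyperplane section, so $\widehat{X}=q^{-1}(X)$ is a relative quadric (not a relative hyperplane section) in the $\P^1$-bundle $\cL_1(M_X)\to F_1(M_X)$, and Lemma~\ref{lemma:blowup-plus}, on which the whole decomposition of $H_{2m+1}(\widehat X,\Z)$ rests, does not apply. Second, assumption~(b) fails: through a general point of the quintic del Pezzo fourfold $M_X$ there is a one-parameter family of lines (by Lemma~\ref{lemma:gr-linear}(a), $F_1(\Gr(2,V_5))\cong\Fl(1,3;V_5)$ has dimension~$8$, so $F_1(M_X)$ has dimension~$4$ and $\cL_1(M_X)$ has dimension~$5$, while $\dim M_X = 4$), so $\cL_1(M_X)\to M_X$ is not generically finite. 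Third, assumption~(c) fails as well: $F_1(X)$ is a curve, hence has codimension~$3$ in the fourfold $F_1(M_X)$, not $m+1=2$. The correct choice, which is what the paper's Lemma~\ref{le216} makes, is to embed $X$ as a general \emph{hyperplane} section of a general GM \emph{fourfold} $Y$; then $F_1(Y)$ is a smooth irreducible threefold, $\cL_1(Y)\to Y$ is generically finite of degree~$6$, $F_1(X)$ is a smooth curve of codimension~$2$ in $F_1(Y)$, and $H_3(Y,\Z)=H_5(Y,\Z)=0$, so all hypotheses of Proposition~\ref{proposition:welters} hold. A minor additional point: the independence of the induced polarization on $\Alb(\tY^{\ge 2}_A)$ from the choice of $L_0$ is not because ``varying $L_0$ alters $\AJ_Z$ only by translations'' (which is not meaningful for a homomorphism $H_1\to H_3$), but because the construction works in connected families of pairs $(X,L_0)$ and a principal polarization is a discrete invariant; your continuity remark for the variation of $[V_5]$ is the right mechanism and applies to $L_0$ as well.
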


This theorem is a more precise form of Theorem~\ref{theorem:intro} for threefolds 
and its proof takes up the rest of Section~\ref{section:quartic-curves}.\
Note that if $A$ is very general, 
the principal polarization of~$\Alb(\widetilde Y^{\ge2}_{A})$ is unique by Corollary~\ref{corollary:jac-gm-simple}.\

\subsection{The boundary of the family}
\label{subsection:boundary-3}

To prove Theorem~\ref{theorem:aj3}, we study, over the boundary~$\tY^{\ge 2}_A \setminus \tS_0$,  
the family of curves~$Z$ constructed in Lemma~\ref{lemma:z}.\
By~\eqref{def:s0}, this boundary consists of the curve $\tY^{\ge 2}_{A,V_5}$ and  the finite set $Y^3_A$.\
As we will  see in the proof of Proposition~\ref{proposition:aj-ty-aj-f1},  finite sets are not important for the Abel--Jacobi map,  
so we will concentrate on a dense open subset {(denoted by $S_{0,V_5}$ and defined in Definition~\ref{definition:s0plus})} 
of the curve~$\tY^{\ge 2}_{A,V_5}$.\ 
We will construct a diagram
\begin{equation}
\label{eq:big-diagram}
\vcenter{\xymatrix@C=3.5em@R=1ex@M=2.4mm{
&&&& Z_0 \ar@{_{(}->}[dl] \ar@{^{(}->}[dr] \ar[dd] 
\\
Z'_F \ar[dd] &
Z'_{0,V_5} \ar@{_{(}->}[l] \ar@{^{(}->}[rr] \ar[dd]  &&
Z_{0+} \ar[dd]  &&
Z \ar[dd] 
\\
&&&& \tS_0 \ar@{_{(}->}[dl] \ar@{^{(}->}[dr] \ar'[d][dd]^(.4){\pi} &
\\
F_1(X) &
S'_{0,V_5} \ar@{_{(}->}[l]_{\textnormal{open}} \ar@{^{(}->}[r]^{\textnormal{closed}} \ar@{=}[ddr] &
\tS_{0,V_5} \ar[r] \ar[dd] &
\tS_{0+} \ar[dd]^{{\pi}} \ar@{^{(}->}[rr] &&
\tY^{\ge 2}_A \ar[dd]^{{\pi_A}} 
\\
&&&& S_0 \ar@{_{(}->}[dl]_(.4)[@!27]{\textnormal{open}} \ar@{^{(}->}[dr]^(.45)[@!{-27}]{\textnormal{open}} &
\\
&&
S_{0,V_5} \ar@{^{(}->}[r]^{\textnormal{closed}} &
S_{0+} \ar@{^{(}->}[rr]^{\textnormal{open}} &&
Y^{\ge 2}_A,
}}
\end{equation}
where {$S_{0+} = S_0 \cup S_{0,V_5}$ and} all squares are cartesian.\
The lower vertical arrows are double coverings 
(\'etale except for the right one, which is only \'etale away from $Y^{\ge 3}_A$).\
We want to emphasize that the schemes~$Z_{0+}$ and~$Z$ are \emph{different} 
over the boundary $\tS_{0+} \setminus \tS_0 \subset \tY^{\ge 2}_A \setminus \tS_0$,
and this difference will be crucial for the rest of the proof.
In fact, the map $Z'_{0,V_5} \to S'_{0,V_5}$ is a flat family of surfaces in~$M_X$, 
while the map $Z \to \tY^{\ge 2}_A$ is a family of curves in~$X$.

To construct the diagram, we need to impose some restrictions on $X$ and $L_0$.\
First,    we will assume from now on that $X$ is ordinary.\
To explain the restriction imposed on $L_0$, we will need the following definition (the map~$\sigma \colon F_1(X) \to Y^{\ge 2}_{A,V_5}$ and the conic $\Sigma_1(X) \subset \P(V_5)$ were
defined in~\eqref{eq:f1x} and~\eqref{def:sigma1}).\

\begin{defi}
\label{def:line-general}
A line $L$ on $X$ is {\sf nice} if $\sigma([L]) \not\in \Sigma_1(X)$. 
\end{defi}

We will use the following simple observation.

\begin{lemm}
\label{lemma:lv}
If $L \subset X$ is a nice line and $[v] := \sigma([L])$, one has
\begin{equation*}
\P(W) \cap \P(v \wedge V_5) = L.
\end{equation*}
In particular, the subspace $\P(W)$ is transverse to $\P(v \wedge V_5) \subset \P(\bw2V_5)$.
\end{lemm}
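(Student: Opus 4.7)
The plan is to translate the statement into linear algebra using Lemma~\ref{lemma:gr-linear}. By part (a) of that lemma, the line $L$ corresponds to a flag $V_1 \subset V_3 \subset V_5$, and the condition $\sigma([L]) = [v]$ forces $V_1 = \C v$, so in the Pl\"ucker embedding $L = \P(v \wedge V_3)$. By part (b), a $\sigma$-plane on $M_X$ through a point of $L$ corresponds to a $4$-dimensional subspace $V_4 \supset \C v$ with $v \wedge V_4 \subset W$; in particular, $[v] \in \Sigma_1(X) = F^2_\sigma(M_X)$ if and only if some such $V_4$ exists.

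The key observation I would use is that the map $V_5 \to v \wedge V_5$, $u \mapsto v \wedge u$, has kernel $\C v$ and induces an isomorphism $V_5/\C v \isom v \wedge V_5$. It follows that every subspace $S \subset v \wedge V_5$ is of the form $v \wedge U$ for a unique subspace $U \subset V_5$ containing $v$, with $\dim U = \dim S + 1$. Applying this to $S := W \cap (v \wedge V_5)$ produces a subspace $U \subset V_5$ with $\C v \subset U$ that entirely controls the intersection.

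Next, I would use that $L \subset X \subset \P(W)$ to get $v \wedge V_3 \subset W$, hence $V_3 \subset U$ and $\dim U \ge 3$. If $\dim U \ge 4$, one could pick a $4$-dimensional subspace $V_4$ with $\C v \subset V_4 \subset U$; then $v \wedge V_4 \subset v \wedge U \subset W$, which would exhibit a $\sigma$-plane of $M_X$ passing through $L$ and force $[v] \in \Sigma_1(X)$, contradicting the niceness of $L$. Therefore $U = V_3$ and $\P(W) \cap \P(v \wedge V_5) = \P(v \wedge V_3) = L$.

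Finally, the transversality assertion is a dimension count: since $\dim W = 8$ (as $W$ has codimension~$2$ in $\bw2V_5$) and $\dim(v \wedge V_5) = 4$, we get $\dim W + \dim(v \wedge V_5) - \dim(W \cap (v \wedge V_5)) = 8 + 4 - 2 = 10 = \dim(\bw2V_5)$, so $W + (v \wedge V_5) = \bw2V_5$. No serious obstacle is expected; the only step to execute carefully is verifying the equivalence between the niceness of $L$ and the bound $\dim(W \cap (v \wedge V_5)) \le 2$, which is the heart of the argument.
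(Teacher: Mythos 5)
Your proof is correct, but it takes a genuinely different route from the paper's. The paper disposes of the lemma in one line: niceness means $[v]\in Y^{2}_{A,V_5}\setminus\Sigma_1(X)$, and then the description~\eqref{eq:fibers-rho1-3fold} of the fibers of the first quadratic fibration (imported from earlier work and ultimately proved via the Lagrangian data) says precisely that $\rho_1^{-1}([v])$ is the line $\P(W\cap(v\wedge V_5))$, which must coincide with $L$. You instead give a self-contained Pl\"ucker-coordinate argument: writing $W\cap(v\wedge V_5)=v\wedge U$ with $\C v\subset U\subset V_5$, observing that $L=\P(v\wedge V_3)\subset\P(W)$ forces $V_3\subset U$, and that $\dim(U)\ge 4$ would produce a $\sigma$-plane $\P(v\wedge V_4)\subset M_X$ through $[v]$, contradicting $[v]\notin\Sigma_1(X)=F^2_\sigma(M_X)$. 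This uses only Lemma~\ref{lemma:gr-linear} and the definition~\eqref{def:sigma1} of $\Sigma_1(X)$, bypassing the quadratic-fibration machinery entirely; the trade-off is that the cited description~\eqref{eq:fibers-rho1-3fold} also records that the line $\P(W\cap(v\wedge V_5))$ lies on every quadric containing $X$ (hence on $X$), which your argument does not address but which the lemma does not require. One small point of care: a $\sigma$-plane through a point of $L$ need not have vertex $[v]$, so the relevant statement is only the implication you actually use, namely that the existence of $V_4\supset\C v$ with $v\wedge V_4\subset W$ puts $[v]$ in $\Sigma_1(X)$ under the identification of $F^2_\sigma(M_X)$ with the conic of kernels; with that read, the equivalence between niceness of $L$ and $\dim(W\cap(v\wedge V_5))\le 2$ holds as you claim, and the final transversality count is correct.
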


\begin{proof}
By definition of a nice line, we have $[v] \in \ytav \setminus \Sigma_1(X)$, hence~\eqref{eq:fibers-rho1-3fold} applies.
\end{proof}

From now on, we will assume that $L_0$ is a nice line and set  
\begin{equation*}
[v_0 ]:=  {\sigma}([L_0]) \in \P(V_5).
\end{equation*}
{Recall that $Y^{\ge 2}_{A,V_5} \cap \Sigma_1(X)$ is a finite scheme (Lemma~\ref{le12}).}

\begin{defi}
\label{definition:s0plus}
Denote by $S_{0,V_5}$ the dense open complement in the curve $Y^{\ge2}_{A,V_5}$
of the finite set~$Y^{\ge2}_{A,V_5} \cap \Sigma_1(X)$ and of the finite subset of $Y^{\ge2}_{A,V_5}$ 
corresponding to lines intersecting $L_0$ (including the line $L_0$ itself).\
Set
\begin{equation*}
S_{0+} := 
 Y^{\ge 2}_A \setminus (Y^{\ge 2}_{A,V_5} \setminus S_{0,V_5}) =
 S_0 \cup S_{0,V_5} \subset Y^{\ge 2}_A.
\end{equation*}
This is a smooth open subscheme of $Y^{\ge 2}_A$ containing $S_0$  {and with finite complement.}
\end{defi}

 {Note that each point of the curve $S_{0,V_5}$ corresponds to a nice line on~$X$}.\

\begin{lemm}
\label{lemma:pia-splits}
The double covering $\pi_A \colon \tY^{\ge 2}_A \to Y^{\ge 2}_A$ splits over the curve $Y^{\ge 2}_{A,V_5}$.
\end{lemm}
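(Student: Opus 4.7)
The plan is to exhibit two distinct sections of $\pi_A$ over $Y^{\ge 2}_{A,V_5}$, thereby producing a splitting of the preimage into two components.

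First I would switch from the threefold setting to an auxiliary GM fivefold $X'$ with the same Lagrangian $A$: choose a hyperplane $V'_5 \subset V_6$ with $[V'_5] \notin Y_\Ap$, so that $(V_6, V'_5, A)$ defines a smooth GM fivefold by~\eqref{eq:moduli}. Lemma~\ref{lemma:lagrangians-quadrics} applied to $X'$ identifies $\pi_A$ over $\P(V_6) \setminus \P(V'_5)$ with the canonical double cover of the corank-$2$ degeneracy locus of the family of quadrics defining $X'$, and Corollary~\ref{corollary:stein-hilbert} (with $s = 2$ and a $2$-plane $\Pi_0 \subset X'$) further identifies this cover with the relative Hilbert scheme of $5$-dimensional linear subspaces contained in the fiber quadrics and containing $\Pi_0$. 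The intersection $Y^{\ge 2}_{A,V_5} \cap \P(V'_5)$ is a finite subset of the curve, so this Hilbert-scheme description is valid over a dense open subset of $Y^{\ge 2}_{A,V_5}$.

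The key geometric observation is that for $[v] \in Y^{\ge 2}_{A,V_5}$ with $v \in V_5$, the quadric $Q_v$ is the Pl\"ucker quadric $\Cone_{\P(v \wedge V_5)}(\Gr(2, V_5/\C v))$, a cone over the $4$-dimensional Klein quadric $\Gr(2, V_5/\C v) \subset \P(\bw2(V_5/\C v))$. This Klein quadric carries two canonical rulings by $2$-planes — the $\sigma$- and $\tau$-planes of Lemma~\ref{lemma:gr-linear}(b) applied to the $4$-dimensional space $V_5/\C v$ — which lift, via joins with the vertex $\P(v \wedge V_5)$, to two disjoint families of maximal isotropic linear subspaces in $Q_v$. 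Imposing the containment of $\Pi_0$ from Corollary~\ref{corollary:stein-hilbert} then picks out one element from each ruling, yielding two sections of $\pi_A$ over $Y^{\ge 2}_{A,V_5} \setminus \P(V'_5)$.

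The main obstacle is extending these sections across the finite bad locus $(Y^{\ge 2}_{A,V_5} \cap \P(V'_5)) \cup Y^{\ge 3}_A$ and verifying their global distinctness. Extension follows from the normality of $\tY^{\ge 2}_A$ (Proposition~\ref{prop25} together with \cite[Theorem~5.2(2)]{dkcovers}) and the valuative criterion applied to the finite morphism from the closure of each section to $Y^{\ge 2}_{A,V_5}$; distinctness propagates from the generic fiber, where the two rulings of the smooth Klein quadric lie in disjoint connected components of the Grassmannian of maximal isotropic subspaces. This delivers the required splitting of $\pi_A$ over $Y^{\ge 2}_{A,V_5}$.
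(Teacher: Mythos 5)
There is a genuine gap, and it sits exactly at your ``key geometric observation.''\ You pass to an auxiliary fivefold $X'$ built from a hyperplane $V'_5$ with $[V'_5]\notin Y_{\Ap}$; necessarily $V'_5\neq V_5$, since $[V_5]\in Y^2_{\Ap}$ for the threefold. The quadrics of the family~\eqref{eq:cq} attached to $X'$ live in $\P(W)\cong\P(\bw2{V'_5})=\P^9$, and the only ones with Pl\"ucker structure are those indexed by $v\in V'_5$. For $[v]\in Y^{\ge 2}_{A,V_5}\setminus\P(V'_5)$ --- which is precisely where your Hilbert-scheme description applies --- the quadric $Q_v$ is an ordinary corank-$2$ quadric with no cone-over-the-Klein-quadric structure, so there are no ``two canonical rulings'' to choose from: the two families of maximal isotropic subspaces of these quadrics are exactly what the double cover $\tY^{\ge 2}_A\to Y^{\ge 2}_A$ keeps track of, and they are exchanged by its monodromy. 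Note that your argument nowhere uses the hypothesis $[V_5]\in Y^{\ge 2}_{\Ap}$, so if it were correct it would show that $\pi_A$ splits over \emph{every} hyperplane section of $Y^{\ge 2}_A$; Lemma~\ref{lemm11} says the opposite --- over $Y^{\ge 2}_{A,V'_5}$ with $[V'_5]\notin Y_{\Ap}$ the preimage is a \emph{connected} curve of genus~$161$.

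Your instinct about the two rulings of the Klein quadric is the right one, but it must be run on the threefold's own family of quadrics, which is what the paper does. There, for $[v]\in S_{0,V_5}$ the quadric $Q_v$ really is the restriction to $\P(W)$ of the Pl\"ucker quadric~\eqref{pluq}, and because the line $L_v$ is nice this restriction equals $\Cone_{L_v}(\Gr(2,V_5/\C v))$, of corank exactly~$2$; the reduced quadric is then the Klein quadric $\Gr(2,V_5/\C v)$, and the explicit $\sigma$-plane $\P(v_0\wedge(V_5/\C v))$ --- which contains $\bar{L}_0$ because $L_0=\P(W)\cap\P(v_0\wedge V_5)$ --- gives a section of $F^2_{\bar{L}_0}(\bar\cQ_{0+}/S_{0+})\to S_{0+}$ over $S_{0,V_5}$, and hence the splitting (one section of an \'etale double cover suffices; no second section or disjointness argument is needed). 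It is the condition $[V_5]\in Y^{\ge 2}_{\Ap}$, i.e.\ $\codim_{\sbw2{V_5}}(W)=2$ together with the transversality of Lemma~\ref{lemma:lv}, that makes the Pl\"ucker quadrics reappear along $\P(V_5)$ with corank exactly~$2$; this is the input your proposal is missing.
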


\begin{proof}
 As we saw in the proof of Corollary~\ref{corollary:stein-hilbert}, the double covering $\pi \colon \tS_0 \to S_0$ induced by~$\pi_A$
agrees with  the relative Hilbert scheme $F^2_{\bar{L}_0}(\bar\cQ_0/S_0) \to S_0$ of planes containing 
the projection~$\bar{L}_0$ of the line $L_0$, where the family of quadrics $\bar\cQ_0 \to S_0$ is obtained from the family~\eqref{eq:cq}
by restricting to~$S_0$ and passing to the quotients with respect to the 2-dimensional kernel spaces of quadrics.\ We will prove that this identification also holds over~$S_{0+}$.

Denote by $\cQ_{0+} \to S_{0+}$ the restriction of the family of quadrics~\eqref{eq:cq} to $S_{0+}$.\
For~$[v] \in S_{0,V_5}$, the quadric~$Q_v$
is the restriction  to~$\P(W)$ of the corresponding  Pl\"ucker quadric (see~\eqref{pluq}), that is
\begin{equation*}
Q_v = \P(W) \cap  \Cone_{\P(v \wedge V_5)}(\Gr(2,V_5/\C v)).
\end{equation*}
By Definition~\ref{definition:s0plus}, the line $L_v$ corresponding to the point $[v] \in S_{0,V_5} \subset Y^{\ge 2}_{A,V_5}$ is nice
hence, by Lemma~\ref{lemma:lv},  {the space $\P(W)$   intersects $\P(v \wedge V_5)$ transversely along the line $L_v$, so that}
\begin{equation}
\label{eq:qv-plucker}
Q_v = \Cone_{L_v}(\Gr(2,V_5/\C v)).
\end{equation}
In particular, $Q_v$ has corank~2 and its vertex~{$L_v$} does not meet $L_0$ (by Definition~\ref{definition:s0plus}).\
Therefore, 
by passing to the quotients with respect to the kernel spaces of quadrics, 
we obtain, as in the proof of Corollary~\ref{corollary:stein-hilbert}, 
a family $\bar\cQ_{0+} \to S_{0+}$ of nondegenerate 4-dimensional quadrics over~$S_{0+}$ 
and conclude that the Hilbert scheme~$F^2_{\bar{L}_0}(\bar\cQ_{0+}/S_{0+})$ of planes in its fibers  containing~$\bar{L}_0$  
is isomorphic to $F^4_{L_0}(\cQ_{0+}/S_{0+})$  and gives an \'etale double covering of~$S_{0+}$.\ 
Over the dense open subset $S_0 \subset S_{0+}$, this covering is induced by $\pi_A$, hence the same is true over $S_{0+}$, that is,
\begin{equation}
\label{eq:ts0plus}
F^4_{L_0}(\cQ_{0+}/S_{0+}) \cong F^2_{\bar{L}_0}(\bar\cQ_{0+}/S_{0+}) \cong \tS_{0+} := \tyta \times_{\yta} S_{0+}.
\end{equation}

 {Therefore,} to prove that the covering $\pi_A \colon \tY^{\ge 2}_A \to Y^{\ge 2}_A$ splits over $Y^{\ge 2}_{A,V_5}$, 
it is enough to check  that the covering $F^2_{\bar{L}_0}(\bar\cQ_{0+}/S_{0+}) \to S_{0+}$ splits over $S_{0,V_5}$.\ 
We do that by constructing a section of this covering over~$S_{0,V_5}$ as follows: for $[v] \in S_{0,V_5}$, consider the plane
\begin{equation}
\label{eq:plane-type-1}
\P(v_0 \wedge (V_5/\C v)) \subset \Gr(2,V_5/\C v) = \bar{Q}_v
\end{equation}
(note that $[v_0] \ne [v]$ for $[v] \in S_{0,V_5}$ by Definition~\ref{definition:s0plus}).\ 
{The line $\bar{L}_0$ is contained in this plane because $L_0 = \P(W) \cap \P(v_0 \wedge V_5)$ by Lemma~\ref{lemma:lv}.
Therefore, we obtain a regular map}
\begin{equation}
\label{eq:sprime-map}
S_{0,V_5} \lra F^2_{\bar{L}_0}(\bar\cQ_0/S_0) = F^4_{L_0}(\cQ_{0+}/S_{0+}) \cong \tS_{0+}
\end{equation}
which gives the required section.
\end{proof}

\begin{rema}
\label{remark:nonreduced-1}
The map~\eqref{eq:sprime-map} is the restriction of the map 
\begin{eqnarray*}
\P(V_5) \setminus \{[v_0]\} &\lra& F^4_{L_0}(\cQ/\P(V_6))
\\ 
{[}v] &\longmapsto& \P\bigl(W \cap ((\C  v_0\oplus \C v) \wedge V_5)\bigr)
\end{eqnarray*}
hence it is well defined even if the curve $Y^{\ge 2}_{A,V_5}$ is not reduced.
\end{rema}

We still denote by~$\pi$ the double covering $\tS_{0+} \to S_{0+}$ constructed above.\ Note that $\tS_{0+}$ is a smooth irreducible open surface in~$\tY^{\ge 2}_A $ containing $\tS_0$ as an open subscheme.\ Let 
\begin{equation*}
\tcQ_{0+} = \cQ_{0+} \times_{S_{0+}} \tS_{0+} 
  \simeq \cQ \times_{\P(V_6)} \tS_{0+}
\end{equation*}
be the base change of the family $\cQ_{0+} \to S_{0+}$ along~$\pi$.\ The isomorphism~\eqref{eq:ts0plus} induces a section
\begin{equation*}
\tS_{0+} \lra F^4_{L_0}(\tcQ_{0+}/\tS_{0+})
\end{equation*}
of its relative Hilbert scheme of 4-spaces and we denote by 
\begin{equation*}
\cP^4_+ \subset \tcQ_{0+} \subset \P(W) \times \tS_{0+}
\end{equation*}
the corresponding family of projective 4-spaces over~$\tS_{0+}$, 
which agrees by construction  with the family $\cP^4$ over $\tS_0 \subset \tS_{0+}$.\
We set
\begin{equation}
\label{def:z0plus}
Z_{0+} := \cP^4_+ \cap (M_X \times \tS_{0+}). 
\end{equation} 
This defines the middle column of the diagram~\eqref{eq:big-diagram}.\ {We denote by $Z_{0+,y} = \cP^4_{+y} \cap M_X \subset M_X$ the fiber of $Z_{0+}$ over a point $y \in \tS_{0+}$.}

\begin{lemm}
\label{lemma:z-z0plus}
We have $Z_{0+} \times_{\tS_{0+}} \tS_0 = Z_0$
and, for general points {$y$} of every irreducible component of $\tS_{0+} \setminus \tS_0$, we have $Z_y \subset Z_{0+,y}$, where $Z_y$ is the fiber of the scheme $Z$ defined in~\eqref{def:z}.
\end{lemm}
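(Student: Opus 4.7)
The first identity $Z_{0+} \times_{\tS_{0+}} \tS_0 = Z_0$ should be a one-line consequence of unwinding the definitions~\eqref{def:z0} and~\eqref{def:z0plus}: by construction, the family $\cP^4_+$ agrees with $\cP^4$ over the open subscheme $\tS_0 \subset \tS_{0+}$, and the intersection with $M_X$ commutes with base change to $\tS_0$.

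For the inclusion, my plan is to reduce to the case of a smooth curve and invoke uniqueness of flat limits. I would fix a general point $y$ of an irreducible component of $\tS_{0+} \setminus \tS_0$; each such component is a curve contained in $\pi_A^{-1}(S_{0,V_5})$. Using Lemma~\ref{lemma:z} and the normality of $\tY^{\ge 2}_A$ away from $\pi_A^{-1}(Y^3_A)$, I may assume that $\tY^{\ge 2}_A$ is smooth at~$y$ and that $y$ avoids the finite nonflat locus of $Z \to \tY^{\ge 2}_A$. I then choose a smooth curve germ $y \in C \subset \tS_{0+}$ meeting $\tS_0$ in a dense open subset $C^\circ := C \cap \tS_0$.

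The heart of the argument is now a comparison of two subschemes of $M_X \times C$. On the one hand, $Z_{0+}|_C$ is a \emph{closed} subscheme whose restriction to $C^\circ$ equals $Z_0|_{C^\circ}$ by the first identity. On the other hand, $Z|_C \subset X \times C \subset M_X \times C$ is a $C$-\emph{flat} family of curves in $X$ with the same restriction over $C^\circ$; this is exactly the content of the construction of the Hilbert closure, which produces a regular extension at $y$ because the indeterminacy locus of the rational map $\tY^{\ge 2}_A \dashrightarrow \Hilb(X)$ has codimension $\geq 2$ and the flatness at $y$ is part of our choice of $y$. Since the scheme-theoretic closure of $Z_0|_{C^\circ}$ inside $M_X \times C$ is automatically $C$-flat and actually lies in $X \times C$ (because $X \subset M_X$ is closed), uniqueness of flat limits over the smooth curve $C$ identifies it with $Z|_C$. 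As this closure is contained in the closed subscheme $Z_{0+}|_C$, I conclude $Z|_C \subset Z_{0+}|_C$, and taking fibers at $y$ yields $Z_y \subset Z_{0+,y}$.

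The only technical point requiring care will be the identification $\overline{Z_0|_{C^\circ}} = Z|_C$: both are $C$-flat, both agree on the dense open $C^\circ$, and neither has an embedded associated point over the smooth point $y$ of $C$, which forces equality. I expect no serious obstacle beyond this standard flat-limit bookkeeping.
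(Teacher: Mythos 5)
Your proposal is correct and follows essentially the same route as the paper: the first identity is read off from the construction of $\cP^4_+$, and the containment $Z_y\subset Z_{0+,y}$ is the paper's one-line ``by continuity'' step (there phrased as $Z_y\subset\cP^4_{+,y}$ followed by intersecting with $M_X$), which you merely make explicit via uniqueness of flat limits over a curve germ through~$y$. The flat-limit bookkeeping you flag is indeed the only content of that step and works as you describe, since $Z$ is flat near a general $y$ of each boundary component and $Z_{0+}\vert_C$ is closed and contains $Z_0\vert_{C^\circ}$.
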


\begin{proof}
{The equality follows from the fact that the family of 4-spaces $\cP^4_+$ agrees with $\cP^4$ over~$\tS_0$.\
Let $y$ be a general point of an irreducible component of $\tS_{0+} \setminus \tS_0$.\
By continuity, we obtain $Z_y \subset \cP^4_{+,y}$.\
Since   $Z_y \subset X \subset M_X$, we also get $Z_y \subset Z_{0+,y}$.}
\end{proof}

We denote by
\begin{equation}
\label{eq:def-sprime}
S'_{0,V_5} \subset \tS_{0+}
\end{equation}
the image of the map~\eqref{eq:sprime-map} and by
\begin{equation*}
Z'_{0,V_5} := Z_{0,+} \times_{\tS_{0+}} S'_{0,V_5} \subset M_X \times S'_{0,V_5}
\end{equation*} 
the restriction of the family~\eqref{def:z0plus} to the curve $S'_{0,V_5}$.

\begin{prop}
\label{proposition:mz2}
The map $Z'_{0,V_5} \to S'_{0,V_5}$ is a flat family of  {surfaces which are isomorphic to} 
hyperplane sections of a cubic scroll\/ $\P^1 \times \P^2$.\
 {For each point $y \in S_{0,V_5}$, the fiber $Z'_y$ contains the corresponding nice line~$L_{\pi(y)}$ and the line~$L_0$.}
\end{prop}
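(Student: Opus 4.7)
The plan is to work pointwise over $S'_{0,V_5}$: for each $y$, identify the fiber $Z'_y$ explicitly as a hyperplane section of a cubic scroll via Lemma~\ref{lemma:gr25-p25}, and then deduce flatness from constancy of the Hilbert polynomial. Fix $y \in S'_{0,V_5}$, set $[v] := \pi(y) \in S_{0,V_5}$, and $V_2 := \C v_0 \oplus \C v \subset V_5$. By Remark~\ref{remark:nonreduced-1}, the projective $4$-space $\cP^4_{+,y} \subset \P(W)$ is exactly $\P\bigl(W \cap (V_2 \wedge V_5)\bigr)$. Since $X$ is ordinary, $\P(W)$ avoids the vertex of the cone $\CGr(2,V_5)$, and hence
\begin{equation*}
Z'_y \;=\; \cP^4_{+,y} \cap M_X \;=\; \cP^4_{+,y} \cap \bigl(\Gr(2,V_5) \cap \P(V_2 \wedge V_5)\bigr).
\end{equation*}
By Lemma~\ref{lemma:gr25-p25}, the right-hand Grassmannian section is the cone with apex $[v_0 \wedge v]$ over the Segre embedding $\P(V_2) \times \P(V_5/V_2) \cong \P^1 \times \P^2 \hookrightarrow \P^5 \subset \P^6 = \P(V_2 \wedge V_5)$, so that $Z'_y$ is a codimension-two linear section of this cone.

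The hard part will be to verify that $\cP^4_{+,y}$ does not contain the apex $[v_0 \wedge v]$: once this is done, linear projection from the apex identifies $Z'_y$ with a hyperplane section of the cubic scroll $\P^1 \times \P^2$, as required. Since $L_0$ is nice, Lemma~\ref{lemma:lv} combined with Lemma~\ref{lemma:gr-linear}(a) gives $W \cap (v_0 \wedge V_5) = v_0 \wedge V_3(L_0)$, where $\C v_0 \subset V_3(L_0)$ is the $3$-space attached to the line $L_0$. Consequently $v_0 \wedge v \in W$ if and only if $v \in V_3(L_0)$, a codimension-two linear condition on $\P(V_5)$. Its intersection with the curve $Y^{\ge 2}_{A,V_5}$ is finite (and empty for a general choice of $L_0$); I will absorb these exceptional points into the finite locus excluded in Definition~\ref{definition:s0plus}, after which $Z'_y$ is uniformly realized as a hyperplane section of $\P^1 \times \P^2$ for every $y$---in particular, a degree-$3$ surface of arithmetic genus~$0$ in $\cP^4_{+,y} \cong \P^4$.

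Flatness of $Z'_{0,V_5} \to S'_{0,V_5}$ then follows from constancy of the Hilbert polynomial over the smooth curve $S'_{0,V_5}$, which is \'etale over $S_{0,V_5}$ by Lemma~\ref{lemma:pia-splits}. For the final assertion, applying Lemma~\ref{lemma:lv} to both the nice lines $L_0$ and $L_v$ yields
\begin{equation*}
L_0 = \P(W) \cap \P(v_0 \wedge V_5) \subset \cP^4_{+,y}, \qquad L_v = \P(W) \cap \P(v \wedge V_5) \subset \cP^4_{+,y},
\end{equation*}
and since both lines lie on $X \subset M_X$, each is contained in $Z'_y$. The main obstacle is the apex-avoidance step---controlling the closed condition $v \in V_3(L_0)$ on the curve $Y^{\ge 2}_{A,V_5}$---while the remaining verifications are essentially formal.
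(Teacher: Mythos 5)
Your proposal is correct and follows essentially the same route as the paper's proof: identify $\cP^4_{+y}$ with $\P(W\cap(V_2\wedge V_5))$, apply Lemma~\ref{lemma:gr25-p25}, check that the linear section avoids the apex $[\bw2V_2]$, conclude that each fiber is a hyperplane section of $\P^1\times\P^2$, deduce flatness from constancy of the Hilbert polynomial, and get the two lines from Lemma~\ref{lemma:lv}. The only point worth flagging is that your ``hard part'' is not actually an extra exclusion: the condition $v\in V_3(L_0)$, equivalently $v_0\wedge v\in W$, says exactly that $[v_0\wedge v]\in L_0\cap L_{\pi(y)}$, i.e.\ that the line $L_{\pi(y)}$ meets $L_0$, and such points are already removed in Definition~\ref{definition:s0plus} (this is precisely how the paper rules out the apex), so no further shrinking of $S_{0,V_5}$ and no genericity of $L_0$ are needed.
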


We will give a more detailed description of the fibers of $Z'_{0,V_5}$ in Lemma~\ref{lemma:mzy-general}.

\begin{proof}
Let $y \in \tS'_{0,V_5}$ and  set again $[v] = \pi(y) \in \P(V_5)$.\
The proof of Lemma~\ref{lemma:pia-splits} shows that the quadric $Q_v$ has the form~\eqref{eq:qv-plucker}
and that the point $y \in S'_{0,V_5}$ corresponds to the plane~\eqref{eq:plane-type-1} in~$\bar{Q}_v$.\ 
Its preimage in $Q_v$ is the 4-space
\begin{equation*}
\cP^4_{+y} = \P(((v \wedge V_5) \cap W) \oplus (v_0 \wedge (V_5/\C v))) = \P(V_2 \wedge V_5) \cap \P(W),
\end{equation*}
where $V_2 \subset V_5$ is the subspace spanned by $v_0$ and $v$ (they are linearly independent by Definition~\ref{definition:s0plus}).\ 
Furthermore, by Lemma~\ref{lemma:gr25-p25}, the fiber of $Z'_{0,V_5}$ at $y$ can be written as 
\begin{equation}
\label{eq:z-prime-y}
Z'_y := M_X \cap \cP^4_{+y} = \Cone_{\P(\sbw2V_2)}\bigl(\P(V_2) \times \P(V_5/V_2)\bigr) \cap \P(W), 
\end{equation}
so it is a linear section of a cone over the 3-dimensional cubic scroll.

The vertex $[\bw2V_2]$ of the cone does not belong to $\P(W)$.\ Indeed, since both $L_0$ and $L_v$ are nice lines (in the sense of Definition~\ref{def:line-general}), 
we have by Lemma~\ref{lemma:lv}
\begin{equation}
\label{eq:l0-lv}
 \P(W) \cap \P(v_0 \wedge V_5) = L_0
\quad \textnormal{and}\quad  
\P(W) \cap\P(v \wedge V_5)  = L_v.
\end{equation}
But $[\bw2V_2] \in \P(v_0 \wedge V_5) \cap \P(v \wedge V_5)$, so if it also belongs to $\P(W)$, 
we get $L_0 \cap L_v \ne \vide$, which contradicts Definition~\ref{definition:s0plus}.\

Since $W$ has codimension~2 in $\bw2V_5$ and is transverse to $V_2 \wedge V_5$ by Lemma~\ref{lemma:lv}, 
it follows that $Z'_y$ is isomorphic to a hyperplane section of $\P(V_2) \times \P(V_5/V_2)   \isom \P^1 \times \P^2$.\ 
It is easy to see that its Hilbert polynomial is $h_{Z'_y}(t) = (t+1)(\tfrac32 t+1)$.\ Since it
does not depend on $y$, the family of surfaces $Z'_{0,V_5}$ is flat over~$S'_{0,V_5}$.

 {A combination of~\eqref{eq:z-prime-y} and~\eqref{eq:l0-lv} shows} that $L_v,L_0 \subset Z'_y$.
\end{proof}

\begin{rema}
\label{remark:nonreduced-2}
As in Remark~\ref{remark:nonreduced-1}, the family $Z'_{0,V_5}$ is the restriction of the family of surfaces
\begin{equation*}
Z'_v = \Cone_{[v_0 \wedge v]}\bigl(\P( \C  v_0\oplus \C v) \times \P(V_5/( \C  v_0\oplus \C v))\bigr) \cap \P(W)
\end{equation*}
over $\P(V_5) \setminus \{[v_0]\}$.\ When~{$[v] \in \P(V_5) \setminus \P(V_3)$}, where $V_3 \subset V_5$ is defined by $L_0 = \P(v_0 \wedge V_3)$, these surfaces are  hyperplane sections of the cubic scroll $\P^1 \times \P^2$.\
This proves flatness of the family~{$Z'_{0,V_5}$} even  when the curve $S'_{0,V_5}$ is not reduced.
\end{rema}

Propositions~\ref{proposition:mz} and~\ref{proposition:mz2} show that the components of $Z_0$ and $Z'_{0,V_5}$  all have  dimension~$3$.\ 
They  are components of the scheme~$Z_{0+}$, which has other 3-dimensional components 
over the curve~$S''_{0,V_5}$ defined by~$S''_{0,V_5} = \tS_{0+} \setminus (\tS_0 \cup S'_{0,V_5})$, but we will not need this fact.

{Finally, to construct the left column of~\eqref{eq:big-diagram},} recall that
the curve $S'_{0,V_5}$ is by definition  {isomorphic to the} open subscheme $S_{0,V_5} \subset Y^{\ge 2}_{A,V_5} \setminus \Sigma_1(X)$,
which via the map $\sigma \colon F_1(X) \to Y^{\ge 2}_{A,V_5}$ 
is identified with an open subscheme in {the Hilbert scheme of lines} $F_1(X)$ (see Proposition~\ref{proposition:f1x-gm3}).\
Applying to $Z'_{0,V_5}$ the construction of Hilbert closure from {Definition~\ref{definition:hilbert-closure}}, we obtain a subscheme
\begin{equation}
\label{def:zf}
Z'_F \subset M_X \times F_1(X) 
\end{equation}
such that $Z'_F \times_{F_1(X)} S'_{0,V_5} \cong Z'_{0,V_5}$.\ Note that $Z'_F$ may be not flat over the singular locus of the curve~$F_1(X)$.

\subsection{A relation between the subschemes}

Let $X$ be a smooth {ordinary} GM threefold  {and let $L_0$ be a nice line on~$X$}.\
In~\eqref{def:z} and~\eqref{def:zf}, we  constructed 
subschemes \mbox{$Z \subset X \times \tY^{\ge 2}_A$} and~\mbox{$Z'_F \subset M_X \times F_1(X)$}.\ 
The proof of Theorem~\ref{theorem:aj3} is based on a relation  
between the schemes 
\begin{equation*}
Z \cap (X \times S'_{0,V_5})
\qquad\textnormal{and}\qquad
Z'_F \cap (X \times S'_{0,V_5}), 
\end{equation*}
where  {the curve $S'_{0,V_5}$ defined in~\eqref{eq:def-sprime}} is considered as a subscheme of~$\tY^{\ge 2}_A$ and~$F_1(X)$.\ 
 
To prove such a relation, we will assume that the Hilbert scheme of lines $F_1(X)$ is a smooth curve (by Lemma~\ref{lemma:f1-connected}, it is then irreducible).\
This assumption implies that the open curves $Y^2_{A,V_5} \setminus \Sigma_1(X)$ and~$S'_{0,V_5}$ are also smooth and irreducible.\ 
The next lemma sharpens the results of Proposition~\ref{proposition:mz2} under this assumption.

\begin{lemm}
\label{lemma:mzy-general}
Assume that $F_1(X)$ is smooth.\
For a general point $y $ in $S'_{0,V_5}$, the fiber $Z'_y$ of~$Z'_{0,V_5}   \to S'_{0,V_5}$ is a smooth cubic surface scroll 
and the lines $L_0$ and $L_{\pi(y)}$ are distinct  {fibers of the ruling} of this scroll.
\end{lemm}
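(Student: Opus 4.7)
The plan is to leverage the explicit description of $Z'_y$ from Proposition~\ref{proposition:mz2} as a hyperplane section of the cubic scroll $\P(V_2)\times\P(V_3)\subset\P(V_2\otimes V_3)$, where $V_2=\C v_0+\C v$, $V_3=V_5/V_2$, and $v=\pi(y)$. By the classical classification, such a hyperplane section of $\P^1\times\P^2\subset\P^5$ is either a smooth cubic surface scroll (the Hirzebruch surface $\mathbb{F}_1$ embedded in $\P^4$ via $|C_0+2f|$) or the reducible surface $\P^2\cup Q$ with $Q\simeq\P^1\times\P^1$ meeting the plane along a line; these two cases correspond respectively to the defining bilinear form in $V_2^\vee\otimes V_3^\vee$ having rank $2$ or rank $1$.

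The identification of $L_0$ and $L_{\pi(y)}$ as fibers of the ruling follows easily once smoothness is granted: the only lines on $\mathbb{F}_1$ embedded as a cubic scroll in $\P^4$ are the unique $(-1)$-directrix $C_0$ and the members of the one-parameter family of fibers $f$, and since $C_0\cdot f=1$ while distinct fibers are disjoint, any pair of disjoint lines on $Z'_y$ consists of two distinct fibers. Proposition~\ref{proposition:mz2} places both $L_0$ and $L_{\pi(y)}$ on $Z'_y$, and Definition~\ref{definition:s0plus} ensures $L_0\cap L_{\pi(y)}=\vide$, so the ruling claim is immediate.

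The main work---and the principal obstacle---is to establish smoothness of $Z'_y$ for general $y$. The rank-$1$ degeneration of the defining bilinear form amounts to the existence of some $[a]\in\P(V_2)$ with $\dim(a\wedge V_5\cap W)\ge 3$ in $\bw2V_5$, so that the $[a]$-indexed $\P(V_3)$-ruling of the cubic scroll lifts to a plane contained in $\cP^4_{+y}$; the niceness of $L_0$ and $L_{\pi(y)}$ (cf.\ Lemma~\ref{lemma:lv}) rules out $[a]=[v_0]$ and $[a]=[\pi(y)]$, so $[a]$ must be a third point of the line $\overline{[v_0][\pi(y)]}\subset\P(V_5)$, and in particular lies in the closed locus
\begin{equation*}
B:=\{[a]\in\P(V_5):\dim(a\wedge V_5\cap W)\ge3\}.
\end{equation*}
Realizing $B$ as the degeneracy locus (where the rank drops by at least $3$) of a natural map between vector bundles of ranks $5$ and $2$ on $\P(V_5)=\P^4$, Porteous's formula yields $\codim B\ge3$ and hence $\dim B\le1$; the cone $[v_0]\ast B\subset\P(V_5)$ is then of dimension at most $2$. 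Singularity of $Z'_y$ forces $[\pi(y)]\in[v_0]\ast B$, so I would conclude by arguing that the curve $S_{0,V_5}$ is not contained in $[v_0]\ast B$. This last point is where the smoothness hypothesis on $F_1(X)$ enters: it provides a positive-dimensional supply of nice lines on $X$ (cf.\ Proposition~\ref{proposition:f1x-gm3}), allowing the nice line $L_0$ fixed at the outset to be chosen generically so that $S_{0,V_5}\not\subset[v_0]\ast B$, whereupon the bad locus in $S'_{0,V_5}$ is a proper closed subset and the lemma follows.
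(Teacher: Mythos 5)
Your reduction is sound and, up to its final step, coincides with the paper's: the hyperplane sections of $\P(V_2)\times\P(V_5/V_2)$ are either smooth cubic scrolls or of the form $(\P(V_2)\times\P(V_2'))\cup(\{[v']\}\times\P(V_5/V_2))$, and in the degenerate case the $\sigma$-plane $\{[v']\}\times\P(V_5/V_2)$ is contained in $M_X$, which is exactly the condition $[v']\in\Sigma_1(X)$ --- your locus $B$ \emph{is} the conic $\Sigma_1(X)$ of~\eqref{def:sigma1}. Niceness excludes $[v']=[v_0]$ and $[v']=[\pi(y)]$, so singularity of $Z'_y$ indeed forces $[\pi(y)]\in\Cone_{[v_0]}(\Sigma_1(X))$. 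Your argument for the ruling claim (two disjoint lines on a smooth cubic scroll must both be fibers, since the directrix meets every fiber) is a correct variant of the paper's. A minor flaw first: Porteous bounds the codimension of a degeneracy locus from \emph{above}, not from below, so it cannot yield $\codim(B)\ge 3$; the correct input is that $B=\Sigma_1(X)$ is the image of the pencil $\P(W^\perp)$ of skew forms under $[\omega]\mapsto[\ker\omega]$, hence a conic.

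The essential gap is the last step. You write that you ``would conclude by arguing that $S_{0,V_5}$ is not contained in $[v_0]\ast B$'' and propose to secure this by choosing $L_0$ generically among nice lines. That is not an argument: you give no reason why a general choice of $[v_0]$ prevents the containment $Y^{\ge 2}_{A,V_5}\subset\Cone_{[v_0]}(\Sigma_1(X))$, and even if it did, you would only obtain the lemma for a \emph{general} nice line, whereas the statement (and its use in proving~\eqref{eq:mz1} for the $L_0$ fixed at the outset) concerns an arbitrary nice $L_0$. The paper closes exactly this point, for every nice $L_0$, by a degree computation: if the containment held, the projection $\pr_{v_0}$ would map the integral degree-$40$ curve $Y^{\ge 2}_{A,V_5}$ onto the conic $\pr_{v_0}(\Sigma_1(X))$, so the line joining $v_0$ to a general point of $\Sigma_1(X)$ would meet $Y^{\ge 2}_{A,V_5}$ in $20$ points; since $Y^{\ge 2}_{A}$ is an intersection of sextics, such a line would be contained in $Y^{\ge 2}_{A,V_5}$, which would then contain the two-dimensional cone $\Cone_{[v_0]}(\Sigma_1(X))$ --- absurd. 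Some concrete argument of this kind is required; without it your proof is incomplete precisely at the point that carries the content of the lemma.
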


\begin{proof}
We saw at the end of the proof of Proposition~\ref{proposition:mz2} that $Z'_y$ is a hyperplane section of~$\P(V_2) \times \P(V_5/V_2)$.\ 
These hyperplane sections come in two kinds: 
\begin{itemize}
\item[\textup{(a)}] 
  smooth cubic scrolls with   projection to $\P^1$ induced by $\P(V_2) \times \P(V_5/V_2) \to \P(V_2)$,
\item[\textup{(b)}] 
  unions $(\P(V_2) \times \P(V'_2)) \cup (\{[v']\} \times \P(V_5/V_2))$, where $V'_2 \subset V_5/V_2$ and $[v'] \in \P(V_2 )$.
\end{itemize}
In case (b), the $\sigma$-plane  $\{[v']\}  \times \P(V_5/V_2)$ is contained in $Z'_y \subset M_X$, 
hence $[v'] \in \Sigma_1(X)$ by~\eqref{def:sigma1} and  $[v'] \ne [v_0]$.\
{Since~$V_2$ is the subspace of $V_5$ spanned by $v_0$ and a vector $v$ such that}
$[v]=\pi(y)$,  {and since~$[v'] \in \P(V_2) \setminus \{[v_0]\}$, case~(b)} holds only if 
\begin{equation*}
[v] \in \pr_{v_0}(\Sigma_1(X)) \cap \pr_{v_0}(Y^{\ge 2}_{A,V_5}) \subset \P(V_5/\C v_0),
\end{equation*}
where $\pr_{v_0} \colon \P(V_5) \dashrightarrow \P(V_5/\C v_0)$ is the  projection from $v_0$.\
Since $Y^{\ge 2}_{A,V_5}$ is an integral curve of degree~40, its image by $\pr_{v_0}$ is contained in the image of the conic $\Sigma_1(X)$
only if the line connecting $v_0$ with a general point of $\Sigma_1(X)$ intersects $Y^{\ge 2}_{A,V_5}$ in 20 points.\
But the surface $Y^{\ge 2}_{A}$ is an intersection of hypersurfaces of degree~6 by~\cite[(33)]{dkperiods}, 
hence the same is true for its hyperplane section~$Y^{\ge 2}_{A,V_5}$, 
and the curve $Y^{\ge 2}_{A,V_5}$ would then contain the cone $\Cone_{[v_0]}(\Sigma_1(X))$, which is absurd.
Therefore, for $y$ general in $ \tS'_{0,V_5}$, we are in case~(a).

By~\eqref{eq:l0-lv}, 
the lines~$L_0$ and~$L_v$ 
are contained in fibers of the map $\P(V_2) \times \P(V_5/V_2) \to \P(V_2)$.\
In case (a), they are therefore  {the fibers of the ruling} of the scroll.\ 
\end{proof}

Since the curve $S'_{0,V_5}$ is isomorphic to  a dense open subscheme of the smooth curve $F_1(X)$, 
the {locally closed} embedding $S'_{0,V_5} \hookrightarrow \tY^{\ge 2}_A$ extends to a regular map 
\begin{equation}
\label{def:phi}
\phi \colon F_1(X) \lra \tY^{\ge 2}_A.
\end{equation}
We combine all these maps into the commutative diagram
\begin{equation}
\label{diagram:schemes}
\vcenter{\xymatrix@M=4pt@R=9pt@C=31pt
{
&&
X \times S'_{0,V_5} \ar@{_{(}->}[dl]_(.45)[@!18]{\textnormal{open}} 
\ar@{^{(}->}[dr]^(.47)[@!-19]{\textnormal{loc.\ closed}}
 \ar@{_{(}->}'[d][dd]^(.3)i &
\\
\cL_1(X) \ar@{^{(}->}[r] &
X \times F_1(X) \ar@{_{(}->}[dd]^-i \ar[rr]^(.4)\phi &&
X \times \tY^{\ge 2}_A \ar@{_{(}->}[dd]^-i &
Z \ar@{_{(}->}[l] &
\\
&&
M_X \times S'_{0,V_5}
\ar@{_{(}->}[dl]_(.47)[@!18]{\textnormal{open}}
\ar@{^{(}->}[dr]^(.49)[@!-19]{\textnormal{loc.\ closed}}
\\
Z'_F \ar@{^{(}->}[r] &
M_X \times F_1(X) \ar[rr]^-\phi &&
M_X \times \tY^{\ge 2}_A,
}}
\end{equation} 
where $\cL_1(X) \subset X \times F_1(X)$ is the universal family of lines, $i \colon X \hra M_X$ is the embedding,
and the schemes~$Z$ and $Z'_F$ are defined by~\eqref{def:z} and~\eqref{def:zf} respectively.\ We make the following  key observation.

\begin{prop}
Assume that $F_1(X)$ is smooth.\ 
There is a dense open  subscheme~\mbox{$U \subset F_1(X)$} such that
\begin{equation}
\label{eq:mz1}
Z'_F \cap (X \times U) =  {\big(\phi^{-1}(Z) \cap   (X \times U) \big) + \big(\cL_1(X) \cap (X \times U)\big)}
\end{equation}
{as cycles}.
\end{prop}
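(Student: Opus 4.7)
The plan is to establish the claimed identity fiberwise over a dense open subscheme $U \subset F_1(X)$, and then globalize by flatness. Under the assumption that $F_1(X)$ is smooth, Proposition~\ref{proposition:f1x-gm3} identifies, via~$\sigma^{-1}$, a dense open subscheme of $F_1(X)$ with the curve $S'_{0,V_5} \subset \tS_{0+} \subset \tY^{\ge 2}_A$; I take $U$ inside this open subscheme, and shrink further so that the following conditions hold for every $y \in U$: by Lemma~\ref{lemma:mzy-general}, the fiber of $Z'_F$ at $y$ is a smooth cubic surface scroll $Z'_y$ containing the lines $L_0$ and $L_y$ (the line on~$X$ parameterized by~$y$) as two distinct fibers of the ruling; by Proposition~\ref{proposition:mz} and Lemma~\ref{lemma:z-z0plus}, the fiber $Z_{\phi(y)}$ of $Z$ at $\phi(y)$ is a curve of degree~$5$ and arithmetic genus~$1$ containing~$L_0$ and contained in $Z'_y$; and the three families $Z'_F \cap (X \times U)$, $\phi^{-1}(Z) \cap (X \times U)$, and $\cL_1(X) \cap (X \times U)$ are flat over $U$ of relative dimension~$1$ (which is automatic on a dense open of the smooth curve $F_1(X)$). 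Finally, I impose the genericity condition $(\star)$: for each $y \in U$, the line $L_y$ is not an irreducible component of the curve $Z_{\phi(y)}$.

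\medskip

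Given these arrangements, the fiberwise identity follows from a degree count on the cubic scroll. The intersection $Z'_y \cap X = Z'_y \cap Q$ is an effective Cartier divisor on the smooth surface $Z'_y$ of class $2H$ (where $H$ is the restriction of the hyperplane class of $\P^4$), hence of degree $6$. Since $Z_{\phi(y)} \subset Z'_y \cap X$ has degree~$5$, the residual divisor $(Z'_y \cap X) - Z_{\phi(y)}$ is an effective divisor of degree~$1$ on the scroll, hence a line; and since $L_y \subset Z'_y \cap X$ but~$L_y$ is not a component of $Z_{\phi(y)}$ by $(\star)$, this residual line must coincide with~$L_y$. This yields $Z'_y \cap X = Z_{\phi(y)} + L_y$ as $1$-cycles on~$X$.

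\medskip

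To pass from this fiberwise equality to the global cycle identity on $X \times U$, observe that both sides of the claimed identity are $2$-dimensional cycles on $X \times U$ whose irreducible components all dominate $U$ (by the flatness assumption). Since the cycle-theoretic fibers coincide at every point of $U$, the cycles themselves agree.

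\medskip

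The main obstacle is the genericity condition $(\star)$, which concerns the behavior of the Hilbert-limit curves $Z_{\phi(y)}$ on the boundary curve $S'_{0,V_5}$. Since $(\star)$ is a closed condition on $y$, it suffices to rule out $L_y \subset Z_{\phi(y)}$ at a single point, after which it holds on a dense open subset. I expect to verify this by a dimension count: curves of degree~$5$ and arithmetic genus~$1$ on a fixed smooth cubic scroll which contain \emph{two} prescribed distinct fibers of the ruling form a strictly smaller family than those containing only one, so a generic member of the one-parameter family $\{Z_{\phi(y)}\}_{y \in S'_{0,V_5}}$ cannot contain the corresponding line $L_y$ (which varies nontrivially with $y$).
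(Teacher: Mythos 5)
Your fiberwise skeleton is the same as the paper's: restrict to the boundary curve $S'_{0,V_5}$, intersect the cubic scroll $Z'_y$ with a non-Pl\"ucker quadric to get a degree-$6$ divisor, observe that the degree-$5$ curve $Z_{\phi(y)}$ sits inside it, and identify the degree-$1$ residual with the line $L_{\pi(y)}$. The logic ``$L_y$ lies on $Z'_y\cap X$ but is not a component of $Z_{\phi(y)}$, hence equals the residual line'' is correct \emph{given} your condition $(\star)$. The problem is that $(\star)$ is exactly where the content of the proposition lies, and your proposed justification of it does not work. The ``dimension count'' compares two loci of curves on a \emph{fixed} scroll containing one versus two prescribed ruling fibers; but in the actual situation the scroll $Z'_y$, the limit curve $Z_{\phi(y)}$, and the line $L_y$ all vary together in a single one-parameter family, and the fact that the two-fiber locus is smaller in each fixed scroll in no way prevents your particular one-parameter family of Hilbert limits from landing inside it for every $y$. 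You have no a priori control over what the limits $Z_{\phi(y)}$ are beyond their Hilbert polynomial and the containments $L_0\subset Z_{\phi(y)}\subset Z'_y$, so a genericity assertion about them must be extracted from the geometry, not postulated.

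The paper closes this gap by an intersection-theoretic argument on the scroll $Z'_y\cong\mathbf{F}_1$ that you should supply. Writing $e$ for the exceptional section and $f$ for a ruling fiber, the divisor $C_y=Z'_y\cap Q_0$ has class $2e+4f$, and $L_0$, $L_{\pi(y)}$ are components of class $f$, so the residual $C'_y=C_y-L_0-L_{\pi(y)}$ is effective of class $2e+2f$. If $C'_y$ contained a line (class $e$ or $f$), then since $(2e+f)\cdot e=-1$ the exceptional section would in either case be a component of $C'_y$, hence a line on $X$ meeting $L_0$; as there are only finitely many such lines, and then only finitely many lines meeting one of them, the point $\sigma^{-1}([v])=[L_{\pi(y)}]$ would be confined to a finite set --- impossible for $y$ general in the irreducible curve $S'_{0,V_5}$. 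Thus for general $y$ the only line components of $C_y$ are $L_0$ and $L_{\pi(y)}$, each with multiplicity one, and the decomposition $C_y=Z_{\phi(y)}+(\text{a line})$ together with $L_0\subset Z_{\phi(y)}$ forces the residual line to be $L_{\pi(y)}$ (in particular $(\star)$ holds). Without some argument of this kind your proof is incomplete.
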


\begin{proof}
Since we only need  an equality over a   dense open subset of $F_1(X)$, 
we may base change both sides along the open embedding $S'_{0,V_5} \hra F_1(X)$.\ 
The left   side can then be rewritten as
\begin{equation*}
Z'_F \cap (X \times S'_{0,V_5}) = Z'_{0,V_5} \cap ( X\times S'_{0,V_5}).
\end{equation*}
By Proposition~\ref{proposition:mz2} and Lemma~\ref{lemma:mzy-general}, the morphism $Z'_{0,V_5} \to S'_{0,V_5}$ 
is a flat family of surfaces whose general fiber is a smooth cubic surface scroll.\
Since $X$ contains no surfaces of degrees less than~10 (\cite[Corollary~3.5]{dkperiods}), it contains no
components of any fiber of~$Z'_{0,V_5}$.\
Therefore, the morphism
\begin{equation*}
Z'_{0,V_5} \times_{M_X} X \to S'_{0,V_5} 
\end{equation*}
is a flat family of curves whose fiber over  {a general point} $y\in S'_{0,V_5}$ is the intersection 
\begin{equation*}
C_y := Z'_y \cap Q_0 
\end{equation*}
of the smooth cubic surface scroll~$Z'_y$ with any non-Pl\"ucker quadric $Q_0$ containing $X$.\
Such an intersection is a connected curve of degree~6 and arithmetic genus~2.\
Since the lines~$L_0$ and~$L_{\pi(y)}$ are contained both in the scroll $Z'_y$ and the quadric $Q_0$, they are components of $C_y$.

To describe the remaining components, we denote by~$e$ the class of the exceptional section~$L_e$ of the scroll~$Z'_y$ 
and by~$f$ the class of a fiber  of its ruling.\
We have 
\begin{equation*}
e^2 = -1,\qquad 
e \cdot f = 1, \qquad 
f^2 = 0.
\end{equation*}
The hyperplane class is equal to $e + 2f$  hence, the class of $C_y $ in $ Z'_y$ is $2e + 4f$.\
As we observed above, the lines $L_0$ and $L_{\pi(y)}$ are fibers  {of the ruling}, hence
\begin{equation*}
C'_y :=C_y- L_0 - L_{\pi(y)}
\end{equation*}
is an effective divisor on~$Z'_y$ with class $2e + 2f$.

If $C'_y$ contains a line, the class of this line is 
either $f$ (the class of a fiber of the ruling), or~$e$ (the class of the exceptional section~$L_e$ of $Z'_y$).\ 
If it is~$f$, the residual components have class~$2e + f$, and since~\mbox{$(2e + f) \cdot e = -1$},
the section $L_e$ is in both cases a component of $C'_y$, hence   a line on $X$.\ 
The line $L_e$ is in the finite set of lines  {on $X$} intersecting~$L_0$, 
and~$L_{\pi(y)}$ is in the finite set of lines  {on $X$} intersecting a line that intersects $L_0$.\  
It follows that for $y$ general, the curve~$C'_y$  contains no lines.\

By Lemma~\ref{lemma:z-z0plus},  the curve~$Z_y$ is contained in the surface~$Z'_y$ for general $y \in S'_{0,V_5}$.\ 
Therefore, by Lemma~\ref{lemma:z}, for general $y \in S'_{0,V_5}$, 
the sextic curve~$C_y$ contains the quintic curve~$Z_y$,  
hence~\mbox{$C_y = Z_y + L'_y$}, where~$L'_y$ is a line.\  
Since $Z_y$ contains $L_0$ and $C'_y$ contains no lines, the line~$L'_y$ must be~$L_{\pi(y)}$.\
Thus,
\begin{equation*}
C_y = Z_y +L_{\pi(y)}.
\end{equation*}
Since this holds for $y$ general in $S'_{0,V_5}$, it follows that the equality of cycles~\eqref{eq:mz1} holds 
 {over a dense open subscheme~\mbox{$U \subset S'_{0,V_5} \subset F_1(X)$}.}
\end{proof}

\subsection{Abel--Jacobi maps}
\label{subsection:aj-3}

Let $X$ be a smooth GM threefold with associated Lagrangian~$A$.\ 
Assume that $Y^3_A = \vide$, so that $\tY^{\ge 2}_A$ is a smooth surface, 
and that the Hilbert scheme of lines~$F_1(X)$ is smooth.\ 
The subscheme $Z \subset X \times \tY^{\ge 2}_A$ was constructed in Lemma~\ref{lemma:z}.\ 
Consider the universal line $\cL_1(X) \subset X \times F_1(X)$, the Abel--Jacobi maps
\begin{equation*}
\AJ_{Z} \colon H_1(\tY^{\ge 2}_A,\Z) \to H_3(X,\Z)
\quad \textnormal{and}\quad  
\AJ_{\cL_1(X)} \colon H_1(F_1(X),\Z) \to H_3(X,\Z),
\end{equation*}
and   the map $\phi \colon F_1(X) \to \tY^{\ge 2}_A$ defined in~\eqref{def:phi}.

\begin{prop}
\label{proposition:aj-ty-aj-f1}
Let $X$ be a smooth ordinary GM threefold with  associated Lagrangian~$A$ satisfying~\mbox{$Y^{3}_A = \vide$}.\ 
Assume that~$F_1(X)$ is smooth and let~$L_0$ be a nice line on~$X$.\ 
The composition of maps 
\begin{equation*}
H_1(F_1(X),\Z) \xrightarrow{\ \phi_*\ } H_1(\tY^{\ge 2}_A,\Z) \xrightarrow{\ \AJ_{Z}\ } H_3(X,\Z)
\end{equation*}
is equal to the map $-\AJ_{\cL_1(X)}$.
\end{prop}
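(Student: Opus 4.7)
The plan is to apply the functoriality of the Abel--Jacobi map (Lemma~\ref{lemma:aj-properties}) to convert $\AJ_Z \circ \phi_*$ into the Abel--Jacobi map of the pullback cycle $(\Id_X \times \phi)^*(Z)$ on $X \times F_1(X)$, then rewrite this cycle using the identity~\eqref{eq:mz1}, and finally kill the unwanted contribution by means of the vanishing of the odd cohomology of the Grassmannian hull $M_X$.

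First, Lemma~\ref{lemma:aj-properties}(a) applied to $\phi \colon F_1(X)\to\tY^{\ge 2}_A$ and to the cycle $Z$ gives immediately
\begin{equation*}
\AJ_Z \circ \phi_* \ =\ \AJ_{(\Id_X \times \phi)^*(Z)}
\quad \text{as maps } H_1(F_1(X),\Z)\to H_3(X,\Z).
\end{equation*}
Interpreting $\phi^{-1}(Z)$ in~\eqref{eq:mz1} as the cycle-theoretic pullback $(\Id_X \times \phi)^*(Z)$, and noting that $Z'_F\cap (X \times U)$ is the restriction to $X\times U$ of the cycle $(i \times \Id)^*(Z'_F)$ obtained by pulling $Z'_F \subset M_X \times F_1(X)$ back along the closed embedding $i\times \Id \colon X\times F_1(X)\hookrightarrow M_X\times F_1(X)$, the identity~\eqref{eq:mz1} rewrites as
\begin{equation*}
(\Id_X \times \phi)^*(Z)\bigl\vert_{X\times U} \ =\ (i \times \Id)^*(Z'_F)\bigl\vert_{X\times U} \ -\ \cL_1(X)\bigl\vert_{X\times U}.
\end{equation*}
The two sides differ by a cycle on $X \times F_1(X)$ supported over the finite set $F_1(X) \setminus U$, and any such cycle induces a trivial Abel--Jacobi map on $H_1(F_1(X),\Z)$ since that map factors through $H_1$ of a finite set. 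Hence
\begin{equation*}
\AJ_Z \circ \phi_* \ =\ \AJ_{(i \times \Id)^*(Z'_F)} \ -\ \AJ_{\cL_1(X)}.
\end{equation*}

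It remains to show that $\AJ_{(i \times \Id)^*(Z'_F)} = 0$. By Lemma~\ref{lemma:aj-properties}(b), this map factors as
\begin{equation*}
H_1(F_1(X),\Z) \xrightarrow{\ \AJ_{Z'_F}\ } H_5(M_X,\Z) \xrightarrow{\ i^*\ } H_3(X,\Z).
\end{equation*}
Since $X$ is ordinary, its Grassmannian hull $M_X$ is a smooth fourfold, and by Proposition~\ref{prop:gm-betti} its odd cohomology vanishes; in particular $H_5(M_X,\Z) = 0$. The composition is therefore zero, and combining with the previous display yields $\AJ_Z \circ \phi_* = -\AJ_{\cL_1(X)}$, as required. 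I expect no real obstacle beyond these formal manipulations: the entire geometric content has already been absorbed into~\eqref{eq:mz1}, and the pleasant surprise is that the surface $Z'_F$ sits inside $M_X\times F_1(X)$ (rather than $X\times F_1(X)$), which is precisely what allows the vanishing of $H_5(M_X,\Z)$ to eliminate its contribution and isolate the Abel--Jacobi map of the universal line (with the correct sign).
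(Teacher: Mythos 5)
Your proof is correct and follows essentially the same route as the paper's: Lemma~\ref{lemma:aj-properties}(a) to pull the cycle back along $\Id_X\times\phi$, the decomposition~\eqref{eq:mz1} up to a cycle supported over the finite set $D=F_1(X)\setminus U$ (whose Abel--Jacobi contribution vanishes because it factors through the zero group attached to $D$), and the vanishing of $H_5(M_X,\Z)$ from Proposition~\ref{prop:gm-betti} combined with Lemma~\ref{lemma:aj-properties}(b) to kill the $Z'_F$ term. No substantive difference from the paper's argument.
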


\begin{proof}
By Lemma~\ref{lemma:aj-properties}(a), it is enough to check that the Abel--Jacobi map given by the image of $[Z]$
with respect to the pullback map 
\begin{equation*}
(\Id_X\times \phi)^*\colon H^4(X \times \tY^{\ge 2}_A,\Z) \lra H^4(X \times F_1(X),\Z)
\end{equation*}
is equal to $-\AJ_{\cL_1(X)}$.\ 
Equality~\eqref{eq:mz1} implies that there is a cycle $Z''_D$ supported on $ X \times D$, 
where the scheme~$D = F_1(X) \setminus U \stackrel{ \delta}\lhra F_1(X)$ is a finite subscheme of  $F_1(X)$, such that
\begin{equation*}
(\Id_X \times \phi)^*([Z]) + [\cL_1(X)] = (i \times \Id_{F_1(X)})^*([Z'_F]) + (\Id_X \times \delta)_*([Z''_D]).
\end{equation*}
Let us show that the Abel--Jacobi map defined by the right side of this equality is zero.\ 

By Lemma~\ref{lemma:aj-properties}(b), the Abel--Jacobi map corresponding to the cycle $(i \times \Id_{F_1(X)})^*([Z'_F])$  
is equal to the composition
\begin{equation*}
H_1(F_1(X),\Z) \xrightarrow{\ \AJ_{Z'_F}\ } H_5(M_X,\Z) \xrightarrow{\ i^*\ } H_3(X,\Z).
\end{equation*}
Since $H_5(M_{X},\Z) = 0$  {by Proposition~\ref{prop:gm-betti}}, this map vanishes.\
Similarly, the Abel--Jacobi map corresponding to the cycle $(\Id_X \times \delta)_*([Z''_D])$ is equal to the composition
\begin{equation*}
H_1(F_1(X),\Z) \xrightarrow{\ \delta^*\ } H_{-1}(D,\Z) \xrightarrow{\ \AJ_{Z''_D}\ } H_3(X,\Z),
\end{equation*}
hence vanishes as well.\
This completes the proof of the proposition.
\end{proof}

The above proposition relates the Abel--Jacobi maps $\AJ_Z$ and $\AJ_{\cL_1(X)}$.\
The next lemma uses the {Clemens--Tyurin} argument (Section~\ref{secw}) to show that the latter is surjective.

\begin{lemm}\label{le216}
Let $X$ be a general GM threefold and let $\cL_1(X)\to F_1(X)$ be the universal family of lines contained in $X$.\ The Abel--Jacobi map 
\begin{equation*}
\AJ_{\cL_1(X)} \colon H_1(F_1(X),\Z) \lra H_3(X,\Z) 
\end{equation*}
is surjective.
\end{lemm}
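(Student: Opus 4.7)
The plan is to apply Proposition~\ref{proposition:welters} with $m=1$, realizing $X$ as a smooth hyperplane section of an ambient smooth fourfold $Y$, and taking $F_Y = F_1(Y)$, $F_X = F_1(X)$. Since a general Gushel--Mukai threefold is ordinary, I would write $X = \Gr(2,V_5) \cap \P(W_8) \cap Q$ with $W_8 \subset \bw2V_5$ an $8$-dimensional subspace and $Q$ a quadric on $\P(W_8)$. Then I would extend $W_8$ to a generic $9$-dimensional subspace $W_9 \subset \bw2V_5$ and $Q$ to a generic quadric $Q'$ on $\P(W_9)$, and set $Y = \Gr(2,V_5) \cap \P(W_9) \cap Q'$. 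For generic choice, $Y$ is a smooth ordinary GM fourfold and $X = Y \cap \P(W_8)$ is a smooth hyperplane section of~$Y$.

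Next I would verify the four hypotheses of Proposition~\ref{proposition:welters}. Condition~(d), $H_3(Y,\Z) = H_5(Y,\Z) = 0$, is immediate since smooth ordinary GM fourfolds have vanishing odd integral cohomology (by the Lefschetz hyperplane theorem applied to the embedding of~$Y$ as a quadric section of the quintic del Pezzo fivefold $M_Y$, whose cohomology is of pure Tate type). A normal-bundle computation, using Lemma~\ref{lemma:resolution-gr25} and the Euler sequence, shows that $N_{L/Y}$ has rank~$3$ and degree~$0$ for any line $L \subset Y$, so the expected dimension of $F_1(Y)$ is~$3$; for generic $Y$, one then checks that $F_1(Y)$ is a smooth projective threefold (condition~(a)) and that the universal line $\cL_1(Y) \to Y$ is dominant (by Mori's theorem applied to the index-$2$ Fano fourfold~$Y$) and generically finite (by the dimension count $\dim \cL_1(Y) = 4 = \dim Y$), giving condition~(b). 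Finally, condition~(c) identifies $F_1(X) \subset F_1(Y)$ with the zero locus of a section of a rank-$2$ vector bundle on $F_1(Y)$, namely the pushforward along $\cL_1(Y) \to F_1(Y)$ of the universal $\cO(1)$ paired against the linear form defining $\P(W_8) \subset \P(W_9)$. For a generic extension, this section is transverse, yielding a smooth closed subscheme of pure codimension~$2$.

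The main obstacle is verifying these smoothness and transversality statements for a generic $Y$ extending~$X$. This requires some deformation theory on the Hilbert schemes: one must show that as $(W_9,Q')$ varies in the moduli of extensions, the induced rank-$2$ bundle and section on $F_1(Y)$ have smooth zero locus. Combined with the smoothness of $F_1(X)$ for general~$X$ (Lemma~\ref{le12}) and of $F_1(Y)$ for generic~$Y$, standard genericity arguments should handle this. Once all four hypotheses are in place, Proposition~\ref{proposition:welters} directly yields the surjectivity of $\AJ_{\cL_1(X)} \colon H_1(F_1(X),\Z) \to H_3(X,\Z)$, completing the proof.
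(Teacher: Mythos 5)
Your proposal follows essentially the same route as the paper: realize $X$ as a general hyperplane section of a general GM fourfold $Y$ and apply Proposition~\ref{proposition:welters} with $m=1$ to $F_Y=F_1(Y)$ and $F_X=F_1(X)$. The only difference is that the paper verifies hypotheses (a)--(d) by citation --- smoothness and irreducibility of the threefold $F_1(Y)$ and generic finiteness of degree~$6$ of $\cL_1(Y)\to Y$ from \cite{dkperiods}, smoothness of the curve $F_1(X)$ from Lemma~\ref{le12} (which already gives (c) without any transversality argument for a rank-$2$ bundle section), and the vanishing $H_3(Y,\Z)=H_5(Y,\Z)=0$ from \cite{dkperiods} --- whereas your ab initio sketches of (b) (note that Mori's bend-and-break on an index-$2$ Fano fourfold only produces free curves of degree $\le 2$, i.e.\ possibly conics rather than lines) would still need to be completed.
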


\begin{proof}
Let $Y$ be a general GM fourfold and let $X\subset Y$ be a general hyperplane section, so that~$X$ is a general GM threefold.\ 
Let~${F_Y = F_1(Y)}$ be the Hilbert scheme of lines contained in~$Y$.\ 
We check that the assumptions of Proposition~\ref{proposition:welters}  {(with $m = 1$)} are satisfied.

Assumption~\eqref{ass:fm0y} holds since $F_1(Y)$  is a smooth irreducible threefold by~\cite[Proposition~5.3]{dkperiods}.\ Furthermore, the map $q \colon \cL_1(Y) \to Y$ is generically finite of degree $6$ (\cite[Lemma~5.6]{dkperiods}), hence~\eqref{ass:lm0y} holds as well.\ Next, $F_1(X)$ is a smooth curve by Lemma~\ref{le12} hence~\eqref{ass:fm0x} holds.\ Finally, 
\begin{equation*}
H_3(Y,\Z) = H_5(Y,\Z) = 0
\end{equation*}
by~\cite[Proposition~3.1]{dkperiods}, hence~\eqref{ass:hy} holds.\

Applying Proposition~\ref{proposition:welters}, we deduce the surjectivity of~$\AJ_{\cL_1(X)}$. 
\end{proof}

Combining the above results, we  can now prove Theorem~\ref{theorem:aj3}.

\begin{proof}[Proof of Theorem~\textup{\ref{theorem:aj3}}]
Let $A \subset \bw3V_6$ be a general Lagrangian subspace.\ 
As recalled in Section~\ref{secgm}, any hyperplane $V_5\subset V_6$ corresponding to a point of $Y_{A^\bot}^{ {\ge 2}} \subset \P(V_6^\vee)$
defines a smooth GM threefold $X$ with $A(X) = A$.\ 
We choose a general such $[V_5]$, so that $X$ is a general GM threefold.\
We also choose a nice line $L_0 \subset X$.\ 
A combination of Proposition~\ref{proposition:aj-ty-aj-f1} and Lemma~\ref{le216} proves that the map $\AJ_Z$ is surjective.\
By Propositions~\ref{prop25} and~\ref{prop:gm-betti}, its source and target are free abelian groups of rank~20.\ 
Therefore, the Abel--Jacobi map~\eqref{eq:ajz-homology} is an isomorphism.

The Abel--Jacobi map is defined by the cohomology class of an algebraic cycle, hence it 
 preserves the Hodge structures and induces an isomorphism~{\eqref{isojac}} between the corresponding abelian varieties:
the Albanese variety of $\tY^{\ge 2}_{A}$ and the intermediate Jacobian of $X$.

Since the scheme $Z \subset X \times \tY^{\ge 2}_A$ is defined in Section~\ref{subsection:z-3} 
for all smooth $X$ and all lines~$L_0\subset X$,
and since this definition works in families, the maps~\eqref{eq:ajz-homology} and~\eqref{isojac} 
are by continuity  isomorphisms for any~$A$ such that~$Y^3_A = \vide$ 
(so that the surface $ \tY^{\ge 2}_{A}$ is smooth), any smooth~$X$ such that $A(X) = A$, and any line $L_0\subset X$.

The \av\ $\Jac(X)$ carries a canonical principal polarization and the isomorphism~\eqref{isojac} 
transports it to a principal polarization on the abelian variety $\Alb(\widetilde Y^{\ge2}_{A})$.\ 
Since $Y_{A^\bot}^{{\ge 2}}$ is connected and the isomorphism~\eqref{isojac} depends continuously on~$X$, 
this principal polarization is independent of the choice of $[V_5]\in Y_{A^\bot}^{{\ge 2}}$.\  
It is therefore canonical.

 It remains to construct an isomorphism~\eqref{eq:alb-isomorphic}.\ Choose a point
\begin{equation*}
(V_1,V_5)  \in (Y^2_A \times Y^2_{A^\perp}) \cap \Fl(1,5;V_6)
\end{equation*}
such that $A \cap (V_1 \wedge \bw2V_5) = 0$ (it exists by~\cite[Lemma~B.5]{dk1}).
Let $X$ and $X'$ be the smooth ordinary GM threefolds corresponding 
to the Lagrangian data sets $(V_6,V_5,A)$ and $(V_6^\vee,V_1^\perp,A^\perp)$ respectively.\ 
We will prove in Proposition~\ref{proposition:line-transform}  that there is a diagram
\begin{equation*}
\xymatrix@C=15mm{
 \widehat{X} \ar[d] \ar@{-->}[r]^-\psi & \widehat{X}'\ar[d]
\\
X & X',
}
\end{equation*}
where both vertical morphisms are  blow ups of smooth rational curves and $\psi$ is a flop.\  By \cite[Proposition~3.1]{fuwang}, the morphism $H^{{3}}(\widehat{X};\Z)\to H^{{3}}(\widehat{X'};\Z)$ 
induced by the correspondence defined by the graph of~$\psi$ is an isomorphism of polarized Hodge structures.\  It induces in particular an isomorphism~$ \Jac(\widehat{X}) \isomto \Jac(\widehat{X}')$ of principally polarized 
abelian varieties between intermediate Jacobians.\   Therefore, there is a chain of isomorphisms 
\begin{equation*}
\Alb(\tY^{\ge 2}_A) \cong \Jac(X) \cong \Jac(\widehat{X}) \isomlra \Jac(\widehat{X}') \cong \Jac(X') \cong \Alb(\tY^{\ge 2}_{A^\perp})
\end{equation*}
of principally polarized abelian varieties.\ 
This concludes the proof of the theorem.
\end{proof}

\subsection{The line transform}
\label{subsection:line-transform}

In this section, we revisit the birational isomorphism of~\cite[Proposition~4.19]{dk1} and identify it with
an elementary transformation along a line, a birational transformation between GM threefolds
defined in~\cite[Proposition~4.3.1]{ip} and \cite[Section~7.2]{dim} (this {relation} was mentioned without proof in~\cite[Section~4.6]{dk1}).\

\begin{prop}
\label{proposition:line-transform}
Let $A \subset \bw3V_6$ be a Lagrangian subspace with no decomposable vectors.\ Consider subspaces $V_1\subset V_5\subset V_6$ such that
\begin{equation}
\label{cond}
[V_1]\in Y^2_A,\quad [V_5]\in Y^2_{A^\perp} ,\quad A \cap (V_1 \wedge \bw2V_5) = 0.
\end{equation}
Let $X$ and $X'$ be the smooth ordinary GM threefolds corresponding 
to the Lagrangian data sets~$(V_6,V_5,A)$ and~$(V_6^\vee,V_1^\perp,A^\perp)$, and let $L_0 \subset X$ and $L'_0 \subset X'$ be the lines
corresponding to the points $[V_1]$ of $Y^2_{A,V_5}$ and $[V_5^\perp]$ of  $Y^2_{A^\bot,V_1^\bot}$ via the maps \eqref{eq:f1x}.

 There is a diagram of birational maps
\begin{equation}
\label{eq:flop}
\vcenter{\xymatrix{
&
\Bl_{L_0}(X)   \ar[dr]^-{\rho_X}	 \ar[dl]_-\beta \ar@{-->}[rr]^-\psi &&
\Bl_{L'_0}(X') \ar[dl]_-{\rho_{X'}} \ar[dr]^-{\beta'}
\\
X \ar@{-->}[rr]^-{{\varpi}}&& 
\bar{X} &&
X' \ar@{-->}[ll]_-{{\varpi'}},
}}
\end{equation} 
where $\beta$ and $\beta'$ are the respective blow ups  of $X$ and $X'$ along the lines $L_0$ and $L'_0$,
the birational maps ${\varpi}$ and ${\varpi'}$
are the respective linear projections of $X$ from $L_0$, and of $X'$ from $L'_0$, 
the  morphisms $\rho_X$ and $\rho_{X'}$ are small crepant extremal birational contractions,
the variety~$\bar X  $ is a normal, Gorenstein, cubic hypersurface in~$\Gr(2,V_5/V_1) =  {\Gr(2,V_1^\perp/V_5^\perp)}$ 
with terminal singularities, and the map $\psi = \rho_{X'}^{-1} \circ \rho_X$ is a flop.
\end{prop}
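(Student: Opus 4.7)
The plan is to realize both $X$ and $X'$ as small crepant resolutions of a common normal cubic threefold $\bar X \subset \Gr(2, V_5/V_1)$, and to read off the flop structure directly.\ First, I would identify $\varpi$ with the restriction to $X$ of the natural rational map
\begin{equation*}
\Gr(2, V_5) \dashrightarrow \Gr(2, V_5/V_1), \qquad [U_2] \longmapsto [(U_2 + V_1)/V_1],
\end{equation*}
which is defined away from the $\sigma$-$3$-space $\P(V_1 \wedge V_5)$ (Lemma~\ref{lemma:gr-linear}(c)).\ By Lemma~\ref{lemma:gr-linear}(a), the line $L_0 = \P(V_1 \wedge V_3)$ lies inside $\P(V_1 \wedge V_5)$.\ The first task is to establish the scheme-theoretic equality $X \cap \P(V_1 \wedge V_5) = L_0$: using the Lagrangian description of $W \subset \bw2 V_5$ from~\cite[Proposition~3.13(a)]{dk1}, the hypothesis $A \cap (V_1 \wedge \bw2 V_5) = 0$ forces $\dim(W \cap (V_1 \wedge V_5)) = 2$, so that $M_X \cap \P(V_1 \wedge V_5) = \P(W) \cap \P(V_1 \wedge V_5)$ is a single line, which must be $L_0$; since~$X$ is smooth, the quadric $Q$ meets this line transversely, proving the equality.\ Consequently, the blow up $\beta$ resolves $\varpi$ to a morphism $\rho_X \colon \Bl_{L_0}(X) \to \Gr(2, V_5/V_1)$, whose target is the Klein quadric in $\P^5 = \P(\bw2(V_5/V_1))$; the natural isomorphism $W/(V_1 \wedge V_3) \isomto \bw2(V_5/V_1)$ identifies $\rho_X$ with the linear projection from $L_0$ in $\P(W)$.

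Second, I would show $\bar X := \rho_X(\Bl_{L_0}(X))$ is a cubic hypersurface in $\Gr(2, V_5/V_1)$ and that $\rho_X$ is small and crepant.\ The morphism is defined by the linear system $|\beta^* H_X - E|$ on $\Bl_{L_0}(X)$, where $H_X$ is the hyperplane class on $X$ and $E$ is the exceptional divisor of $\beta$.\ Using the generic normal bundle $N_{L_0/X} \cong \cO \oplus \cO(-1)$ for a line on a smooth GM threefold, the intersection numbers $H_X^3 = 10$, $H_X^2 \cdot E = 0$, $H_X \cdot E^2 = -1$, and $E^3 = 1$ yield $(\beta^* H_X - E)^3 = 6$; since the Klein quadric has degree two in~$\P^5$, this degree-six image is cut out by a single cubic equation inside the Klein quadric, so $\bar X$ is a cubic threefold.\ The contracted locus of $\rho_X$ consists of strict transforms of curves $C \subset X$ with $U_2 + V_1$ constant along $C$, i.e.\ either $C \subset L_0$ (absorbed by $\beta$) or $C \subset X \cap \Gr(2, V_3')$ for some $V_3' \supset V_1$; a dimension count in $\P(V_5/V_1) \cong \P^3$ shows that only finitely many such $V_3'$ give an intersection of positive dimension, so $\rho_X$ is small.\ Crepancy follows from the identity $-K_{\Bl_{L_0}(X)} = \beta^* H_X - E = \rho_X^* \cO_{\bar X}(1) = \rho_X^*(-K_{\bar X})$, where the last equality is adjunction for a cubic in the smooth fourfold $\Gr(2, V_5/V_1)$.\ Since $\bar X$ is a cubic in a smooth variety it is Gorenstein; normality and terminality (the singularities being compound Du Val at the finitely many images of contracted curves) follow from a local analysis.

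The final step is to identify $\bar X$ with the analogous cubic constructed from $X'$.\ Under the canonical duality $(V_5/V_1)^\vee \isomto V_1^\perp/V_5^\perp$ and the induced identification of Pl\"ucker Klein quadrics $\Gr(2, V_5/V_1) \cong \Gr(2, V_1^\perp/V_5^\perp)$ (via $\bw2 U \cong (\bw2 U)^\vee$ for $\dim U = 4$), the image of the analogous projection $\rho_{X'}$ is another cubic threefold in the same $\P^5$; one must show that it coincides with~$\bar X$.\ The plan is to extract an explicit cubic equation from the Lagrangian $A$ by a suitable isotropic reduction modulo the subspace spanned by $V_1 \wedge \bw2 V_6$ and $\bw3 V_5$, and to verify that this datum is self-dual under $A \leftrightarrow A^\perp$: the hypothesis~\eqref{cond} is manifestly symmetric under swapping $(V_1, V_5, A)$ with $(V_5^\perp, V_1^\perp, A^\perp)$, so the same construction applied to $A^\perp$ yields the same cubic.\ Once this identification is in place, $\rho_X$ and $\rho_{X'}$ are two small crepant resolutions of the same variety~$\bar X$, each extremal since $\Bl_{L_0}(X)$ and $\Bl_{L'_0}(X')$ have Picard rank two, and $\psi = \rho_{X'}^{-1} \circ \rho_X$ is then a flop.\ The hard part is certainly this cubic identification via the Lagrangian duality; it is the birational shadow of the $A \leftrightarrow A^\perp$ symmetry underlying the entire paper, and the cleanest approach is presumably to describe the cubic equation intrinsically on the common six-dimensional quotient $\bw2(V_5/V_1) \cong (\bw2(V_1^\perp/V_5^\perp))^\vee$.
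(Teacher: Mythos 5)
Your overall route---project $X$ from $L_0$ into $\Gr(2,V_5/V_1)$, land on a cubic $\bar X$, do the same for $X'$, and compare---is the same geometric skeleton as the paper's proof (which packages the projection as the restriction $\tilde\rho_2$ of the second quadratic fibration and gets the cubic, its normality and terminality, and the smallness and crepancy of $\rho_X$ from \cite[Lemma~4.18 and Proposition~4.19]{dk1} rather than from your degree count). But there are two genuine gaps. First, the identification of the two cubics is the heart of the matter and you explicitly defer it (``the hard part is certainly this cubic identification''); what you sketch is a plan, not a proof. The paper obtains it because both $\tilde\rho_2(\Bl_{L_0}(X))$ and $\tilde\rho_2'(\Bl_{L'_0}(X'))$ are cut out inside the self-dual EPW quartic $Z_{A,V_1,V_5}=\bar X\cup\sD\subset\Gr(2,V_5/V_1)=\Gr(2,V_1^\perp/V_5^\perp)$, with $\sD$ the common Schubert divisor swept out by $f^{-1}(L_0)$; without some such mechanism your argument does not close. (Your degree computation also needs $\rho_X$ to be birational onto its image: since every divisor on the quadric fourfold $\Gr(2,V_5/V_1)$ has even degree, you must still exclude a degree-$3$ map onto a hyperplane section.)

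Second, and more decisively, your last step is a non sequitur: two small crepant extremal contractions $\rho_X$ and $\rho_{X'}$ onto the \emph{same} $\bar X$ give a map $\psi$ that is either a flop \emph{or an isomorphism}, and nothing you have said excludes the latter. The paper devotes the entire final paragraph of its proof to exactly this point: if $\psi$ were an isomorphism then $X\cong X'$, and letting $[V_1]$ vary in the curve $Y^2_{A,V_5}$ (with $A$ and $V_5$ fixed) would force all the resulting $X'$ to be isomorphic to $X$, contradicting the fact that their moduli points sweep out the curve $Y^2_{A,V_5}/\!\!/\PGL(V_6)_A$ inside $\mathfrak{p}_3^{-1}([A^\perp])\subset\bM_3^{\rm GM}$. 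Some argument of this kind is indispensable; as written, your proof establishes at best that $\psi$ is a flop or an isomorphism.
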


The proposition says that the birational isomorphism ${\varpi}^{\prime -1}\circ {\varpi} \colon X\dra X'$ 
is the ``elementary rational map with center along the line~$L_0$'' in the sense of \cite[(4.1.1)]{ip} 
(or elementary transformation along the line~$L_0$) if the GM threefold $X$ and the line $L_0$ are sufficiently general.\ 
We use the proposition and the fact that the elementary transformation is defined for any~$X$ and $L_0$ 
to specify what the moduli point of the result of the elementary transformation is in general 
(we use the description of the coarse moduli space $\bM_3^{\rm GM}$ for GM threefolds given in~\eqref{eq:moduli}).

\begin{coro}
\label{corollary:moduli-line-transform}
Let $X$ be a smooth GM threefold with  moduli point $([A],[V_5]) \in \bM_3^{\rm GM}$
and let~$L \subset X$ be any line.\
The moduli point of the result $X'_L$ of the elementary transformation of~$X$  
along the line~$L$ is $([A^\perp],\sigma(L)) \in \bM_3^{\rm GM}$.
\end{coro}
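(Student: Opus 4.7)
The plan is to deduce the corollary from Proposition \ref{proposition:line-transform} by a continuity argument on the universal family of pairs $(X, L)$.

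First, I would introduce the parameter space $\cP$ of pairs $(X, L)$ where $X$ is a smooth GM threefold and $L \subset X$ is a line; this is the relative Hilbert scheme of lines over $\bM_3^{\rm GM}$, and it is irreducible because $\bM_3^{\rm GM}$ is irreducible (see~\cite{dkmoduli}) and the general fiber $F_1(X)$ is a smooth connected curve by Lemmas \ref{le12} and \ref{lemma:f1-connected}. On $\cP$ I would construct two morphisms to $\bM_3^{\rm GM}$. The first, $\mu_1(X, L) := [X'_L]$, is the classifying morphism associated with the universal family of elementary transformations; it is well-defined because the elementary transformation is functorial for smooth Fano threefolds along lines (\cite[Proposition~4.3.1]{ip}) and produces a smooth Fano threefold of genus $6$, index $1$, Picard rank $1$, hence a smooth GM threefold by the classification of such Fano threefolds (see~\cite{dkmoduli}). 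The second morphism is
\begin{equation*}
\mu_2(X, L) := ([A(X)^\perp], \sigma(L)),
\end{equation*}
which is evidently algebraic since $A(X)$ and the map~\eqref{eq:f1x} depend algebraically on~$(X, L)$.

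Next I would invoke Proposition \ref{proposition:line-transform}: it asserts exactly the equality $\mu_1(X, L) = \mu_2(X, L)$ on the dense open subscheme of $\cP$ consisting of pairs satisfying the transversality condition~\eqref{cond}, namely those for which $[V_1] := \sigma(L) \in Y^2_{A(X)}$, $[V_5(X)] \in Y^2_{A(X)^\perp}$, and $A(X) \cap (V_1 \wedge \bw2 V_5(X)) = 0$ (nonemptyness of this locus uses~\cite[Lemma B.5]{dk1}). Since $\cP$ is irreducible and $\bM_3^{\rm GM}$ is separated, the equality of the two morphisms on this dense open subset extends to all of~$\cP$, which is precisely the content of the corollary.

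The main obstacle will be establishing $\mu_1$ as a bona fide morphism: this requires showing that the universal elementary transformation exists as a flat family of smooth GM threefolds over all of $\cP$. Pointwise smoothness of the output $X'_L$ for every smooth pair $(X, L)$ follows from the general theory of extremal contractions on smooth Fano threefolds, and the fact that $X'_L$ is always a GM threefold follows from the fact that smooth Fano threefolds of genus $6$, index $1$, and Picard rank $1$ form a single irreducible family, namely the family of GM threefolds. Alternatively, one could bypass the construction of $\mu_1$ on all of $\cP$ by a valuative-criterion argument: given any $(X, L) \in \cP$, one deforms it to a pair satisfying~\eqref{cond}, applies Proposition~\ref{proposition:line-transform} over the generic fiber, and uses properness of $\bM_3^{\rm GM}$ along the one-parameter degeneration to identify the limit with $([A(X)^\perp], \sigma(L))$.
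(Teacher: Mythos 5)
Your proposal is correct and follows essentially the same route as the paper: both define the map $([X],[L])\mapsto[X'_L]$ on the (irreducible) relative Hilbert scheme of lines using the fact that the elementary transformation works in families (\cite[Section~4.1]{ip}), identify it with $([A^\perp],\sigma(L))$ on the dense open locus where condition~\eqref{cond} holds via Proposition~\ref{proposition:line-transform}, and conclude by separatedness of $\bM_3^{\rm GM}$. Two minor quibbles: the relative Hilbert scheme should be taken over the moduli \emph{stack} $\fM_3^{\rm GM}$ (there is no universal family over the coarse space), and your alternative valuative-criterion route invokes ``properness'' of $\bM_3^{\rm GM}$, which fails --- separatedness (which you correctly use in the main argument) is what is needed and suffices.
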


\begin{proof}
 Let $\fM_3^{\rm GM}$ be the moduli stack of smooth GM threefolds (see~\cite{dkmoduli}),
let $\cX \to \fM_3^{\rm GM}$ be the universal family of threefolds over it,
and let $F_1(\cX/\fM_3^{\rm GM})$ be the relative Hilbert scheme of lines.\
As we already mentioned, by~\cite[Section~4.1]{ip}, the elementary transformation 
is defined for any line contained in any smooth GM threefold $X$.\ 
Moreover, this transformation can be performed  for a family of lines and will produce a family of GM threefolds.\
This defines a morphism
\begin{eqnarray*}
F_1(\cX/\fM_3^{\rm GM}) & \lra& \bM_3^{\rm GM}\\
([X],[L]) &\longmapsto& [X'_L].
\end{eqnarray*}
{By~\cite[Theorem~5.11]{dkmoduli} and~\cite[Theorem~4.7]{dkperiods}, the left side is irreducible and birational to 
\begin{equation*}
\{ (A,V_5,V_1) \in \LGr_{\rm ndv}(\bw3V_6) \times  {\Fl(1,5;V_6)}
\mid 
[V_5] \in Y^{\ge 2}_{A^\perp},\ 
[V_1] \in Y^{\ge 2}_{A}
\} /\!\!/ \PGL(V_6).
\end{equation*}
By Proposition~\ref{proposition:line-transform}, over the dense open subset of triples $(A,V_5,V_1)$
satisfying condition~\eqref{cond}, this map coincides with the projection
\begin{equation*}
(A,V_5,V_1) \mapsto ({A^\perp},  {V_1^\perp}) \in \bM_3^{\rm GM}.
\end{equation*}
Since the target is separated {(\cite[Theorem~5.15]{dkmoduli})}, by continuity, the two maps coincide everywhere.}
\end{proof}
 
To prove Proposition~\ref{proposition:line-transform}, we start with some preliminaries.\  
First, subspaces~$V_1$ and~$V_5$ satisfying the conditions~\eqref{cond} exist: 
this follows from~\cite[Lemma~B.5]{dk1} (where~$\widehat Y_A$ is defined in~\cite[(B.5)]{dk1}).\  
More exactly, for~$[V_5]$ general in~$Y^2_{A^\perp}$
(so that~$X$ is a general GM threefold associated with the fixed Lagrangian~$A$), 
these conditions will be satisfied for a general~$[V_1]\in Y^2_{A,V_5}$, corresponding to a general line $L_0\subset X$.\ 
As explained in the proof of~\cite[Theorem~4.20]{dk1}, the  conditions~\eqref{cond} are equivalent to
\begin{equation*}
[V_1] \in Y^2_{A,V_5} \setminus \Sigma_1(X)
\qquad\textnormal{and}\qquad 
[V_5^\perp] \in Y^2_{A^\perp,V_1^\perp} \setminus \Sigma_1(X'),
\end{equation*}
hence the lines $L_0$ and $L'_0$ are nice and the assumptions of~\cite[Proposition~4.19]{dk1} are satisfied.\
It was explained in the proof of that proposition that these lines can be written as
\begin{equation}
\label{eq:v3}
L_0 =\P(V_1\wedge V_3),
\qquad 
L'_0 = \P(V_5^\perp\wedge V_3^\perp)
\end{equation}
for the same subspace $V_3$ such that $V_1 \subset V_3 \subset V_5$.

Following~\cite[Section~4.4]{dk1}, we introduce the second quadratic fibration
\begin{equation*}
\rho_2\colon \P_{{X}}(V_5/\cU_X)\lra \Gr(3,V_5),
\end{equation*}
and analogously for $X'$, and study the diagram~\cite[(4.5)]{dk1}
\begin{equation}
\label{defxt}
\vcenter{\xymatrix@R=5mm@C=6mm{
& \widetilde{X} \ar[dl]_-f \ar[dr]^-{\tilde\rho_2} && \widetilde{X}' \ar[dl]_-{\tilde\rho'_2} \ar[dr]^-{f'} \\
X && \Gr(2,V_5/V_1) && X',
}}
\end{equation}
where ${\tilde\rho_2}$ is obtained from $\rho_2$ by restriction to $\Gr(2,V_5/V_1)\subset \Gr(3,V_5)$ and analogously for $\tilde\rho'_2$.\ 
The next lemma is a refinement of~\cite[Lemma~4.18]{dk1}.

\begin{lemm}
The scheme $\tX$ has two irreducible components, $\Bl_{L_0}(X) $ and $ f^{-1}(L_0)$.\ 
They are both smooth of dimension~$3$ and meet transversely along the exceptional divisor of $\Bl_{L_0}(X)$.\ 

Moreover, the map~$\tilde\rho_2 \colon \tX \to \Gr(2,V_5/V_1)$ is induced by the linear projection from the line~$L_0$.
\end{lemm}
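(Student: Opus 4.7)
\medskip

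\noindent\textbf{Proof proposal.}
The plan is to work out the fibers of the natural map $f \colon \tX \to X$ set-theoretically and then identify the two components of $\tX$. By definition, a point of $\tX$ is a pair $([U_2],[\bar v])$ with $[U_2] \in X$ and $\bar v \in V_5/U_2$ nonzero such that $V_3 := U_2 + \C v$ contains $V_1$. The fiber of $f$ over $[U_2]$ therefore has two possible descriptions. If $V_1 \not\subset U_2$, then $V_3 = U_2 + V_1$ is forced, so $[\bar v] = [v_1 \bmod U_2]$ is uniquely determined and the fiber is a single reduced point. If $V_1 \subset U_2$, then any $V_3$ containing $U_2$ works, so the fiber is $\P(V_5/U_2) \cong \P^2$. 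The locus where $V_1 \subset U_2$ is $X \cap \sigma_1(V_1)$, where $\sigma_1(V_1) \cong \P(V_1 \wedge V_5) \cong \P^3$ is the Schubert threefold from Lemma~\ref{lemma:gr-linear}(c). Since $X = \CGr(2,V_5) \cap \P(W) \cap Q$ and $\sigma_1(V_1) \cap \P(W)$ is the line $L_0 = \P(V_1 \wedge V_3)$ by the assumption that $L_0$ is nice and by Lemma~\ref{lemma:lv}, and since $L_0 \subset X \subset Q$, we get $X \cap \sigma_1(V_1) = L_0$.

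Next I would identify the two components. The preimage $f^{-1}(L_0)$ is a $\P^2$-bundle over $L_0 \cong \P^1$, which is a smooth threefold; concretely, it is the projectivization $\P_{L_0}(V_5/\cU_X\vert_{L_0})$. On the complementary open set $X \moins L_0$ the map $f$ is an isomorphism, and its inverse section extends to a rational section $X \dashrightarrow \P_X(V_5/\cU_X)$ whose base locus is exactly $L_0$. The closure $\bar{X}$ of the graph of this section in $\tX$ is therefore the blow up of $X$ along the smooth curve $L_0$: indeed, the section comes from the surjection $V_5/\cU_X \thra V_5/(V_1 + \cU_X)$, whose degeneracy locus along $L_0$ is precisely $L_0$ with its reduced scheme structure, so the closure is the blow up along the ideal sheaf of $L_0$. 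Consequently, $\bar X \cong \Bl_{L_0}(X)$ is smooth of dimension $3$, and $f^{-1}(L_0)$ is another smooth irreducible threefold; since their union covers all pairs and each contains points not in the other, these are exactly the two components of $\tX$.

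To finish the first claim I would compute the scheme-theoretic intersection $\Bl_{L_0}(X) \cap f^{-1}(L_0)$. On the blow up side, this intersection is exactly the exceptional divisor $E = \P_{L_0}(N^\vee_{L_0/X})$, a $\P^1$-bundle over $L_0$. The limit-direction computation (parameterizing $U_2(t) = V_1 + \C v_2 + tw$ with $V_1 \not\subset U_2(t)$ and taking $\lim V_3(t) = U_2 + \C w_1$) shows that the embedding of $E$ into $f^{-1}(L_0) = \P_{L_0}(V_5/\cU_X\vert_{L_0})$ is realized by the natural subbundle $N_{L_0/X} \hookrightarrow (V_5/\cU_X)\vert_{L_0}/(\text{tangent directions along }L_0)$, giving a $\P^1$-subbundle of the $\P^2$-bundle. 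A local computation in coordinates at a point of $E$ then shows that the two smooth divisors of $\tX$ through that point have distinct tangent hyperplanes, giving transversality.

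The hardest step is the last one: honestly verifying that the closure of the section over $X \moins L_0$ is the blow up along the reduced ideal of $L_0$, and that the intersection with $f^{-1}(L_0)$ is scheme-theoretically reduced and transverse, rather than say a thickening. This requires an explicit local analysis of the section near $L_0$. For the final assertion about $\tilde\rho_2$, once the components are identified, the map $\tilde\rho_2$ sends $(U_2, V_3)$ to $V_3/V_1 \in \Gr(2, V_5/V_1)$; for $[U_2] \notin L_0$ one has $V_3/V_1 \cong U_2$ under the projection $V_5 \thra V_5/V_1$, which is exactly the restriction to $X \moins L_0$ of the linear projection $\Gr(2,V_5) \dashrightarrow \Gr(2,V_5/V_1)$ from $\P(V_1 \wedge V_5)$; and on $X$ this center cuts out precisely $L_0$, so $\tilde\rho_2$ on $\Bl_{L_0}(X)$ coincides with the resolved linear projection from $L_0$, as claimed.
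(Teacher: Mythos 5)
Your set-theoretic analysis of the fibers of $f$ is correct, and the identification $X \cap \P(V_1\wedge V_5)=L_0$ via Lemma~\ref{lemma:lv} is exactly the right use of the hypothesis that $L_0$ is nice. But the step you yourself flag as the hardest one is a genuine gap, and the justification you sketch for it cannot work as stated. Your rational section $X \dashrightarrow \P_X(V_5/\cU_X)$ is induced by the composition $V_1\otimes\cO_X \to V_5\otimes \cO_X \to V_5/\cU_X$, a section of a rank-$3$ bundle on the \emph{threefold} $X$ whose zero locus $L_0$ has codimension~$2$. This section is therefore not regular on $X$, so one cannot read off that the ideal generated by its three components is the reduced ideal of $L_0$: a priori it could be a proper subideal with the same radical, in which case the graph closure would be the blow up of a thickening of $L_0$ rather than $\Bl_{L_0}(X)$. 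The same issue affects the transversality of the two components and, more basically, the scheme structure of $\tX=\rho_2^{-1}(\Gr(2,V_5/V_1))$: your fiber computation is purely set-theoretic, whereas the assertion that $\tX$ is the reduced union of two smooth threefolds meeting transversely is an infinitesimal statement.

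The paper closes exactly this hole by working one dimension up. On the fourfold $M_X$ (equivalently on $\Gr(2,V_5)$), the analogous section of $V_5/\cU_{M_X}$ vanishes on $L_0=\P(V_1\wedge V_5)\cap\P(W)$, which has codimension $3=\mathrm{rank}$ precisely because $L_0$ is nice (Lemma~\ref{lemma:lv} gives the transversality of $\P(W)$ and $\P(V_1\wedge V_5)$). The section is regular there, so by \cite[Lemma~2.1]{K16} the restriction of $\Fl(2,3;V_5)\to\Gr(3,V_5)$ over $\Gr(2,V_5/V_1)$, pulled back to $M_X$, is exactly $\Bl_{L_0}(M_X)$ with the map to $\Gr(2,V_5/V_1)$ induced by the linear projection. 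Then $\tX=\Bl_{L_0}(M_X)\times_{M_X}X$ is the total transform of the smooth Cartier divisor $X\subset M_X$ through the center $L_0$, hence automatically the strict transform $\Bl_{L_0}(X)$ plus the exceptional divisor $f^{-1}(L_0)$ with multiplicity one, meeting transversely along the exceptional divisor of $\Bl_{L_0}(X)$. If you wish to keep your direct route, the fix is to observe that the ideal generated by your section on $X$ is $I_{L_0/M_X}\cdot\cO_X$, the restriction of the Koszul ideal from $M_X$, which equals $I_{L_0/X}$ because $L_0\subset X$; but this is the paper's argument in disguise, and the detour through $M_X$ is where all the scheme-theoretic content actually lives. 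Your last paragraph on $\tilde\rho_2$ being the linear projection is fine once the components are correctly identified.
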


\begin{proof} 
We defined in~\cite[Section~4.4]{dk1} the EPW quartic hypersurface $Z_A \subset \Gr(3,V_6)$.\ 
For any subspace $V_1 \subset V_5$,  we denote by 
\begin{equation*}
Z_{A,V_5} \subset \Gr(3,V_5)
\quad \textnormal{and}\quad
Z_{A,V_1,V_5} \subset \Gr(2,V_5/V_1)
\end{equation*}
the subschemes obtained by intersecting~$Z_A$ with the subvarieties $\Gr(3,V_5)$
and $\Gr(2,V_5/V_1)$ of~$\Gr(3,V_6)$.

Consider the   commutative diagram
\begin{equation*}
\xymatrix@C=2em{
\P_X(V_5/\cU_X) \ar[r] \ar@{->>}[dr]_-{\rho_2} &
\P_{M_X}(V_5/\cU_{M_X}) \ar[r] \ar[dr] &
\P_{\Gr(2,V_5)}(V_5/\cU) \ar@{=}[r] \ar[d] &
\Fl(2,3;V_5)
\\
& Z_{A,V_5} \ar@{^(->}[r] & \Gr(3,V_5),
}
\end{equation*}
where $M_X$ was defined in~\eqref{ghull} and the vertical arrow is the canonical projection, which induces   the diagonal arrows 
(the left diagonal arrow factors through $Z_{A,V_5}$ by~\cite[Proposition~4.10]{dk1}).\
Pulling this diagram back by the inclusion $\Gr(2,V_5/V_1) \subset \Gr(3,V_5)$, we obtain the  diagram
\begin{equation*}
\xymatrix@C=3em{
\tX \ar[r] \ar@{->>}[dr]_-{\tilde\rho_2} &
\Bl_{L_0}(M_X) \ar[r] \ar[dr] &
\Bl_{\P(V_1\wedge V_5)}(\Gr(2,V_5)) \ar[d] 
\\
& Z_{A,V_1,V_5} \ar@{^(->}[r] & \Gr(2,V_5/V_1).
}
\end{equation*}
Indeed, $\Gr(2,V_5/V_1) \subset \Gr(3,V_5)$ is the zero-locus of the section of the vector bundle~$V_5/\cU_3$ corresponding to~$V_1$
and,  {by~\cite[Lemma~2.1]{K16}},
the zero-locus of the corresponding section on~$\P_{\Gr(2,V_5)}(V_5/\cU)$ is the blow up of~$\Gr(2,V_5)$ 
along the  {zero-locus of} induced section of~$V_5/\cU$, 
which is equal to the locus $\P(V_1 \wedge V_5)$ of 2-dimensional subspaces in $V_5$ containing~$V_1$.\
Note also that the map 
\begin{equation*}
\Bl_{\P(V_1\wedge V_5)}(\Gr(2,V_5)) \to \Gr(2,V_5/V_1)
\end{equation*}
is induced by the linear projection $V_5 \to V_5/V_1$ from~$V_1$, 
or equivalently by the linear projection~\mbox{$\P(\bw2V_5) \dashrightarrow \P(\bw2(V_5/V_1))$} from $\P(V_1 \wedge V_5)$.

Furthermore, $M_X \subset \Gr(2,V_5)$ is the linear section by the subspace $\P(W) \subset \P(\bw2V_5)$ 
which is transverse to~$\P(V_1 \wedge V_5)$ by Lemma~\ref{lemma:lv}, because the line~$L_0$ is nice; 
the pullback of~$\P_{M_X}(V_5/\cU_{M_X})$ is therefore~$\Bl_{L_0}(M_X)$.\ {Moreover, the map $\Bl_{L_0}(M_X) \to \Gr(2,V_5/V_1)$ is induced by the linear projection from $\P(W \cap (V_1 \wedge V_5)) = L_0$.}

To prove the lemma, it remains to invoke the equality
\begin{equation*}
\Bl_{L_0}(M_X) \times_{M_X} X = \Bl_{L_0}(X) \cup f^{-1}(L_0),
\end{equation*}
which holds because the first component is  the strict transform of $X$ 
and the second component is the exceptional divisor of $\Bl_{L_0}(M_X) \to M_X$.
\end{proof}

It was proved in~\cite[Lemma~4.18 and Proposition~4.19]{dk1} that 
\begin{itemize}
\item $\tilde\rho_2$ maps $f^{-1}(L_0)$ birationally onto the Schubert hyperplane divisor $\sD  \subset \Gr(2,V_5/V_1)$ 
parameterizing subspaces  intersecting $V_3/V_1$, where $V_3$ was defined in~\eqref{eq:v3};
\item $\tilde\rho_2$ maps $\Bl_{L_0}(X)$ birationally onto a cubic hypersurface $\bar X   \subset \Gr(2,V_5/V_1)$;
\item the image of $\tilde\rho_2$ is the quartic hypersurface $Z_{A,V_1,V_5} $; it is therefore equal to $ \bar X \cup \sD$.
\end{itemize}

 We denote by $\rho_X \colon \Bl_{L_0}(X) \to \bar{X}$ the (birational) restriction of $\tilde\rho_2$ and we define $\rho_{X'}$ similarly.\

\begin{proof}[Proof of Proposition~\textup{\ref{proposition:line-transform}}]
We  have already constructed the left part of the diagram~\eqref{eq:flop}.\
The right part is constructed analogously.\ 
It remains to prove that $\psi$ is a flop.

As explained in \cite[Lemma~4.1.1 and Corollary~4.3.2]{ip}, $\rho_X$ is a flopping contraction.\
The same is true for~$\rho_{X'}$, so  $\psi$ is either a flop or  an isomorphism.\
If $\psi$ is an isomorphism,  the maps~$\beta$ and $\beta'$ are the contractions of the same extremal ray, 
hence $X \cong X'$.\ Let us show that this is impossible.\ 

Indeed, we can perform this construction on a fixed $X$ (that is, with $A$ and $V_5$ fixed) 
but with  $[V_1]$ varying in the curve $Y^2_{A,V_5}$.\ 
Locally, the map $\psi$ will remain an isomorphism and the threefolds $X'$ obtained by the construction will be all isomorphic to $X$.\ 
But this is impossible: by definition,~$X'$ is the ordinary GM threefold corresponding 
to the Lagrangian data set~$(V_6^\vee,V_1^\perp,A^\perp)$, hence its moduli point describes the curve
\begin{equation*}
Y^2_{A,V_5} /\!\!/ \PGL(V_6)_A \subset  {\mathfrak{p}}_3^{-1}([A^\perp]) \subset \bM_3^{\rm GM}
\end{equation*}
(recall that  {the map $\mathfrak{p}_3$ was defined in~\eqref{wpn} and that} the group $\PGL(V_6)_A$ is finite).\
It follows that $\psi$ is a flop and  the proof of the proposition is complete.
\end{proof}

\begin{rema}
One can describe the maps $\rho_X$ and $\rho_{X'}$ {in~\eqref{eq:flop}} further: 
in fact, the intersections with~$\Gr(2,V_5/V_1)$ of the subscheme $\Sigma_2(X) \subset \Gr(3,V_5)$ defined in~\mbox{\cite[(4.2)]{dk1}}
and of the analogous subscheme~\mbox{$\Sigma_2(X') \subset \Gr(2,V_6/V_1)$} 
are cubic surfaces ({cubic} scrolls or cones)~$\Sigma_{2,V_1}(X)$ and~$\Sigma_{2,V_5}(X')$ such that 
\begin{equation*}
\bar{X} \cap \sD = \Sigma_{2,V_1}(X) \cup \Sigma_{2,V_5}(X') .
\end{equation*}
The morphisms $\rho_X$ and $\rho_{X'}$ are the blow ups of $  \bar{X}$ along the Weil divisors $\Sigma_{2,V_1}(X)  $ 
and~$\Sigma_{2,V_5}(X') $,  {respectively}.
\end{rema}

\section{Intermediate  {Jacobians} of GM fivefolds}
\label{section:quartic-scrolls}

In this section, we perform, for GM fivefolds, a construction analogous to what we did in Section~\ref{section:quartic-curves} for threefolds.\
The curves in the construction are replaced by surfaces: lines by planes, 
elliptic quintic curves by quintic del Pezzo surfaces,
and rational quartic curves by rational quartic surface scrolls.\
In Section~\ref{subsection:simplicity-argument}, we give an alternative proof of the main result.

\subsection{Family of surfaces}
\label{subsection:z-5d}

Given  {an arbitrary} GM fivefold $X$ with associated Lagrangian $A$, 
we begin by choosing  {an arbitrary} $\sigma$-plane~$\Pi_0 \subset X$ {(that is, a point of $F^2_\sigma(X)$; see~\eqref{def:f2sigma})}.
We consider the open surface $S_0 \subset Y_A^{\ge 2}$ defined by~\eqref{def:s0}
and the family of quadrics $\cQ_0 \to S_0$ obtained by restricting to $S_0$ 
the universal family~\eqref{eq:cq} of 8-dimensional quadrics containing~$X$.\ 
We denote by~$F^5(\cQ_0/S_0)\to S_0$ the relative Hilbert scheme of linear 5-spaces in the fibers of $\cQ_0 \to S_0$
and by~$F^5_{\Pi_0}(\cQ_0/S_0)\to S_0$ the subscheme parameterizing those 5-spaces which contain the plane~$\Pi_0$.\ {Applying Corollary~\ref{corollary:stein-hilbert}, we obtain an isomorphism
\begin{equation}
\label{eq:f5l0q0s0}
F^5_{\Pi_0}(\cQ_0/S_0) \cong \tS_0 := \tyta \times_{\yta} S_0 
\end{equation}
of schemes over $S_0$.\ 
In particular, the canonical map $F^5_{\Pi_0}(\cQ_0/S_0) \to S_0$ is the double \'etale covering $\pi \colon \tS_0 \to S_0$ 
induced by the double covering~$\pi_A$.}

Note that $\tS_0$ is a smooth surface.\ Let 
\begin{equation*}
\tcQ_0 := \cQ_0 \times_{S_0} \tS_0 
\end{equation*}
be the base change of the family of quadrics $\cQ_0 \to S_0$ along $\pi$.\ We have a canonical map
\begin{equation*}
\tS_0 \to F^5_{\Pi_0}(\cQ_0/S_0) \times_{S_0}\tS_0 \hookrightarrow F^5(\cQ_0/S_0) \times_{S_0} \tS_0 \cong F^5(\tcQ_0/\tS_0), 
\end{equation*}
where the first map is the product of the isomorphism~\eqref{eq:f5l0q0s0} with the identity map.\  
By construction, it is a section of  the projection $F^5(\tcQ_0/\tS_0) \to \tS_0$.\ 

Let $\cP^5 \subset \tcQ_0 \subset \P(W) \times \tS_0$ be the pullback 
of the universal family of linear 5-spaces over~$F^5(\tcQ_0/\tS_0)$ along this section.\ 
Set 
\begin{equation}
\label{def:z0-5d}
Z_0 := \cP^5 \cap (M_X \times \tS_0),
\end{equation} 
where the Grassmannian hull $M_X\subset \P(W)$ was defined in~\eqref{ghull}.

\begin{prop}
\label{proposition:mz-5d}
The map $Z_0 \to \tS_0$ is a flat family of surfaces in $X$ containing~$\Pi_0$ with Hilbert polynomial
\begin{equation}
\label{eq:hp-dp5}
h(t) = \frac52(t^2 + t) + 1.
\end{equation}
In particular, $Z_0 \subset X \times \tS_0$.
\end{prop}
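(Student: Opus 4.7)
The plan is to follow the structure of the proof of Proposition~\ref{proposition:mz}, adapted one dimension higher: curves are replaced by surfaces and $\P^4$'s by $\P^5$'s. First, I would fix $y \in \tS_0$ and set $[v] := \pi(y) \in \P(V_6) \setminus \P(V_5)$. By construction of the family $\cP^5$ as a subscheme of $\tcQ_0 \subset \P(W) \times \tS_0$, we have $\cP^5_y \subset \P(W) \cap Q_v$, so the fiber rewrites as
\begin{equation*}
Z_{0,y} = M_X \cap \cP^5_y = \CGr(2,V_5) \cap \cP^5_y \subset \CGr(2,V_5) \cap \P(W) \cap Q_v = X,
\end{equation*}
which in particular gives the inclusion $Z_0 \subset X \times \tS_0$ asserted in the last sentence.

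Next, the cone $\CGr(2,V_5) \subset \P(\C \oplus \bw2V_5)$ has codimension~$3$ and degree~$5$, while $\cP^5_y \cong \P^5$, so $Z_{0,y}$ has dimension at least~$2$; the main step is to verify that this dimension is exactly~$2$. Suppose to the contrary that $W \subset Z_{0,y} \subset X$ is a $3$-dimensional component. In the ordinary case one has $M_X = \Gr(2,V_5) \cap \P(W)$ with $\P(W) \subset \P(\bw2V_5)$, so $W$ is a $3$-dimensional component of $\Gr(2,V_5) \cap \cP^5_y$, and the case analysis carried out in the proof of Lemma~\ref{lemma:gr25-p5} exhausts the possibilities: a hyperplane section of some $\Gr(2,V_4)$ (degree~$2$), a cone over a cubic scroll (degree~$3$), or a linear $\P^3$ (degree~$1$). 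The quadric threefold case is excluded by Lemma~\ref{lemma:gm5-q3}; the remaining odd-degree cases must be ruled out using smoothness of~$X$ --- most naturally via a degree bound on $3$-dimensional subvarieties of a smooth GM fivefold, analogous to~\cite[Corollary~3.5]{dkperiods}. The special case should reduce to the ordinary one by passing to the opposite GM variety, which has the same Lagrangian data set, hence the same family $\cQ_0 \to S_0$.

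With pure dimension~$2$ in hand, $Z_{0,y}$ becomes a dimensionally transverse section of $\CGr(2,V_5)$ by $\cP^5_y$, so restricting the resolution of Lemma~\ref{lemma:resolution-gr25} to $\cP^5_y \cong \P^5$ produces an exact resolution
\begin{equation*}
0 \to \cO_{\P^5}(-5) \to \cO_{\P^5}(-3)^{\oplus 5} \to \cO_{\P^5}(-2)^{\oplus 5} \to \cO_{\P^5} \to \cO_{Z_{0,y}} \to 0.
\end{equation*}
A direct Euler-characteristic computation then gives the Hilbert polynomial $h_{Z_{0,y}}(t) = \tfrac{5}{2}(t^2+t) + 1$. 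Since this polynomial does not depend on~$y$, the family $Z_0 \to \tS_0$ is flat; the inclusion $\Pi_0 \subset Z_{0,y}$ is immediate because $\Pi_0 \subset M_X$ by hypothesis and $\Pi_0 \subset \cP^5_y$ by the defining section $\tS_0 \to F^5_{\Pi_0}(\cQ_0/S_0)$. The hardest step will be excluding the odd-degree $3$-dimensional components listed in Lemma~\ref{lemma:gr25-p5}, since Lemma~\ref{lemma:gm5-q3} addresses only the quadric threefold case.
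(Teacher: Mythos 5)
Your overall architecture matches the paper's proof, but the step you yourself flag as unresolved --- excluding the odd-degree $3$-dimensional components --- is a genuine gap, and the tool you propose for closing it is the wrong one. There is no degree \emph{lower bound} for threefolds in a smooth GM fivefold analogous to the statement ``no surfaces of degree less than $10$'' used for threefolds: for instance $X \cap \Gr(2,V_4)$ is, when dimensionally transverse, a threefold of degree~$4$ contained in~$X$. What \cite[Corollary~3.5]{dkperiods} actually supplies in dimension~$5$, and what the paper uses, is a \emph{parity} constraint: every irreducible $3$-dimensional subvariety of a smooth GM fivefold has even degree (together with the statement that divisors have degree at least~$10$, which is what bounds $\dim(Z_{0,y})\le 3$ in the first place --- a point you also skip, since a component of dimension~$4$ would be a divisor of degree at most~$5$). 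The parity constraint kills the linear $\P^3$ (degree~$1$) and the degree-$3$ component in one stroke; this is exactly why the paper formulates Lemma~\ref{lemma:gr25-p5} as a statement about \emph{even-degree} components and why the ad hoc Lemma~\ref{lemma:gm5-q3} is only needed for the single surviving case, the hyperplane section of $\Gr(2,V_4)$. (The degree-$1$ case could alternatively be excluded by the fact that a smooth GM fivefold contains no linear $3$-spaces.)

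A secondary issue: the proposed reduction of the special case to the ordinary one by passing to the opposite GM variety does not work as stated, because the opposite of a special fivefold is an ordinary \emph{fourfold} in a smaller $\P(W_0)$, so the families of quadrics are not the same. No reduction is needed: in both cases $Z_{0,y} = \CGr(2,V_5) \cap \cP^5_y$ is contained in $X$, hence avoids the vertex of the cone, so $\cP^5_y$ projects isomorphically onto a $\P^5$ in $\P(\bw2V_5)$ and Lemma~\ref{lemma:gr25-p5} applies directly to the image. The remainder of your write-up --- the identification of the fiber, the restriction of the resolution of Lemma~\ref{lemma:resolution-gr25} to $\cP^5_y$ to get the Hilbert polynomial, flatness from its constancy, and the inclusion $\Pi_0 \subset Z_{0,y}$ --- agrees with the paper.
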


\begin{proof}
Let $y \in \tS_0$ and set $[v] := \pi(y) \in \P(V_6) \setminus \P(V_5)$.\
The fiber  of $Z_0$ over $y$ is
\begin{equation*}
Z_{0,y} := M_X \cap \cP^5_y = {\CGr}(2,V_5) \cap \cP^5_y.
\end{equation*}
Since the cone  ${\CGr}(2,V_5) \subset \P(\C \oplus \bw2V_5)$ 
has codimension~3 and degree~5, the intersection~${\CGr}(2,V_5) \cap \cP^5_y$ has dimension at least~2 and degree at most~5  
(and if the dimension is~2, the degree is~5).\
Furthermore, $\cP^5_y $ is contained in $Q_v$, hence
\begin{equation*}
Z_{0,y} \subset M_X \cap Q_v = X.
\end{equation*}
Since $X$ contains no divisors of degrees less than~10, we have \mbox{$\dim(Z_{0,y}) \le 3$}
and, moreover, if \mbox{$\dim(Z_{0,y}) = 3$}, any irreducible 3-dimensional component $Z_{0,y}$ has even degree
(\cite[Corollary~3.5]{dkperiods}).\ By Lemma~\ref{lemma:gr25-p5}, its image in $\Gr(2,V_5)$ must be a hyperplane section of $\Gr(2,V_4)$ and Lemma~\ref{lemma:gm5-q3} gives a contradiction.\ Therefore~$Z_{0,y}$ is a surface.\ 
This argument also proves the inclusion 
\begin{equation*}
Z_0 \subset X \times \tS_0.
\end{equation*}

Since the surface $Z_{0,y}$ is a dimensionally transverse linear section of ${\CGr}(2,V_5)$,
we obtain from Lemma~\ref{lemma:resolution-gr25}
a resolution 
\begin{equation*}
0 \to \cO_{\cP^5_y}(-5) \to \cO_{\cP^5_y}(-3)^{\oplus 5} \to \cO_{\cP^5_y}(-2)^{\oplus 5} \to \cO_{\cP^5_y} \to \cO_{Z_{0,y}} \to 0.
\end{equation*}
It follows that the Hilbert polynomial of $Z_{0,y}$ is given by~\eqref{eq:hp-dp5}.\ 
Since it is independent of $y$,  the family of surfaces~$Z_0$ is flat over $\tS_0$.\
Finally, since $\Pi_0 \subset M_X$ and $\Pi_0 \subset \cP^5_y$ by construction, we obtain~\mbox{$\Pi_0 \subset Z_{0,y}$}.
\end{proof}

Applying {to the family $Z_0 \to \tS_0$ the  Hilbert closure construction from {Definition~\ref{definition:hilbert-closure}}}, we obtain the following result.

\begin{lemm}
\label{lemma:z-5d}
 There is a subscheme
\begin{equation}
\label{def:z-5d}
Z \subset X \times \tY^{\ge 2}_A 
\end{equation}
such that, away from a finite subset of the surface $\tY^{\ge 2}_A$,  the map $Z \to \tY^{\ge 2}_A$ is a flat family of surfaces with Hilbert polynomial~\eqref{eq:hp-dp5} containing the plane $\Pi_0$.\ Moreover,  we have
\begin{equation*}
Z \times_{\tY^{\ge 2}_A} \tS_0 = Z_0
\end{equation*}
as subschemes of $X \times \tS_0$.
\end{lemm}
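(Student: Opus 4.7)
The plan is to mimic the proof of Lemma~\ref{lemma:z} verbatim, adapting only the dimension of the fibers.

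First, I would take $Z \subset X \times \tY^{\ge 2}_A$ to be the Hilbert closure (Definition~\ref{definition:hilbert-closure}) of the flat family $Z_0 \subset X \times \tS_0$ produced in Proposition~\ref{proposition:mz-5d}, with respect to the open embedding $\tS_0 \hookrightarrow \tY^{\ge 2}_A$. The equality $Z \times_{\tY^{\ge 2}_A} \tS_0 = Z_0$ will be immediate from the construction, since over $\tS_0$ the graph of the classifying map $\tS_0 \to \Hilb(X)$ projects isomorphically onto $\tS_0$ and pulls the universal subscheme back to $Z_0$.

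Second, I would extend the classifying map across codimension one. The surface $\tY^{\ge 2}_A$ is integral and normal by the O'Grady results recalled in Section~\ref{defepw}, and $\Hilb(X)$ is proper; therefore, the rational map $\tY^{\ge 2}_A \dashrightarrow \Hilb(X)$ associated with $Z_0$ extends regularly to every codimension-one point of $\tY^{\ge 2}_A$. The indeterminacy locus is thus a closed subset of codimension at least two in the surface, hence a finite subscheme $T \subset \tY^{\ge 2}_A$.

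Third, over $\tY^{\ge 2}_A \setminus T$, the graph appearing in the Hilbert closure construction maps isomorphically onto the base, so the restriction of $Z$ to $X \times (\tY^{\ge 2}_A \setminus T)$ is the pullback of the universal subscheme along the extended classifying map. This restriction is automatically flat over the base, with Hilbert polynomial~\eqref{eq:hp-dp5}, which is the common Hilbert polynomial of the fibers of $Z_0 \to \tS_0$ by Proposition~\ref{proposition:mz-5d}. Containment of the plane $\Pi_0$ defines a closed subscheme of $\Hilb(X)$, and since this containment holds on the dense open $\tS_0$, it persists on the entire flat locus.

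The argument presents no obstacle beyond what was already handled in the threefold case of Lemma~\ref{lemma:z}: the mechanism of extending a rational map from a normal surface into a proper Hilbert scheme is insensitive to the dimension of the fibers, so the same short argument suffices in the fivefold setting.
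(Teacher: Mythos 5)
Your proposal is correct and is essentially the paper's own argument: the paper simply applies the Hilbert closure construction of Definition~\ref{definition:hilbert-closure} to $Z_0 \to \tS_0$ and invokes the same reasoning as in Lemma~\ref{lemma:z} (normality of the integral surface $\tY^{\ge 2}_A$ gives extension of the classifying map in codimension~one, hence a finite nonflat locus, with the Hilbert polynomial and the containment of $\Pi_0$ persisting by flatness and closedness of the containment condition). No further commentary is needed.
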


By Proposition~\ref{proposition:mz-5d}, the schemes $Z_0$ and $Z$ have pure dimension 4.\

The main result of this section is the following theorem
(recall from  {Propositions~\ref{prop25} and~\ref{prop:gm-betti}} 
that the abelian groups $H_1(\widetilde Y^{\ge2}_{A(X)},\Z)$ and $ H_5(X,\Z)$ are both  free of rank~20 
and from Theorem~\ref{theorem:aj3} that the abelian variety $\Alb(\widetilde Y^{\ge2}_{A(X)})$ 
is endowed with a canonical principal polarization  {when~$Y^{\ge 3}_{A(X)} = \vide$}).

\begin{theo}
\label{theorem:aj5}
Let $X$ be a smooth GM fivefold and let $\Pi_0$ be a $\sigma$-plane contained in $X$.\
Let~\mbox{$Z \subset X \times \widetilde Y^{\ge2}_{A(X)}$} be the subscheme defined by~\eqref{def:z-5d}.\
If $Y_{A(X)}^{\ge 3}=\vide$, the Abel--Jacobi map 
\begin{equation*}
\AJ_{Z} \colon H_1(\widetilde Y^{\ge2}_{A(X)},\Z) 
\lra H_5(X,\Z)
\end{equation*}
is an isomorphism of integral Hodge structures.\
It induces an isomorphism 
\begin{equation}\label{isojac2}
\Alb(\widetilde Y^{\ge2}_{A(X)}) \isomlra \Jac(X)
\end{equation}
of principally polarized abelian varieties from the Albanese variety of the surface $\widetilde Y^{\ge2}_{A(X)}$ to the intermediate Jacobian of $X$.
\end{theo}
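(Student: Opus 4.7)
The plan is to follow the blueprint of the proof of Theorem~\ref{theorem:aj3} with every dimension shifted up by one: $\sigma$-planes replace lines, rational quartic surface scrolls replace rational quartic curves, quintic del~Pezzo surfaces replace elliptic quintic curves, and the $m=1$ Clemens--Tyurin argument (Proposition~\ref{proposition:welters}) is replaced by its $m=2$ instance. The key ingredients are the isomorphism~\eqref{eq:f2sx} between $F^2_\sigma(X)$ and $\tY^{\ge 2}_{A,V_5}$, the vanishing of the odd cohomology of the Grassmannian hull~$M_X$ from Proposition~\ref{prop:gm-betti}, and the existence of a smooth GM sixfold~$Y$ containing~$X$ as a hyperplane section.

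First, imitating the constructions of Section~\ref{subsection:boundary-3}, I would extend the family $Z_0 \to \tS_0$ of Lemma~\ref{lemma:z-5d} to a family $Z_{0+}$ over an open neighborhood $\tS_{0+} \subset \tY^{\ge 2}_A$ of $\tS_0$ covering a dense open part $S_{0,V_5}$ of the boundary curve~$Y^{\ge 2}_{A,V_5}$ (away from the finite set of bad points where $\Pi_v$ meets $\Pi_0$). The analogue of Lemma~\ref{lemma:pia-splits} would show that the double covering $\pi_A$ splits over~$S_{0,V_5}$, the splitting being encoded by a canonical isotropic subspace of the degenerate quadric~$Q_v$ containing both~$\Pi_0$ and the $\sigma$-plane $\Pi_v \subset X$ associated with~$[v]$ via~\eqref{eq:f2sx}. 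Intersecting these isotropic subspaces with $M_X$ yields a family~$Z'_{0,V_5}$ of reducible quintic surface scrolls, each of whose general members is a linear section of a cone of the form $\Cone(\P(V_2) \times \P(V_5/V_2))$ with $V_2$ spanned by~$v_0$ and~$v$, containing~$\Pi_0$ and~$\Pi_v$ as $\sigma$-plane rulings; intersecting further with any non-Pl\"ucker quadric $Q_0 \supset X$ then cuts out a reducible quintic del~Pezzo surface in~$X$.

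Second, using~\eqref{eq:f2sx} to identify (an open subset of) $S_{0,V_5}$ with an open subset of~$F^2_\sigma(X)$, the Hilbert closure of Definition~\ref{definition:hilbert-closure} produces a subscheme $Z'_F \subset M_X \times F^2_\sigma(X)$ and a morphism $\phi \colon F^2_\sigma(X) \to \tY^{\ge 2}_A$. The geometric picture above, combined with the fact that~$X$ contains no surfaces of degree less than~$10$ (Lemma~\ref{lemma:gm5-q3} and~\cite[Corollary~3.5]{dkperiods}), would give, after removing the two distinguished $\sigma$-planes from a general reducible quintic del~Pezzo fiber, an identity
\begin{equation*}
Z'_F \cap (X \times U) = \bigl(\phi^{-1}(Z) \cap (X \times U)\bigr) + \bigl(\cL^2_\sigma(X) \cap (X \times U)\bigr)
\end{equation*}
of codimension-two cycles on $X \times F^2_\sigma(X)$ over a dense open subscheme~$U \subset F^2_\sigma(X)$, where $\cL^2_\sigma(X) \subset X \times F^2_\sigma(X)$ denotes the universal family of $\sigma$-planes. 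Lemma~\ref{lemma:aj-properties} together with the vanishing $H_7(M_X,\Z) = H_9(M_X,\Z) = 0$ from Proposition~\ref{prop:gm-betti} would then yield, as in Proposition~\ref{proposition:aj-ty-aj-f1}, the identity
\begin{equation*}
\AJ_Z \circ \phi_* = -\AJ_{\cL^2_\sigma(X)} \colon H_1(F^2_\sigma(X), \Z) \lra H_5(X, \Z).
\end{equation*}

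Third, to show the right-hand side is surjective, I would apply Proposition~\ref{proposition:welters} with $m=2$: a general smooth GM fivefold~$X$ is a hyperplane section of a smooth GM sixfold~$Y$, the Hilbert scheme $F_Y := F^2_\sigma(Y)$ is smooth (of dimension~$4$) with generically finite dominant universal family~$\cL^2_\sigma(Y) \to Y$, the curve $F^2_\sigma(X)$ is smooth of codimension~$3 = m+1$ in~$F_Y$ by Lemma~\ref{lemm11}, and $H_5(Y,\Z) = H_7(Y,\Z) = 0$ by Lefschetz applied to the Grassmannian embedding $Y \hookrightarrow \Gr(2,V_5)$. This forces $\AJ_{\cL^2_\sigma(X)}$, hence $\AJ_Z$, to be surjective for $X$ general. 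Since source and target have the same rank~$20$ by Propositions~\ref{prop25} and~\ref{prop:gm-betti}, $\AJ_Z$ is an isomorphism; being defined by an algebraic correspondence in families, it preserves integral Hodge structures, extends by continuity to arbitrary smooth~$X$ with $Y^{\ge 3}_{A(X)} = \vide$, and induces the stated isomorphism~\eqref{isojac2} of principally polarized abelian varieties (the polarization on the Albanese side having been built in Theorem~\ref{theorem:aj3}). The hard part will be the fiber-by-fiber geometric analysis: one must verify that a general fiber $Z'_y \cap Q_0 \subset X$ is a reducible quintic del~Pezzo surface from which exactly the components $Z_y$, $\Pi_0$, and $\Pi_v$ can be isolated, with no extraneous divisorial pathology on~$X$; this is the analogue of Lemma~\ref{lemma:mzy-general}, where Lemma~\ref{lemma:gm5-q3} and the projection argument against the locus $\Sigma_1$-like loci in $\P(V_5)$ will replace the conic $\Sigma_1(X)$ used in the threefold case.
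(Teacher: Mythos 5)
Your overall strategy coincides with the paper's: extend the family over the boundary, show that a general boundary fiber of the ambient family is a $3$-dimensional cubic scroll whose intersection with a non-Pl\"ucker quadric $Q_0 \supset X$ decomposes as $\Pi_0 + \Pi_y + (\text{quartic scroll})$, deduce the cycle identity relating $Z$, the Hilbert closure over $F^2_\sigma(X)$, and the universal family $\cL^2_\sigma(X)$, kill the $M_X$-contribution using the vanishing of $H_{\mathrm{odd}}(M_X,\Z)$, and get surjectivity of $\AJ_{\cL^2_\sigma(X)}$ from the $m=2$ Clemens--Tyurin argument applied to a GM sixfold containing $X$ as a hyperplane section. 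The rank count and the continuity/Picard-number-one arguments at the end are also as in the paper. (The paper additionally gives a second, independent proof of~\eqref{isojac2} by the monodromy/simplicity argument of Section~\ref{subsection:simplicity-argument}, which you do not need.)

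There is, however, one concrete error in your boundary construction. You claim that ``the analogue of Lemma~\ref{lemma:pia-splits} would show that the double covering $\pi_A$ splits over $S_{0,V_5}$'', where $S_{0,V_5}$ is a dense open part of the base curve $Y^{\ge 2}_{A,V_5}$. This is false for fivefolds: by~\eqref{eq:f2sx} and Lemma~\ref{lemm11}, the preimage $\tY^{\ge 2}_{A,V_5} \cong F^2_\sigma(X)$ is a \emph{connected} \'etale double cover of $Y^{\ge 2}_{A,V_5}$ (a smooth curve of genus $161$ for $X$ general), so it cannot split. Geometrically, a point $[v]\in Y^{2}_{A,V_5}$ carries \emph{two} $\sigma$-planes of $X$ (the fiber $\rho_1^{-1}([v])$ is a union of two planes by~\eqref{eq:fibers-rho1-5fold}), exchanged by monodromy; hence the $5$-space $\langle \Pi_0,\Pi_v\rangle$ you want is not single-valued on the base curve, only on its double cover. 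The paper's route avoids any splitting: since $\tY^{\ge 2}_A$ is smooth and $F^5_{\Pi_0}(\tcQ/\tY^{\ge 2}_A)$ is proper, the section defined over $\tS_0$ extends over an open subset $\tS_{0+}$ meeting the \emph{upstairs} curve $\tY^{\ge 2}_{A,V_5}$ in a dense open subset $\tS_{0,V_5}$ (see~\eqref{def:ts0p-5d}), and Lemma~\ref{lemma:ts0v5} identifies the boundary value at $y$ as $\langle \Pi_0,\Pi_y\rangle$, with $\Pi_y$ indexed by the point $y$ of the double cover (after possibly composing~\eqref{eq:f2sx} with the covering involution). Once you replace your $S_{0,V_5}$ and your map $\phi$ by the upstairs curve $\tS_{0,V_5}$ and the isomorphism~\eqref{eq:f2sx}, the rest of your argument goes through. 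A minor further point: the vanishing $H_5(Y,\Z)=H_7(Y,\Z)=0$ for a GM sixfold $Y$ does not follow from ``Lefschetz applied to the Grassmannian embedding $Y\hookrightarrow \Gr(2,V_5)$'' --- a GM sixfold is special, a double cover of $\Gr(2,V_5)$ rather than a subvariety of it --- it is the cohomology computation of \cite[Proposition~3.1]{dkperiods}.
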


The proof of this theorem takes up  {Sections~\ref{subsection:boundary-5}--\ref{subsection:aj-5}}.

\subsection{The boundary of the family}
\label{subsection:boundary-5}

Let $X$ be a smooth GM fivefold.\ 
To prove Theorem~\ref{theorem:aj5}, we study the family of surfaces~$Z$ described in Lemma~\ref{lemma:z-5d} 
over the boundary $\widetilde Y^{\ge 2}_A \setminus \tS_0$.\  
We assume in this section that $X$ is general (in particular it is ordinary), so that the curve $Y_{A,V_5}^{\ge 2}$ is smooth.\

Consider the Hilbert scheme $F^2_\sigma(X)$ of $\sigma$-planes on $X$;
we identify it with the smooth connected curve~$\widetilde Y^{\ge 2}_{A,V_5}$ via the isomorphism~\eqref{eq:f2sx}.\ 
For $y \in \widetilde Y^{\ge 2}_{A,V_5}$, we denote by $\Pi_y \subset X$ the corresponding $\sigma$-plane.

We denote by $y_0 \in \tY^{\ge 2}_{A,V_5}$ the point such that $\Pi_{y_0} = \Pi_0$ is the plane chosen in Section~\ref{subsection:z-5d}.\  
Set $[v_0] := \pi_A(y_0)$.\
By Lemma~\ref{lemma:gr-linear}(b), we have, for an appropriate hyperplane $V_4 \subset V_5$,
\begin{equation}
\label{eq:v4-v4p}
\Pi_0 = \P(v_0 \wedge V_4).
\end{equation}
 We set 
\begin{equation*}
Y^{\ge 2}_{A,V_4} := Y^{\ge 2}_A \cap \P(V_4).
\end{equation*}
This is a hyperplane section of the smooth curve $\ytav$ hence is finite;
the induced double coverings of this set parameterizes  {$\sigma$}-planes on $X$ that intersect $\Pi_0$.

Denote by $\tcQ$ the pullback of the family of quadrics $\cQ$ along the map $\tY^{\ge 2}_A \to Y_A^{\ge 2}$.\  Recall that a section~$\tS_0 \to F^5_{\Pi_0}(\tcQ_0/\tS_0)$ was constructed in Section~\ref{subsection:z-5d}.\  Since the surface $\tY^{\ge 2}_A$ is smooth and the Hilbert scheme $F^5_{\Pi_0}(\tcQ/\tY^{\ge 2}_A)$ is proper over~$\tY^{\ge 2}_A$, this section extends to an open subset 
\begin{equation}
\label{def:ts0p-5d}
\tS_{0+} \subset \pi_A^{-1}(Y^{\ge 2}_A \setminus (Y^{\ge 2}_{A,V_4} \cup Y^{\ge 2}_{A,V'_4})) \subset \tY^{\ge 2}_A
\end{equation} 
which contains a general point of the curve $\tS_{0,V_5} := \tS_{0+} \setminus \tS_0 \subset \tY^{\ge 2}_{A,V_5}$.\ 
We denote by 
\begin{equation*}
\cP^5_+ \subset \P(\bw2V_5) \times \tS_{0+} 
\end{equation*}
the corresponding family of 5-spaces and, for  $y \in \tS_{0+}$, by $\cP^5_{+y} \subset \P(\bw2V_5)$ the corresponding linear 5-space.\  By definition, we have $\Pi_0 \subset \cP^5_{+y}$ for each $y \in \tS_{0,V_5}$.

\begin{lemm}
\label{lemma:ts0v5}
For each point $y \in \tS_{0,V_5}$, we have
\begin{equation}
\label{eq:cp5y}
\cP^5_{+y} = \langle \Pi_y, \Pi_0 \rangle.
\end{equation}
\end{lemm}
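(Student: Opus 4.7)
The plan is to first identify $\cP^5_{+y}$ as the span $\langle \Pi_0, \Pi \rangle$ of $\Pi_0$ with a plane $\Pi$ contained in the vertex of the Pl\"ucker quadric $Q_{v_y}$, and then to argue that $\Pi = \Pi_y$.

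First, I would set up the geometry of $Q_{v_y}$. Since $[v_y] := \pi_A(y) \in Y^{\ge 2}_{A,V_5} \subset \P(V_5)$, the quadric $Q_{v_y}$ is a Pl\"ucker quadric~\eqref{pluq}: a cone with vertex $\P(v_y \wedge V_5) \cong \P^3$ over the smooth four-dimensional quadric $Q^4 = \Gr(2, V_5/\C v_y) \subset \P^5$, whose maximal isotropic subspaces are planes. By the definition~\eqref{def:ts0p-5d} of $\tS_{0+}$, we have $v_y \notin V_4$; a short bivector calculation then shows that $\Pi_0 = \P(v_0 \wedge V_4)$ is disjoint from the vertex of $Q_{v_y}$ (any nonzero bivector $v_0 \wedge v \in v_y \wedge V_5$ with $v \in V_4$ would force $v \in \mathrm{span}(v_0, v_y) \cap V_4 = \C v_0$, giving $v_0 \wedge v = 0$).

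Next comes a dimension count. The 5-space $\cP^5_{+y}$ lies in $Q_{v_y}$ and contains $\Pi_0$ by construction. Setting $\Pi := \cP^5_{+y} \cap \P(v_y \wedge V_5)$, the projection of $\cP^5_{+y}$ from the vertex yields an isotropic subspace of $Q^4$ of projective dimension $4 - \dim \Pi$, which forces $\dim \Pi \ge 2$. The case $\dim \Pi = 3$ would make $\cP^5_{+y}/\Pi$ one-dimensional, but this quotient must also contain the image of $\Pi_0$, which is a plane because $\Pi_0$ avoids the vertex, a contradiction. Thus $\Pi$ is a plane, and $\cP^5_{+y} = \langle \Pi_0, \Pi \rangle$ by the disjointness $\Pi_0 \cap \Pi = \emptyset$.

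Finally, I would identify $\Pi$ with $\Pi_y$. Every plane in $\P(v_y \wedge V_5)$ is a $\sigma$-plane $\P(v_y \wedge V_{4,y}')$ of $\Gr(2, V_5)$, and for each such choice the span $\langle \Pi_0, \P(v_y \wedge V_{4,y}')\rangle$ is an isotropic 5-space in $Q_{v_y}$ containing $\Pi_0$ (the second plane lies in $\Ker(Q_{v_y})$, and $\Pi_0 \subset \Gr(2, V_5) \subset Q_{v_y}$). So $\Pi$ is one plane among a $\P^3$-family, singled out by the section extending $\cP^5_+$ from $\tS_0$. To determine which one, I would approach $y$ along a path in $\tS_0$ and track the 2-dimensional kernels of the corank-$2$ quadrics $Q_{v(t)}$ (with $v(t) = \pi_A(y(t))$); these kernels specialize to a 2-dimensional subspace of $v_y \wedge V_5$, and the sheet of the double cover $\tY^{\ge 2}_{A,V_5} \to Y^{\ge 2}_{A,V_5}$ on which $y$ lies dictates which one. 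Matching this sheet with $\Pi_y$ under the isomorphism $F^2_\sigma(X) \cong \tY^{\ge 2}_{A,V_5}$ of~\eqref{eq:f2sx} is the main obstacle; it relies on the canonical nature of both double covers, both arising from the same Lagrangian data via the isotropic reduction of Lemma~\ref{lemma:lagrangians-quadrics}, as in~\cite[Section~5]{dkcovers}.
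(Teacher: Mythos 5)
Your setup and dimension count are correct and essentially match the first half of the paper's argument: one does show that $\cP^5_{+y}$ meets the vertex $\P(v_y\wedge V_5)$ of the Pl\"ucker quadric in a plane $\Pi$ disjoint from $\Pi_0$, so that $\cP^5_{+y}=\langle \Pi_0,\Pi\rangle$. The gap is exactly where you flag it, and it is not a technicality but the heart of the lemma: you never show that $\Pi$ is the plane $\Pi_y$, nor even that $\Pi$ is contained in $X$. As you observe, \emph{every} plane $\P(v_y\wedge V_4')$ in the vertex spans with $\Pi_0$ a $5$-space inside $Q_{v_y}$, so containment in the single quadric $Q_{v_y}$ gives no information; but $\Pi_y$ is by definition a $\sigma$-plane \emph{on $X$}, i.e.\ contained in a non-Pl\"ucker quadric as well, and your argument supplies no reason why $\Pi$ should satisfy this. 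Your proposed fix --- following the $2$-dimensional kernels of the corank-$2$ quadrics $Q_{v(t)}$ along a path in $\tS_0$ --- does not obviously determine the $3$-dimensional subspace $\Pi$ inside the $4$-dimensional kernel $v_y\wedge V_5$, and the sheet of the covering over $S_0$ is recorded by a ruling of $\bar Q_{v(t)}$, not by the kernel itself, so this sketch as stated would not close the gap.

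The paper resolves this by a first-order deformation argument. Writing $W_y=W_6\cap(v_y\wedge V_5)$ for the linear space underlying $\Pi$, one takes a tangent direction $[v']\in\P(V_6)\setminus\P(V_5)$ to the surface $Y^{\ge 2}_A$ at $[v_y]$, lifts the corresponding map $\Spec(\C[\epsilon]/\epsilon^2)\to Y^{\ge 2}_A$ to $\tS_{0+}$ (which is \'etale over the curve), and deforms a basis $\{w_i\}$ of $W_y$ to $\{w_i+\epsilon w_i'\}$ isotropic for $q_{v_y}+\epsilon q_{v'}$. Because the $w_i$ lie in $\Ker(q_{v_y})$, the first-order terms force $q_{v'}(w_i,w_j)=0$, so $\P(W_y)\subset \Gr(2,V_5)\cap Q_{v'}=X$. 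Since $X$ contains no $\P^3$, this also pins down $\dim(W_y)=3$, and one obtains a morphism $\tS_{0,V_5}\to F^2_\sigma(X)$ of schemes over $Y^{\ge 2}_{A,V_5}$; by connectedness of the \'etale double cover $\tY^{\ge 2}_{A,V_5}\to Y^{\ge 2}_{A,V_5}$ such a morphism is either the canonical inclusion or its composition with the covering involution, and one simply normalizes the isomorphism~\eqref{eq:f2sx} so that it is the inclusion. This last normalization step is how the paper sidesteps the ``which sheet'' question that your proposal identifies as the main obstacle; without the containment $\Pi\subset X$, no such argument is available.
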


\begin{proof}
Set $[v] := \pi_A(y) \in Y^{\ge 2}_{A,V_5}$ and let $W_6 \subset \bw2V_5$ be the  6-dimensional subspace 
corresponding to  the 5-space $\cP^5_y \subset \P(\bw2V_5)$.\ By definition, we have 
\begin{equation*}
\P(W_6) \subset Q_v = {\Cone}_{\P(v \wedge V_5)}(\Gr(2,V_5/v)).
\end{equation*}
Since $\Gr(2,V_5/v)$ is a smooth 4-dimensional quadric, the maximal dimension of a linear subspace that it contains is~2,  hence the subspace
\begin{equation*}
W_y := W_6 \cap (v \wedge V_5)
\end{equation*}
is at least 3-dimensional.\
We claim that $\P(W_y)$ is contained in~$X$.

Let $\{w_i\}$ be a basis of $W_y$, let $q_v$ be an equation of $Q_v$, and
consider a line~\mbox{$\langle [v],[v'] \rangle \subset \P(V_6)$} tangent to~$Y^{\ge 2}_A$ at $[v]$, 
with $[v'] \in \P(V_6) \setminus \P(V_5)$.\ 
Let $\Spec(\C[\epsilon]/\epsilon^2) \to Y^{\ge 2}_A$ be the corresponding morphism that takes the closed point to $[v]$.\ 
Since $\tS_{0+}$ is \'etale over~{$Y^{\ge 2}_{A,V_5}$}, the morphism can be lifted to a morphism
\begin{equation*}
\Spec(\C[\epsilon]/\epsilon^2) \lra \tS_{0+}
\end{equation*}
that takes the closed point to $y$.\
 This implies that there are vectors $w'_i$ in $\bw2V_5$ such that the subspace in~$\bw2V_5$ generated by $w_i + \epsilon w'_i$ 
is isotropic for the quadratic form $q_v + \epsilon q_{v'}$, where $q_{v'}$ is an equation of~$Q_{v'}$.\ 
We have therefore
\begin{equation*}
0 = 
(q_v + \epsilon q_{v'})(w_i + \epsilon w'_i,w_j + \epsilon w'_j) = 
\epsilon ( q_v(w_i,w'_j) + q_v(w_j,w'_i) + q_{v'}(w_i,w_j) ).
\end{equation*}
Note that $q_v(w_i,w'_j) = q_v(w_j,w'_i) = 0$, since $w_i,w_j \in v \wedge V_5 = \Ker(q_v)$ for all $i,j$.\
It follows that~$q_{v'}(w_i,w_j) = 0$ for all $i,j$, hence $W_y$ is isotropic for $q_{v'}$, that is, $\P(W_y) \subset Q_{v'}$.\ 
Since~$\P(W_y) \subset \P(v \wedge V_5) \subset \Gr(2,V_5)$,  we conclude that 
\begin{equation*}
\P(W_y) \subset \Gr(2,V_5) \cap Q_{v'} = X,
\end{equation*}
thus proving the claim.

Since $\P(W_y) \subset \P(v \wedge V_5) \cap X$ and $X$ contains no linear 3-spaces (\cite[Theorem~4.2]{dkperiods}), 
it follows that~$\dim(W_y) = 3$ and $\P(W_y)$ is a $\sigma$-plane on $X$.\ Moreover, the induced map 
\begin{eqnarray*}
\tS_{0,V_5} &\lra& F^2_\sigma(X)\\ 
y& \longmapsto& \P(W_y)
\end{eqnarray*}
is a $Y^{\ge 2}_{A,V_5}$-morphism.\ But $F^2_\sigma(X) \cong \tY^{\ge 2}_{A,V_5}$, 
while $\tS_{0,V_5}$ is an open  {subscheme} in $\tY^{\ge 2}_{A,V_5}$,
and $\tY^{\ge 2}_{A,V_5}$ is a connected \'etale covering of $Y^{\ge 2}_{A,V_5}$.\
Therefore, replacing if necessary the isomorphism~\eqref{eq:f2sx} by its composition with the involution of the double covering,
we may assume that the above map~$\tS_{0,V_5} \to F^2_\sigma(X)$ coincides with the embedding $\tS_{0,V_5} \hra \tY^{\ge 2}_{A,V_5}$,
hence $\P(W_y) = \Pi_y$ for all~$y \in \tS_{0,V_5}$.\

This means that there is an inclusion $\Pi_y \subset \cP^5_{+y}$ for all $y \in \tS_{0,V_5}$.\ 
Since $\Pi_0 \subset \cP^5_{+y}$ by definition, the right side of~\eqref{eq:cp5y} is contained in the left side.\
Finally, since $\Pi_0 \cap \Pi_y = \vide$ by definition of $\tS_{0+}$
(since planes intersecting $\Pi_0$ are parameterized by the double  {cover} of the subscheme $Y^{\ge 2}_{A,V_4}$),
the inclusion is an equality.
\end{proof}

The family $\cP^5_+$ of projective 5-spaces over~$\tS_{0+}$ agrees by construction with the family~$\cP^5$ over $\tS_0 \subset \tS_{0+}$.\ We set
\begin{equation}
\label{def:z0plus2}
Z_{0+} := \cP^5_+ \cap (M_X \times \tS_{0+}). 
\end{equation} 
Comparing this with~\eqref{def:z0-5d}, we obtain
\begin{equation*}
Z_{0+} \times_{\tS_{0+}} \tS_0 = Z_0.
\end{equation*} 
We denote by
\begin{equation*}
Z_{0,V_5} := Z_{0,+} \times_{\tS_{0+}} \tS_{0,V_5} \subset M_X \times \tS_{0,V_5}
\end{equation*} 
the restriction of the family~\eqref{def:z0plus2} to the curve $\tS_{0,V_5} =  {\tS_{0+} \setminus \tS_0}$.

\begin{prop}
\label{proposition:mz2a-5d}
Let $X$ be a general GM fivefold.\
The map~\mbox{$Z_{0,V_5} \to \tS_{0,V_5}$} is a flat family of  {$3$-dimensional} cubic scrolls $\P^1 \times \P^2$.
\end{prop}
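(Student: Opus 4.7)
The plan is to identify each fiber $Z_{0,V_5,y}$ explicitly for $y\in\tS_{0,V_5}$ and show that it is a three-dimensional cubic scroll; flatness of the family will then follow at once from the constancy of the Hilbert polynomial. Setting $[v]:=\pi_A(y)\in Y^{\ge 2}_{A,V_5}$, the $\sigma$-plane $\Pi_y$ takes the form $\Pi_y=\P(v\wedge V_{4,y})$ for some hyperplane $V_{4,y}\subset V_5$ containing $v$, while $\Pi_0=\P(v_0\wedge V_4)$ with $v_0\in V_4$. By Lemma~\ref{lemma:ts0v5},
\begin{equation*}
\cP^5_{+y}=\langle \Pi_y,\Pi_0\rangle=\P(W_{6,y}),\qquad W_{6,y}:=(v_0\wedge V_4)+(v\wedge V_{4,y}).
\end{equation*}
The exclusion of $Y^{\ge 2}_{A,V_4}$ in the definition~\eqref{def:ts0p-5d} of $\tS_{0+}$ forces $v\notin V_4$; in particular, $v_0$ and $v$ are linearly independent and $\Pi_0\cap\Pi_y=\vide$, so the above sum is direct and $W_{6,y}$ is a hyperplane inside $V_2\wedge V_5$, where $V_2:=\C v_0\oplus\C v$.

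Next, I will invoke Lemma~\ref{lemma:gr25-p25} to identify
\begin{equation*}
M_X\cap\P(V_2\wedge V_5)=\Gr(2,V_5)\cap\P(V_2\wedge V_5)=\Cone_{[v_0\wedge v]}\bigl(\P(V_2)\times\P(V_5/V_2)\bigr),
\end{equation*}
a four-dimensional cone with apex $[v_0\wedge v]$ sitting in $\P(V_2\wedge V_5)\cong\P^7$. Thus $Z_{0,V_5,y}$ is the hyperplane section, cut by the codimension-one subspace $\P(W_{6,y})$, of this cone. The key step is to check that the apex $[v_0\wedge v]$ does not lie on $\P(W_{6,y})$: any relation $v_0\wedge v=v_0\wedge a+v\wedge b$ with $a\in V_4$ and $b\in V_{4,y}$ can be rewritten as $v_0\wedge(v-a)=v\wedge b$, and since both sides lie in $(v_0\wedge V_5)\cap(v\wedge V_5)=\C(v_0\wedge v)$, one deduces that either $v\in V_4$ or $v_0\in V_{4,y}$. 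The first case is excluded by $y\notin \pi_A^{-1}(Y^{\ge 2}_{A,V_4})$; the second is equivalent to $[v_0\wedge v]\in\Pi_y$ and is ruled out by the exclusion of $Y^{\ge 2}_{A,V'_4}$ in~\eqref{def:ts0p-5d}.

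Once the apex lies outside $\P(W_{6,y})$, the hyperplane section projects isomorphically from the apex onto the base of the cone, so $Z_{0,V_5,y}\cong\P(V_2)\times\P(V_5/V_2)\cong\P^1\times\P^2$. All fibers therefore share the Hilbert polynomial of the Segre-embedded cubic scroll, yielding flatness of $Z_{0,V_5}\to\tS_{0,V_5}$. The main obstacle in this plan is precisely the verification that the apex is not contained in $\P(W_{6,y})$, along with the matching of the two ``bad'' conditions $v\in V_4$ and $v_0\in V_{4,y}$ with the two loci $Y^{\ge 2}_{A,V_4}$ and $Y^{\ge 2}_{A,V'_4}$ excluded in the definition of $\tS_{0+}$; once this matching is in place, the rest of the argument reduces to elementary linear algebra.
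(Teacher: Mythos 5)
Your proof is correct and follows essentially the same route as the paper's: identify $\cP^5_{+y}=\langle\Pi_0,\Pi_y\rangle$ as a hyperplane in $\P(V_2\wedge V_5)$ via Lemma~\ref{lemma:ts0v5}, invoke Lemma~\ref{lemma:gr25-p25} to realize the fiber as a hyperplane section of the cone over $\P(V_2)\times\P(V_5/V_2)$, check that the apex $[v_0\wedge v]$ lies off that hyperplane, and deduce flatness from the constant Hilbert polynomial. The only divergence is that you verify the apex condition by explicit linear algebra in $W_{6,y}$ where the paper argues that the apex lying in $\cP^5_{+y}$ would force $\Pi_0\cap\Pi_y\ne\vide$; your second case $v_0\in V_{4,y}$ could use one more line --- it gives $[v_0\wedge v]\in\Pi_y\subset X\cap\P(v_0\wedge V_5)$, which is the union of the two $\sigma$-planes over $[v_0]$, whence $[v]\in\P(V_4)\cup\P(V'_4)$, both excluded by~\eqref{def:ts0p-5d}.
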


\begin{proof}
Let $y \in \tS_{0,V_5}$ and set again $[v] := \pi(y) \in \P(V_5)$.\ 
By~\eqref{eq:cp5y}, the linear 5-space $\cP^5_{+y}$ is the linear span of the planes $\Pi_0$ and $\Pi_y$,
that is, a hyperplane in $\P(V_2 \wedge V_5)$, where $V_2 \subset V_5$ is the subspace spanned by $v_0$ and $v$.\ 
Therefore, $\Gr(2,V_5) \cap \cP^5_{+y}$ is a hyperplane section of the cone~$\Cone_{\P(\wedge^2V_5)}(\P(V_2) \times \P(V_5/V_2))$ 
(see Lemma~\ref{lemma:gr25-p25}).\

The vertex $[\bw2V_2] =  {[v_0 \wedge v]}$ of the cone does not belong to $\cP^5_{+y}$: 
if it did, $\Pi_0$ and $\Pi_y$ would intersect at the point $[v_0 \wedge v]$
and this would contradict the definition of~$S_{0+}$.\
Therefore, the fiber   of $Z_{0,V_5}$ over $y$ is isomorphic to the 3-dimensional cubic scroll~$\P(V_2) \times \P(V_5/V_2)$.
\end{proof}

Propositions~\ref{proposition:mz-5d} and~\ref{proposition:mz2a-5d} show that all the components of $Z_0$ and $Z_{0,V_5}$  have dimension~$4$.\ 
They are components of the scheme~$Z_{0+}$.\ 
We  {also} consider the Hilbert closure\begin{equation}
\label{def:zf-5d}
Z_F \subset M_X \times F^2_\sigma(X) 
\end{equation}
of $Z_{0,V_5}$ in $M_X \times F^2_\sigma(X)$, constructed as in Definition~\ref{definition:hilbert-closure}.\

\subsection{A relation between the subschemes}
\label{subsection:relation-5}

Let $X$ be a general GM fivefold.\
In~\eqref{def:z-5d} and~\eqref{def:zf-5d}, we  have constructed 
subschemes $Z \subset X \times \tY^{\ge 2}_A$ and $Z_F \subset M_X \times F^2_\sigma(X)$.\ 
The proof of Theorem~\ref{theorem:aj5} is based on a relation 
between the schemes $Z \cap (X \times \tS_{0,V_5})$ and $Z_F \cap (X \times \tS_{0,V_5})$, 
where the curve $\tS_{0,V_5}$ is considered as a subscheme of both the surface $\tY^{\ge 2}_A$ and the curve $F^2_\sigma(X)$.\

Consider the commutative diagram
\begin{equation}
\label{diagram:schemes-5fold}
\vcenter{\xymatrix@M=4pt@R=9pt@C=33pt
{
&&
X \times \tS_{0,V_5} 
\ar@{_{(}->}[dl]_(.45)[@!18]{\textnormal{open}} 
\ar@{^{(}->}[dr]^(.49)[@!-19]{\textnormal{loc.\ closed}}
 \ar@{_{(}->}'[d][dd]^(.3)i &
\\
\cL^2_\sigma(X)  \ar@{^{(}->}[r] &
X \times F^2_\sigma(X) \ar@{_{(}->}[dd]_-i \ar[rr]^(.4)\tsi &&
X \times {\tY^{\ge 2}_A} \ar@{_{(}->}[dd]^-i &
Z \ar@{_{(}->}[l] &
\\
&&
M_X \times \tS_{0,V_5} 
\ar@{_{(}->}[dl]_(.46)[@!18]{\textnormal{open}} 
\ar@{^{(}->}[dr]^(.49)[@!-19]{\textnormal{loc.\ closed}}
\\
Z_F \ar@{^{(}->}[r] &
M_X \times F^2_\sigma(X) \ar[rr]^-\tsi &&
M_X \times {\tY^{\ge 2}_A},
}}
\end{equation} 
where $\cL^2_\sigma(X) \subset X \times F^2_\sigma(X)$ is the universal family of  $\sigma$-planes, 
$i \colon X \hra M_X$ is the embedding,  {and $\tsi$ is the isomorphism~\eqref{eq:f2sx}.}

\begin{prop}
We have an equality
\begin{equation}
\label{eq:mz1-5fold}
Z_F \cap (X \times \tS_{0,V_5}) = 
{(\tsi^{-1}(Z) \cap (X \times \tS_{0,V_5})) + (\cL^2_\sigma(X) \cap (X \times \tS_{0,V_5}))}
\end{equation}
{of cycles.}
\end{prop}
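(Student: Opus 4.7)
The plan is to mirror closely the proof of the corresponding equality~\eqref{eq:mz1} in the threefold case. Since we only need a cycle equality over a dense open subscheme of $F^2_\sigma(X)$, one can base change both sides of~\eqref{eq:mz1-5fold} along the locally closed embedding $\tS_{0,V_5} \hookrightarrow F^2_\sigma(X)$; the left side then becomes $Z_{0,V_5} \cap (X \times \tS_{0,V_5})$, while the right side is $Z_y + \Pi_y$ fiberwise, where $\Pi_y$ is the $\sigma$-plane on~$X$ corresponding to the point $y \in \tS_{0,V_5}$.

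Next, by Proposition~\ref{proposition:mz2a-5d}, the morphism $Z_{0,V_5} \to \tS_{0,V_5}$ is a flat family of three-dimensional cubic scrolls $Z'_y \cong \P(V_2) \times \P(V_5/V_2)$. Writing $X = M_X \cap Q$ for some non-Pl\"ucker quadric~$Q$ and observing that no component of $Z'_y$ can be contained in $X$, since $X$ contains no divisors of degree less than~$10$ by \cite[Corollary~3.5]{dkperiods}, one sees that $Z_{0,V_5} \cap (X \times \tS_{0,V_5}) \to \tS_{0,V_5}$ is a flat family of surfaces $S_y := Z'_y \cap Q$, each of class $2h_1 + 2h_2$ in $Z'_y$, where $h_1,h_2$ denote the hyperplane classes pulled back from $\P(V_2)$ and $\P(V_5/V_2)$. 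Both $\Pi_0$ and $\Pi_y$ are contained in $S_y$: the plane $\Pi_0$ lies in $Z'_y$ by construction and in $X$ by hypothesis, while $\Pi_y$ lies in $Z'_y$ by Lemma~\ref{lemma:ts0v5} and in $X$ by definition. Under the embedding $Z'_y \subset \Gr(2,V_5)$, both $\Pi_0$ and $\Pi_y$ are fibers of the first projection, so each has class~$h_1$. A direct computation (vanishing of a section of $\cO(2h_1+2h_2)$ on two distinct fibers of the $\P^1$-factor forces it to factor through $t_0 t_1$) then yields
\begin{equation*}
S_y = \Pi_0 + \Pi_y + Q_y
\end{equation*}
as cycles, where $Q_y \subset Z'_y$ is an effective divisor of class~$2h_2$, hence a quadric surface.

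Finally, a Hilbert closure argument analogous to Lemma~\ref{lemma:z-z0plus} shows that, for general $y \in \tS_{0,V_5}$, the fiber $Z_y$ of the scheme~$Z$ defined in~\eqref{def:z-5d} is contained in $Z'_y$. By Lemma~\ref{lemma:z-5d}, $Z_y$ contains $\Pi_0$ and has Hilbert polynomial $\tfrac{5}{2}(t^2+t)+1$; a short Hilbert polynomial computation in $Z'_y$ forces $Z_y$ to have class~$h_1 + 2h_2$, and the inclusion $\Pi_0 \subset Z_y$ then yields the decomposition $Z_y = \Pi_0 + Q_y$ of cycles. Adding $\Pi_y$ gives $S_y = Z_y + \Pi_y$ for generic $y \in \tS_{0,V_5}$, whence the desired equality~\eqref{eq:mz1-5fold} of cycles over a dense open subscheme of $\tS_{0,V_5}$. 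The main subtlety lies in the Hilbert closure step: one has to verify that the flat limit of the smooth quintic del Pezzo surfaces~$Z_{y'}$ parameterized by $y' \in \tS_0$ acquires, along the boundary, the reducible form $\Pi_0 + Q_y$, with $Q_y$ containing neither $\Pi_0$ nor $\Pi_y$ for generic~$y$; both verifications amount to an incidence analysis of the universal $\sigma$-plane with the cubic scroll, much as in the threefold case.
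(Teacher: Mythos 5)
Your proposal is correct and follows essentially the same route as the paper: restrict to the fibers over $\tS_{0,V_5}$, identify $S_y = Z'_y \cap Q$ as a divisor of class $2f_1+2f_2$ in the cubic scroll, split off $\Pi_0$ and $\Pi_y$ (each of class $f_1$), and recognize the residual class-$2f_2$ surface as $Z_y - \Pi_0$. The only (harmless) divergences are that you pin down the class of $Z_y$ by a Hilbert-polynomial computation where the paper just uses its degree together with the fact that the residual surface contains no planes, and that the exclusion of components of $Z'_y$ from $X$ should be phrased in terms of low-degree \emph{threefolds} in the fivefold $X$ (as in \cite[Corollary~3.5]{dkperiods}) rather than divisors.
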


\begin{proof}
The left side of~\eqref{eq:mz1-5fold} can be rewritten as
\begin{equation*}
Z_{F} \cap (X \times \tS_{0,V_5}) = Z_{0,V_5} \cap (X \times \tS_{0,V_5}) = Z_{0,V_5} \times_{M_X} X.
\end{equation*}
By Proposition~\ref{proposition:mz2a-5d}, the morphism $Z_{0,V_5} \to \tS_{0,V_5}$ 
is a flat family of smooth 3-dimensional cubic scrolls.\ 
Since $X$ contains no such threefolds (\cite[Corollary~3.5]{dkperiods}), it contains no fibers of~$Z_{0,V_5}$.\ 
Therefore, the morphism
\begin{equation*}
Z_{0,V_5} \times_{M_X} X \lra \tS_{0,V_5} 
\end{equation*}
is a flat family of surfaces whose fiber over $y\in \tS_{0,V_5}$ is the  {dimensionally transverse} intersection 
\begin{equation*}
S_y := Z_{0,V_5,y} \cap {Q_0}
\end{equation*}
of the smooth 3-dimensional cubic scroll~$Z_{0,V_5,y} \cong \P^2 \times \P^1$ with any non-Pl\"ucker quadric ${Q_0}$ containing~$X$.\ 
Such an intersection is a surface of class $2f_2 + 2f_1$ in $Z_{0,V_5,y}$,  
where $f_i$ is the preimage of the hyperplane class on $\P^i$ under the projection~$Z_{0,V_5,y} \cong \P^2 \times \P^1 \to \P^i$.

By~\eqref{eq:cp5y} and~\eqref{def:z0plus2}, the planes~$\Pi_0$ and~$\Pi_y$ are contained in the scroll~$Z_{0,V_5,y}$.\
Since they are  also contained in~$X$, they are contained in the quadric~$Q_0$.\
It follows that they are components of~$S_y$, each of class~$f_1$.\
Therefore, 
\begin{equation*}
S_y = \Pi_0 + \Pi_y + S'_y,
\end{equation*}
where $S'_y \subset Z_{0,V_5,y}$ is a surface of class $2f_2$, that is, the product of a conic in $\P^2$ with $\P^1$.\  
In particular, it has degree 4  and contains no planes.\ 
Since $Z_y \subset Z_{0,V_5,y}$ is a surface of degree~5 that contains~$\Pi_0$, 
we have~$Z_y = S'_y + \Pi_0$  {for all $y \in \tS_{0,V_5}$; this proves~\eqref{eq:mz1-5fold}}.
 \end{proof}

\subsection{Abel--Jacobi maps}  
\label{subsection:aj-5}

Let $X$ be a smooth  {ordinary} GM fivefold with associated Lagrangian $A$.\
Assume  $Y^3_A = \vide$, so that $\tY^{\ge 2}_A$ is a smooth surface
and  {the curve $\tY^{\ge 2}_{A,V_5}$,  hence also} the Hilbert scheme $F^2_\sigma(X)$ of $\sigma$-planes in $X$, is a smooth curve.\ 
 {Let $\cL^2_\sigma(X) \subset X \times F^2_\sigma(X)$ denote the universal family of~$\sigma$-planes} on~$X$.
Consider the Abel--Jacobi maps
\begin{equation*}
\AJ_{Z} \colon H_1(\tY^{\ge 2}_A,\Z) \to H_5(X,\Z)
\quad \textnormal{and}\quad  
\AJ_{\cL^2_\sigma(X)} \colon H_1(F^2_\sigma(X),\Z) \to H_5(X,\Z)
\end{equation*}
and  {recall} the isomorphism $\tsi \colon F^2_\sigma(X) \isomto \tY^{\ge 2}_{A,V_5}$  from~\eqref{eq:f2sx}.

\begin{prop}
\label{proposition:aj-ty-aj-f2}
Let $X$ be a smooth ordinary GM fivefold with associated Lagrangian $A$.\ 
Assume that~\mbox{$Y^{3}_A = \vide$} and~$F_2^\sigma(X)$ is smooth.\
The composition of maps 
\begin{equation*}
H_1(F^2_\sigma(X),\Z) \xrightarrow{\ \tsi_*\ } H_1(\tY^{\ge 2}_A,\Z) \xrightarrow{\ \AJ_{Z}\ } H_5(X,\Z)
\end{equation*}
is equal to the map $-\AJ_{\cL^2_\sigma(X)}$.
\end{prop}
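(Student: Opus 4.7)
The plan is to imitate the proof of Proposition~\ref{proposition:aj-ty-aj-f1} almost verbatim, using the just-established cycle relation~\eqref{eq:mz1-5fold} in place of~\eqref{eq:mz1} and replacing the vanishing $H_5(M_X,\Z)=0$ used in the threefold case by the vanishing $H_7(M_X,\Z)=0$, which holds for the Grassmannian hull of a smooth ordinary GM fivefold by Proposition~\ref{prop:gm-betti} and Poincar\'e duality.

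First, by Lemma~\ref{lemma:aj-properties}(a) we have $\AJ_Z\circ\tsi_*=\AJ_{(\id_X\times\tsi)^*[Z]}$, so it is enough to prove that the Abel--Jacobi map of $(\id_X\times\tsi)^*[Z]+[\cL^2_\sigma(X)]$ vanishes. The curve $F^2_\sigma(X)\cong \tY^{\ge 2}_{A,V_5}$ is smooth by Lemma~\ref{lemm11} (since $Y^{\ge 3}_A=\vide$), and the open subcurve $\tS_{0,V_5}\subset F^2_\sigma(X)$ defined in Section~\ref{subsection:boundary-5} has finite complement $\delta\colon D\hookrightarrow F^2_\sigma(X)$ (the points of $F^2_\sigma(X)$ lying over $Y^{\ge 2}_{A,V_4}$, together with the chosen base point $y_0$ and any other points where the section of the Hilbert scheme failed to extend). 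The cycle equality~\eqref{eq:mz1-5fold} of the preceding proposition, read off on $X\times\tS_{0,V_5}$, therefore extends to an equality
\begin{equation*}
(\id_X\times\tsi)^*[Z]+[\cL^2_\sigma(X)] = (i\times\id_{F^2_\sigma(X)})^*[Z_F]+(\id_X\times\delta)_*[Z''_D]
\end{equation*}
on $X\times F^2_\sigma(X)$, for some cycle $Z''_D$ supported on $X\times D$.

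Next, I would check that the Abel--Jacobi maps defined by each of the two summands on the right side vanish. By Lemma~\ref{lemma:aj-properties}(b), the Abel--Jacobi map associated with $(i\times\id)^*[Z_F]$ factors as the composition
\begin{equation*}
H_1(F^2_\sigma(X),\Z)\xrightarrow{\ \AJ_{Z_F}\ } H_7(M_X,\Z)\xrightarrow{\ i^*\ } H_5(X,\Z),
\end{equation*}
and since $X$ is ordinary, Proposition~\ref{prop:gm-betti} gives $H^{\mathrm{odd}}(M_X,\Z)=0$, hence $H_7(M_X,\Z)=0$ by Poincar\'e duality. Similarly, by Lemma~\ref{lemma:aj-properties}(c), the Abel--Jacobi map associated with $(\id_X\times\delta)_*[Z''_D]$ factors as
\begin{equation*}
H_1(F^2_\sigma(X),\Z)\xrightarrow{\ \delta^*\ } H_{-1}(D,\Z)\xrightarrow{\ \AJ_{Z''_D}\ } H_5(X,\Z),
\end{equation*}
and the middle group vanishes because $D$ is zero-dimensional. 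Adding up, $\AJ_{(\id_X\times\tsi)^*[Z]}=-\AJ_{\cL^2_\sigma(X)}$, which is exactly the asserted identity.

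The main subtlety, rather than a genuine obstacle, is bookkeeping around the fact that $\tsi$ factors through the closed inclusion $\tY^{\ge 2}_{A,V_5}\hookrightarrow \tY^{\ge 2}_A$ of a smooth curve in the smooth surface $\tY^{\ge 2}_A$, so one needs to check that the pullback cycle $(\id_X\times\tsi)^*[Z]$ is well-defined and of the correct dimension; this follows because $Z$ is flat of relative dimension two over an open dense subset of $\tY^{\ge 2}_A$ containing the generic point of the curve $\tY^{\ge 2}_{A,V_5}$, which suffices to compute its cycle-theoretic pullback (the contribution from any finite locus of nonflatness is absorbed into the correction cycle $Z''_D$).
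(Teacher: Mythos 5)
Your proof is correct and is precisely the argument the paper intends: its own proof of this proposition is the single line ``Analogous to the proof of Proposition~\ref{proposition:aj-ty-aj-f1}'', and your write-up carries out that analogy faithfully, substituting the cycle relation~\eqref{eq:mz1-5fold} for~\eqref{eq:mz1} and the vanishing $H_7(M_X,\Z)=0$ (from Proposition~\ref{prop:gm-betti} for the ordinary Grassmannian hull) for $H_5(M_X,\Z)=0$, with the finite bad locus absorbed into a correction cycle exactly as in the threefold case.
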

\begin{proof}
Analogous to the proof of Proposition~\ref{proposition:aj-ty-aj-f1}.
\end{proof}

The above proposition connects the Abel--Jacobi maps $\AJ_Z$ and $\AJ_{{\cL^2_\sigma(X)}}$.\
The next lemma uses the Clemens--Tyurin argument (Section~\ref{secw}) to show that the latter is surjective.

\begin{lemm}
\label{le216-5d}
Let $X$ be a general GM fivefold.\ 
The Abel--Jacobi map 
\begin{equation*}
\AJ_{\cL^2_\sigma(X)} \colon H_1(F^2_\sigma(X),\Z) \lra H_5(X,\Z) 
\end{equation*}
is surjective.
\end{lemm}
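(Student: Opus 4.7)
The plan is to mirror the proof of Lemma~\ref{le216} by applying Proposition~\ref{proposition:welters} (the generalized Clemens--Tyurin argument) with $m = 2$. I would take $Y$ to be a smooth general GM sixfold $\CGr(2,V_5) \cap Q_Y$ and realize the general ordinary GM fivefold $X$ as $X = Y \cap \P(W_X)$ for a hyperplane $\P(W_X) \subset \P^{10}$ not passing through the vertex of the Grassmannian cone. Since every general ordinary GM fivefold arises this way by suitable choice of $Q_Y$ and $\P(W_X)$, this reduces the problem to the verification of the four hypotheses of Proposition~\ref{proposition:welters}.

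In this setup, $F_Y := F^2_\sigma(Y)$ and $F_X := F^2_\sigma(X)$. A key preliminary computation shows that $F^2_\sigma(Y)$ is $4$-dimensional: the $\sigma$-planes on the cone $\CGr(2,V_5)$ avoiding the vertex project isomorphically onto $\sigma$-planes on $\Gr(2,V_5)$, and their lifts form an affine $\mathbb{A}^3$-bundle over the $7$-dimensional variety $\Fl(1,4;V_5) = F^2_\sigma(\Gr(2,V_5))$, parameterized by linear forms $f \in W_\omega^\vee$; so $\dim F^2_\sigma(\CGr(2,V_5)) = 7 + 3 = 10$. The condition that such a lifted plane lies in $Q_Y$ amounts to the vanishing of an element of $\Sym^2 W_\omega^\vee$, imposing $6$ conditions, so $\dim F^2_\sigma(Y) = 10 - 6 = 4$. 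Since $F_X$ is cut out in $F_Y$ by the $3$ linear conditions requiring $\Pi \subset \P(W_X)$, it has codimension $3 = m + 1$, consistent with Lemma~\ref{lemm11}.

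The remaining hypotheses are addressed as follows. Hypothesis~(c) is Lemma~\ref{lemm11} together with the codimension computation above. Hypothesis~(d), the vanishing $H_5(Y,\Z) = H_7(Y,\Z) = 0$, reflects the fact that GM sixfolds have K3-type cohomology with trivial odd part (an analogue of~\cite[Proposition~3.1]{dkperiods}). Hypothesis~(a), the smoothness of $F^2_\sigma(Y)$, reduces by deformation theory to showing $H^1(\Pi, N_{\Pi/Y}) = 0$ for a general $\sigma$-plane $\Pi \subset Y$; this is a concrete cohomological vanishing on $\Pi \cong \P^2$, computable via the normal bundle exact sequences for the embeddings $\Pi \subset \CGr(2,V_5)$ and $Y \subset \CGr(2,V_5)$. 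Hypothesis~(b), the dominance and generic finiteness of the evaluation morphism $\cL^2_\sigma(Y) \to Y$, follows from the dimensional equality $\dim \cL^2_\sigma(Y) = 4 + 2 = 6 = \dim Y$ together with a direct count of $\sigma$-planes in $Y$ through a general point. Once these hypotheses are verified, Proposition~\ref{proposition:welters} directly yields the surjectivity of $\AJ_{\cL^2_\sigma(X)}$.

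The main obstacle I anticipate is the detailed verification of (a) and (b): although the expected codimension and dimension arguments work out in principle, one must check that tangent space computations are nondegenerate at generic points of $F^2_\sigma(Y)$ and that the evaluation map is genuinely surjective onto $Y$. The crucial structural observation making the whole argument possible is that $F^2_\sigma(\CGr(2,V_5))$ is $10$-dimensional rather than the $7$-dimensional $\Fl(1,4;V_5)$, a subtlety arising from the cone structure; without the extra three dimensions of lifts, $F^2_\sigma(Y)$ would be too small for the codimension hypothesis of Proposition~\ref{proposition:welters} to hold.
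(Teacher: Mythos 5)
Your proposal follows essentially the same route as the paper: realize the general GM fivefold $X$ as a general hyperplane section of a general GM sixfold $Y$ and apply Proposition~\ref{proposition:welters} with $m=2$, $F_Y = F^2_\sigma(Y)$, $F_X = F^2_\sigma(X)$. The only difference is that the paper disposes of hypotheses (a), (b), (d) by citing \cite[Corollary~5.13, Lemma~5.15, Proposition~3.1]{dkperiods} (in particular, $F^2_\sigma(Y)$ is a smooth irreducible fourfold and $\cL^2_\sigma(Y) \to Y$ is generically finite of degree $12$), whereas you sketch direct verifications whose remaining details (smoothness of $F^2_\sigma(Y)$ at \emph{every} point and genuine dominance of the evaluation map, not just the dimension count) you correctly identify as the points still to be checked.
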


\begin{proof}
Let $Y$ be a general GM sixfold 
 and let~$X\subset Y$ be a general hyperplane section, so that~$X$ is a general GM fivefold.\
Set~$F_Y := F^2_\sigma(Y)$, the Hilbert scheme of $\sigma$-planes contained in~$Y$.\
We check that the assumptions of Proposition~\ref{proposition:welters} {(with $m = 2$)} are satisfied.

Assumption~\eqref{ass:fm0y} holds because $F^2_\sigma(Y)$ is a smooth irreducible fourfold by~\cite[Corollary~5.13]{dkperiods}.\ 
Furthermore, the map $q \colon \cL^2_\sigma(Y) \to Y$ is generically finite of degree~$12$ (\cite[Lemma~5.15]{dkperiods}), 
hence~\eqref{ass:lm0y} holds as well.\ 
Next, $F^2_\sigma(X)$ is a smooth curve by Lemma~\ref{lemm11}, hence~\eqref{ass:fm0x} holds.\ 
Finally, $H_5(Y,\Z) = H_7(Y,\Z) = 0$
by~\cite[Proposition~3.1]{dkperiods}, hence~\eqref{ass:hy} holds.\

Applying Proposition~\ref{proposition:welters}, we deduce the surjectivity of~$\AJ_{\cL^2_\sigma(X)}$. 
\end{proof}

Combining the above results, we  can now prove Theorem~\ref{theorem:aj5}.

\begin{proof}[Proof of Theorem~\textup{\ref{theorem:aj5}}]
Assume first that the GM fivefold $X$ is general.\ 
A combination of Proposition~\ref{proposition:aj-ty-aj-f2} and Lemma~\ref{le216-5d} proves that the map $\AJ_Z$ is surjective.\  
By Propositions~\ref{prop25} and~\ref{prop:gm-betti}, its source and target are free abelian groups of rank~20.\
Therefore, the Abel--Jacobi map is an isomorphism.

Since the Abel--Jacobi map is defined by the cohomology class of an algebraic cycle,  
it preserves the Hodge structures, hence induces an isomorphism of the corresponding abelian varieties:
the Albanese variety of $\tY^{\ge 2}_{A(X)}$ and the intermediate Jacobian of $X$.

Since the scheme $Z \subset X \times \tY^{\ge 2}_{{A(X)}}$ was defined in Section~\ref{subsection:z-5d} 
for all  {smooth~$X$ and all $\sigma$-planes~$\Pi_0 \subset X$}
and since this definition works in families, these two statements follow by continuity for any $X$ 
such that~\mbox{$Y^3_{A(X)} = \vide$}.

It remains to prove that the isomorphism~\eqref{isojac2} respects the principal polarizations.\ 
For~$X$   very general, the Picard number of $\Jac(X)$ is $1$ by {Corollary~\ref{corollary:jac-gm-simple}},
hence any two principal polarizations on $\Jac(X)$ coincide.\ 
This proves the claim for very general $X$ and, by continuity, for any smooth $X$ such that $\widetilde Y^{\ge2}_{A(X)}$ is also smooth.
\end{proof}

\subsection{Simplicity argument}
\label{subsection:simplicity-argument}

We give an alternative argument for the isomorphism~{\eqref{isojac2}}
for a smooth GM fivefold $X$, based on a simplicity result of independent interest, 
analogous to the one proved in Proposition~\ref{proposition:simplicity-jac}.

Let $S$ be a smooth connected projective surface  and 
let $\jmath\colon C\hra S$ be a smooth (irreducible) ample curve.\  
By the Lefschetz theorem, 
 {the morphism $\jmath_* \colon H_1(C,\Z)\to H_1(S,\Z)$ is surjective, hence the induced morphsim}
\begin{equation*}
\Jac(C)\isom \Alb(C)\lra \Alb(S)
\end{equation*}
is surjective with connected kernel.\   
We denote this kernel by $K(C,S)$.

Consider now a connected double \'etale cover $\pi\colon\tS\to S$ 
 and  set $\tC:=\pi^{-1}(C)$, a smooth ample curve on the surface $\tS$.\

\begin{lemm}
 There is a 
commutative diagram
 \begin{equation}\label{diag}
\vcenter{
\xymatrix
@M=2.5mm@R=4mm@C=4mm
{
&
K(\tC,C) \ar[r] \ar[d] &
P(\tC,C) \ar[d] \ar[r] &
P(\tS,S) \ar[d] \ar[r] &
0 
\\ 
0 \ar[r] &
K(\tC,\tS) \ar[r] \ar[d]^{{\pi_*}} &
\Jac(\tC)  \ar[r] \ar[d]^{{\pi_*}} &
\Alb(\tS)  \ar[r] \ar[d]^{{\pi_*}} &
0
\\ 
0 \ar[r] & 
K(C,S)  \ar[r] \ar[d] &
\Jac(C) \ar[r] \ar[d] &
\Alb(S) \ar[d] \ar[r] &
0 
\\
&
0 &
0 &
0,
}}
\end{equation}
where $K(\tC,C)$, $P(\tC,C)$ \textup(the Prym variety of the double cover $\tC\to C$\textup), 
and $P(\tS,S)$ are the neutral components of the respective kernels of the vertical maps ${\pi_*}$ induced by $\pi$.\ 
\end{lemm}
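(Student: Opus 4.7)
The plan is to build the diagram from the middle and bottom rows and derive the top row via the snake lemma combined with an eigenspace analysis under the deck involution~$\sigma$ of the double cover~$\pi$.

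The two lower rows are short exact sequences by the Lefschetz hyperplane theorem applied to the smooth ample curves $C \subset S$ and $\tC \subset \tS$, where $\tC$ is connected since it is ample in the connected surface~$\tS$; the kernels $K(C,S)$ and $K(\tC,\tS)$ are connected by definition. Commutativity of all squares is an instance of the functoriality of Jacobians and Albanese varieties. For any connected \'etale double cover, the composition $\pi_* \circ \pi^*$ equals $2 \cdot \id$ on~$H_1$, so $\pi_*$ is surjective rationally on~$H_1$ and hence surjective on the associated abelian varieties $\Jac(\tC) \to \Jac(C)$ and $\Alb(\tS) \to \Alb(S)$.

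Applying the snake lemma to the two lower rows produces a short exact sequence of commutative group schemes
\begin{equation*}
0 \to \ker(\pi_*|_{K(\tC,\tS)}) \to \ker(\pi_*|_{\Jac(\tC)}) \to \ker(\pi_*|_{\Alb(\tS)}) \to 0,
\end{equation*}
whose three terms have neutral components $K(\tC,C)$, $P(\tC,C)$, and $P(\tS,S)$ by definition.\ Passing to neutral components produces the morphisms of the top row of the diagram, and the composition $K(\tC,C) \to P(\tS,S)$ is automatically zero.

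The main technical step is the surjectivity of $P(\tC,C) \to P(\tS,S)$ (together with the surjectivity of $\pi_* \colon K(\tC,\tS) \to K(C,S)$ asserted at the bottom of the $K$-column), since taking neutral components is not right exact in general.\ I would handle these via the deck involution~$\sigma$: after tensoring with~$\Q$, every abelian variety in the diagram splits into $\pm 1$-eigenspaces for~$\sigma_*$, the Prym varieties being (up to isogeny) the $(-1)$-parts, while the pullbacks $\pi^*\Jac(C)_\Q$ and $\pi^*\Alb(S)_\Q$ recover the $(+1)$-parts.\ All maps in the middle row are $\sigma$-equivariant, so the middle-row short exact sequence decomposes into short exact sequences on each eigenspace: the $(+1)$-part is identified via~$\pi^*$ with the rationalized bottom row (yielding in particular the surjectivity of~$\pi_*$ on the $K$-column), while the $(-1)$-part is the rationalization of the top row.\ In particular, $P(\tC,C) \to P(\tS,S)$ is surjective after tensoring with~$\Q$; as the source is connected and has the same dimension as the target, the map is surjective on the nose.
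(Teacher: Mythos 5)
Your proof is correct, but the key step is handled by a genuinely different mechanism than in the paper. Both arguments agree that everything reduces to the surjectivity of $P(\tC,C)\to P(\tS,S)$ (equivalently, of $K(\tC,\tS)\to K(C,S)$). The paper proves this by passing to cotangent spaces: the surjectivity is equivalent to the injectivity of the restriction $H^1(S,\eta)\to H^1(C,\eta\vert_C)$, where $\eta$ is the order-$2$ line bundle defining $\pi$, and the kernel of that restriction is controlled by $H^1(S,\eta(-C))$, which vanishes by Kodaira vanishing and Serre duality since $\eta(C)$ is ample. You instead exploit the deck involution: up to isogeny the Pryms are the $(-1)$-eigenparts, and the $(-1)$-eigenpart of the surjection $\Jac(\tC)\to\Alb(\tS)$ (already known from Lefschetz) is again surjective --- equivalently, $f\bigl((1-\sigma_*)\Jac(\tC)\bigr)=(1-\sigma_*)\Alb(\tS)$ by equivariance. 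Your route is more formal and avoids any coherent vanishing beyond what the Lefschetz theorem already supplies; the paper's route identifies the relevant cotangent spaces explicitly, which is sometimes useful information in its own right. Two presentational points to fix: the snake lemma does \emph{not} directly give a short exact sequence of kernels --- it gives $0\to\ker(\pi_*\vert_{K})\to\ker(\pi_*\vert_{\Jac(\tC)})\to\ker(\pi_*\vert_{\Alb(\tS)})\to\Coker(\pi_*\vert_{K})\to 0$, so the right-exactness you assert there is precisely the surjectivity you later prove by the eigenspace argument (which does carry the load, so this is a matter of ordering, not a gap); and the final justification should read that the \emph{image} of $P(\tC,C)$ is a connected abelian subvariety of $P(\tS,S)$ of full dimension, hence equal to it --- the source itself generally has much larger dimension than the target.
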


\begin{proof}
 The surjectivity of the maps $\Jac(\tC) \to \Jac(C)$ and $\Alb(\tS) \to \Alb(S)$ is obvious.\
The only thing we have to prove is the surjectivity of  the map $K(\tC,\tS) \to K(C,S)$ or, equivalently, 
the surjectivity of  the map $P(\tC,C) \to P(\tS,S)$.\ 
On the level of cotangent spaces, the surjectivity of this second map corresponds to the injectivity 
of the restriction $H^1(S,\eta) \to H^1(C,\eta\vert_C)$, where $\eta$ is the line bundle of order~2 on $S$ 
corresponding to the double  \'etale covering $\pi$.\
Its kernel is controlled by $H^1(S,\eta(-C))$, which vanishes by Kodaira vanishing  {and Serre duality}
because~$\eta(C)$ is ample on $S$.\ 
This proves the injectivity of the morphism~$H^1(S,\eta) \to H^1(C,\eta\vert_C)$, hence the lemma.
\end{proof}

The next statement is the main result of this section.\ 
The definition of ``trivial endomorphism ring'' can be found in Section~\ref{subsection:simplicty}.

\begin{theo}
\label{thsimple}
Let $S\subset \P^N$ be a smooth connected projective  surface and let $\pi\colon\tS\to S$ be a
connected double \'etale cover.\ Let $H\subset \P^N$ be a very general hyperplane and set $C:=S\cap H$.\ 
With the notation above, the endomorphism ring of  the abelian variety $K(\tC,C)$ is trivial.
 \end{theo}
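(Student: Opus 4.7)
The plan is to mimic the Picard--Lefschetz argument from the proof of Proposition~\ref{proposition:simplicity-jac}, carried out on the double cover~$\tS$ and then restricted under the involution~$\iota$ in order to isolate the piece $K(\tC,C)$. Concretely, I would take a Lefschetz pencil $\{H_t\}_{t\in\P^1}$ of hyperplanes of $\P^N$, with base point~$0$ and critical values $t_1,\dots,t_r$; each $C_{t_i}=S\cap H_{t_i}$ has a single node~$p_i$, with associated vanishing cycle $\delta_i\in H_1(C_0,\Q)$. Since $\pi$ is \'etale, the preimage $\tC_{t_i}=\pi^{-1}(C_{t_i})$ has two nodes $\tp_i^{(1)},\tp_i^{(2)}$ above~$p_i$, producing two vanishing cycles $\tilde\delta_i^{(1)},\tilde\delta_i^{(2)}\in H_1(\tC_0,\Q)$ that are exchanged by~$\iota$. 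I set $\tilde\delta_i^-:=\tilde\delta_i^{(1)}-\tilde\delta_i^{(2)}$; this class is $\iota$-anti-invariant and dies in $H_1(\tS,\Q)$, hence lies in
\begin{equation*}
H_1(K(\tC_0,C_0),\Q)=\ker\bigl(H_1(\tC_0,\Q)\to H_1(\tS,\Q)\bigr)\cap H_1(\tC_0,\Q)^-.
\end{equation*}

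Around each $t_i$, the monodromy on $H_1(\tC_0,\Q)$ is the product $T_{\tilde\delta_i^{(1)}}T_{\tilde\delta_i^{(2)}}$ of two commuting Picard--Lefschetz transvections (commuting because $\tilde\delta_i^{(1)}\cdot\tilde\delta_i^{(2)}=0$, since the two cycles are disjoint on~$\tC_0$); using $\iota$-invariance of the intersection form to write $x\cdot\tilde\delta_i^{(2)}=-x\cdot\tilde\delta_i^{(1)}$ for any $\iota$-anti-invariant~$x$, a direct computation shows that this operator restricts to $H_1(K(\tC_0,C_0),\Q)$ as the symplectic transvection $x\mapsto x-\tfrac{1}{2}(x\cdot\tilde\delta_i^-)\tilde\delta_i^-$. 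Moreover, the $\tilde\delta_i^{(j)}$ span $\ker(H_1(\tC_0,\Q)\to H_1(\tS,\Q))$ (the classical vanishing-cycle spanning theorem remains valid for the pulled-back pencil, which is still nodal though not Lefschetz in the strict sense), so after $\iota$-anti-invariant projection the $\tilde\delta_i^-$ span $H_1(K(\tC_0,C_0),\Q)$.

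The crux of the argument, and the expected main obstacle, is to show that the monodromy representation on $H_1(K(\tC_0,C_0),\Q)$ is absolutely irreducible. A nonzero invariant subspace~$W$ must contain some~$\tilde\delta_i^-$ (pick $w\in W$; spanning and nondegeneracy of the symplectic form on this vanishing subspace give some~$i$ with $w\cdot\tilde\delta_i^-\ne 0$, and then $T_{\tilde\delta_i^-}(w)-w$ is a nonzero multiple of~$\tilde\delta_i^-$ lying in~$W$), so it suffices to propagate membership of~$W$ to all~$\tilde\delta_j^-$, i.e.\ to establish connectedness of the intersection graph of the $\tilde\delta_i^-$. This is the delicate point: the corresponding statement for $\{\delta_i\}$ on $C_0$ is Voisin's single-orbit result \cite[Corollary~3.24]{voi} (and the resulting irreducibility \cite[Theorem~3.27]{voi}), and one has to lift it to $\{\tilde\delta_i^-\}$ by exploiting the $\iota$-equivariance of the monodromy and its compatibility with~$\pi_*$ (and, if needed, specializing from a genuine Lefschetz pencil on~$\tS$ in a neighboring linear system). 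Granting this, \cite[Lemma~4]{ps} yields that the Zariski closure of the monodromy image is the full symplectic group of $H_1(K(\tC_0,C_0),\Q)\otimes\C$, so for $t\in\P^1$ very general any endomorphism of $K(\tC_t,C_t)$ centralizes this group and is therefore a multiple of the identity, exactly as in the proof of Proposition~\ref{proposition:simplicity-jac}. Since Lefschetz pencils through a given smooth hyperplane sweep out a Zariski dense subset of $|\cO_{\P^N}(1)|$, this conclusion extends to very general $H\subset\P^N$.
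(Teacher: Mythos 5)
Your proposal follows essentially the same route as the paper's proof: the same identification of $H_1(K(\tC_t,C_t),\Q)$ with the $\tau$-anti-invariant part of the vanishing cohomology of $\tC_t$, the same pair of vanishing cycles $\delta'_i,\delta''_i=\tau^*(\delta'_i)$ over each node of $C_{t_i}$, the same computation showing that $T_{\delta''_i}\circ T_{\delta'_i}$ acts on anti-invariants as $x\mapsto x-\tfrac12\bigl(x\cdot(\delta'_i-\delta''_i)\bigr)(\delta'_i-\delta''_i)$, and the same appeal to \cite[Lemma~4]{ps} followed by the very-general specialization of Proposition~\ref{proposition:simplicity-jac}. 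The one step you defer (``granting this''), namely that the classes $\delta'_i-\delta''_i$ lie, up to sign, in a single monodromy orbit, is exactly what the paper supplies by rerunning the connectedness argument of \cite[Proposition~3.23]{voi} equivariantly with respect to the covering involution; your fallback of specializing from a genuine Lefschetz pencil on $\tS$ in a neighboring linear system is not needed and would not obviously apply, since the relevant curves $\tC_t$ are pullbacks of hyperplane sections of $S$ rather than hyperplane sections of $\tS$ in some embedding.
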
 

\begin{proof}
We use the notation of Section~\ref{subsection:simplicty}.\ Choose a Lefschetz pencil   $f\colon S\dra \P^1$ of hyperplane sections of $S$.\  The  connected  double \'etale cover $\pi\colon \tS\to S$  induces for each $t\in \P^1$ 
a connected  double \'etale cover~\mbox{$\pi_t\colon \tC_t\to C_t$} between fibers.\ Denote by $\jmath_t \colon C_t \hookrightarrow S$ and~$\tj_t \colon \tC_t \hookrightarrow \tS$ the embeddings.

For $t\in\P^1\moins \{t_1,\dots,t_r\}$, the involution $\tau$ of~$\tS$ attached to~$\pi$ acts  
on each summand of the orthogonal direct sum decomposition
\begin{equation*}
H^1(\tC_{t},\Q)=H^1(\tC_{t},\Q)_{\textnormal{van}}\oplus \tj_{t} ^{\,*}H^1(\tS,\Q)
\end{equation*}
from \eqref{dsd} and it preserves the symplectic form given by cup-product.\  
The $\tau$-invariant subspaces are
\begin{equation*}
H^1( C_{t},\Q) = H^1( C_{t},\Q)_{\textnormal{van}} \oplus \jmath_{t} ^*H^1( S,\Q).
\end{equation*}
The  {isogeny class of the} abelian variety~$K(C_t,S)$  {defined in~\eqref{diag}} 
is obtained from the Hodge structure of~$H^1( C_{t},\Q)_{\textnormal{van}}$,
hence its endomorphism ring is trivial by Proposition~\ref{proposition:simplicity-jac}.\ 
Therefore, to study  {the isogeny class of} the neutral component~$K(\tC_{t},C_{t})$ of the kernel of the surjection
\begin{equation*}
K(\tC_{t},\tS) \lra K(C_{t},S),
\end{equation*}
we    need to study the rational Hodge structure on the $\tau$-antiinvariant subspace~$H^1(\tC_{t},\Q)_{\textnormal{van}}^-$.

For each $i\in\{1,\dots,r\}$, the curve  {$\tC_{t_i}$} has two nodes over the node of  {$C_{t_i}$}, hence
there are two disjoint vanishing cycles $\delta'_i$ and $\delta''_i=\tau^*(\delta'_i)$.\ 
 Since  the vanishing cycles   span the  vector space $H^1(\tC_{t},\Q)_{\textnormal{van}}$, 
the cycles $ \delta'_1 -\delta''_1,\dots, \delta'_r -\delta''_r$ span the antiinvariant  subspace $H^1(\tC_{t},\Q)_{\textnormal{van}}^-$.\ 
The image of the monodromy representation 
\begin{equation*}
\tilde\rho\colon \pi_1(\P^1\moins \{t_1,\dots,t_r\},t)\lra 
 \Sp (H^1(\tC_{t},\Q))
\end{equation*}
consists of automorphisms that are $\tau$-equivariant
and, reasoning as in the proof of \cite[Proposition~3.23]{voi}, we see that, up to changing  signs, 
the classes $ \delta'_1 -\delta''_1,\dots, \delta'_r -\delta''_r$ are all in the same monodromy orbit.\ 
Moreover, as in the proof of Proposition~\ref{proposition:simplicity-jac}, 
there is for each $i\in\{1,\dots,r\}$ an element of $\pi_1(\P^1\moins \{t_1,\dots,t_r\},0)$ that acts on~$ H^1(\tC_{t},\Q)$ by
\begin{equation*}
T_i:=T_{\delta''_i}\circ T_{\delta'_i}\colon x\longmapsto x - (x\cdot \delta'_i)\delta'_i- (x\cdot \delta''_i)\delta''_i.
\end{equation*}
If $x $ is $\tau$-antiinvariant, we have 
\begin{equation*}
(x\cdot \delta''_i)=(x\cdot \tau^*(\delta'_i))=(\tau^*(x)\cdot \delta'_i)=-(x\cdot \delta'_i),
\end{equation*}
hence
\begin{equation*}
T_i(x )
=
x - (x\cdot \delta'_i)(\delta'_i -\delta''_i)=x-\frac12(x\cdot (\delta'_i-\delta''_i))(\delta'_i -\delta''_i).
\end{equation*}
One then deduces from that and~\cite[Lemma~4]{ps} that the monodromy action on $H^1(\tC_{t},\Q)_{\textnormal{van}}^-$ is big;
it follows that the Zariski closure of the monodromy group for $K(\tC_t,C_t)$ 
is the full symplectic group~$\Sp(H^1(\tC_{t},\Q)_{\textnormal{van}}^-)$.\ 
As in the proof of~\cite[Theorem~17]{ps}, for $t\in\P^1$ very general, any endomorphism of  $K(\tC_t,C_t)$ 
intertwines every element of the monodromy group, hence every element of the symplectic group.\ 
It must therefore be a multiple of the identity.\ 

The endomorphism ring of the abelian variety $K(\tC_t,C_t)$ is therefore trivial.
\end{proof}

We now apply the theorem to GM fivefolds.\
Let $ X$ be a general GM fivefold with Lagrangian data set $(V_6,V_5,A)$.\ 
Our starting point is again the surjectivity, proved in Lemma~\ref{le216-5d}, of   the Abel--Jacobi map 
\begin{equation*}
\AJ_{{\cL^2_\sigma}(X)} \colon H_1(F^2_\sigma(X),\Z) \lra H_5(X,\Z) 
\end{equation*}
associated with the Hilbert scheme $F^2_\sigma(X)$ that parametrizes  $\sigma$-planes contained in $X$.\  This Hilbert scheme is isomorphic to the smooth curve $\widetilde Y^{\ge 2}_{A,V_5} $  {(Lemma~\ref{lemm11})}
defined as the  inverse image by the double  cover
\begin{equation*}
\pi_A\colon  \widetilde Y^{ \ge  2}_{A }\lra    Y^{ \ge  2}_{A }
\end{equation*}
of the hyperplane section  $Y^{\ge 2}_{A,V_5}=Y^{\ge 2}_{A }\cap \P(V_5)$.\ 
The surjectivity of the map $ \AJ_{{\cL^2_\sigma}(X)}$ is therefore equivalent 
to the connectedness of the kernel of the induced surjective morphism
\begin{equation}\label{jac}
\Phi\colon \Jac(\widetilde Y^{\ge 2}_{A,V_5} ) \lra \Jac(X ) 
\end{equation}
between Jacobians.\ 
By  Lemma~\ref{lemm11}
and~{Proposition~\ref{prop:gm-betti}}, the dimension of this kernel is
\begin{equation*}
g(\widetilde Y^{\ge 2}_{A,V_5})-\dim(\Jac(X))=
 {161} - 10 = 151.
\end{equation*}

\begin{coro}\label{coro144}
Let $X$ be a smooth GM fivefold with Lagrangian data set $(V_6,V_5,A)$.\ 
Assume that the surface~$\tY^{\ge 2}_A$  and the curve $\tY^{\ge 2}_{A,V_5} $ are smooth.\ 
The morphism $\Phi$ from \eqref{jac} then factors as
\begin{equation*}
\Phi\colon \Jac(\tY^{\ge 2}_{A,V_5}) \thra \Alb(\tY^{\ge 2}_A) \isomlra \Jac(X ),
\end{equation*}
{where the left arrow is the Albanese map $\Jac(\tY^{\ge 2}_{A,V_5}) = \Alb(\tY^{\ge 2}_{A,V_5}) \to \Alb(\tY^{\ge 2}_A)$.}
\end{coro}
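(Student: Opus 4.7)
The plan is to derive this factorization almost immediately from two results already established in the paper: Theorem~\ref{theorem:aj5} and Proposition~\ref{proposition:aj-ty-aj-f2}. The key observation is that Proposition~\ref{proposition:aj-ty-aj-f2} already encodes the desired factorization on the level of first homology, while Theorem~\ref{theorem:aj5} provides the isomorphism that identifies the target $\Alb(\tY^{\ge 2}_A)$ with $\Jac(X)$.

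First, I would recall from Proposition~\ref{proposition:aj-ty-aj-f2} that, through the isomorphism $\tsi \colon F^2_\sigma(X) \isomto \tY^{\ge 2}_{A,V_5}$, the Abel--Jacobi map $\AJ_{\cL^2_\sigma(X)}$ factors as the composition
\begin{equation*}
H_1(\tY^{\ge 2}_{A,V_5},\Z) \xrightarrow{\ \jmath_*\ } H_1(\tY^{\ge 2}_A,\Z) \xrightarrow{\ -\AJ_Z\ } H_5(X,\Z),
\end{equation*}
where $\jmath \colon \tY^{\ge 2}_{A,V_5} \hookrightarrow \tY^{\ge 2}_A$ is the inclusion.\ Passing to the corresponding morphisms between (intermediate) Jacobians, this expresses $\Phi$ as the composition of the induced morphism $\jmath_* \colon \Jac(\tY^{\ge 2}_{A,V_5}) \to \Alb(\tY^{\ge 2}_A)$ followed by $-\AJ_Z \colon \Alb(\tY^{\ge 2}_A) \to \Jac(X)$.

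Second, I would invoke Theorem~\ref{theorem:aj5}, which guarantees under the assumption $Y^{\ge 3}_A = \vide$ (implied by smoothness of $\tY^{\ge 2}_A$) that $\AJ_Z$ induces an isomorphism $\Alb(\tY^{\ge 2}_A) \isomto \Jac(X)$ of principally polarized abelian varieties.\ The sign is harmless since $-\id$ is an automorphism of any abelian variety preserving its principal polarization.

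Finally, I would verify that $\jmath_*$ is precisely the Albanese map and is surjective.\ The curve $Y^{\ge 2}_{A,V_5} = Y^{\ge 2}_A \cap \P(V_5)$ is the smooth hyperplane section of the smooth projective surface $Y^{\ge 2}_A$ by the hyperplane $\P(V_5)\subset\P(V_6)$, hence ample; its preimage $\tY^{\ge 2}_{A,V_5}$ under the finite morphism $\pi_A$ is then ample on the smooth surface $\tY^{\ge 2}_A$.\ By the Lefschetz hyperplane theorem, $\jmath_* \colon H_1(\tY^{\ge 2}_{A,V_5},\Z) \to H_1(\tY^{\ge 2}_A,\Z)$ is surjective, so the induced morphism $\Jac(\tY^{\ge 2}_{A,V_5}) = \Alb(\tY^{\ge 2}_{A,V_5}) \to \Alb(\tY^{\ge 2}_A)$ is the Albanese morphism attached to $\jmath$ and is surjective, yielding the claimed factorization.\ There is no real obstacle here; the statement is essentially a repackaging of the content of Theorem~\ref{theorem:aj5} and Proposition~\ref{proposition:aj-ty-aj-f2} that highlights the role of the Albanese map of $\tY^{\ge 2}_{A,V_5}$.
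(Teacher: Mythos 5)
Your proof is correct, but it takes a genuinely different route from the paper's. You deduce the corollary formally from Proposition~\ref{proposition:aj-ty-aj-f2} (which identifies $-\AJ_{\cL^2_\sigma(X)}$ with $\AJ_Z\circ\tsi_*$ and hence already factors $\Phi$ through $H_1(\tY^{\ge 2}_A,\Z)$) together with Theorem~\ref{theorem:aj5} (which makes the second map an isomorphism), plus the Lefschetz theorem for the ample curve $\tY^{\ge 2}_{A,V_5}\subset\tY^{\ge 2}_A$ to identify the first map with the surjective Albanese morphism; the sign is harmlessly absorbed into the isomorphism. All of this is sound. The paper, however, proves the corollary \emph{without} invoking Theorem~\ref{theorem:aj5} or the correspondence $Z$ at all: it uses only the surjectivity of $\Phi$ with connected $151$-dimensional kernel (coming from Lemma~\ref{le216-5d}), the exact diagram~\eqref{diagram:k-ty-y}, the dimension counts $\dim\Jac(Y^{\ge 2}_{A,V_5})=81$, $\dim\Alb(\tY^{\ge 2}_A)=10$, $\dim K_{V_5}=70$, and the simplicity of $\Jac(Y^{\ge 2}_{A,V_5})$ and of $K_{V_5}$ for very general $V_5$ (Proposition~\ref{proposition:simplicity-jac} and Theorem~\ref{thsimple}); since no nonzero simple factor of $\widetilde K_{V_5}$ can map onto an abelian subvariety of the $10$-dimensional $\Jac(X)$, one gets $\widetilde K_{V_5}\subset\Ker(\Phi)$, and a dimension count forces equality. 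The point of Section~\ref{subsection:simplicity-argument} is precisely to give an \emph{alternative}, independent derivation of the isomorphism~\eqref{isojac2}; your argument, while a valid proof of the literal statement, is circular with respect to that purpose, since it presupposes the isomorphism the corollary is meant to reprove. What your route buys is brevity; what the paper's route buys is a second, logically independent proof of $\Alb(\tY^{\ge 2}_A)\cong\Jac(X)$ resting only on monodromy and the Clemens--Tyurin surjectivity.
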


\begin{proof}
Since $\Alb(Y^{\ge 2}_A)=0$ (Proposition~\ref{prop25}), the diagram \eqref{diag} reads 
\begin{equation}
\label{diagram:k-ty-y}
\vcenter{
\xymatrix
@M=2.5mm@R=4mm@C=4mm
{
&K_{V_5}\ar[r]\ar[d] &P_{A,V_5} \ar[d]\ar[r]&\Alb(\tY^{\ge 2}_A) \ar@{=}[d] \ar[r]&0 \\ 
0\ar[r]&\widetilde K_{V_5}\ar[r]\ar[d] &\Jac(\tY^{\ge 2}_{A,V_5})\ar[r] \ar[d] &\Alb(\tY^{\ge 2}_A) \ar[r]&0\\ 
 &\Jac(Y^{\ge 2}_{A,V_5}) \ar[d]\ar@{=}[r]&\Jac(Y^{\ge 2}_{A,V_5}) \ar[d]  \\
&0&0,
}}
\end{equation}
where  $K_{V_5}:=K(\tY^{\ge 2}_{A,V_5},Y^{\ge 2}_{A,V_5})$, 
$\widetilde K_{V_5}:=K(\tY^{\ge 2}_{A,V_5},\tY^{\ge 2}_A)$,
 {and $P_{A,V_5}$ is the Prym variety of the double covering~$\tY^{\ge 2}_{A,V_5} \to Y^{\ge 2}_{A,V_5}$.\ 
The genus of the curve $Y^{\ge 2}_{A,V_5}$ is 81 by~\eqref{genus81},
hence the dimension of {the variety}~$P_{A,V_5}$ is~80.\ 
Also, the dimension of~$\Alb(\tY^{\ge 2}_{A})$ is~10 by Proposition~\ref{prop25}.\
Therefore, $\dim(K_{V_5}) = 70$.}

When $V_5$ is a very general hyperplane in $V_6$, 
{the abelian varieties $\Jac(Y^{\ge 2}_{A,V_5})$ and $K_{V_5}$}
are simple by Proposition~\ref{proposition:simplicity-jac} and Theorem~\ref{thsimple}, 
hence they are the only two simple factors of the abelian variety $\widetilde K_{V_5}$.\  
Since~$\Jac(X)$ has dimension 10, the abelian variety $\widetilde K_{V_5} $  must therefore be contained in the kernel of~$\Phi$.\ 

In other words, the composition $\widetilde K_{V_5}\hra \Jac(\tY^{\ge 2}_{A,V_5})\stackrel{\Phi}{\thra}   \Jac(X )$ vanishes 
for $V_5$ very general.\ 
By continuity,  it vanishes for all hyperplanes~$V_5$ such that $\tY^{\ge 2}_{A,V_5}$  {is} smooth.\ 
The kernel of $\Phi$, being connected of dimension~$151$, must then be equal to $\widetilde K_{V_5}$, which implies the corollary.
\end{proof}

Note that this argument cannot be applied to GM threefolds, because the corresponding hyperplane sections $Y^{\ge 2}_{A,V_5}$
are very far from being general.

\section{Period maps}
\label{section:period-maps}

In this section, we prove  {Theorems~\ref{theorem:intro} and}~\ref{theorem:period-intro}.\
We will use the description~\eqref{eq:moduli} of the coarse 
moduli space~$\bM_n^{\rm GM}$ of smooth GM varieties of dimension~$n$.\
Let   
\begin{equation}
\label{eq:moduli-epw}
\bM^{\rm EPW}_\ndv = \LGr_{\rm ndv}(\bw3V_6) /\!\!/ \PGL(V_6)
\end{equation}
be the coarse quasiprojective moduli space of EPW sextics defined by Lagrangian subspaces~$A$  with no decomposable vectors 
(see~\eqref{ndv} for the definition).\ 
This is an open subset of the coarse moduli space $\bM^{\rm EPW}$ of EPW sextics defined in~\eqref{eq:period-rational}.\
We denote by~$\br$ the involution of $\bM^{\rm EPW}_\ndv$ defined by $\br([A] )=( [A^\perp])$.\ 
The morphism $\mathfrak{p}_n$ was defined in~\eqref{wpn}.

\begin{lemm}
\label{lemma:extension}
There exists a regular morphism $\bar\wp \colon \bM^{\rm EPW}_\ndv / \br \to \bA_{10}$
such that for $n \in \{3,5\}$, the composition
\begin{equation}
\label{eq:period-composition}
\bM_n^{\rm GM} \xrightarrow{\ \mathfrak{p}_n\ } 
\bM^{\rm EPW}_\ndv \xrightarrow{\ \ \ } 
\bM^{\rm EPW}_\ndv / \br \xrightarrow{\ \bar\wp\ } 
\bA_{10}
\end{equation}
is equal to the period map 
\begin{align*}
 \wp_n \colon \bM_n^{\rm GM} &\lra \bA_{10}\\
[X]&\longmapsto [\Jac(X)]. 
\end{align*}
\end{lemm}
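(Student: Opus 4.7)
The plan is to first define $\bar\wp$ on the open stratum of $\bM^{\rm EPW}_\ndv$ where the double EPW surface is smooth, descend it to the $\br$-quotient using Theorem~\ref{theorem:intro}, then extend across the boundary, and finally verify the factorization identity using Theorems~\ref{theorem:aj3} and~\ref{theorem:aj5} together with separatedness of~$\bA_{10}$.

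First, let $\bM^{\rm EPW}_{\ndv,0} \subset \bM^{\rm EPW}_\ndv$ be the open subset consisting of classes $[A]$ with $Y^{\ge 3}_A = \vide$, so that $\tY^{\ge 2}_A$ is a smooth projective surface. On this locus I would set
\begin{equation*}
\bar\wp([A]) := [\Alb(\tY^{\ge 2}_A)],
\end{equation*}
equipped with the canonical principal polarization furnished by Theorem~\ref{theorem:intro}. Regularity on this locus follows from the relative construction: the universal EPW surface $\cY^{\ge 2} \to \LGr_{\ndv,0}(\bw3V_6)$ and its canonical double covering $\tcY^{\ge 2} \to \cY^{\ge 2}$ from~\eqref{cancov} form smooth projective families over the corresponding parameter space; their relative Albanese is a family of principally polarized abelian varieties, $\PGL(V_6)$-equivariant by construction, which then defines a morphism to $\bA_{10}$ by the universal property of the coarse moduli space. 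The isomorphism~\eqref{eq:alb-isomorphic-intro} of principally polarized abelian varieties shows that $\bar\wp([A]) = \bar\wp([A^\perp])$, so the morphism factors through the quotient $\bM^{\rm EPW}_{\ndv,0}/\br$.

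Next, to extend $\bar\wp$ across the closed locus where $Y^{\ge 3}_A \ne \vide$, I would appeal to Proposition~\ref{proposition:period-map}, where precisely this extension is carried out: over such boundary points, $\tY^{\ge 2}_A$ acquires ordinary double points, and one takes the Albanese of any desingularization. Regularity then reduces to the observation that for a one-parameter family $A_t$ degenerating to a boundary point $A_0$, the limit of $\Alb(\tY^{\ge 2}_{A_t})$ inside $\bA_{10}$ coincides with the Albanese of a resolution of $\tY^{\ge 2}_{A_0}$; this is a consequence of the existence of simultaneous resolutions for families with only ODP degenerations, combined with the fact that $\bA_{10}$ is separated, so that the extended map is uniquely determined.

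Finally, to verify the factorization~\eqref{eq:period-composition}, fix~$n \in \{3,5\}$ and consider the open dense subscheme $\bM_{n,0}^{\rm GM} \subset \bM_n^{\rm GM}$ where $Y^{\ge 3}_{A(X)} = \vide$ (cf.~\eqref{eq:moduli} and~\eqref{eq:wp-inverse-a}). By Theorems~\ref{theorem:aj3} and~\ref{theorem:aj5}, for any $[X] \in \bM_{n,0}^{\rm GM}$ there is a canonical isomorphism
\begin{equation*}
\Jac(X) \isomlra \Alb(\tY^{\ge 2}_{A(X)})
\end{equation*}
of principally polarized abelian varieties, which translates exactly into the equality $\wp_n([X]) = \bar\wp(\mathfrak{p}_n([X]))$. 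Thus the regular morphisms $\wp_n$ and $\bar\wp \circ (\textnormal{proj}) \circ \mathfrak{p}_n$ from $\bM_n^{\rm GM}$ to $\bA_{10}$ agree on the dense open $\bM_{n,0}^{\rm GM}$, hence agree everywhere by separatedness of~$\bA_{10}$.

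The main obstacle is the regular extension of $\bar\wp$ across the locus $\{Y^{\ge 3}_A \ne \vide\}$: since the surface $\tY^{\ge 2}_A$ becomes singular there, one cannot directly take a relative Albanese and one has to work with desingularizations and control their behavior in families. This step is what Proposition~\ref{proposition:period-map} settles, and invoking it cleanly is the crucial ingredient; the rest of the argument is then a standard separatedness/continuity exercise.
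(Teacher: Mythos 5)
Your overall plan is different from the paper's, and the step you yourself identify as crucial --- the regular extension of $\bar\wp$ across the divisor $\{Y^{\ge 3}_A \ne \vide\}$ --- is where the argument breaks down. First, invoking Proposition~\ref{proposition:period-map} here is circular in the paper's logical order: the proof of that proposition \emph{uses} Lemma~\ref{lemma:extension} (both to handle the case $Y^{\ge 3}_A = \vide$ and, in the degeneration argument, to compare the curve $\tC \to \bA_{10}$ with the already-constructed morphism $\bar\wp$), so it cannot be used to construct $\bar\wp$. Second, even granting the simultaneous-resolution analysis independently, what you extract from it --- that along a one-parameter family the limit of $[\Alb(\tY^{\ge 2}_{A_t})]$ exists and equals the Albanese of a resolution of the central fibre, ``combined with separatedness of $\bA_{10}$'' --- only yields \emph{uniqueness} of a putative extension, not its existence as a regular morphism: $\bA_{10}$ is not proper, the bad locus is a divisor (so no Hartogs-type argument applies), and regularity at a boundary point does not follow from the behaviour along one general arc through it. A similar (minor) care issue arises in your first step: the descent to the $\br$-quotient on the open stratum should be quoted from Theorem~\ref{theorem:aj3} (which requires \emph{both} $Y^{\ge 3}_A = \vide$ and $Y^{\ge 3}_{A^\perp} = \vide$), not from Theorem~\ref{theorem:intro}, whose full statement is only proved after this lemma.

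The paper avoids the extension problem entirely by working on the GM side, where nothing degenerates: the assignment $[X] \mapsto [\Jac(X)]$ is regular on all of the universal family $\cY^{5-n}_{\cA^\perp}$ because the GM varieties are smooth even when $Y^{\ge 3}_{A(X)} \ne \vide$, and the projection $\pr \colon \cY^{5-n}_{\cA^\perp} \to \LGr_{\rm ndv}(\bw3V_6)$ is smooth and surjective. One then descends along $\pr$ by checking that the two pullbacks to the (smooth, hence reduced) fibre product agree; since the target is separated this need only be verified on a dense open set, where it is exactly the statement $\Jac(X) \cong \Jac(X')$ for general $X, X'$ with $A(X)=A(X')$, supplied by Theorems~\ref{theorem:aj3} and~\ref{theorem:aj5}. $\PGL(V_6)$- and $\br$-invariance (again checked generically) then produce $\bar\wp$ on all of $\bM^{\rm EPW}_\ndv/\br$, with the factorization~\eqref{eq:period-composition} holding by construction; the identification $\bar\wp([A]) = [\Alb(\tY^{\ge 2}_A)]$ at the boundary points is deferred to Proposition~\ref{proposition:period-map}. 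If you want to keep your route, you would have to replace the appeal to that proposition by a self-contained proof that the relative Albanese of the simultaneously resolved family, with its extended polarization, defines a morphism to $\bA_{10}$ in a neighbourhood of each boundary point --- which is essentially redoing the descent argument in harder circumstances.
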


\begin{proof}
 {Consider the universal EPW variety}
\begin{equation*}
\cY^{5-n}_{\cA^\perp} := 
\{ (A,V_5) \in \LGr_{\rm ndv}(\bw3V_6) \times \P(V_6^\vee) \mid \dim(A \cap \bw3V_5) = 5-n \},
\end{equation*}
so that $\cY^{5-n}_{\cA^\perp} /\!\!/ \PGL(V_6)$ is {the coarse moduli space of ordinary GM varieties of dimension~$n$}, 
an open subscheme of $\bM_n^{\rm GM}$.\
{For $n \in \{3,5\}$,} the projection
\begin{equation*}
\pr \colon \cY^{5-n}_{\cA^\perp} \lra \LGr_{\rm ndv}(\bw3V_6)
\end{equation*}
is a smooth surjective morphism.\
We show that the map
\begin{align*}
\tilde\wp_n \colon \cY^{5-n}_{\cA^\perp} & \lra \bA_{10}
\\
(A,V_5) & \longmapsto [\Jac(X_{A,V_5})]
\end{align*}
factors through $\pr$.
By smooth descent, it is enough for this to show that the two maps 
\begin{equation*}
\xymatrix{
\cY^{5-n}_{\cA^\perp} \times_{\LGr_{\rm ndv}(\sbw3V_6)} \cY^{5-n}_{\cA^\perp} \ar@<0.5ex>[r]\ar@<-0.5ex>[r]& \bA_{10}
}
\end{equation*}
defined as compositions of the projections to the factors with the map~$\tilde\wp_n$ are equal.\
Since the fiber product is a smooth variety, it is enough to verify the equality pointwise on an open subset.\
In other words, we need to check that for general GM varieties $X,X'$ of {the same} dimension~$n \in \{3,5\}$ with $A(X) = A(X')$,
there is an isomorphism $\Jac(X) \cong \Jac(X')$ of principally polarized abelian varieties.\
For $n = 3$, this holds by Theorem~\ref{theorem:aj3}, and for $n = 5$, by Theorem~\ref{theorem:aj5};
in both cases, the intermediate Jacobians are isomorphic to~$\Alb(\tY^{\ge 2}_A)$ 
as soon as~$A = A(X) = A(X')$ is such that $Y^{\ge 3}_A = \vide$.\
This proves that in both cases, the morphism~$\tilde\wp_n$ factors as
\begin{equation*}
\cY^{5-n}_{\cA^\perp} \xrightarrow{\ \pr\ } \LGr_{\rm ndv}(\bw3V_6) \lra \bA_{10}.
\end{equation*}
The maps~$\tilde\wp_n$ are~$\PGL(V_6)$-invariant, hence so is the map 
\begin{equation*}
\LGr_{\rm ndv}(\bw3V_6) \lra \bA_{10}
\end{equation*}
constructed via factorization.\
Therefore, it factors through a regular map 
\begin{equation*}
\bM^{\rm EPW}_\ndv = \LGr_{\rm ndv}(\bw3V_6) /\!\!/ \PGL(V_6) \lra \bA_{10}.
\end{equation*}
Similarly, this map is $\br$-invariant by Theorem~\ref{theorem:aj3}, 
hence we obtain a regular morphism
\begin{equation*}
\bar\wp \colon \bM^{\rm EPW}_\ndv / \br \lra \bA_{10}.
\end{equation*}
The composition~\eqref{eq:period-composition} agrees with the period map by construction.
\end{proof}

Recall that we denoted by $\Alb(\tY_A^{\ge 2})$   the Albanese variety of (any desingularization of) the double EPW surface $\tY_A^{\ge 2}$.

\begin{prop}
\label{proposition:period-map}
For any Lagrangian $[A] \in \LGr_\ndv(\bw3V_6)$, we have
$\bar\wp([A]) =[ \Alb(\tY_A^{\ge 2})]$.
\end{prop}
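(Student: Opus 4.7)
The plan is to reduce to the case where $Y_A^{\ge 3} = \vide$—already handled by the construction of $\bar\wp$—via a one-parameter degeneration, using simultaneous resolution of ordinary double points in families of surfaces.

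First I would observe that when $Y_A^{\ge 3} = \vide$, the surface $\tY_A^{\ge 2}$ is smooth, so no desingularization is needed, and the formula $\bar\wp([A]) = [\Alb(\tY_A^{\ge 2})]$ is essentially tautological from the construction of $\bar\wp$ in Lemma~\ref{lemma:extension}: it arises by factoring through the map $[A] \mapsto [\Jac(X_{A,V_5})]$, which by Theorems~\ref{theorem:aj3} and~\ref{theorem:aj5} is isomorphic to $[\Alb(\tY_A^{\ge 2})]$ as a principally polarized abelian variety for any $V_5$ corresponding to a smooth ordinary GM threefold or fivefold. Moreover, the locus $U \subset \LGr_\ndv(\bw3V_6)$ on which $Y_A^{\ge 3}$ is empty is Zariski open and dense, since $Y_A^{\ge 3}$ vanishes for a general Lagrangian (Section~\ref{defepw}).

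For an arbitrary $[A_0] \in \LGr_\ndv(\bw3V_6)$, I would choose a smooth pointed curve $(B,0)$ together with a morphism $B \to \LGr_\ndv(\bw3V_6)$ sending $0$ to $[A_0]$ and whose image meets $U$ in a nonempty (hence dense) open subset. Pulling back the universal family of double EPW surfaces produces a flat family $\tcY \to B$ whose special fiber is $\tY_{A_0}^{\ge 2}$, smooth away from the finitely many ordinary double points over $Y_{A_0}^{\ge 3}$ (as recalled in Section~\ref{defepw}), and whose generic fiber is smooth.

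The main technical step, which I expect to be the principal obstacle, is to construct, after possibly a finite base change $B' \to B$, a simultaneous resolution $\widehat{\tcY} \to \tcY \times_B B'$ giving a smooth proper family of surfaces $\widehat{\tcY} \to B'$ whose fiber over a preimage $0'$ of $0$ is a desingularization of $\tY_{A_0}^{\ge 2}$, and which is an isomorphism over the locus where the fibers of $\tcY \to B$ are already smooth. This is a standard construction for families of surfaces with nodes (one may use a degree-two base change to smooth the total space and then take a small resolution, or directly blow up the singular locus of the total space and verify that the resulting family of surfaces is smooth); the fact that only ordinary double points occur keeps this step elementary.

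Once the resolution is in hand, the relative Albanese produces a family of principally polarized abelian varieties over $B'$ (the polarizations extending continuously those on the smooth fibers, identified with polarized intermediate Jacobians of GM varieties by Theorems~\ref{theorem:aj3} and~\ref{theorem:aj5}), inducing a regular morphism $B' \to \bA_{10}$. This morphism coincides with the composition $B' \to B \to \bM^{\rm EPW}_\ndv/\br \xrightarrow{\bar\wp} \bA_{10}$ on the dense open preimage of $U$ by the first paragraph, hence everywhere on $B'$ by separatedness of $\bA_{10}$. Evaluating at $0'$ yields $\bar\wp([A_0]) = [\Alb(\widehat{\tcY}_{0'})] = [\Alb(\tY_{A_0}^{\ge 2})]$, where the last equality is the definition of the Albanese of the singular surface, completing the proof.
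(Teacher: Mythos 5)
Your proposal follows essentially the same route as the paper's proof: reduce to the dense locus where $Y_A^{\ge 3}=\vide$ by degenerating along a curve through $[A_0]$, perform a simultaneous resolution after a degree-two base change (the paper cites Atiyah's theorem, having chosen the curve transverse so that the base-changed threefold acquires only ordinary double points admitting small resolutions), extend the principal polarization across the special point via local constancy of $R^1p'_*\underline{\Z}$, and conclude by density and separatedness of $\bA_{10}$. The one imprecision is your description of the resolution step: the degree-two base change does not smooth the total space but rather creates threefold nodes (the small resolution then does the smoothing), and for this to work one should choose the curve transverse to the discriminant as the paper does --- a harmless fix that does not affect the validity of the argument.
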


\begin{proof}
If $Y^3_A = \vide$, the equality holds by Theorems~\ref{theorem:aj3} and \ref{theorem:aj5}, and Lemma~\ref{lemma:extension}.\ 

Assume~$Y^3_A \ne \vide$.
Consider a   neighborhood $U \subset \LGr_\ndv(\bw3V_6)$ of $[A]$ 
such that the determinant   of the tautological bundle~$\cA$ is trivial over~$U$.\
Consider
a universal family~\mbox{$\cY^{\ge 2}_\cA \to U$} of EPW surfaces over it and the composition
\begin{equation*}
p  \colon \tcY^{\ge 2}_\cA \xrightarrow{\ \pi\ } \cY^{\ge 2}_\cA\lra  {U},
\end{equation*}
where $\pi$ is the double covering constructed as in the proof of~\cite[Theorem~5.2(2)]{dkcovers}.\
This proof implies that~$\tcY^{\ge 2}_\cA$ is a smooth variety.\
 {The fiber of~$p$ over~$[A]$ is the surface $\tY^{\ge 2}_{A}$, which is smooth away from 
a finite number of ordinary double points (\cite[Theorem~5.2(2)]{dkcovers}).}

Let $C \subset U$ be a general smooth affine curve passing through the point $[A]$, 
so that its tangent space at~$[A]$ {lies} outside the finitely many hyperplanes 
which are the images of the differential of~$p$ at the various singular points of the  fiber $p^{-1}([A])$.\ 
Upon  shrinking~$U$, the base change 
\begin{equation*}
\tcY^{\ge 2}_C := \tcY^{\ge 2}_\cA \times_{U} C
\end{equation*}
is  then a smooth threefold
and the morphism~$p_C \colon \tcY^{\ge 2}_C \to C$ is smooth over the complement of the  point~$[A] \in C$.\

Consider a double covering $\tC \to C$ branched at $[A]$.\ 
The base change
\begin{equation*}
\tcY^{\ge 2}_\tC := \tcY^{\ge 2}_C \times_C \tC
\end{equation*}
is a threefold with finitely many ordinary double points in the central fiber.\
By~\cite{at}, there is an analytic simultaneous resolution $ {(\tcY^{\ge 2}_\tC)'} \to \tcY^{\ge 2}_\tC$
such that the composition 
\begin{equation*}
p' \colon (\tcY^{\ge 2}_\tC)' \lra \tcY^{\ge 2}_\tC \lra  \tC
\end{equation*}
is  {a smooth morphism} with central fiber isomorphic to the (smooth) blow up $ {(\tY^{\ge 2}_\tC)'} \to \tY^{\ge 2}_{A}$  
of the singular points of~$\tY^{\ge 2}_{A} $.\
The sheaf $R^1p'_*{(\underline{\Z})}$ is then locally constant and its stalk at $[A]$ is isomorphic to~$H^1(  {(\tY^{\ge 2}_\tC)'},\Z)$.

By Theorems~\ref{theorem:aj3} and~\ref{theorem:aj5}, 
this sheaf carries, away from the point $[A]$, a variation of Hodge structure that comes from the middle cohomology 
of a family of smooth projective varieties of odd dimension,
hence it has a canonical principal polarization.\
Since the sheaf  {$R^1p'_*(\underline{\Z})$} is locally constant on the whole $\tC$, this polarization extends across this point.\ 
In particular, the natural Hodge structure on the stalk $H^1({(\tY^{\ge 2}_\tC)'},\Z)$ at~$[A]$ has a principal polarization,
hence provides a principal polarization on the Albanese variety $\Alb({(\tY^{\ge 2}_\tC)'})$
and the map $\tC \to \bA_{10}$ defined by the above variation takes the point $[A]$ to $[\Alb({(\tY^{\ge 2}_\tC)'})]$.\ 
Since this map agrees on~\mbox{$\tC \setminus \{[A]\}$} with the composition 
\begin{equation*}
\tC \lra C \lra U \lra \bM^{\rm EPW}_\ndv \lra \bM^{\rm EPW}_\ndv / \br \xrightarrow{\ \bar\wp\ } \bA_{10},
\end{equation*}
 it agrees everywhere, hence $\bar\wp([A]) = [\Alb({(\tY^{\ge 2}_\tC)'})]$.
\end{proof}

We   now use {these results} to prove Theorem~\ref{theorem:period-intro}.

\begin{proof}[Proof of Theorem~\textup{\ref{theorem:period-intro}}]
 {The factorization of the period map $\wp_n$ is proved in Lemma~\ref{lemma:extension}
and the equality~$\wp_n([X]) = [\Alb(\tY^{\ge 2}_{A(X)})]$ follows from this factorization and Proposition~\ref{proposition:period-map}.}
\end{proof}

\begin{rema}
 Consider the natural action of $\PGL(V_6)$ on $\LGr_\ndv(\bw3V_6) \times \P(V_6^\vee)$, linearized as in~\cite[Section~5.4]{dkmoduli}.\
For each $n \in \{3,4,5,6\}$,   there is by~\cite[Theorem~5.15]{dkmoduli} a canonical  embedding
\begin{equation}
\label{eq:bm-gm-embedding}
\bM_n^{\rm GM} \subset (\LGr_\ndv(\bw3V_6) \times \P(V_6^\vee)) /\!\!/ \PGL(V_6)
\end{equation}
and $(\LGr_\ndv(\bw3V_6) \times \P(V_6^\vee)) /\!\!/ \PGL(V_6) \to \bM^{\rm EPW}_\ndv$ is  generically a $\P^5$-fibration
(the fiber over any point~$[A]$ is isomorphic to $\P(V_6^\vee)/\!\!/\PGL(V_6)_A$).\
The inclusion~\eqref{eq:bm-gm-embedding} is an open embedding for $n = 5$,  a closed embedding for $n = 3$, and 
\begin{equation*}
(\LGr_\ndv(\bw3V_6) \times \P(V_6^\vee)) /\!\!/ \PGL(V_6) = \bM_5^{\rm GM} \sqcup \bM_3^{\rm GM}
\end{equation*}
by~\eqref{eq:wp-inverse-a}.\
This property is reminiscent of the Satake compactification. 
\end{rema}

 {We can also complete the proof of Theorem~\ref{theorem:intro}.

\begin{proof}[Proof of Theorem~\textup{\ref{theorem:intro}}]
 {By Proposition~\ref{proposition:period-map}, the morphism~$\bar\wp$ defines 
for each Lagrangian~$A$ with no decomposable vectors a principal polarization on~$\Alb(\tY^{\ge 2}_A)$ 
such that~\eqref{eq:alb-isomorphic-intro} holds};
this proves the first part of the theorem.\

 {By Lemma~\ref{lemma:extension}, the isomorphism~\eqref{eq:iso-jac-intro} 
of principally polarized abelian varieties holds for all smooth GM varieties $X$ of dimension~$n \in \{3,5\}$;
this proves the  {last} part of the theorem.}

 {Finally}, the isomorphism~\eqref{eq:iso-hodge-intro} for $A$ with $Y^3_A = \vide$ 
was established in Theorems~\ref{theorem:aj3} and~\ref{theorem:aj5}.
For~$A$ with~$Y^3_A \ne \vide$, the proof of Proposition~\ref{proposition:period-map} gives an isomorphism 
\begin{equation*}
H_n(X,\Z) \cong H_1(({\widetilde Y^{\ge2}_{A(X)}})',\Z),
\end{equation*}
 {where $({\widetilde Y^{\ge2}_{A(X)}})'$ is a desingularization of $\tY^{\ge 2}_{A(X)}$.}
Since the only singularities of $\tY^{\ge 2}_{A }$ are ordinary double points, 
there is a canonical isomorphism $H_1((\tY^{\ge 2}_{A }{})',\Z)\isomto H_1( \tY^{\ge 2}_{A } ,\Z)$.\
This proves the second part of the theorem.
\end{proof}
}

%%%%%%%%%%%%%%%%%%%%%
% References
%%%%%%%%%%%%%%%%%%%%%

\ifx\undefined\bysame
\newcommand{\bysame}{\leavevmode\hbox to3em{\hrulefill}\,}
\fi

\end{document}